
\documentclass[10pt]{amsart}
\usepackage{a4wide,color}
\usepackage{amsmath,amssymb}
\usepackage[active]{srcltx}
\usepackage{ulem}
\usepackage{enumitem}
\usepackage{bm}
\usepackage{graphicx}
\usepackage[dvipsnames]{xcolor}
\usepackage{subfigure}

\setcounter{MaxMatrixCols}{10}

\allowdisplaybreaks

\let \eps \varepsilon

\newcommand{\R}{{\mathbb R}}

\newtheorem{theorem}{Theorem}
\newtheorem{lemma}[theorem]{Lemma}
\newtheorem{proposition}[theorem]{Proposition}

\newtheorem{corollary}[theorem]{Corollary}

\let \de=\delta
\let \eps=\varepsilon

\let \la=\lambda

\let \ka=\kappa

\let \om=\omega

\definecolor{mypink1}{RGB}{114,110,161}
\definecolor{mypink2}{RGB}{124,115,196}
\definecolor{mypink3}{RGB}{96,156,176}

\begin{document}
\title[Boltzmann equation]{3D hard sphere Boltzmann equation: explicit
structure and the transition process from polynomial tail to Gaussian tail}
\author{Yu-Chu Lin}
\address{Yu-Chu Lin, Department of Mathematics, National Cheng Kung
University, Tainan, Taiwan}
\email{yuchu@mail.ncku.edu.tw }
\author{Haitao Wang}
\address{Haitao Wang, School of Mathematical Sciences, Institute of Natural
Sciences, MOE-LSC, IMA-Shanghai, Shanghai Jiao Tong University, Shanghai,
China}
\email{haitallica@sjtu.edu.cn}
\author{Kung-Chien Wu}
\address{Kung-Chien Wu, Department of Mathematics, National Cheng Kung
University, Tainan, Taiwan and National Center for Theoretical Sciences,
National Taiwan University, Taipei, Taiwan}
\email{kungchienwu@gmail.com}
\date{\today }
\thanks{2020 Mathematics Subject Classification: 35Q20; 82C40.}

\begin{abstract}
We study the Boltzmann equation with hard sphere in a near-equilibrium
setting. The initial data is compactly supported in the space variable and
has a polynomial tail in the microscopic velocity. We show that the solution
can be decomposed into a particle-like part (polynomial tail) and a
fluid-like part (Gaussian tail). The particle-like part decays exponentially
in both space and time, while the fluid-like part corresponds to the behavior of the compressible Navier-Stokes equation, which dominates the long time
behavior and exhibits rich wave motion. The nonlinear wave interactions in
the fluid-like part are precisely characterized and therefore we are able to distinguish the linear and nonlinear wave of the solution. It is notable that although the solution has polynomial tail in the velocity initially, the transition process from the polynomial to the Gaussian tail can be
quantitatively revealed due to the collision with the background global Maxwellian.

%
%
%
\end{abstract}

\keywords{Boltzmann equation, Maxwellian, polynomial tail.}
\maketitle
\tableofcontents

\addtocontents{toc}{\setcounter{tocdepth}{1}}


\section{Introduction}

\subsection{The model}

The Boltzmann equation is a fundamental model in the collisional kinetic
theory, which describes the evolution of a phase space distribution function
of moderately dilute gas. Precisely, the Boltzmann equation reads
\begin{equation}
\left \{
\begin{array}{l}
\partial _{t}F+\xi \cdot \nabla _{x}F=Q(F,F)\text{,} \\[4mm]
F(0,x,\xi )=F_{0}(x,\xi )\text{,}%
\end{array}%
\right. \quad (t,x,\xi )\in {\mathbb{R}}^{+}\times {\mathbb{R}}^{3}\times {%
\mathbb{R}}^{3}\text{,}  \label{bot.1.a}
\end{equation}%
where $F\left( t,x,\xi \right) $ is the distribution function for particles
at time $t>0$, position $x=\left( x_{1}\text{, }x_{2}\text{, }x_{3}\right)
\in \mathbb{R}^{3}$ and microscopic velocity $\xi =\left( \xi _{1}\text{, }%
\xi _{2}\text{, }\xi _{3}\right) \in \mathbb{R}^{3}$, and initial data $%
F_{0}\left( x,\xi \right) \geq 0$ is given. Here the Boltzmann collision
operator $Q\left( \cdot ,\cdot \right) $ is given by
\begin{equation*}
Q\left( G,F\right) =\int_{\mathbb{R}^{3}}\int_{\mathbb{S}^{2}}\left \vert
q\cdot n \right \vert \left[ G\left( \xi _{\ast }^{\prime }\right) F\left(
\xi ^{\prime }\right) -G\left( \xi _{\ast }\right) F\left( \xi \right) %
\right] dn d\xi _{\ast }  \label{collision}
\end{equation*}%
where $q=\xi -\xi _{\ast }$ is the relative velocity and the
post-collisional velocities satisfy
\begin{equation*}
\xi ^{\prime }=\xi -\left[ \left( \xi -\xi _{\ast }\right) \cdot n \right] n
\text{, \  \  \ }\xi _{\ast }^{\prime }=\xi _{\ast }+\left[ \left( \xi -\xi
_{\ast }\right) \cdot n \right] n \text{.}
\end{equation*}%
It is well known that the global Maxwellians are the steady solutions to the
Boltzmann equation. In the perturbation regime near the Maxwellian, we look
for the solution in the form of
\begin{equation}
F=\mathcal{M}+f\text{, \  \  \ }F_{0}(x,\xi )=\mathcal{M}+\eps f_{0}\text{,}
\label{F}
\end{equation}%
where $\eps>0$ sufficiently small, with a perturbation function $f$ to $%
\mathcal{M} $. Here the global Maxwellian $\mathcal{M}$ is normalized as
\begin{equation*}
\mathcal{M=}\frac{1}{\left( 2\pi \right) ^{3/2}}\exp \left( -\frac{\left
\vert \xi \right \vert ^{2}}{2}\right) \text{.}
\end{equation*}%
Substituting \eqref{F} into \eqref{bot.1.a}, the perturbation function $f$
satisfies the equation
\begin{equation}
\left \{
\begin{array}{l}
\partial _{t}f+\xi \cdot \nabla _{x}f=\mathcal{L}f+Q\left( f,f\right),
\vspace {3mm}
\\
f|_{t=0}=F_{0}(x,\xi )-\mathcal{M=}\eps f_{0}\text{, \  \ }\left( x,\xi
\right) \in \mathbb{R}^{3}\times \mathbb{R}^{3}\text{,}%
\end{array}%
\right.  \label{Linearized}
\end{equation}%
where
\begin{equation*}
\mathcal{L}f=Q\left( \mathcal{M},f\right) +Q\left( f,\mathcal{M}\right)
\text{.}
\end{equation*}%
In fact, $\mathcal{L}$ can be split into%
\begin{equation*}
\mathcal{L}f=-\nu +\mathcal{K}\text{,}
\end{equation*}%
with \
\begin{equation*}
\nu \left( \xi \right) =\int_{\mathbb{R}^{3}}\int_{\mathbb{S}^{2}}\left
\vert q\cdot \omega \right \vert \mathcal{M}\left( \xi _{\ast }\right)
d\omega d\xi _{\ast }\sim \left( 1+\left \vert \xi \right \vert \right)
\text{,}
\end{equation*}%
\begin{equation*}
\mathcal{K}f=\int_{\mathbb{R}^{3}}\int_{\mathbb{S}^{2}}\left \vert q\cdot n
\right \vert \left[ \mathcal{M}\left( \xi _{\ast }^{\prime }\right) f\left(
\xi ^{\prime }\right) +f\left( \xi _{\ast }^{\prime }\right) \mathcal{M}%
\left( \xi ^{\prime }\right) -f\left( \xi _{\ast }\right) \mathcal{M}\left(
\xi \right) \right] dn d\xi _{\ast }\text{.}
\end{equation*}

Note that this kind of perturbation \eqref{F} allows the initial data to
have a polynomial tail in the microscopic velocity, which is different from
the standard perturbation, $F=\mathcal{M}+\sqrt{\mathcal{M}}f$, where
initial data is assumed to have a Gaussian tail.

It is known that there are extensive studies on the standard perturbation,
including the global existence, time-asymptotic behavior, and even the
pointwise structure, see \cite{[CeIlPu],[Glassey],[Liu-Yu3],[Ukai-Yang]} and
the references therein. It would be very interesting to see that can we
still obtain the precise space-time structure of the solution for initial
data with a polynomial tail?

Moreover, since the perturbation setting describes the collisions between a
small amount of released particles and the ambient particles that have
reached thermal equilibrium, the physical intuition suggests that the
distribution of the released particles will also approach thermal
equilibrium over a long period, namely, it will become close to a Gaussian
in terms of the microscopic velocity. It is a challenge problem that is it
possible to give a quantitative description of the transition process?

The main goal of this paper is to answer the above two questions.
Specifically, we will construct a pointwise estimate of solution to %
\eqref{Linearized} with respect to all variables, space, time, and velocity.
The estimate not only exhibits the wave motion in space-time, but also
reveals how the solution transitions from a polynomial tail to a Gaussian
tail.

\subsection{Review of previous works}

Concerning the polynomial tail perturbation for collisional kinetic
equation, there has been substantial progress recently. For the torus case,
it was initiated by Gualdani-Mischler-Mouhot \cite{[Gualdani]} for the
Boltzmann equation with hard sphere. It was then generalized by
\cite{K-T-Wu} for Landau equation, by 
\cite{[Yang],[Herau]} 
for the Boltzmann equation without angular cutoff, and by 
\cite{[Cao]} for the Boltzmann equation with soft potentials. For the whole
space case, we refer to \cite{K-P,[Cao-Duan-Li]} for the non-cutoff
Boltzmann equation and 
\cite{[DLL]} for the cutoff Boltzmann equation. These works mainly focused
on global existence and large time decay of the solution, whereas our study
aim at providing a more quantitative description of the structure of the
solution.

Next, we review some space-time pointwise results closely related to the
current study. It was initiated by Liu \cite{[Liu]} for 1D viscous
conservation laws, then developed to multi-dimensional compressible
Navier-Stokes equation \cite{[Deng-Yu], [Du-Wu], [Liu-Noh],[Liu-Wang]}.
There are two key ingredients. The first is the construction of Green's
function for the linearized system, where rich wave phenomena, such as
dissipative Huygens wave, diffusion wave, Riesz wave are identified. The
other one is the careful estimate of nonlinear wave couplings between the
above basic wave patterns. As is known, the long time behavior of the
Boltzmann equation is governed by macroscopic fluid. There are some parallel
results for Boltzmann equation with standard Gaussian tail. The result for 1D hard sphere Boltzmann
equation was constructed by Liu-Yu \cite{[LiuYu1]}. As
the nonlinear interaction is strong in 1D, the authors need to extract the
so called \textquotedblleft kinetic Burger equations\textquotedblright \ to
close the nonlinear problem. Later, \cite{[Liu-Yu2],[Liu-Yu3]} obtained the
explicit structure of Green's function for the linearized Boltzmann equation
with hard sphere in 3D. Recently, \cite{[Li-Zhong]} constructed the explicit
structure of the relativistic Boltzmann equation for \textquotedblleft hard
ball\textquotedblright , an exponentially sharp ansatz similar to structure
in \cite{[Deng-Yu]} was justified. In these works, the nonlinear
interactions have been estimated to the extent necessary to close the
nonlinear ansatz.

The transition from polynomial tail to Gaussian tail is related to the decay
estimates for large velocities in the Boltzmann equation. For space
homogeneous case, there are extensive studies on $L^1_v$ moments or
pointwise decay, both for polynomial and exponential weight, see \cite%
{[ACGM], [Bobylev], [De], [GPV]} and references therein. For space
inhomogeneous case, the results are relatively fewer. Generation of
polynomial moments in $L^1_v$ or pointwise sense was established under
suitable moment bound conditions for hard potential with or without cutoff,
see \cite{[CaSne],[Gualdani],[ImMoSi]}. Different from the conditional
results, our result provides a dynamic process for Gaussian tail generation
in the perturbation regime.


\subsection{Notations}

Before stating our main results, we introduce some notations used in this
paper. We denote $w_{\beta }\left( \xi \right) =\left \langle \xi
\right
\rangle ^{\beta }=(1+|\xi |^{2})^{\beta /2}$ , $\beta \in {\mathbb{R}%
} $. For the microscopic variable $\xi $, we denote
\begin{equation*}
|g|_{L_{\xi }^{p}}=\Big(\int_{{\mathbb{R}}^{3}}|g|^{p}d\xi \Big)^{1/p}%
\hbox{
if }1\leq p<\infty \hbox{,}\quad \quad |g|_{L_{\xi }^{\infty }}=\sup_{\xi
\in {\mathbb{R}}^{3}}|g(\xi )|\hbox{,}
\end{equation*}%
and the weighted norms can be defined by
\begin{equation*}
|g|_{L_{\xi ,\beta }^{p}}=\Big(\int_{{\mathbb{R}}^{3}}\left \vert \left
\langle \xi \right \rangle ^{\beta }g\right \vert ^{p}d\xi \Big)^{1/p}%
\hbox{
if }1\leq p<\infty \hbox{,}\quad \quad |g|_{L_{\xi ,\beta }^{\infty
}}=\sup_{\xi \in {\mathbb{R}}^{3}}\left \vert \left \langle \xi \right
\rangle ^{\beta }g(\xi )\right \vert \hbox{.}
\end{equation*}
The $L_{\xi }^{2}$ inner product in ${\mathbb{R}}^{3}$ will be denoted by $%
\left \langle \cdot ,\cdot \right \rangle _{\xi }$, i.e.,
\begin{equation*}
\left \langle f,g\right \rangle _{\xi }=\int f(\xi )\overline{g(\xi )}d\xi %
\hbox{.}
\end{equation*}%
For the Boltzmann equation with hard sphere, the natural norm in $\xi $ is $%
|\cdot |_{L_{\sigma }^{2}}$, which is defined as
\begin{equation*}
|g|_{L_{\sigma }^{2}}^{2}=\left \vert \left \langle \xi \right \rangle ^{%
\frac{1}{2}}g\right \vert _{L_{\xi }^{2}}^{2}\hbox{.}
\end{equation*}%
For the space variable $x$, we have similar notations, namely,
\begin{equation*}
|g|_{L_{x}^{p}}=\left( \int_{{\mathbb{R}^{3}}}|g|^{p}dx\right) ^{1/p}%
\hbox{
if }1\leq p<\infty \hbox{,}\quad \quad |g|_{L_{x}^{\infty }}=\sup_{x\in {%
\mathbb{R}^{3}}}|g(x)|\hbox{.}
\end{equation*}

Finally, with $\mathcal{X}$ and $\mathcal{Y}$ being two normed spaces, we
define%
\begin{equation*}
\left \Vert g\right \Vert _{\mathcal{XY}}=\left \vert \left \vert g\right
\vert _{\mathcal{Y}}\right \vert _{\mathcal{X}}\hbox{,}
\end{equation*}%
and for simplicity, we denote
\begin{equation*}
\left \Vert g\right \Vert _{L^{2}}=\left \Vert g\right \Vert _{L_{\xi
}^{2}L_{x}^{2}}=\left( \int_{\mathbb{R}^{3}}\left \vert g\right \vert
_{L_{x}^{2}}^{2}d\xi \right) ^{1/2}\hbox{.}
\end{equation*}%
For any two functions $f(x,t)$ and $g(x,t)$, we define the space-time
convolution as
\begin{equation*}
f(x,t)\ast _{x,t}g(x,t)=\int_{0}^{t}\int_{{\mathbb{R}}^{3}}f(x-y,t-\tau
)g(y,\tau )dyd\tau .
\end{equation*}%
For any two real numbers $a$ and $b$, we define $a\wedge b=\min \{a,b\}$.

For simplicity of notations, hereafter, we abbreviate $\leq C$ to $\lesssim $%
, where $C$ is a constant depending only on fixed numbers.

\subsection{Main theorem and significant points of our results}

In order to achieve our goal, we introduce the decomposition $f=f_{1}+\sqrt{%
\mathcal{M}}f_{2}$. Here $f_1$ corresponds to the part with only polynomial
tail, while $\sqrt{\mathcal{M}}f_2$ has Gaussian tail. Heuristically, the
closer a distribution function to a Gaussian, the closer the behavior
resembles macroscopic hydrodynamics. Therefore, one may expect $f_1$ behaves
like particle, and $f_2$ behaves like fluid.

The following coupled system designed for $f_{1}$ and $f_{2}$ is to realize
one's intuition:
\begin{equation}
\left \{
\begin{array}{l}
\partial _{t}f_{1}+\xi \cdot \nabla _{x}f_{1}=-\nu \left( \xi \right) f_{1}+%
\mathcal{K}_{s}f_{1}+Q\left( f_{1},f_{1}\right) +Q\left( f_{1},\sqrt{%
\mathcal{M}}f_{2}\right) +Q\left( \sqrt{\mathcal{M}}f_{2},f_{1}\right) ,%
\vspace {3mm}
\\
\partial _{t}f_{2}+\xi \cdot \nabla _{x}f_{2}=Lf_{2}+\mathcal{K}%
_{b}f_{1}+\Gamma \left( f_{2},f_{2}\right) ,%
\end{array}%
\right.  \label{decom-System}
\end{equation}%
with initial data
\begin{equation*}
f_{1}\left( 0,x,\xi \right) =f_{1,0}\left( x,\xi \right) =\varepsilon
f_{0}\left( x,\xi \right) \hbox{, \  \  \ }f_{2}\left( 0,x,\xi \right)
=f_{2,0}\left( x,\xi \right) =0\hbox{,}
\end{equation*}%
where
\begin{equation*}
Lg=\frac{1}{\sqrt{\mathcal{M}}}\left[ Q\left( \mathcal{M},\sqrt{\mathcal{M}}%
g\right) +Q\left( \sqrt{\mathcal{M}}g,\mathcal{M}\right) \right]
\end{equation*}%
is the so called linearized Boltzmann collision operator,
\begin{equation*}
\Gamma \left( f_{2},f_{2}\right) =\frac{1}{\sqrt{\mathcal{M}}}Q\left( \sqrt{%
\mathcal{M}}f_{2},\sqrt{\mathcal{M}}f_{2}\right)
\end{equation*}%
is the nonlinear Boltzmann collision operator, and $\mathcal{K=K}_{s}+%
\mathcal{M}^{1/2}\mathcal{K}_{b}$ \ with
\begin{equation*}
\mathcal{K}_{s}:=\chi _{\{ \left \vert \xi \right \vert \geq R\}}\mathcal{K}%
\hbox{, \  \  \  \ }\mathcal{K}_{b}:=\mathcal{M}^{-1/2}\chi _{\{ \left \vert
\xi \right \vert <R\}}\mathcal{K}
\end{equation*}%
for a constant $R>0$ large enough, $\chi _{\{ \cdot \}}$ being the indicator
function.

It is noted that similar decomposition was also employed in \cite%
{[Cao-Duan-Li]} for Boltzmann equation without angular cutoff. Based on this
decomposition, we have the main theorem as follows:

\begin{theorem}
\label{thm-main1} Let $\beta >4$ be sufficiently large. Assume that $%
f_{0}\in L_{\xi ,\beta }^{\infty }L_{x}^{\infty }$ is compactly supported in
the variable $x$ for all $\xi $
\begin{equation*}
f_{0}\left( x,\xi \right) \equiv 0\text{ for }\left \vert x\right \vert \geq
1\text{, }\xi \in \mathbb{R}^{3}\text{.}
\end{equation*}%
Then for any fixed $\de>0$, there exists $\eps>0$ small enough such that the
solution $f$ of $(\ref{Linearized})$ exists for $t>0$ and it can be
decomposed as $f=f_{1}+\sqrt{\mathcal{M}}f_{2}$, where $f_{1}$ and $f_{2}$
satisfy $\left( \ref{decom-System}\right)$ with the following estimates:

For $f_{1}$, 
we have
\begin{equation*}
\left|\mathbf{1}_{\{ \left<\xi \right> \leq t\}}f_{1}\right|\lesssim
\varepsilon \left \langle \xi \right \rangle ^{-\beta } e^{-\overline{c}_{0}%
\left[\left<\xi \right>^{2}+(t+|x|)\right] }
\end{equation*}
and
\begin{equation*}
\left|\mathbf{1}_{\{ \left<\xi \right> > t\}}f_{1}\right|\lesssim
\varepsilon \left \langle \xi \right \rangle ^{-\beta } e^{-\overline{c}_{0} %
\left[\left<\xi \right>t+(t+|x|)\right] }
\end{equation*}
for some positive constant $\overline{c}_{0}$.

For $f_{2}$, we have
\begin{align*}
\left \vert f_{2}\right \vert _{L_{\xi ,\beta }^{\infty }} &\lesssim
\varepsilon \left[
\begin{array}{l}
\mathbf{1}_{\{ \left \vert x\right \vert \leq \mathbf{c}t\}}\left(
1+t\right) ^{-3/2}\left( 1+\frac{\left \vert x\right \vert ^{2}}{1+t}\right)
^{-3/2} +\left( 1+t\right) ^{-3/2}e^{-\frac{|x|^{2}}{\widehat{D}\left(
1+t\right) }} \\
\\
+\left( 1+t\right) ^{-2}e^{-\frac{\left( \left \vert x\right \vert -\mathbf{c%
}t\right) ^{2}}{\widehat{D}_{0}\left( 1+t\right) }}+e^{-\frac{t+|x|}{%
\widehat{c}_{\de}}}%
\end{array}%
\right] \\
\\
&\quad+ \varepsilon^{2}\mathbf{1}_{\{ \left \vert x\right \vert \leq (%
\mathbf{c}+\de)t\}}\left[ \left( 1+t\right) ^{-2}\left( 1+\frac{\left \vert
x\right \vert ^{2}}{1+t}\right) ^{-3/2}+\left( 1+t\right) ^{-\frac{5}{2}%
}\left( 1+\frac{\left( \left \vert x\right \vert -\mathbf{c}t\right) ^{2}}{%
1+t}\right) ^{-1}\right]
\end{align*}%
for some positive constants $\hat{D}, \hat{D}_{0}, \hat{c}_{\de}$. Here $%
\mathbf{c}=\sqrt{5/3}$ is the sound speed associated with global Maxwellian $%
\mathcal{M}$.
\end{theorem}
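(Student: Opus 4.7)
\emph{Proof plan.} The solution will be built as the fixed point of a Picard iteration for the coupled system \eqref{decom-System}, with the stated pointwise bounds used as the ansatz and closed by a nonlinear bootstrap for $\varepsilon$ small. The two parts are handled by genuinely different tools: characteristics plus $\nu(\xi)$-damping for $f_{1}$, and the pointwise Green's function of the linearised Boltzmann operator for $f_{2}$. The mechanism producing the transition from polynomial to Gaussian tail is already encoded in the system: every application of $\mathcal{K}_{b}$ produces an output supported in $\{|\xi|<R\}$ which, once injected into the $f_{2}$ equation, is propagated by the Gaussian-preserving semigroup of $L$; meanwhile, the damping $e^{-\nu(\xi)t}$ in the $f_{1}$ equation converts the initial polynomial tail into a Gaussian-like factor for velocities $\langle\xi\rangle\leq t$.

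\emph{Step 1 (bound for $f_{1}$).} Since $\mathcal{K}_{s}=\chi_{\{|\xi|\geq R\}}\mathcal{K}$ is supported where $\nu(\xi)\gtrsim R$, for $R$ large it is subdominant to $-\nu$, and the integral representation along the straight characteristic yields $|f_{1}(t,x,\xi)|\lesssim e^{-\frac{1}{2}\nu(\xi)t}|f_{1,0}(x-t\xi,\xi)|+(\text{forcing})$. The compact support of $f_{1,0}$ in $x$ forces $|\xi|\gtrsim(|x|-1)/t$ on the initial-data term, so $\nu(\xi)t\gtrsim|x|$; splitting $e^{-\frac{1}{2}\nu(\xi)t}$ and using either $\nu(\xi)t\gtrsim\langle\xi\rangle^{2}$ when $\langle\xi\rangle\leq t$, or $\nu(\xi)t\gtrsim\langle\xi\rangle t$ otherwise, produces the two regime-dependent exponential factors; the polynomial weight $\langle\xi\rangle^{-\beta}$ is inherited from $f_{0}$. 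The quadratic sources $Q(f_{1},f_{1})$, $Q(f_{1},\sqrt{\mathcal{M}}f_{2})$, $Q(\sqrt{\mathcal{M}}f_{2},f_{1})$ are controlled pointwise from the ansatz on $(f_{1},f_{2})$ and absorbed by the $\nu$-damping and a factor of $\varepsilon$.

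\emph{Step 2 (bound for $f_{2}$ and nonlinear wave couplings).} Write $f_{2}=\int_{0}^{t}\mathbb{G}(t-s)[\mathcal{K}_{b}f_{1}+\Gamma(f_{2},f_{2})]\,ds$ with $\mathbb{G}$ the Green's kernel of $\partial_{t}+\xi\cdot\nabla_{x}-L$, and invoke its pointwise decomposition as developed by Liu--Yu into: (i) a singular short-range piece $\sim e^{-\widehat{c}(t+|x|)}$; (ii) a Huygens wave $(1+t)^{-2}e^{-(|x|-\mathbf{c}t)^{2}/\widehat{D}_{0}(1+t)}$ along the sound cone; (iii) a heat-type diffusion wave $(1+t)^{-3/2}e^{-|x|^{2}/\widehat{D}(1+t)}$; and (iv) a Riesz-type wave $\mathbf{1}_{\{|x|\leq\mathbf{c}t\}}(1+t)^{-3/2}(1+|x|^{2}/(1+t))^{-3/2}$ inside the sound cone. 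Convolving $\mathbb{G}$ with the Gaussian-tailed, exponentially localised source $\mathcal{K}_{b}f_{1}$ (bounded by Step~1) reproduces the four $\varepsilon$-linear terms in the bound for $|f_{2}|_{L^{\infty}_{\xi,\beta}}$. The $\varepsilon^{2}$ terms come from $\Gamma(f_{2},f_{2})$: after substituting the $\varepsilon$-ansatz one must estimate the 3D space-time convolutions Huygens$\,\ast_{x,t}\,$Huygens, Huygens$\,\ast_{x,t}\,$diffusion and Riesz$\,\ast_{x,t}\,$diffusion, and show their sum is absorbed by $(1+t)^{-2}(1+|x|^{2}/(1+t))^{-3/2}+(1+t)^{-5/2}(1+(|x|-\mathbf{c}t)^{2}/(1+t))^{-1}$ on the enlarged cone $|x|\leq(\mathbf{c}+\delta)t$, the slack $\delta$ being absorbed by the exponential tails outside. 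This case analysis on, inside and just outside the sound cone is the main technical obstacle; it parallels the compressible Navier--Stokes results of Deng--Yu, Du--Wu, Liu--Noh and Liu--Wang, but here one also has to track the coupling back to the particle part through $\mathcal{K}_{b}f_{1}$.

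\emph{Step 3 (closure).} Define iterates $(f_{1}^{(n+1)},f_{2}^{(n+1)})$ by substituting $(f_{1}^{(n)},f_{2}^{(n)})$ on the right-hand side of \eqref{decom-System}. Steps~1--2 transmit the ansatz bounds with constants independent of $n$ once $\varepsilon$ is small enough depending on $\delta$, and a contraction on differences in a norm compatible with the pointwise ansatz yields a fixed point $(f_{1},f_{2})$. Summing the two equations, $f=f_{1}+\sqrt{\mathcal{M}}f_{2}$ solves \eqref{Linearized}, and the stated bounds on $f_{1}$ and $f_{2}$ are the claim of the theorem.
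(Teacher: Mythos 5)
Your plan reproduces the skeleton of the paper's argument (Picard iteration, Liu--Yu Green's function, space-time convolution estimates for the wave couplings), but it glosses over the two technical cores of the theorem and in both places the step you take for granted is precisely the step that fails without additional structure.

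\emph{The $f_{1}$ velocity weight.} In Step 1 you derive the two regime-dependent exponentials $e^{-\bar c_{0}\langle\xi\rangle^{2}}$ and $e^{-\bar c_{0}\langle\xi\rangle t}$ only from the initial-data term $\varepsilon\mathbb{S}^{t}f_{0}$, and then assert the quadratic sources are ``controlled pointwise from the ansatz and absorbed by the $\nu$-damping.'' This is not a bound; it is the statement to be proved. The operators $\mathcal{K}_{s}$ and $Q$ mix velocities via the collision geometry $|\xi|^{2}+|\xi_{*}|^{2}=|\xi'|^{2}+|\xi_{*}'|^{2}$, so a weight $W(\xi,t)$ is inherited only if $W(\xi,t)\lesssim W(\xi',t)\,W(\xi_{*}',t)$ on the collision sphere. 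The naive exponent $\nu(\xi)t\sim\langle\xi\rangle t$ does not satisfy any such inequality, and moreover the coupling $Q(\sqrt{\mathcal{M}}f_{2},f_{1})$ carries a genuine Gaussian factor $\sqrt{\mathcal{M}}$ that \emph{caps} the generated decay at Gaussian order regardless of how large $\nu(\xi)t$ becomes. The paper's resolution is a specifically engineered weight $e^{\kappa\rho(\xi,t)}$ with $\rho(\xi,t)=\langle\xi\rangle(\langle\xi\rangle\wedge t)$, for which a subadditivity inequality $\overline\rho(a,t)+\overline\rho(b,t)\geq\overline\rho(\sqrt{a^{2}+b^{2}},t)$ holds (Lemma~\ref{Weight-estimate}), giving weighted estimates $e^{\kappa\rho}\langle\xi\rangle^{\beta}|Q(g,h)|\lesssim\nu(\xi)\,|e^{\kappa\rho}g|_{L^{\infty}_{\xi,\beta}}|e^{\kappa\rho}h|_{L^{\infty}_{\xi,\beta}}$ and a matching smallness bound for $\mathcal{K}_{s}$ (Lemma~\ref{Weighted-Q}, Corollary~\ref{Cor-Q}); together with $e^{\kappa\rho(\xi,\tau)}\sqrt{\mathcal{M}}\leq e^{1/4}$ for $\kappa<1/4$, these allow a clean bootstrap for $u=\langle\xi\rangle^{\beta}e^{\kappa\rho}f_{1}$. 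Without this mechanism the Gaussian generation claim has no proof, and the seemingly stronger linear estimate $e^{-\nu_{0}\langle\xi\rangle t/2}$ that you obtain is simply false for the nonlinear solution.

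\emph{The indicator $\mathbf{1}_{\{|x|\leq(\mathbf{c}+\delta)t\}}$ on the $\varepsilon^{2}$ terms.} The Picard iteration (your Step~3, the paper's Theorem~\ref{thm-main}) yields the nonlinear bound \emph{without} the indicator; the polynomial $\varepsilon^{2}$ terms produced by the convolution lemmas are not supported in the enlarged cone, nor is the Riesz wave component of the linear bound supported in the strict cone away from its boundary. Your remark that the slack $\delta$ is ``absorbed by the exponential tails outside'' would require those tails to be space-time exponential, but the linear Huygens term decays only Gaussian in the cone distance $(|x|-\mathbf{c}t)^{2}/(1+t)$, and the Riesz term has no exponential decay at all. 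The paper instead proves exponential space-time decay of $f_{2}$ outside $|x|<(\mathbf{c}+\delta)t$ by a separate weighted energy argument (Theorem~\ref{outside the cone}): one multiplies by $w=\exp((\langle x\rangle-Mt)/\ell)$ with $M=\mathbf{c}+O(\delta)$, decomposes the weighted solution into a finite Picard chain plus a remainder, and proves an $L^{2}_{\xi}H^{2}_{x}$ regularization estimate for the remainder using the spectral information $\mathrm{P}_{0}(x\cdot\xi/|x|)\mathrm{P}_{0}$ having eigenvalues $\pm\mathbf{c},0$, followed by Sobolev embedding and bootstrap. This is a distinct piece of analysis that your proposal does not supply, and the theorem cannot be obtained from convolution estimates alone.

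The convolution estimates themselves (your Step~2) are described at the right level of detail for a plan, though you should note that proving the extra factor $(1+t)^{-1/2}$ gain of the nonlinear Huygens--Huygens and Huygens--diffusion couplings over the linear waves (which is what allows the $\varepsilon^{2}$ terms to be qualitatively smaller than in the prior Navier--Stokes literature) requires the sharp strong-interaction-region analysis in the paper's Section~\ref{wave-interaction}, not a generic Young-type estimate.
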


Several remarks on the main theorem are in order:

\begin{itemize}

\item The main result is the combinations of Theorems \ref{thm-main}, \ref%
{outside the cone}, and \ref{Improvement-f1}. In Theorem \ref{thm-main}, we
obtain the global wave structure, which is accurate in the time-like region
but only shows polynomial decay for $f_2$ in the space-like region. In
Theorem \ref{outside the cone}, the estimate in space-like region is further
improved to be exponentially sharp. Theorem \ref{Improvement-f1} describes
the dynamic process of transition from polynomial tail to Gaussian tail for $%
f_1$.

\item The result shows that the polynomial tail part $f_1$ decays
exponentially in both space and time, while Gaussian tail part $\sqrt{%
\mathcal{M}}f_2$ exhibits rich wave phenomena and dominates the solution at
large time. This is consistent with our intuition: the polynomial and
Gaussian tail parts are associated with particle-like and fluid-like
behaviors, respectively.

\item In the estimate of $f_2$, we see that the terms consist of $%
\varepsilon $ and $\varepsilon^2$ orders. The $\varepsilon$ order terms
represent linear waves, such as Huygens, diffusion, and Riesz waves, as
given by the Green's function. The $\varepsilon^2$ order terms arise from
nonlinear interactions between these basic waves. They consist of polynomial
versions of Huygens and diffusion waves, primarily concentrate inside the
acoustic wave cone. Compared to the linear waves, the nonlinear waves not
only have a $\varepsilon^2$ order magnitude, but also decay faster by $%
(1+t)^{-1/2}$ than their linear counterparts. However, previous works \cite%
{[Deng-Yu],[Du-Wu],[Li-Zhong],[Liu-Noh],[Liu-Wang]} only showed that they
have the same decay rates. Thus, our results provide a more accurate
description for the nonlinear effect, based on sharper estimates of
nonlinear wave couplings.

\item The polynomial tail of the solution is fully captured by $f_{1}$ part.
The pointwise estimate of $f_{1}$ in velocity variable thus shows how the
polynomial tail transitions to a Gaussian tail. Specifically, $f_{1}$ only
exhibits a polynomial tail initially, as time evolves it immediately
acquires an exponential tail. As time continues to evolve, the particles
with velocity $\left \langle \xi \right \rangle <t$ will become Gaussian
tail, while the non-Gaussian part is of the order $e^{-t^{2}}$ and any
moments generated by the non-Gaussian part are bounded by $e^{-t^{2}}$.
Therefore, as time tends to infinity, the distribution function will
eventually transit to a Gaussian tail; but however, the transition cannot be
completed in any finite time.

\item The assumption that the initial data has compact support in space is
unessential. It is not hard to generalize to the case where the initial data
decays polynomially in space, but in this case, the space-like behavior
should be modified accordingly. We do not pursue this, as our focus is on
the quantitative description of wave motion and the transition to a Gaussian
tail.

\end{itemize}

\subsection{Ideas and strategies}

We now outline the ideas and strategies for the proof of the main theorem.

\begin{subsubsection}{\underline{Global wave structure}}
Let us begin with Theorem \ref{thm-main}, the global wave structure of the solution. Consider the coupled system \eqref{decom-System}. Let $\mathbb{S}^t$ and $\mathbb{G}^t$ be the solution operators to the damped transport equation and (standard) linearized Boltzmann equation respectively, namely, $g(t)\equiv \mathbb{S}
^{t}g_{0}  $ solves
\begin{equation*}
\partial _{t}g+\xi \cdot \nabla _{x}g+\nu \left( \xi \right) g=0\hbox{,}%
\quad g(0,x,\xi )=g_{0}\,,
\end{equation*}
and $g(t)\equiv \mathbb{G}^t g_0$ solves
\begin{equation*}
\partial _{t}g+\xi \cdot \nabla _{x}g-L g=0\hbox{,}%
\quad g(0,x,\xi )=g_{0}\,.
\end{equation*}
Then by Duhamel principle,  solutions $f_1$ and $f_2$ to \eqref{decom-System} satisfy the following coupled integral system
\begin{equation}\label{eq:integral}
\left \{
\begin{aligned}
&f_1(t)=\varepsilon \mathbb{S}^t  f_0+	\int_{0}^{t}\mathbb{S}^{t-\tau} \mathcal{K}_s f_1(\tau)d\tau +\int_{0}^{t}\mathbb{S}^{t-\tau}\Bigl[ Q(f_1,f_1)+Q(f_1,\sqrt{\mathcal{M}}f_2)+Q(\sqrt{\mathcal{M}}f_2,f_1)	\Bigr](\tau)d\tau,\\
&f_2(t)= \int_{0}^{t} \mathbb{G}^{t-\tau}\mathcal{K}_b  f_1(\tau)d\tau+ \int_{0}^{t} \mathbb{G}^{t-\tau} \Gamma(f_2,f_2)(\tau) d\tau.
\end{aligned}
\right.
\end{equation} The essential step for constructing global wave structure is to find out the accurate ansatz for the solution.

We neglect the nonlinear effects for the time being. In the equation for $f_1$, if $R$ is chosen sufficiently large, the integral operator $\mathcal{K}_s$ can be regarded as a perturbation of the damped transport operator. This results in an  exponential decay of $f_1$ in both space and time. We then substitute the estimate of $f_1$ into the integral equation for $f_2$, requiring consideration of  $\int_{0}^{t}\mathbb{G}^{t-\tau} \mathcal{K}_b f_1(\tau)d \tau$. The explicit structure of Green's function $\mathbb{G}$ is constructed in \cite{[Liu-Yu2], [Liu-Yu3]} (we stated it in Lemma \ref{Green}), showing rich wave structures, including a dissipative Huygens wave (acoustic wave described by a moving heat kernel), a diffusion wave (thermal wave described by a stationary heat kernel), a Riesz wave (related to vorticity of macroscopic fluid, described by a polynomial analogue of diffusion wave confined in  the wave cone), and a space-time exponential decay term.  The estimate of $f_2$ is given by the convolution of the Green's function and the source term $\mathcal{K}_b f_1$, 
inheriting a structure similar to that of the Green's function (see Lemmas \ref{exponential-easy-1}-\ref{poly-exponential}). This indicates that $f_1$ can be viewed as  particle-like wave, while $f_2$ can be viewed as fluid-like wave.

Next, we incorporate nonlinear effects. In designing the coupled system, we intentionally placed all nonlinear terms involving $f_1$ in the first equation of \eqref{decom-System}, as it describes the particle-like behavior. The term $\Gamma(f_2,f_2)$ is included in the second equation, as it is associated with  the fluid-like behavior. The key  term  is $\int_0^t \mathbb{G}^{t-\tau}\Gamma(f_2,f_2)(\tau)d\tau$, which accounts for the nonlinear wave interactions. A fundamental property is that the nonlinear operator $\Gamma$ is purely microscopic, and when acted upon by the Green's operator, it  gains an extra $\frac{1}{2}$-order time decay. Substituting the linear estimates leads to the  convolution estimates:
{\small \begin{equation*}
 		 \frac{\bigl(\mbox{Diffusion+Huygens+Riesz+Exponential decay}\bigr)}{\sqrt{1+t}}\underset{(x,t)}{\ast} \bigl(\mbox{Diffusion+Huygens+Riesz+Exponential decay}\bigr)^2.
\end{equation*}
}

The main effort  is  to provide sharp estimates of these convolutions. Without delving into the details, we provide a heuristic explanation of the interaction process here. We illustrate this with the convolution of two Huygens waves.
\[
\int_{0}^{t} \int_{\R^3} (1+t-s)^{-5/2} e^{-\frac{(\lvert x-y\rvert-(t-s))^2}{D_0(t-s)}} (1+s)^{-4} e^{-\frac{ (	\lvert y\rvert -s )^2}{C_1(s+1)}} dy ds,
\]
where  $\mathbf{c}=1$ for simplicity.
The following figure is  to explain the interaction process:

\begin{figure}[h]
	\centering
	\includegraphics[width=0.6\textwidth]{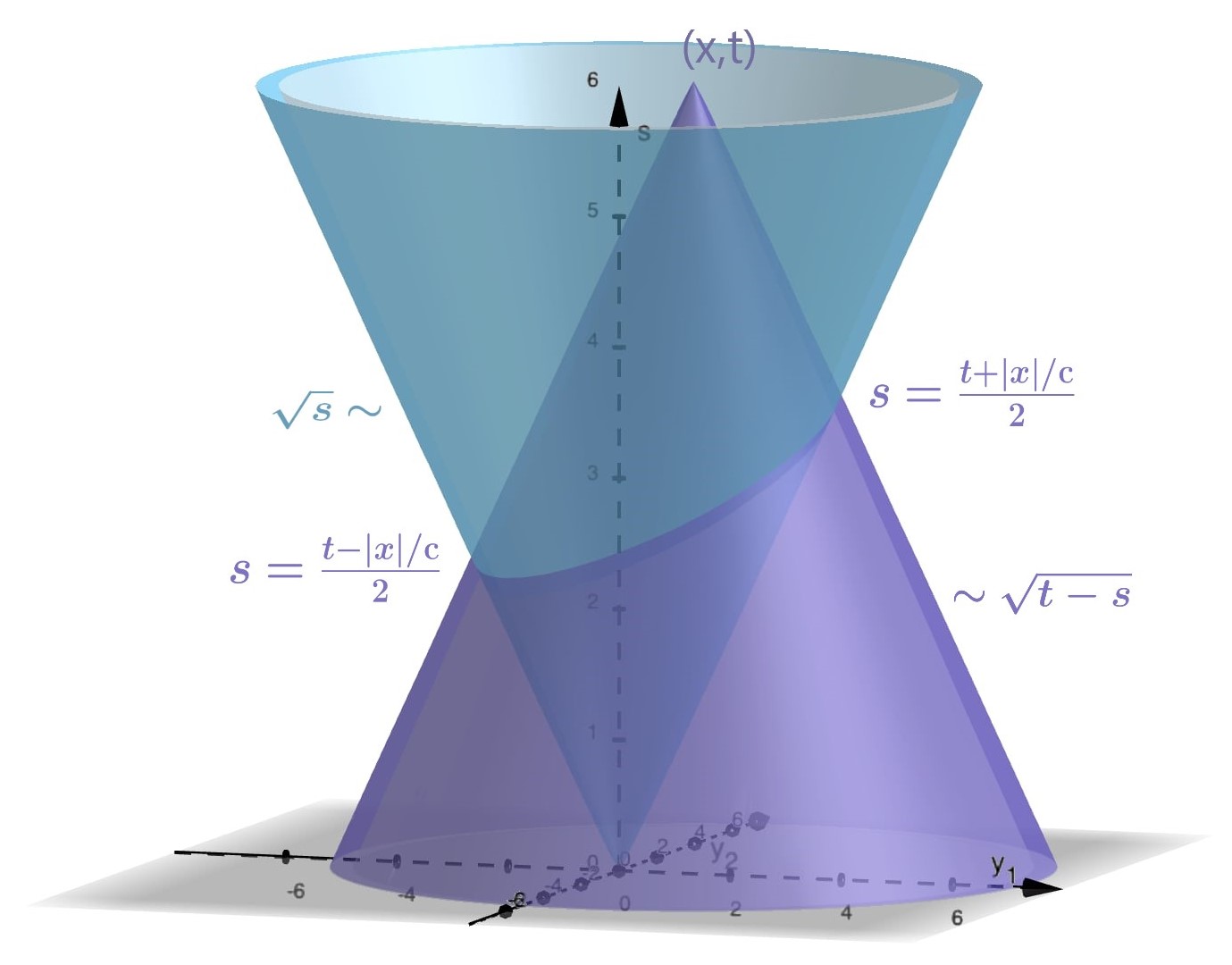}
	\caption{Interaction between two Huygens waves}
\end{figure}

Here, the source $(1+s)^{-4} e^{-\frac{ (	\lvert y\rvert -s )^2}{C_1(s+1)}}$ is plotted as a forward cone with thickness $\sqrt{s}$, and the propagator  $ (1+t-s)^{-5/2} e^{-\frac{(\lvert x-y\rvert-(t-s))^2}{D_0(t-s)}}  $ is plotted as a backward cone with thickness $\sqrt{t-s}$. The space is represented in 2D in the figure.
%
%
The  interaction essentially occurs in the following space-time region:
\[
\left \{ (y,s) \big|\, \lvert \lvert x-y\rvert -(t-s)\rvert \leq O(1)\sqrt{t-s} \mbox{ and } \lvert \lvert y\rvert -s\rvert \leq O(1)\sqrt{s}
\right \}.
\]
Inside this region, the exponential term in Huygens wave is not effective, and the decay is mainly due to time decay factor. The key point is the sharp estimate of the  volume for this space-time region.  In Section \ref{nonlinear wave interaction}, we first provide some heuristic calculations. In fact, our rigorous estimates are greatly  motivated by the heuristics. We   identify the strong interaction region (which appears in the regions $D_{4}$ and $D_{5}$ of Section \ref{wave-interaction}) and perform very careful estimates there. The results match those obtained by the heuristic argument.

Through these sharp convolution estimates, we  propose an appropriate ansatz, which is exponential decay for $f_1$ and  polynomially sharp for $f_2$ as the main focus here is  the region inside wave cone, where only polynomial decay can be expected. Justifying the ansatz involves even more complicated convolution estimates, but the underlying idea remains similar. Additionally,  the damped transport operator $\mathbb{S}^{t}$ is used to compensate the loss of velocity decay from the nonlinear operator $\Gamma$ in the justification for  $f_1$ (see Lemmas \ref{Lemma-gain decay} and \ref{Lemma-gain decay-nonmoving}).
Our result distinguishes between the linear and nonlinear parts of the solution, significantly improving upon previous results in \cite{[Deng-Yu],[Du-Wu],[Li-Zhong], [Liu-Noh],[Liu-Wang]}, where the nonlinear couplings and the linear part have the same decay rate.

\end{subsubsection}

\begin{subsubsection}{\underline{Exponential decay outside the acoustic wave cone}}

In Theorem \ref{thm-main}, we obtain space-time exponential decay for $f_1$, but for $f_2$, we only achieve a polynomial decay estimate. This is because we designed a polynomial-type ansatz to facilitate the control of the nonlinear part of $f_2$. This ansatz is accurate for the structure inside the acoustic wave cone. However, since the initial data has compact support in space and $f_2$ corresponds to a fluid structure that propagates at a finite speed, we expect the solution to  decay exponentially outside the wave cone.

To observe the behavior of $f_2$ outside the cone, we multiply a suitable weight function on $f_2$ and prove an $L^\infty$ bound of the weighted solution through regularization and energy estimates. This approach for obtaining the space asymptotic behavior of the Boltzmann equation was developed in our previous work \cite{[LinWangLyuWu]}, and here it is adapted to handle $f_2$.

It is worth mentioning that in our previous work, we proved exponential decay outside a wave cone $\lvert x\rvert < Mt$   for a sufficiently large $M$. Here, through careful calculation of the micro projection, we show that $f_2$ indeed decays space-time exponentially outside the wave cone $\lvert x\rvert < (\mathbf{c}+\delta) t$ for any positive $\delta$ (see Theorem \ref{outside the cone}), where $\mathbf{c}$ is the sound speed. This result is more consistent with physical reality.

\end{subsubsection}

\begin{subsubsection}{\underline{The transition from polynomial tail to Gaussian tail}}

In the decomposition $f=f_1+\sqrt{\mathcal{M}}f_2$, the latter  already exhibits Gaussian tail. Therefore, studying the transition process is equivalent to examining the generation of the Gaussian tail for $f_1$. The mechanism for generating  velocity decay comes from $\displaystyle e^{-\nu(\xi)t}$  in the damped transport operator $\mathbb{S}^t$:
\[
\mathbb{S}^t g_0=e^{-\nu(\xi)t}g_0(x-\xi t,\xi).
\]
At first glance, it seems that $f_1$ can gain arbitrary velocity decay as time evolves. However, the upper bound for velocity decay is limited by the coupling between $f_1$ and $\sqrt{\mathcal{M}}f_2$, specifically by the term
\[
\int_{0}^{t}\mathbb{S}^{t-\tau}\Bigl[Q(f_1,\sqrt{\mathcal{M}}f_2)+Q(\sqrt{\mathcal{M}}f_2,f_1)	\Bigr](\tau)d\tau
\]
in the second equation of \eqref{eq:integral}. The velocity weight will ultimately be slowed down by $\sqrt{\mathcal{M}}f_2$ as it decay at most as a Gaussian. We design a suitable weighted function
\[
e^{\kappa \langle \xi \rangle \min \{ \langle \xi \rangle \wedge t\}}
\] to capture this feature. By carefully analyzing the commutator between this weight function and the  $\mathcal{K}_s,\,Q$ operators, we complete the description of dynamic transition process (see Theorem \ref{Improvement-f1}).

It is interesting to note that  the  mechanism for gaining velocity weight and the limitation of the maximal generation  both stem from collisions with the global Maxwellian. This is entirely consistent with our physical intuition.

%
%
%
%
%
%

\end{subsubsection}

\subsection{Organization of the paper}

The rest of this paper is organized as follows: In Section \ref{pre}, we
first prepare some basic properties of the integral operator $\mathcal{K}$,
and present some estimates for the damped transport equation and the
linearized Boltzmann equation. In Section \ref{global-structure}, we
construct the global wave structures of the solution, fully utilizing sharp
nonlinear wave interactions. In Section \ref{weighted}, we apply the
weighted energy estimate to prove that the solution indeed decays
exponentially in space-time outside the wave cone.
In Section \ref{f1-velocity}, we provide a quantitative description of how
solution approaches a Gaussian tail in terms of the microscopic velocity.
Finally, we present the proof of all kinds of wave interactions in Section %
\ref{wave-interaction}.

\section{Preliminaries}

\label{pre}

To begin with, we study some essential properties of the collision frequency
$\nu \left( \xi \right) $, the operator $\mathcal{K}$ and the collision
operator $Q$. It is well known that there exist two positive constants $\nu
_{0}$ and $\nu _{1}$ such that%
\begin{equation*}
\nu _{0}\left \langle \xi \right \rangle \leq \nu \left( \xi \right) =\int_{%
\mathbb{R}^{3}}\int_{S^{2}}\left \vert q\cdot n\right \vert \mathcal{M}%
\left( \xi _{\ast }\right) dnd\xi _{\ast }\leq \nu _{1}\left \langle \xi
\right \rangle
\end{equation*}%
for all $\xi \in \mathbb{R}^{3}$. In \cite[Lemma 2.1 and Lemma 6.2]{[Cao]},
the following estimates of the collision kernel have been proved.

\begin{lemma}[{\protect \cite{[Cao]}}]
For any $\beta >4$, there exists a constant $C_{\beta }>0$ depending only on
$\beta $ such that%
\begin{equation*}
\int_{\mathbb{R}^{3}}\int_{S^{2}}\left \vert \xi -\xi _{\ast }\right \vert
\frac{w_{\beta }\left( \xi \right) }{ w_{\beta }\left( \xi _{\ast }^{\prime
}\right) }e^{-\frac{1}{4}|\xi^{\prime 2}}dnd\xi _{\ast }\leq \frac{C}{\beta }%
\nu \left( \xi \right) +C_{\beta }\frac{\nu \left( \xi \right) }{\left(
1+\left \vert \xi \right \vert \right) ^{2}}
\end{equation*}%
and
\begin{equation*}
\int_{\mathbb{R}^{3}}\int_{S^{2}}\left \vert \xi -\xi _{\ast }\right \vert
\frac{w_{\beta }\left( \xi \right) }{w_{\beta }\left( \xi ^{\prime }\right)
w_{\beta }\left( \xi _{\ast }^{\prime }\right) }dnd\xi _{\ast }\leq \frac{C}{%
\beta }\nu \left( \xi \right) +C_{\beta }\frac{\nu \left( \xi \right) }{%
\left( 1+\left \vert \xi \right \vert \right) ^{2}}\hbox{.}
\end{equation*}%
where $C>0$ is a universal constant independent of $\beta $. Here $w_{\beta
}\left( \xi \right) =\left \langle \xi \right \rangle ^{\beta }$.
\end{lemma}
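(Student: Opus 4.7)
The plan is to follow the now-standard approach for polynomially weighted hard-sphere kernel estimates developed by Gualdani--Mischler--Mouhot and adapted in \cite{[Cao]}. The dangerous factor in both integrals is the weight ratio $\langle\xi\rangle^\beta/\langle\xi_\ast'\rangle^\beta$, which blows up when $\langle\xi_\ast'\rangle$ is small relative to $\langle\xi\rangle$. The key physical tool is the collisional energy identity $|\xi'|^2+|\xi_\ast'|^2=|\xi|^2+|\xi_\ast|^2$: whenever $\langle\xi_\ast'\rangle$ is small, $|\xi'|$ must be close to $|\xi|$, so that either the exponential $e^{-|\xi'|^2/4}$ (first bound) or the weight $\langle\xi'\rangle^{-\beta}$ (second bound) compensates the blow-up of the ratio.

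First I would fix a threshold $\theta\in(0,1)$ and decompose
\begin{equation*}
\mathbb{R}^3\times S^2 = A_\theta \cup A_\theta^c,\qquad A_\theta := \{(\xi_\ast,n)\colon \langle\xi_\ast'\rangle \geq \theta\langle\xi\rangle\}.
\end{equation*}
On $A_\theta$ the weight ratio is bounded by $\theta^{-\beta}$; choosing $\theta=e^{-1/\beta}$ makes $\theta^{-\beta}\leq e$ universal. The remaining contribution on $A_\theta$ then reduces to the standard Grad moment $\int\int|\xi-\xi_\ast|e^{-|\xi'|^2/4}\,dn\,d\xi_\ast \lesssim \nu(\xi)$ for the first estimate, respectively $\int\int|\xi-\xi_\ast|\langle\xi'\rangle^{-\beta}\langle\xi_\ast'\rangle^{-\beta}\,dn\,d\xi_\ast \lesssim \nu(\xi)$ for the second. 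Moreover, a Carleman-type parametrization of $n$ along the direction of $\xi-\xi_\ast$ shows that the ``bad'' angular subregion of $A_\theta$ where the integrand is not already absorbed by the Maxwellian (namely the one where $|\xi'|$ is small, i.e.\ $\langle\xi_\ast'\rangle$ is close to $\langle\xi\rangle$) is an angular shell of relative measure $O(1/\beta)$. This yields the first term $\tfrac{C}{\beta}\nu(\xi)$.

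On the complementary set $A_\theta^c$ the energy identity forces $|\xi'|^2 \geq (1-\theta^2)\langle\xi\rangle^2-1 \gtrsim \langle\xi\rangle^2$ for $|\xi|$ large. For the first estimate, $e^{-|\xi'|^2/4}\leq e^{-c\langle\xi\rangle^2}$, and pairing with $\langle\xi\rangle^\beta$ gives a global factor $\leq C_\beta(1+|\xi|)^{-2}$; multiplying by the Grad moment $\int|\xi-\xi_\ast|\mathcal{M}(\xi_\ast)\,d\xi_\ast\lesssim\nu(\xi)$ delivers the second term $C_\beta\,\nu(\xi)(1+|\xi|)^{-2}$. The second estimate is parallel: on $A_\theta^c$, $\langle\xi'\rangle^{-\beta}\leq C\theta^{-\beta}\langle\xi\rangle^{-\beta}$, so the $\langle\xi\rangle^\beta$ factor cancels, and the residual $C_\beta\int\int|\xi-\xi_\ast|\langle\xi_\ast'\rangle^{-\beta}\,dn\,d\xi_\ast$ is bounded, via the pre--post change of variables $(\xi_\ast,n)\mapsto(\xi',\xi_\ast')$ (unitary away from grazing), by $C_\beta\,\nu(\xi)(1+|\xi|)^{-2}$.

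The main obstacle is the sharp angular measure estimate inside $A_\theta$ that extracts the $1/\beta$ factor; this is the only step where collisional geometry genuinely enters, and it requires a careful Carleman-type parametrization of $S^2$ relative to the direction of $\xi-\xi_\ast$ together with the quantitative bound $\langle\xi_\ast'\rangle^2 \leq \langle\xi\rangle^2 + |\xi_\ast|^2 - |\xi'|^2$. Once this volume bound is in hand, the remaining work is routine Grad-type moment accounting combined with the pre--post symmetry of the hard-sphere cross section.
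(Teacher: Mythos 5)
The paper does not actually contain a proof of this lemma; it is cited wholesale from \cite[Lemma 2.1 and Lemma 6.2]{[Cao]}. The closest thing to a proof in the paper is the argument for the weighted analogue (Lemma~\ref{Weighted-Q}), and there one can see where the $C/\beta$ factor really comes from, which is \emph{not} a small angular measure as in your sketch. One passes to Carleman variables $\eta=\xi'-\xi$, $\omega=\xi_\ast'-\xi$ with $\eta\perp\omega$, integrates first over the two-dimensional slice $\{\omega:\omega\perp\eta\}$, and the $1/\beta$ drops out algebraically as a radial tail integral, $\int_1^\infty\zeta^{-(\beta-1)}\,d\zeta=\tfrac{1}{\beta-2}$. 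The remaining integral over $\eta$ is then controlled by $\nu(\xi)$ plus the $C_\beta\langle\xi\rangle^{-2}\nu(\xi)$ error. This is structurally quite different from, and cleaner than, trying to read off $1/\beta$ from the size of a spherical cap.

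There are two concrete problems with the way you propose to extract the $1/\beta$ and the error term. First, your characterization of the ``bad'' subregion of $A_\theta$ as ``$|\xi'|$ small, i.e.\ $\langle\xi_\ast'\rangle$ close to $\langle\xi\rangle$'' is only valid for bounded $\xi_\ast$: if $|\xi'|=O(1)$ while $|\xi_\ast|$ is large, the energy identity forces $\langle\xi_\ast'\rangle\approx\sqrt{\langle\xi\rangle^2+\langle\xi_\ast\rangle^2}\gg\langle\xi\rangle$, which is not ``close to $\langle\xi\rangle$''. So the $O(1/\beta)$ cap measure you invoke is only established near $\xi_\ast=0$; for large $\xi_\ast$ the Maxwellian (resp.\ $\langle\xi'\rangle^{-\beta}$) must simultaneously give $\xi_\ast$-convergence and the $1/\beta$ smallness, and your sketch neither isolates nor quantifies this; the grazing Jacobian in $(\xi_\ast,n)\mapsto(\xi',\xi_\ast')$ is singular precisely on the cap you want to measure. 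Second, for the second inequality on $A_\theta^c$ your pointwise bound $\langle\xi'\rangle^{-\beta}\leq C\theta^{-\beta}\langle\xi\rangle^{-\beta}$ with universal $C$ is wrong: the energy identity only gives $\langle\xi'\rangle\gtrsim\sqrt{1-\theta^2}\,\langle\xi\rangle\approx\langle\xi\rangle/\sqrt{\beta}$, so the constant is of size $\beta^{\beta/2}$, not universal. More seriously, the claimed bound
$\int\!\int|\xi-\xi_\ast|\,\langle\xi_\ast'\rangle^{-\beta}\,dn\,d\xi_\ast\lesssim \nu(\xi)(1+|\xi|)^{-2}$
is false: for $\xi_\ast$ near the origin, $\langle\xi_\ast'\rangle^2=1+|\xi|^2\cos^2\psi$ and a direct computation gives an $O(1)$ (in $|\xi|$) contribution, while the right-hand side is $O(|\xi|^{-1})$. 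The actual lemma compensates by letting this piece be absorbed by $\frac{C}{\beta}\nu(\xi)$ for large $|\xi|$, so the clean separation ``$A_\theta\mapsto C/\beta$, $A_\theta^c\mapsto C_\beta(1+|\xi|)^{-2}$'' your proposal asserts does not actually hold.
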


Using this lemma, one can get the estimates for the operators $\mathcal{K}$
and $Q$.

\begin{lemma}[{\protect \cite{[Cao]}}]
\label{Estimate-KQ}For any $\beta >4$, there exists a constant $C_{\beta }>0$
depending only on $\beta$ such that%
\begin{equation*}
\left \langle \xi \right \rangle ^{\beta }\left \vert \mathcal{K}f\right
\vert \leq \left( \frac{C}{\beta }\nu \left( \xi \right) +C_{\beta }\frac{%
\nu \left( \xi \right) }{\left( 1+\left \vert \xi \right \vert \right) ^{2}}%
\right) \left \vert f\right \vert _{L_{\xi ,\beta }^{\infty }}
\end{equation*}%
and
\begin{equation*}
\left \vert \left \langle \xi \right \rangle ^{\beta }Q\left( f,g\right)
\right \vert \leq C_{\beta }\nu \left( \xi \right) \left \vert f\right \vert
_{L_{\xi ,\beta }^{\infty }}\left \vert g\right \vert _{L_{\xi ,\beta
}^{\infty }}\hbox{.}
\end{equation*}
Moreover, for any $R>0$,%
\begin{equation*}
\chi _{\{ \left \vert \xi \right \vert \geq R\}}\left \langle \xi \right
\rangle ^{\beta }\left \vert \mathcal{K}f\right \vert \leq \left( \frac{C}{%
\beta }+\frac{C_{\beta }}{R^{2}}\right) \nu \left( \xi \right) \left \vert
f\right \vert _{L_{\xi ,\beta }^{\infty }}\,.
\end{equation*}
\end{lemma}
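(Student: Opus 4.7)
The plan is to reduce both inequalities to direct applications of the two integral estimates proved in the preceding lemma, by inserting the pointwise weighted bound $|h(\eta)| \leq w_\beta(\eta)^{-1}|h|_{L^\infty_{\xi,\beta}}$ into the definitions of $\mathcal{K}$ and $Q$.

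For the bound on $\mathcal{K}f$, I first split its defining integrand into the three natural summands $\mathcal{M}(\xi_{\ast}')f(\xi')$, $f(\xi_{\ast}')\mathcal{M}(\xi')$, and $f(\xi_{\ast})\mathcal{M}(\xi)$, multiply by $w_\beta(\xi)$, and treat each separately. The term involving $f(\xi_{\ast}')\mathcal{M}(\xi')$ is the cleanest: using $|f(\xi_{\ast}')|\leq w_\beta(\xi_{\ast}')^{-1}|f|_{L^\infty_{\xi,\beta}}$ together with $\mathcal{M}(\xi')\leq C e^{-|\xi'|^2/4}$ produces an integrand of exactly the form $\frac{w_\beta(\xi)}{w_\beta(\xi_{\ast}')}e^{-|\xi'|^2/4}$, to which the preceding lemma's first estimate applies directly. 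The $\mathcal{M}(\xi_{\ast}')f(\xi')$ term is reduced to the same shape via the standard change of variable (Carleman representation, or the $n$-involution swapping the roles of $\xi'$ and $\xi_{\ast}'$ in the collision integral) that exploits the symmetry of the Boltzmann integrand. The remaining summand $w_\beta(\xi)f(\xi_{\ast})\mathcal{M}(\xi)$ is trivial, since $w_\beta(\xi)\mathcal{M}(\xi)$ is globally bounded by a rapidly decaying Gaussian and $\int\int|q\cdot n|w_\beta(\xi_{\ast})^{-1}\,dn\,d\xi_{\ast}\lesssim \nu(\xi)$; this contribution is absorbed into $C_\beta\nu(\xi)(1+|\xi|)^{-2}|f|_{L^\infty_{\xi,\beta}}$.

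For $Q(f,g)$, bounding both arguments pointwise in terms of their $L^\infty_{\xi,\beta}$ norms yields, after multiplication by $w_\beta(\xi)$, the two integrals
\[
\int\int|q\cdot n|\frac{w_\beta(\xi)}{w_\beta(\xi_{\ast}')w_\beta(\xi')}\,dn\,d\xi_{\ast}\quad\text{and}\quad \int\int|q\cdot n|w_\beta(\xi_{\ast})^{-1}\,dn\,d\xi_{\ast}.
\]
The first is bounded by $(\tfrac{C}{\beta}+C_\beta/(1+|\xi|)^2)\nu(\xi)$ via the preceding lemma's second estimate, while the second is bounded directly by $C_\beta \nu(\xi)$ by elementary integration against the Maxwellian factor hidden in $w_\beta(\xi_{\ast})^{-1}$. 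Together these give the claimed $C_\beta\nu(\xi)|f|_{L^\infty_{\xi,\beta}}|g|_{L^\infty_{\xi,\beta}}$ bound.

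Finally, the cutoff version is immediate from the first inequality: on the set $\{|\xi|\geq R\}$ one has $(1+|\xi|)^{-2}\leq R^{-2}$, so the factor $C_\beta\nu(\xi)/(1+|\xi|)^2$ is bounded by $(C_\beta/R^2)\nu(\xi)$, producing the claimed $(C/\beta+C_\beta/R^2)\nu(\xi)|f|_{L^\infty_{\xi,\beta}}$ bound. The only modest obstacle throughout is picking the correct splitting or change of variable in $\mathcal{K}f$ so that each of its three summands matches one of the two integrands already bounded by the preceding lemma; everything else is a routine unpacking.
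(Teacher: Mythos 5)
The paper attributes this lemma to \cite{[Cao]} and gives no proof of its own, so the task is to check that your derivation from the preceding lemma is sound. It is, and it follows exactly the route the paper intends: multiply by the weight, insert $|h(\eta)|\le w_\beta(\eta)^{-1}|h|_{L^\infty_{\xi,\beta}}$ into each argument of $\mathcal{K}$ and $Q$, match the two gain-type integrands to the two integral estimates of the preceding lemma, dispose of the loss terms directly, and read off the cutoff version from $(1+|\xi|)^{-2}\le R^{-2}$ on $\{|\xi|\ge R\}$.

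Two minor wording corrections, neither affecting validity. First, the loss-term integral $\int\int |q\cdot n|\,w_\beta(\xi_*)^{-1}\,dn\,d\xi_*$ is controlled not because of any ``Maxwellian factor hidden in $w_\beta(\xi_*)^{-1}$'' (there is none; $w_\beta(\xi_*)^{-1}=\langle\xi_*\rangle^{-\beta}$ is only polynomial decay) but simply because $\int |\xi-\xi_*|\langle\xi_*\rangle^{-\beta}\,d\xi_*\lesssim\langle\xi\rangle\sim\nu(\xi)$ for $\beta>4$. Second, the swap you invoke to treat the $\mathcal{M}(\xi_*')f(\xi')$ term deserves one line of justification: in the $n$-parametrization with $q$ along the polar axis, the substitution $\theta\mapsto\pi/2-\theta$ exchanges $\xi'$ and $\xi_*'$ while leaving the hard-sphere measure $|q\cdot n|\,dn=|q|\cos\theta\sin\theta\,d\theta\,d\phi$ invariant, which reduces that term to the same integrand as the $f(\xi_*')\mathcal{M}(\xi')$ term. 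Also worth noting: for the $\mathcal{K}$ loss term you get $w_\beta(\xi)\mathcal{M}(\xi)\lesssim e^{-|\xi|^2/4}\lesssim(1+|\xi|)^{-2}$, so absorbing it into $C_\beta\nu(\xi)(1+|\xi|)^{-2}$ is correct, as you say. With those small precisions supplied, the proposal is a complete and correct proof.
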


In order to study the first equation of (\ref{decom-System}), we introduce
the damped transport operator $\mathbb{S}^{t}$, that is, $\mathbb{S}%
^{t}g_{0} $ is the solution of the equation:
\begin{equation*}
\partial _{t}g+\xi \cdot \nabla _{x}g+\nu \left( \xi \right) g=0\hbox{,}%
\quad g(0,x,\xi )=g_{0}\,.
\end{equation*}

\begin{lemma}
\label{Lemma-gain decay}Let $\beta >4$ and $M>0$. Assume that $U\left(
t,x,\xi \right) $ and $h(t,x,\xi )$ satisfy
\begin{equation*}
\left \vert \nu \left( \xi \right) ^{-1}U\left( t,x,\xi \right) \right \vert
_{L_{\xi ,\beta }^{\infty }}\leq Ae^{-\frac{t+\left \vert x\right \vert }{c}}
\end{equation*}%
and
\begin{equation*}
\left \vert h\left( t,x,\xi \right) \right \vert _{L_{\xi ,\beta }^{\infty
}}\leq Be^{-\frac{t+\left \vert x\right \vert }{c}}
\end{equation*}%
for some constants $A,c>0$ with $\frac{\nu _{0}}{2}>\frac{1}{c}$. If $g$
satisfies the integral equation
\begin{equation*}
g\left( t,x,\xi \right) =\int_{0}^{t}\mathbb{S}^{t-\tau }\left( \mathcal{K}%
_{s}h+U\right) \left( \tau ,x,\xi \right) d\tau \hbox{,}
\end{equation*}%
then
\begin{equation*}
\left \vert g(t,x,\xi )\right \vert _{L_{\xi ,\beta }^{\infty }}\leq
2(A+\eta B)e^{-\frac{t+\left \vert x\right \vert }{c}}\hbox{,}
\end{equation*}%
where
\begin{equation*}
\eta =\eta (\beta ,R)=\left( \frac{C}{\beta }+\frac{C_{\beta }}{R^{2}}%
\right) \,.
\end{equation*}
\end{lemma}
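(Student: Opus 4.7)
The plan is to substitute the explicit form of the damped transport operator into the integral representation for $g$, estimate the integrand pointwise in $(t,x,\xi)$, and then carry out the $\tau$-integration. Since $\mathbb{S}^{t-\tau}f(\tau,x,\xi)=e^{-\nu(\xi)(t-\tau)}f(\tau,x-\xi(t-\tau),\xi)$, the integral formula reads
\begin{equation*}
	g(t,x,\xi)=\int_0^t e^{-\nu(\xi)(t-\tau)}\bigl(\mathcal{K}_s h+U\bigr)(\tau,x-\xi(t-\tau),\xi)\,d\tau.
\end{equation*}
Multiplying by $\langle \xi\rangle^{\beta}$, the cutoff estimate from Lemma \ref{Estimate-KQ} applied to $\mathcal{K}_s h$ gives $\langle\xi\rangle^\beta|\mathcal{K}_s h(\tau,y,\xi)|\leq \eta \nu(\xi)|h(\tau,y)|_{L^\infty_{\xi,\beta}}\leq \eta B\nu(\xi)e^{-(\tau+|y|)/c}$, while the hypothesis on $U$ gives $\langle\xi\rangle^\beta|U(\tau,y,\xi)|\leq A\nu(\xi)e^{-(\tau+|y|)/c}$. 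Setting $y=x-\xi(t-\tau)$ reduces the problem to bounding
\begin{equation*}
	(A+\eta B)\,\nu(\xi)\int_{0}^{t} e^{-\nu(\xi)(t-\tau)-\frac{1}{c}(\tau+|x-\xi(t-\tau)|)}\,d\tau.
\end{equation*}

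The core step is to extract a factor $e^{-(t+|x|)/c}$ from the exponent. Using the triangle inequality $|x-\xi(t-\tau)|\geq |x|-|\xi|(t-\tau)$ and reorganizing, one sees that
\begin{equation*}
	\nu(\xi)(t-\tau)+\tfrac{1}{c}\bigl(\tau+|x-\xi(t-\tau)|\bigr)\geq \tfrac{1}{c}(t+|x|)+m(\xi)(t-\tau),
\end{equation*}
where $m(\xi):=\nu(\xi)-(|\xi|+1)/c$. Performing the $\tau$-integration then yields $\int_0^t e^{-m(\xi)(t-\tau)}\,d\tau\leq 1/m(\xi)$, which combines to give $\langle \xi\rangle^\beta|g(t,x,\xi)|\leq (A+\eta B)\bigl(\nu(\xi)/m(\xi)\bigr)e^{-(t+|x|)/c}$.

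The only point that needs care—and essentially the sole place where the hypothesis $\nu_0/2>1/c$ is used—is to show that $\nu(\xi)/m(\xi)$ is uniformly bounded in $\xi$ by a constant that can be absorbed into the stated factor $2$. Using $\nu(\xi)\geq \nu_0\langle\xi\rangle$ together with the elementary bound $|\xi|+1\leq\sqrt 2\langle\xi\rangle$, I would lower-bound $m(\xi)\geq \langle \xi\rangle(\nu_0-\sqrt 2/c)$ and upper-bound $\nu(\xi)\leq \nu_1\langle\xi\rangle$. The strict gap $\nu_0/2>1/c$ forces $\nu_0-\sqrt 2/c>0$, yielding the uniform bound $\nu(\xi)/m(\xi)\leq \nu_1/(\nu_0-\sqrt 2/c)$. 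Once this is established, the remaining steps—the explicit transport representation, the cutoff $\mathcal{K}_s$ bound, and the $\tau$-integration—are purely mechanical, so I do not anticipate any genuine obstacle beyond this quantitative bookkeeping.
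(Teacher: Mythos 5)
Your strategy — writing the damped transport explicitly, extracting $e^{-(t+|x|)/c}$ from the composite exponent via $|x-\xi(t-\tau)|\ge |x|-|\xi|(t-\tau)$, and integrating the residual exponential in $\tau$ — is the right idea and parallels the paper. The inequality you record, namely $\nu(\xi)(t-\tau)+\tfrac{1}{c}(\tau+|x-\xi(t-\tau)|)\ge \tfrac{1}{c}(t+|x|)+m(\xi)(t-\tau)$ with $m(\xi)=\nu(\xi)-(1+|\xi|)/c$, is correct, as is the positivity of $m(\xi)$ under $\nu_0/2>1/c$. The gap is the final constant. After the $\tau$-integration you are left with $\nu(\xi)/m(\xi)$. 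Your bound $\nu(\xi)/m(\xi)\le\nu_1/(\nu_0-\sqrt2/c)$ is correct but reintroduces $\nu_1$ and is not $\le 2$; even the sharper pointwise estimate $\nu(\xi)/m(\xi)=1/\bigl(1-\tfrac{1+|\xi|}{c\,\nu(\xi)}\bigr)\le 1/(1-\sqrt2/(c\nu_0))$ only yields $\le 2+\sqrt2$ in the worst case allowed by $c\nu_0>2$. So the remark that the ratio "can be absorbed into the stated factor $2$" is not justified: this route cannot produce the constant $2$ in the lemma's conclusion.

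The paper achieves $2$ by a small but decisive rearrangement: it does not spend the entire damping factor on the spatial decay. It splits $e^{-\nu(\xi)(t-\tau)}=e^{-\frac{\nu(\xi)}{2}(t-\tau)}\cdot e^{-\frac{\nu(\xi)}{2}(t-\tau)}$, uses one half to produce $e^{-\frac{\nu_0}{2}(t-\tau+|x-y|)}$ (with $y=x-\xi(t-\tau)$), which together with the source's $e^{-(\tau+|y|)/c}$ and $|x|\le|y|+|x-y|$ gives $e^{-(t+|x|)/c}$ under $\nu_0/2>1/c$, and keeps the other half intact to pair with the $\nu(\xi)$ from the bound on $\mathcal K_s h$ and $U$ in the clean identity $\int_0^t\nu(\xi)e^{-\frac{\nu(\xi)}{2}(t-\tau)}\,d\tau=2\bigl(1-e^{-\nu(\xi)t/2}\bigr)\le 2$, which is independent of $\nu_1$. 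Your version would establish the lemma with a larger universal constant — still adequate for the iteration in Theorem \ref{thm-main} after retuning $\beta$, $R$, $\varepsilon$ — but it does not prove the statement as written.
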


\proof%
Let $y=x-\left( t-\tau \right) \xi $, we have
\begin{eqnarray*}
\left \vert \left \langle \xi \right \rangle ^{\beta }g\left( t,x,\xi
\right) \right \vert &=&\left \vert \int_{0}^{t}e^{-\nu \left( \xi \right)
\left( t-\tau \right) }\left \langle \xi \right \rangle ^{\beta }\left[
\mathcal{K}_{s}h\left( \tau ,x-\xi \left( t-\tau \right) ,\xi \right)
+U\left( \tau ,x-\xi \left( t-\tau \right) ,\xi \right) \right] d\tau \right
\vert \\
&\leq &\int_{0}^{t}e^{-\frac{\nu \left( \xi \right) }{2}\left( t-\tau
\right) }\nu \left( \xi \right) \left( \nu \left( \xi \right) \right)
^{-1}e^{-\frac{\nu _{0}}{2}\left( t-\tau +\left \vert x-y\right \vert
\right) }\left \langle \xi \right \rangle ^{\beta }\left[ \left \vert
\mathcal{K}_{s}h\left( \tau ,y,\xi \right) \right \vert \right] d\tau \\
&&+\int_{0}^{t}e^{-\frac{\nu \left( \xi \right) }{2}\left( t-\tau \right)
}\nu \left( \xi \right) \left( \nu \left( \xi \right) \right) ^{-1}e^{-\frac{%
\nu _{0}}{2}\left( t-\tau +\left \vert x-y\right \vert \right) }\left
\langle \xi \right \rangle ^{\beta }\left \vert U\left( \tau ,y,\xi \right)
\right \vert d\tau \\
&\leq &\eta \int_{0}^{t}e^{-\frac{\nu \left( \xi \right) }{2}\left( t-\tau
\right) }\nu \left( \xi \right) \cdot \sup_{y\in \mathbb{R}^{3}}e^{-\frac{%
\nu _{0}}{2}\left( t-\tau +\left \vert x-y\right \vert \right) }\left \vert
h\left( \tau ,y,\xi \right) \right \vert _{L_{\xi ,\beta }^{\infty }}d\tau \\
&&+\int_{0}^{t}e^{-\frac{\nu \left( \xi \right) }{2}\left( t-\tau \right)
}\nu \left( \xi \right) \cdot \sup_{y\in \mathbb{R}^{3}}e^{-\frac{\nu _{0}}{2%
}\left( t-\tau +\left \vert x-y\right \vert \right) }\left \vert \nu
^{-1}(\xi )U\left( \tau ,y,\xi \right) \right \vert _{L_{\xi ,\beta
}^{\infty }}d\tau \\
&\leq &2(A+\eta B)e^{-\frac{t+\left \vert x\right \vert }{c}}\text{.}
\end{eqnarray*}%
The proof is completed.$%
\hfill%
\square $

\begin{lemma}
\label{Lemma-gain decay-nonmoving} Let $\alpha >0$ and $\rho >0$. Assume
that $V\left( t,x,\xi \right) $ satisfies
\begin{equation*}
\left \vert \nu \left( \xi \right) ^{-1}V\left( t,x,\xi \right) \right \vert
_{L_{\xi ,\beta }^{\infty }}\leq \left( 1+t\right) ^{-\alpha }\left( 1+\frac{%
\left \vert x\right \vert ^{2}}{1+t}\right) ^{-\rho }\,.
\end{equation*}%
If $g$ satisfies the integral equation
\begin{equation*}
g\left( t,x,\xi \right) =\int_{0}^{t}\mathbb{S}^{t-\tau }V\left( \tau ,x,\xi
\right) d\tau \hbox{,}
\end{equation*}%
then
\begin{equation*}
\left \vert g(t,x,\xi )\right \vert _{L_{\xi ,\beta }^{\infty }}\leq
C_{\alpha ,\rho }\left( 1+t\right) ^{-\alpha }\left( 1+\frac{\left \vert
x\right \vert ^{2}}{1+t}\right) ^{-\rho }
\end{equation*}%
for some constant $C_{\alpha ,\rho }>0$.
\end{lemma}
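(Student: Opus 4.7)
The plan is to write the solution operator explicitly via characteristics and reduce the statement to a scalar integral inequality. Since $(\mathbb{S}^{t-\tau} h)(x,\xi) = e^{-\nu(\xi)(t-\tau)} h(x - \xi(t-\tau),\xi)$, the Duhamel formula gives $g(t,x,\xi) = \int_0^t e^{-\nu(\xi)(t-\tau)} V(\tau, x - \xi(t-\tau),\xi)\,d\tau$. Inserting the hypothesis on $V$ after multiplying and dividing by $\nu(\xi)$ yields
\begin{equation*}
\langle \xi \rangle^\beta |g(t,x,\xi)| \leq \int_0^t \nu(\xi) e^{-\nu(\xi)(t-\tau)} (1+\tau)^{-\alpha}\Big(1+\frac{|x-\xi(t-\tau)|^2}{1+\tau}\Big)^{-\rho} d\tau.
\end{equation*}
The heuristic behind the estimate is that $\nu(\xi) e^{-\nu(\xi)(t-\tau)}$ acts as a mollifier of mass at most $1$ concentrated on the scale $t-\tau \lesssim 1/\nu(\xi) \leq 1/\nu_0$, on which $|\xi|(t-\tau) \lesssim |\xi|/\nu(\xi) \leq 1/\nu_0$; effectively $\tau \approx t$ and $x - \xi(t-\tau) \approx x$, which is exactly the target bound.

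To make this rigorous, I would keep half of the exponential with $\nu(\xi)$ to preserve the mollifier structure and establish the pointwise inequality
\begin{equation*}
e^{-\nu(\xi)s/2}(1+t-s)^{-\alpha}\Big(1+\frac{|x-\xi s|^2}{1+t-s}\Big)^{-\rho} \leq C_{\alpha,\rho}(1+t)^{-\alpha}\Big(1+\frac{|x|^2}{1+t}\Big)^{-\rho}
\end{equation*}
for every $s = t-\tau \in [0,t]$. Granting this, the leftover factor $\nu(\xi) e^{-\nu(\xi)s/2}$ integrates on $[0,t]$ to at most $2$, and the conclusion is immediate.

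The pointwise inequality is verified by a case split. For $s \leq t/2$ one has $(1+t-s)^{-\alpha} \leq 2^\alpha (1+t)^{-\alpha}$ directly, so it remains to compare the spatial weights. If $|\xi|s \leq |x|/2$ then $|x-\xi s| \geq |x|/2$, and combining with $1+t-s \leq 1+t$ gives the comparison up to a factor $4^\rho$; if $|\xi|s > |x|/2$ then $\nu(\xi) s \geq \nu_0 |\xi|s \geq \nu_0|x|/2$, so the factor $e^{-\nu(\xi)s/2} \leq e^{-\nu_0|x|/4}$ dominates the polynomial $(1+|x|^2/(1+t))^\rho \leq (1+|x|^2)^\rho$. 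For $s \geq t/2$ one uses $e^{-\nu(\xi)s/2} \leq e^{-\nu_0 t/4}$ to absorb $(1+t)^\alpha(1+t-s)^{-\alpha} \leq (1+t)^\alpha$, and the same dichotomy $|\xi|s \lessgtr |x|/2$ then controls the spatial ratio as before. The main bookkeeping obstacle, and the only real subtlety, is to keep the three distinct uses of the exponential straight: as a mollifier paired with $\nu(\xi)$, as polynomial decay in $|x|$ through $\nu(\xi)s \geq \nu_0|\xi|s$, and as polynomial decay in $t$ through $\nu(\xi) \geq \nu_0$; once these are orchestrated the result follows with a constant depending only on $\alpha$, $\rho$, and $\nu_0$.
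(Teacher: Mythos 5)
Your proof is correct and follows essentially the same approach as the paper: write $\mathbb{S}^{t-\tau}$ explicitly via characteristics, split $e^{-\nu(\xi)(t-\tau)}$ so that one piece pairs with $\nu(\xi)$ as an integrable kernel while the remainder absorbs the spatial and temporal weights, and dichotomize on the size of the displacement $|\xi|(t-\tau)$ relative to $|x|$. The paper organizes the bookkeeping slightly differently (taking a supremum over $y$ and leaving the $(1+\tau)^{-\alpha}$ factor to the final $\tau$-integral instead of a pointwise bound, and splitting on $|y|\lessgtr|x|/2$ rather than $|x-y|\lessgtr|x|/2$), but these are cosmetic variants of the same argument, including the need to apportion the exponential among its several roles which you correctly flag.
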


\begin{proof}
The proof is similar to Lemma \ref{Lemma-gain decay}. It suffices to verify
that
\begin{eqnarray*}
&&\int_{0}^{t}e^{-\frac{\nu \left( \xi \right) }{2}\left( t-\tau \right)
}\nu \left( \xi \right) \sup_{y\in \mathbb{R}^{3}}e^{-\frac{\nu _{0}}{2}%
\left( t-\tau +\left \vert x-y\right \vert \right) }\left( 1+\tau \right)
^{-\alpha }\left( 1+\frac{\left \vert y\right \vert ^{2}}{1+\tau }\right)
^{-\rho }d\tau \\
&\leq &C_{\alpha ,\rho }\left( 1+t\right) ^{-\alpha }\left( 1+\frac{\left
\vert x\right \vert ^{2}}{1+t}\right) ^{-\rho }\,.
\end{eqnarray*}

Let $0\leq \tau \leq t$. If $\left \vert y\right \vert >\left \vert
x\right
\vert /2$, then
\begin{equation*}
\left( 1+\frac{\left \vert y\right \vert ^{2}}{1+\tau }\right) ^{-\rho }\leq
\left( 1+\frac{\left \vert x\right \vert ^{2}}{4\left( 1+t\right) }\right)
^{-\rho }\leq 4^{\rho }\left( 1+\frac{\left \vert x\right \vert ^{2}}{1+t}%
\right) ^{-\rho }\hbox{,}
\end{equation*}%
and thus%
\begin{equation*}
e^{-\frac{\nu _{0}}{2}\left( t-\tau +\left \vert x-y\right \vert \right)
}\left( 1+\tau \right) ^{-\alpha }\left( 1+\frac{\left \vert y\right \vert
^{2}}{1+\tau }\right) ^{-\rho }\leq 4^{\rho }\left( 1+\tau \right) ^{-\alpha
}\left( 1+\frac{\left \vert x\right \vert ^{2}}{1+t}\right) ^{-\rho }\hbox{.}
\end{equation*}%
If $\left \vert y\right \vert \leq \left \vert x\right \vert /2$, then%
\begin{equation*}
e^{-\frac{\nu _{0}}{2}\left \vert x-y\right \vert }\leq e^{-\frac{\nu
_{0}\left \vert x\right \vert }{4}}\leq C_{\rho }\left( 1+\left \vert
x\right \vert ^{2}\right) ^{-\rho }\leq C_{\rho }\left( 1+\frac{\left \vert
x\right \vert ^{2}}{1+t}\right) ^{-\rho }\hbox{,}
\end{equation*}%
and so%
\begin{equation*}
e^{-\frac{\nu _{0}}{2}\left( t-\tau +\left \vert x-y\right \vert \right)
}\left( 1+\tau \right) ^{-\alpha }\left( 1+\frac{\left \vert y\right \vert
^{2}}{1+\tau }\right) ^{-\rho }\leq C_{\rho }\left( 1+\tau \right) ^{-\alpha
}\left( 1+\frac{\left \vert x\right \vert ^{2}}{1+t}\right) ^{-\rho }\hbox{.}
\end{equation*}%
Therefore,%
\begin{eqnarray*}
&&\int_{0}^{t}e^{-\frac{\nu \left( \xi \right) }{2}\left( t-\tau \right)
}\nu \left( \xi \right) \sup_{y\in \mathbb{R}^{3}}e^{-\frac{\nu _{0}}{2}%
\left( t-\tau +\left \vert x-y\right \vert \right) }\left( 1+\tau \right)
^{-\alpha }\left( 1+\frac{\left \vert y\right \vert ^{2}}{1+\tau }\right)
^{-\rho }d\tau \\
&\leq &\left( C_{\rho }+4^{\rho }\right) \left( 1+\frac{\left \vert x\right
\vert ^{2}}{1+t}\right) ^{-\rho }\int_{0}^{t}e^{-\frac{\nu \left( \xi
\right) }{2}\left( t-\tau \right) }\nu \left( \xi \right) \left( 1+\tau
\right) ^{-\alpha }d\tau \\
&\leq &C_{\alpha }\left( C_{\rho }+4^{\rho }\right) \left( 1+t\right)
^{-\alpha }\left( 1+\frac{\left \vert x\right \vert ^{2}}{1+t}\right)
^{-\rho }\hbox{.}
\end{eqnarray*}
\end{proof}

\begin{lemma}
\label{Lemma-gain decay-moving} Let $\alpha >0$ and $\rho >0$. Assume that $%
W\left( t,x,\xi \right) $ satisfies
\begin{equation*}
\left \vert \nu \left( \xi \right) ^{-1}W\left( t,x,\xi \right) \right \vert
_{L_{\xi ,\beta }^{\infty }}\leq \left( 1+t\right) ^{-\alpha }\left( 1+\frac{%
\left( \mathbf{c}t-\left \vert x\right \vert \right) ^{2}}{1+t}\right)
^{-\rho }\,.
\end{equation*}%
If $g$ satisfies the integral equation
\begin{equation*}
g\left( t,x,\xi \right) =\int_{0}^{t}\mathbb{S}^{t-\tau }W\left( \tau ,x,\xi
\right) d\tau \hbox{,}
\end{equation*}%
then
\begin{equation*}
\left \vert g(t,x,\xi )\right \vert _{L_{\xi ,\beta }^{\infty }}\leq
C_{\alpha ,\rho }\left( 1+t\right) ^{-\alpha }\left( 1+\frac{\left( \mathbf{c%
}t-\left \vert x\right \vert \right) ^{2}}{1+t}\right) ^{-\rho }
\end{equation*}%
for some constant $C_{\alpha ,\rho }>0$.
\end{lemma}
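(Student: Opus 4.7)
The plan is to mimic the proof of Lemma \ref{Lemma-gain decay-nonmoving}, replacing the stationary heat-kernel weight $(1+|x|^2/(1+t))^{-\rho}$ with the Huygens-type moving wave-cone weight $(1+(\mathbf{c}t-|x|)^2/(1+t))^{-\rho}$. First I would apply Duhamel along characteristics and, setting $y=x-\xi(t-\tau)$, split $e^{-\nu(\xi)(t-\tau)}$ into two halves: one half is kept as $e^{-\nu(\xi)(t-\tau)/2}$, while the other, using $\nu(\xi)\geq \nu_0(1+|\xi|)$ together with $|x-y|=|\xi|(t-\tau)$, becomes $e^{-\nu_0((t-\tau)+|x-y|)/2}$. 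Inserting the hypothesis on $W$ and taking $\sup$ over $y\in\R^3$ reduces the problem to proving
\[
\int_0^t e^{-\nu(\xi)(t-\tau)/2}\nu(\xi)\sup_{y\in\R^3}\Bigl[e^{-\nu_0((t-\tau)+|x-y|)/2}(1+\tau)^{-\alpha}\Bigl(1+\tfrac{(\mathbf{c}\tau-|y|)^2}{1+\tau}\Bigr)^{-\rho}\Bigr]d\tau \leq C_{\alpha,\rho}(1+t)^{-\alpha}\Bigl(1+\tfrac{(\mathbf{c}t-|x|)^2}{1+t}\Bigr)^{-\rho}.
\]

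The heart of the argument is pulling the Huygens weight at $(x,t)$ out of $\sup_y$. I would partition the $y$-variable into two regimes. \emph{Case A:} $|\mathbf{c}\tau-|y||\geq \tfrac{1}{2}|\mathbf{c}t-|x||$. Since $1+\tau\leq 1+t$, an elementary manipulation yields $1+\frac{(\mathbf{c}\tau-|y|)^2}{1+\tau}\geq \tfrac{1}{8}\bigl(1+\frac{(\mathbf{c}t-|x|)^2}{1+t}\bigr)$, so the weight at $(y,\tau)$ dominates the target weight at $(x,t)$ up to a constant and the exponential factor can simply be discarded. \emph{Case B:} $|\mathbf{c}\tau-|y||<\tfrac{1}{2}|\mathbf{c}t-|x||$. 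Two applications of the triangle inequality,
\[
|\mathbf{c}t-|x||\leq \mathbf{c}(t-\tau)+|\mathbf{c}\tau-|y||+||y|-|x||\leq \mathbf{c}(t-\tau)+\tfrac{1}{2}|\mathbf{c}t-|x||+|x-y|,
\]
give $(t-\tau)+|x-y|\gtrsim |\mathbf{c}t-|x||$, and the spatial exponential decay then supplies
\[
e^{-\nu_0((t-\tau)+|x-y|)/2}\lesssim e^{-c|\mathbf{c}t-|x||}\leq C_\rho \bigl(1+(\mathbf{c}t-|x|)^2\bigr)^{-\rho}\leq C_\rho\Bigl(1+\tfrac{(\mathbf{c}t-|x|)^2}{1+t}\Bigr)^{-\rho},
\]
where the last step uses $1+t\geq 1$. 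This triangle-inequality step is the only place where the moving center $\mathbf{c}t$ behaves differently from the stationary center $0$ treated in Lemma \ref{Lemma-gain decay-nonmoving}, and it is where I expect the only (mild) obstacle.

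Once the Huygens weight has been extracted from the supremum, what remains is
\[
\int_0^t e^{-\nu(\xi)(t-\tau)/2}\nu(\xi)(1+\tau)^{-\alpha}d\tau\leq C_\alpha(1+t)^{-\alpha},
\]
exactly the estimate used at the end of the proof of Lemma \ref{Lemma-gain decay-nonmoving}. Multiplying the two pieces together produces the claimed bound, with $C_{\alpha,\rho}$ independent of $\xi$ and depending only on $\alpha,\rho$ and the fixed sound speed $\mathbf{c}=\sqrt{5/3}$.
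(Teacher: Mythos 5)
Your proposal is correct, and it takes a genuinely different route from the paper's proof of the key integral estimate
\[
\int_{0}^{t}e^{-\frac{\nu(\xi)}{2}(t-\tau)}\nu(\xi)\sup_{y}e^{-\frac{\nu_{0}}{2}(t-\tau+|x-y|)}(1+\tau)^{-\alpha}\Bigl(1+\tfrac{(|y|-\mathbf{c}\tau)^{2}}{1+\tau}\Bigr)^{-\rho}d\tau\lesssim(1+t)^{-\alpha}\Bigl(1+\tfrac{(\mathbf{c}t-|x|)^{2}}{1+t}\Bigr)^{-\rho}.
\]
The paper first splits on the sign of $\mathbf{c}t-|x|$. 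When $|x|\geq\mathbf{c}t$ it splits $y$ as you do, and in the small-$||y|-\mathbf{c}\tau|$ subcase it exploits the sign constraint $\mathbf{c}\tau\leq\mathbf{c}t\leq|x|$ to deduce $||x|-\mathbf{c}\tau|\geq|x|-\mathbf{c}t$ and hence $|x-y|\gtrsim|x|-\mathbf{c}t$. When $|x|\leq\mathbf{c}t$ it instead splits the $\tau$-integral at $\tau=\frac{t}{2}+\frac{|x|}{2\mathbf{c}}$, using the temporal factor $e^{-\nu_{0}(t-\tau)}$ for small $\tau$ and then splitting $y$ again for large $\tau$, so three sub-cases in all. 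Your two-case decomposition on whether $|\mathbf{c}\tau-|y||$ is comparable to $|\mathbf{c}t-|x||$ unifies all of this: in the non-comparable regime your two-step triangle inequality $|\mathbf{c}t-|x||\leq\mathbf{c}(t-\tau)+|\mathbf{c}\tau-|y||+|x-y|$ yields $(t-\tau)+|x-y|\geq\frac{1}{2\mathbf{c}}|\mathbf{c}t-|x||$ with no sign bookkeeping, automatically absorbing both the time-decay mechanism (when $\tau$ is far from $t$) and the space-decay mechanism (when $y$ is far from $x$). This is slicker and more symmetric, at the cost of slightly less explicit intermediate bounds; the paper's version is longer but shows concretely which factor ($e^{-\nu_{0}(t-\tau)}$ vs.\ $e^{-\nu_{0}|x-y|}$) is doing the work in each region. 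Both are valid, and your final reduction to $\int_{0}^{t}e^{-\nu(\xi)(t-\tau)/2}\nu(\xi)(1+\tau)^{-\alpha}d\tau\lesssim(1+t)^{-\alpha}$ matches the paper's closing step.
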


\begin{proof}
The proof is similar to Lemma \ref{Lemma-gain decay} and it suffices to
verify that
\begin{eqnarray*}
&&\int_{0}^{t}e^{-\frac{\nu \left( \xi \right) }{2}\left( t-\tau \right)
}\nu \left( \xi \right) \sup_{y\in \mathbb{R}^{3}}e^{-\frac{\nu _{0}}{2}%
\left( t-\tau +\left \vert x-y\right \vert \right) }\left( 1+\tau \right)
^{-\alpha }\left( 1+\frac{\left( \left \vert y\right \vert -\mathbf{c}\tau
\right) ^{2}}{1+\tau }\right) ^{-\rho }d\tau \\
&\leq &C_{\alpha ,\rho }\left( 1+t\right) ^{-\alpha }\left( 1+\frac{\left(
\mathbf{c}t-\left \vert x\right \vert \right) ^{2}}{1+t}\right) ^{-\rho }
\end{eqnarray*}

We consider two cases $\left \vert x\right \vert \geq \mathbf{c}t$ and $%
\left \vert x\right \vert \leq \mathbf{c}t$. \newline
\newline
\textbf{Case 1:} $\left \vert x\right \vert \geq \mathbf{c}t$. We split $%
\mathbb{R}^{3}$ into two parts
\begin{equation*}
\{y\in \mathbb{R}^{3}:\left \vert \left \vert y\right \vert -\mathbf{c}\tau
\right \vert >\frac{\left \vert x\right \vert -\mathbf{c}t}{2}\} \hbox{ and }%
\{y\in \mathbb{R}^{3}:\left \vert \left \vert y\right \vert -\mathbf{c}\tau
\right \vert \leq \frac{\left \vert x\right \vert -\mathbf{c}t}{2}\} \hbox{.}
\end{equation*}%
If $\left \vert \left \vert y\right \vert -\mathbf{c}\tau \right \vert \leq
\frac{\left \vert x\right \vert -\mathbf{c}t}{2}$, then
\begin{eqnarray*}
\left \vert x-y\right \vert &\geq &\left \vert \left \vert x\right \vert
-\left \vert y\right \vert \right \vert =\left \vert \left \vert x\right
\vert -\mathbf{c}\tau +\mathbf{c}\tau -\left \vert y\right \vert \right
\vert \geq \left \vert \left \vert x\right \vert -\mathbf{c}\tau \right
\vert -\left \vert \left \vert y\right \vert -\mathbf{c}\tau \right \vert \\
&\geq &\left \vert x\right \vert -\mathbf{c}t-\frac{\left \vert x\right
\vert -\mathbf{c}t}{2}=\frac{\left \vert x\right \vert -\mathbf{c}t}{2}%
\hbox{,}
\end{eqnarray*}%
and thus%
\begin{equation*}
e^{-\frac{\nu _{0}}{2}\left \vert x-y\right \vert }\leq e^{-\frac{\nu
_{0}\left \vert \left \vert x\right \vert -\mathbf{c}t\right \vert }{4}}\leq
C_{\rho }\left( 1+\left( \left \vert x\right \vert -\mathbf{c}t\right)
^{2}\right) ^{-\rho }\leq C_{\rho }\left( 1+\frac{\left( \left \vert x\right
\vert -\mathbf{c}t\right) ^{2}}{1+t}\right) ^{-\rho }\hbox{.}
\end{equation*}%
If $\left \vert \left \vert y\right \vert -\mathbf{c}\tau \right \vert >%
\frac{\left \vert x\right \vert -\mathbf{c}t}{2}$, then
\begin{equation*}
\left( 1+\frac{\left( \left \vert y\right \vert -\mathbf{c}\tau \right) ^{2}%
}{1+\tau }\right) ^{-\rho }\leq \left( 1+\frac{\left( \left \vert x\right
\vert -\mathbf{c}t\right) ^{2}}{4\left( 1+t\right) }\right) ^{-\rho }\leq
4^{\rho }\left( 1+\frac{\left( \left \vert x\right \vert -\mathbf{c}t\right)
^{2}}{1+t}\right) ^{-\rho }\hbox{.}
\end{equation*}%
Therefore,%
\begin{eqnarray*}
&&\int_{0}^{t}e^{-\frac{\nu \left( \xi \right) }{2}\left( t-\tau \right)
}\nu \left( \xi \right) \sup_{y\in \mathbb{R}^{3}}e^{-\frac{\nu _{0}}{2}%
\left( t-\tau +\left \vert x-y\right \vert \right) }\left( 1+\tau \right)
^{-\alpha }\left( 1+\frac{\left( \left \vert y\right \vert -\mathbf{c}\tau
\right) ^{2}}{\left( 1+\tau \right) }\right) ^{-\rho }d\tau \\
&\leq &\left( C_{\rho }+4^{\rho }\right) \left( 1+\frac{\left( \left \vert
x\right \vert -\mathbf{c}t\right) ^{2}}{1+t}\right) ^{-\rho }\int_{0}^{t}e^{-%
\frac{\nu \left( \xi \right) }{2}\left( t-\tau \right) }\nu \left( \xi
\right) \left( 1+\tau \right) ^{-\alpha }d\tau \\
&\leq &C_{\alpha }\left( C_{\rho }+4^{\rho }\right) \left( 1+t\right)
^{-\alpha }\left( 1+\frac{\left( \left \vert x\right \vert -\mathbf{c}%
t\right) ^{2}}{1+t}\right) ^{-\rho }\hbox{.}
\end{eqnarray*}
\newline
\textbf{Case 2:} $\left \vert x\right \vert \leq \mathbf{c}t$. If $0\leq
\tau \leq \frac{t}{2}+\frac{\left \vert x\right \vert }{2\mathbf{c}}$, then
\begin{equation*}
t-\tau \geq t-\left( \frac{t}{2}+\frac{\left \vert x\right \vert }{2\mathbf{c%
}}\right) =\frac{\mathbf{c}t-\left \vert x\right \vert }{2\mathbf{c}}\geq 0%
\hbox{,}
\end{equation*}%
and thus%
\begin{equation*}
e^{-\nu _{0}\left( t-\tau \right) }\leq e^{-\frac{\nu _{0}}{2\mathbf{c}}%
\left( \mathbf{c}t-\left \vert x\right \vert \right) }\leq C_{\rho }\left(
1+\left( \mathbf{c}t-\left \vert x\right \vert \right) ^{2}\right) ^{-\rho
}\leq C_{\rho }\left( 1+\frac{\left( \mathbf{c}t-\left \vert x\right \vert
\right) ^{2}}{1+t}\right) ^{-\rho }\hbox{.}
\end{equation*}%
It implies that
\begin{eqnarray*}
&&\int_{0}^{\frac{t}{2}+\frac{\left \vert x\right \vert }{2\mathbf{c}}}e^{-%
\frac{\nu \left( \xi \right) }{2}\left( t-\tau \right) }\nu \left( \xi
\right) \sup_{y\in \mathbb{R}^{3}}e^{-\frac{\nu _{0}}{2}\left( t-\tau +\left
\vert x-y\right \vert \right) }\left( 1+\tau \right) ^{-\alpha }\left( 1+%
\frac{\left( \left \vert y\right \vert -\mathbf{c}\tau \right) ^{2}}{1+\tau }%
\right) ^{-\rho }d\tau \\
&\leq &C_{\rho }\left( 1+\frac{\left( \mathbf{c}t-\left \vert x\right \vert
\right) ^{2}}{1+t}\right) ^{-\rho }\int_{0}^{t}e^{-\frac{\nu \left( \xi
\right) }{2}\left( t-\tau \right) }\nu \left( \xi \right) \left( 1+\tau
\right) ^{-\alpha }d\tau \\
&\leq &C_{\alpha }C_{\rho }\left( 1+\tau \right) ^{-\alpha }\left( 1+\frac{%
\left( \mathbf{c}t-\left \vert x\right \vert \right) ^{2}}{1+t}\right)
^{-\rho }\hbox{.}
\end{eqnarray*}%
If $\frac{t}{2}+\frac{\left \vert x\right \vert }{2\mathbf{c}}\leq \tau \leq
t$, $\left \vert y\right \vert \leq \frac{\mathbf{c}\tau +\left \vert
x\right \vert }{2}$, then
\begin{equation*}
\mathbf{c}\tau -\left \vert y\right \vert \geq \frac{\mathbf{c}\tau -\left
\vert x\right \vert }{2}\geq \frac{\mathbf{c}t-\left \vert x\right \vert }{4}%
\geq 0\hbox{,}
\end{equation*}%
and so
\begin{equation*}
\left( 1+\frac{\left( \left \vert y\right \vert -\mathbf{c}\tau \right) ^{2}%
}{1+\tau }\right) ^{-\rho }\leq \left( 1+\frac{\left( \mathbf{c}t-\left
\vert x\right \vert \right) ^{2}}{16\left( 1+t\right) }\right) ^{-\rho }\leq
16^{\rho }\left( 1+\frac{\left( \mathbf{c}t-\left \vert x\right \vert
\right) ^{2}}{1+t}\right) ^{-\rho }\hbox{.}
\end{equation*}%
If $\frac{t}{2}+\frac{\left \vert x\right \vert }{2\mathbf{c}}\leq \tau \leq
t$, $\left \vert y\right \vert >\frac{\mathbf{c}\tau +\left \vert
x\right
\vert }{2}$, then%
\begin{equation*}
\left \vert x-y\right \vert \geq \left \vert y\right \vert -\left \vert
x\right \vert \geq \frac{\mathbf{c}\tau -\left \vert x\right \vert }{2}\geq
\frac{\mathbf{c}t-\left \vert x\right \vert }{4}\hbox{,}
\end{equation*}%
and thus%
\begin{equation*}
e^{-\frac{\nu _{0}}{2}\left \vert x-y\right \vert }\leq e^{-\frac{\nu _{0}}{8%
}\left( \mathbf{c}t-\left \vert x\right \vert \right) }\leq C_{\rho }\left(
1+\left( \mathbf{c}t-\left \vert x\right \vert \right) ^{2}\right) ^{-\rho
}\leq C_{\rho }\left( 1+\frac{\left( \mathbf{c}t-\left \vert x\right \vert
\right) ^{2}}{1+t}\right) ^{-\rho }\text{.}
\end{equation*}%
Consequently,%
\begin{eqnarray*}
&&\int_{\frac{t}{2}+\frac{\left \vert x\right \vert }{2\mathbf{c}}}^{t}e^{-%
\frac{\nu \left( \xi \right) }{2}\left( t-\tau \right) }\nu \left( \xi
\right) \sup_{y\in \mathbb{R}^{3}}e^{-\frac{\nu _{0}}{2}\left( t-\tau +\left
\vert x-y\right \vert \right) }\left( 1+\tau \right) ^{-\alpha }\left( 1+%
\frac{\left( \left \vert y\right \vert -\mathbf{c}\tau \right) ^{2}}{\left(
1+\tau \right) }\right) ^{-\rho }d\tau \\
&\leq &\left( 16^{\rho }+C_{\rho }\right) \left( 1+\frac{\left( \mathbf{c}%
t-\left \vert x\right \vert \right) ^{2}}{1+t}\right) ^{-\rho
}\int_{0}^{t}e^{-\frac{\nu \left( \xi \right) }{2}\left( t-\tau \right) }\nu
\left( \xi \right) \left( 1+\tau \right) ^{-\alpha }d\tau \\
&\leq &C_{\alpha }\left( 16^{\rho }+C_{\rho }\right) \left( 1+t\right)
^{-\alpha }\left( 1+\frac{\left( \mathbf{c}t-\left \vert x\right \vert
\right) ^{2}}{1+t}\right) ^{-\rho }\hbox{.}
\end{eqnarray*}

Combining all the discussion, there exists a constant $C_{\alpha ,\rho }>0$
such that%
\begin{eqnarray*}
&&\int_{0}^{t}e^{-\frac{\nu \left( \xi \right) }{2}\left( t-\tau \right)
}\nu \left( \xi \right) \sup_{y\in \mathbb{R}^{3}}e^{-\frac{\nu _{0}}{2}%
\left( t-\tau +\left \vert x-y\right \vert \right) }\left( 1+\tau \right)
^{-\alpha }\left( 1+\frac{\left( \left \vert y\right \vert -\mathbf{c}\tau
\right) ^{2}}{1+\tau }\right) ^{-\rho }d\tau \\
&\leq &C_{\alpha ,\rho }\left( 1+t\right) ^{-\alpha }\left( 1+\frac{\left(
\mathbf{c}t-\left \vert x\right \vert \right) ^{2}}{1+t}\right) ^{-\rho }%
\hbox{,}
\end{eqnarray*}%
as desired.
\end{proof}

For the second equation of decomposition (\ref{decom-System}), we list some
basic properties of the linearized Boltzmann collision operator $L$ and
nonlinear operator $\Gamma $ as below.

It is well known that the null space of \ $L$
\begin{equation*}
\ker \left( L\right) =\text{span }\{ \chi_{0},\chi_{1}, \chi_{2}, \chi_{3},
\chi_{4}\} \text{,}
\end{equation*}%
is a five-dimensional vector space, where
\begin{equation*}
\chi_{0}=\mathcal{M}^{1/2},\text{ }\chi_{i}=\xi _{i}\mathcal{M}^{1/2},\text{
}\chi_{4}=\frac{1}{\sqrt{6}}\left( \left \vert \xi \right \vert
^{2}-3\right) \mathcal{M}^{1/2} \text{, \ }i=1\text{, }2\text{, }3\text{.}
\end{equation*}
Let $\mathrm{P}_{0}$ be the orthogonal projection with respect to the $%
L_{\xi }^{2}$ inner product onto $\mathrm{\ker }(L)$, and $\mathrm{P}%
_{1}\equiv \mathrm{Id}-\mathrm{P}_{0}$. That is, for any $g\in L^{2}_{\xi}$,
\begin{equation*}
\mathrm{P}_{0}g=\sum_{i=0}^{4}\left<\chi_{i}, g\right>_{\xi}\chi_{i}\,,
\quad \mathrm{P}_{1}g=g-\mathrm{P}_{0}g\,.
\end{equation*}
The solution of the wave propagation is connected to the operator $\mathrm{P}%
_{0}(\xi \cdot \om)\mathrm{P}_0$ for $\om \in \mathbb{S}^{2}$
\begin{equation}  \label{speed}
\left \{%
\begin{array}{l}
\mathrm{P}_{0}\xi \cdot \om E_{j}=\la_{j}E_{j}\,, \\[2mm]
\la_{0}=\mathbf{c}\,,\quad \la_{1}=-\mathbf{c}\,,\quad \la_{2}=\la_{3}=\la%
_{4}=0\,, \mathbf{c}=\sqrt{\frac{5}{3}} \,, \\[2mm]
E_{0}=\sqrt{\frac{3}{10}}\chi_{0}+\sqrt{\frac{1}{2}}\om \cdot \overline{\chi}%
+\sqrt{\frac{1}{5}}\chi_{4}\,, \\[2mm]
E_{1}=\sqrt{\frac{3}{10}}\chi_{0}-\sqrt{\frac{1}{2}}\om \cdot \overline{\chi}%
+\sqrt{\frac{1}{5}}\chi_{4}\,, \\[2mm]
E_{2}= -\sqrt{\frac{2}{5}}\chi_{0}+\sqrt{\frac{3}{5}}\chi_{4}\,, \\[2mm]
E_{3}= \om_{1}\cdot \overline{\chi}\,, \\[2mm]
E_{4}= \om_{2}\cdot \overline{\chi}\,.%
\end{array}
\right.
\end{equation}
where $\overline{\chi}=(\chi_{1},\chi_{2},\chi_{3})$, and $\{ \om_{1},\om%
_{2}, \om \}$ is an orthonormal basis of ${\mathbb{R}}^{3}$.


\begin{lemma}[{\protect \cite{[Liu-Yu2]}}]
\label{pro1} The collision operator $L$ consists of a multiplicative
operator $\nu (\xi )$ and an integral operator $K$, namely,
\begin{equation*}
Lf=-\nu (\xi )f+Kf\,,
\end{equation*}
where $\nu (\xi )\sim (1+|\xi |)$ and
\begin{equation*}
\left \vert Kg\right \vert _{L_{\xi ,\eta+1 }^{\infty }}\leq C\left \vert
g\right \vert _{L_{\xi ,\eta }^{\infty }}\text{, }\eta \in \mathbb{R}\text{,}
\end{equation*}%
for some universal constant $C>0$.

The nonlinear operator $\Gamma $ has the following estimate
\begin{equation*}
\left \vert \nu ^{-1}(\xi )\Gamma (g,h)\right \vert _{L_{\xi ,\eta }^{\infty
}}\leq C\left \vert g\right \vert _{L_{\xi ,\eta }^{\infty }}\left \vert
h\right \vert _{L_{\xi ,\eta }^{\infty }}\text{, }\eta \in \mathbb{R}\text{,}
\end{equation*}%
for some universal constant $C>0$.
\end{lemma}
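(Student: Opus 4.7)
The plan is to follow the classical Grad decomposition of the linearized Boltzmann operator. First, I expand
\[
Lg = \frac{1}{\sqrt{\mathcal{M}}}\bigl[Q(\mathcal{M}, \sqrt{\mathcal{M}} g) + Q(\sqrt{\mathcal{M}} g, \mathcal{M})\bigr]
\]
using the gain--loss splitting of $Q$. The two loss pieces combine into the multiplicative operator $-\nu(\xi) g$ with $\nu(\xi) = \int_{\R^3}\int_{\mathbb{S}^2}|q\cdot n|\mathcal{M}(\xi_*)\,dn\,d\xi_*$, and the asymptotics $\nu(\xi)\sim 1+|\xi|$ follow from a direct computation of this Gaussian-weighted integral of $|q|$ (already recorded in the preliminary discussion of $\nu$ above). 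The remaining two gain pieces assemble into an integral operator $K$ with an explicit Carleman representation.

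To establish the regularizing estimate for $K$, I use $Kg(\xi) = \int_{\R^3} k(\xi, \xi_*)\,g(\xi_*)\,d\xi_*$ together with the classical Grad kernel bound
\[
|k(\xi, \xi_*)| \lesssim \frac{1}{|\xi - \xi_*|} \exp\Bigl(-\tfrac{1}{8}|\xi - \xi_*|^2 - \tfrac{(|\xi|^2 - |\xi_*|^2)^2}{8|\xi - \xi_*|^2}\Bigr).
\]
The claim $|Kg|_{L^\infty_{\xi,\eta+1}} \leq C|g|_{L^\infty_{\xi,\eta}}$ then reduces to verifying
\[
\sup_{\xi \in \R^3} \int_{\R^3} \langle \xi\rangle^{\eta+1}\langle \xi_*\rangle^{-\eta} |k(\xi,\xi_*)|\,d\xi_* \leq C.
\]
The quadratic exponent $(|\xi|^2-|\xi_*|^2)^2/|\xi-\xi_*|^2$ concentrates the kernel near $|\xi_*|\approx |\xi|$, so that $\langle\xi\rangle^{\eta+1}\langle\xi_*\rangle^{-\eta}\lesssim \langle\xi\rangle$ effectively. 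This linear growth is absorbed by one power of the Gaussian $e^{-c|\xi-\xi_*|^2}$ after splitting the $\xi_*$-integration into $\{|\xi_*|\leq |\xi|/2\}$ (where $|\xi-\xi_*|\gtrsim |\xi|$ gives direct exponential decay in $\langle\xi\rangle$) and its complement (where $\langle\xi\rangle \sim \langle\xi_*\rangle$). The singularity $1/|\xi-\xi_*|$ is integrable in $\R^3_{\xi_*}$.

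For $\Gamma$, I split $\Gamma = \Gamma^+ - \Gamma^-$. The loss part
\[
\Gamma^-(g,h)(\xi) = g(\xi)\int_{\R^3}\int_{\mathbb{S}^2}|q\cdot n|\sqrt{\mathcal{M}(\xi_*)}h(\xi_*)\,dn\,d\xi_*
\]
is controlled by $|q\cdot n| \leq \langle\xi\rangle + \langle\xi_*\rangle$ together with the Gaussian in $\xi_*$ absorbing $\langle\xi_*\rangle^{-\eta}$, yielding a prefactor $\lesssim \nu(\xi)\langle\xi\rangle^{-\eta}|g|_{L^\infty_{\xi,\eta}}|h|_{L^\infty_{\xi,\eta}}$. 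The gain part is handled by an analogous Carleman-type kernel carrying an extra $\sqrt{\mathcal{M}(\xi_*)}$ factor; after passing to post-collisional variables and using the energy conservation $|\xi|^2+|\xi_*|^2 = |\xi'|^2+|\xi_*'|^2$, Gaussian decay survives in all the collisional integration variables and closes the bound by the same weight manipulation as for $K$, with the $\nu^{-1}(\xi)$ prefactor coming from extracting one factor of $\langle\xi\rangle$ from the collision cross section.

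The main obstacle is the weight-shift estimate on the Carleman kernel: producing the upgrade $\langle\xi_*\rangle^{-\eta}\to \langle\xi\rangle^{\eta+1}$ in $K$ requires the quadratic term $(|\xi|^2-|\xi_*|^2)^2/|\xi-\xi_*|^2$ in the Grad kernel, since only this concentration makes $\langle\xi\rangle/\langle\xi_*\rangle$ close to $1$ on the effective support of $k$ and permits the \emph{gain} of a full power of $\langle\xi\rangle$. The other steps — the pointwise estimate on $\nu(\xi)$ and the bilinear estimates on $\Gamma$ — reduce to routine Gaussian integral computations in $\xi_*$ space.
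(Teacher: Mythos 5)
The paper cites this lemma from [Liu-Yu2] without giving a proof, so the benchmark is the classical Grad decomposition and Carleman kernel estimate, which is indeed the framework you invoke. Your overall plan is the right one, but two of its steps are misdescribed in ways that, taken literally, do not give the claimed result.

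\textbf{The decomposition.} You assert that "the two loss pieces combine into the multiplicative operator $-\nu(\xi)g$." Only one of them does. Expanding $Lg=\mathcal{M}^{-1/2}\bigl[Q(\mathcal{M},\sqrt{\mathcal{M}}g)+Q(\sqrt{\mathcal{M}}g,\mathcal{M})\bigr]$, the loss term of $Q(\mathcal{M},\sqrt{\mathcal{M}}g)$ carries $g$ at the unstarred velocity and, after dividing by $\sqrt{\mathcal{M}(\xi)}$, yields $-\nu(\xi)g(\xi)$. But the loss term of $Q(\sqrt{\mathcal{M}}g,\mathcal{M})$ carries $g$ at $\xi_*$; after the division it is the \emph{integral} operator $g\mapsto -2\pi\int|\xi-\xi_*|\sqrt{\mathcal{M}(\xi)\mathcal{M}(\xi_*)}\,g(\xi_*)\,d\xi_*$ (Grad's $K_1$), which must be folded into $K$ together with the two gain pieces. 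So your bookkeeping $L=-\nu+K$ with "$K$ = the two gain pieces" is off. The slip does not endanger the final bound, since the $K_1$ kernel has full Gaussian decay in both $\xi$ and $\xi_*$, strictly better than the Grad bound you quote for the gain kernel, but the decomposition must be stated correctly for $L=-\nu+K$ to hold.

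\textbf{The $\Gamma^{+}$ estimate.} "Gaussian decay survives in all the collisional integration variables" is not what closes the gain bound. The factor $\sqrt{\mathcal{M}(\xi_*)}$ decays only in $\xi_*$; with $|\xi_*|=O(1)$ and $n$ at angle $\pi/4$ to $\xi$, both $|\xi'|$ and $|\xi_*'|$ are $\approx |\xi|/\sqrt{2}$, so the post-collisional variables enjoy no Gaussian suppression whatsoever. What actually closes the gain estimate for $\eta\geq 0$ is the elementary weight inequality $\langle\xi'\rangle\langle\xi_*'\rangle\geq\langle\xi\rangle$, which follows from $\langle\xi'\rangle^2\langle\xi_*'\rangle^2\geq 1+|\xi'|^2+|\xi_*'|^2=1+|\xi|^2+|\xi_*|^2\geq\langle\xi\rangle^2$; this gives $\langle\xi'\rangle^{-\eta}\langle\xi_*'\rangle^{-\eta}\leq\langle\xi\rangle^{-\eta}$, and the remaining $\int\int|q\cdot n|\sqrt{\mathcal{M}(\xi_*)}\,dn\,d\xi_*\lesssim\nu(\xi)$. (As a side remark this step, and indeed the $\Gamma$-bound as written, does not extend to $\eta<0$: taking $g=h=\langle\xi\rangle^{-\eta}$ with $-\eta>0$, the contribution near $\theta=\pi/4$ makes $\Gamma^{+}(g,h)(\xi)\gtrsim|\xi|^{1-2\eta}$, which exceeds $\nu(\xi)\langle\xi\rangle^{-\eta}\sim|\xi|^{1-\eta}$. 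The paper only applies the lemma with $\eta=\beta-1>3$, so this is harmless, but the blanket "$\eta\in\mathbb{R}$" should be understood as $\eta\geq 0$ for the nonlinear estimate.)
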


It is vital to get the space-time structure of the solution of the
linearized Boltzmann equation
\begin{equation*}
\partial _{t}h+\xi \cdot \nabla _{x}h=Lh\text{,}
\end{equation*}%
and we denote $\mathbb{G}^{t}$ as its solution operator.

\begin{lemma}[{\protect \cite{[Liu-Yu2], [Liu-Yu3]}}]
\label{Green} The solution $h=\mathbb{G}^{t}h_{0}$ of the equation
\begin{equation*}
\partial _{t}h+\xi \cdot \nabla _{x}h=Lh\hbox{, }h\left( 0,x,\xi \right)
=h_{0}\left( x,\xi \right) \hbox{.}
\end{equation*}%
satisfies%
\begin{equation*}
\left \vert \mathbb{G}^{t}h_{0}\right \vert _{L_{\xi ,\beta }^{\infty }}\leq
\mathcal{C}_{1}\left[
\begin{array}{l}
\left( 1+t\right) ^{-2}e^{-\frac{\left( \left \vert x\right \vert -\mathbf{c}%
t\right) ^{2}}{D_{0}\left( 1+t\right) }}+\left( 1+t\right) ^{-3/2}e^{-\frac{%
\left \vert x\right \vert ^{2}}{D_{0}\left( 1+t\right) }} \\[2mm]
+\mathbf{1}_{\{ \left \vert x\right \vert \leq \mathbf{c}t\}}\left(
1+t\right) ^{-3/2}\left( 1+\frac{\left \vert x\right \vert ^{2}}{1+t}\right)
^{-3/2}+e^{-\frac{t+|x|}{c_{0}}}%
\end{array}%
\right] \ast _{x}\left \vert h_{0}\right \vert _{L_{\xi ,\beta }^{\infty }}
\end{equation*}%
for any $\beta >3/2$ and some constants $\mathcal{C}_{1}$, $c_{0}$, $D_{0}>0
$, where $h_{0}\in L_{\xi ,\beta }^{\infty }(L_{x}^{\infty }\cap L_{x}^{1})$%
. Moreover, if $\mathrm{P}_{1}h_{0}=0$, then
\begin{equation*}
\left \vert \mathbb{G}^{t}h_{0}\right \vert _{L_{\xi ,\beta }^{\infty }}\leq
\mathcal{C}_{1}\left[
\begin{array}{l}
\left( 1+t\right) ^{-5/2}e^{-\frac{\left( \left \vert x\right \vert -\mathbf{%
c}t\right) ^{2}}{D_{0}\left( 1+t\right) }}+\left( 1+t\right) ^{-2}e^{-\frac{%
\left \vert x\right \vert ^{2}}{D_{0}\left( 1+t\right) }} \\[2mm]
+\mathbf{1}_{\{ \left \vert x\right \vert \leq \mathbf{c}t\}}\left(
1+t\right) ^{-2}\left( 1+\frac{\left \vert x\right \vert ^{2}}{1+t}\right)
^{-3/2}+e^{-\frac{t+|x|}{c_{0}}}%
\end{array}%
\right] \ast _{x}\left \vert h_{0}\right \vert _{L_{\xi ,\beta }^{\infty }}.
\end{equation*}
\end{lemma}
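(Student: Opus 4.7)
The plan is to follow the long-wave/short-wave decomposition approach pioneered by Liu--Yu for the hard sphere Boltzmann equation. Taking the Fourier transform in $x$, the equation for $\widehat{h}(t,\eta,\xi)$ becomes $\partial_t \widehat{h} = (L - i\xi\cdot\eta)\widehat{h}$, and the Green's operator is $e^{t(L - i\xi\cdot\eta)}$. I would split the analysis into a long-wave piece on $|\eta| \leq \eta_0$ and a short-wave piece on $|\eta| \geq \eta_0$ for some small $\eta_0 > 0$, then combine the two by a smooth frequency partition.

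For the long-wave part, I would use spectral perturbation theory. At $\eta = 0$, $L$ has a five-dimensional kernel spanned by $\{\chi_i\}_{i=0}^{4}$, and the rest of its spectrum is separated from $0$ by a gap. For small $\eta$ the perturbation $-i\xi\cdot\eta$ produces five analytic branches $\lambda_j(\eta)$ bifurcating from $0$, with expansions
\begin{equation*}
\lambda_j(\eta) = -i\,\lambda_j\,|\eta| - A_j|\eta|^2 + O(|\eta|^3),
\end{equation*}
where the $\lambda_j$ are the wave speeds in \eqref{speed}, so two branches carry the sound speeds $\pm \mathbf{c}$ and three are $0$, while $A_j > 0$ by the H-theorem dissipation. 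After inverting the Fourier transform, the two nonzero-speed branches yield the moving Gaussian $(1+t)^{-2}e^{-(|x|-\mathbf{c}t)^2/(D_0(1+t))}$ (dissipative Huygens wave), while the three zero-speed branches yield the stationary Gaussian $(1+t)^{-3/2}e^{-|x|^2/(D_0(1+t))}$ (thermal diffusion wave). The non-Gaussian algebraic tail $(1+|x|^2/(1+t))^{-3/2}$ inside the cone arises from the higher-order terms in the expansion of eigenvalues/eigenprojectors (and the fact that for $|x| \lesssim \mathbf{c}t$ the oscillatory phase has no stationary point outside the front), producing the Riesz-type contribution confined to $\{|x| \leq \mathbf{c}t\}$.

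For the short-wave part $|\eta| \geq \eta_0$, one writes $L - i\xi\cdot\eta = -\nu(\xi) + K - i\xi\cdot\eta$ and treats $K$ as a perturbation of the damped transport operator, whose semigroup is exactly $\mathbb{S}^t$. Iterating the Duhamel identity $\mathbb{G}^t = \mathbb{S}^t + \mathbb{S}^t \ast (K \mathbb{G}^\cdot)$ finitely many times, and combining with a mixture lemma that upgrades velocity regularity of $K$ into spatial regularity along the free streaming, yields pointwise $L^\infty_\xi$ control at the rate $e^{-(t+|x|)/c_0}$. For the refined estimate under $\mathrm{P}_1 h_0 = 0$, the point is that purely microscopic data is annihilated by the leading-order eigenprojectors at $\eta = 0$, so its contribution to each long-wave branch carries an extra $|\eta|$ factor; after inverse Fourier transform this translates into an additional $(1+t)^{-1/2}$ decay in every wave component, matching the sharper rates $(1+t)^{-5/2}$, $(1+t)^{-2}$ in the second display.

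The main obstacle is converting the Fourier-side spectral estimates into the pointwise convolution form stated in the lemma, with the correct $L^\infty_{\xi,\beta}$ weights in velocity and sharp envelopes in $(t,x)$. This requires extracting the wave fronts $|x| = \mathbf{c}t$ by stationary phase on the oscillatory factor $e^{-i\lambda_j|\eta|t}$, matching the diffusive and the algebraic regimes at the transition scales $|(|x|-\mathbf{c}t)| \sim \sqrt{t}$, and using the mixture lemma to pass from $L^2_\xi$ energy estimates of the Fourier kernel to weighted $L^\infty_\xi$ bounds. Once this is achieved, the stated convolution with $|h_0|_{L^\infty_{\xi,\beta}}$ in $x$ is immediate because the $\langle\xi\rangle^\beta$ weight is absorbed uniformly through the decay of $\nu(\xi)^{-1}$ and the iterates of $K$.
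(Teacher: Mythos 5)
This lemma is cited in the paper directly from [Liu-Yu2, Liu-Yu3] and carries no internal proof, so there is no proof of the paper to compare your sketch against; your outline instead reproduces the cited references' strategy. It does so correctly at the level of the road map: the Fourier long-wave/short-wave split, Ellis--Pinsky-type spectral perturbation of the five macroscopic branches with dissipative second-order corrections, the wave-remainder decomposition via finite iteration of the Duhamel identity with the damped transport semigroup together with the mixture lemma, and the extraction of Huygens, heat, and Riesz-type envelopes from the long-wave oscillatory integrals. The step you flag as the main obstacle (turning the Fourier-side spectral structure into sharp pointwise $(t,x)$ envelopes in the weighted $L^\infty_\xi$ norm) is indeed where essentially all of the length and difficulty of the Liu--Yu proof resides, and your sketch does not attempt it, so this remains a genuine gap rather than a proof.

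One point deserves a correction. You explain the refined rates by saying that purely microscopic data is annihilated by the zeroth-order eigenprojectors at $\eta = 0$ and hence contributes an extra factor $|\eta|$, producing the additional $(1+t)^{-1/2}$ decay across all wave components. That mechanism is right, but it corresponds to the hypothesis $\mathrm{P}_0 h_0 = 0$ (microscopic data), not $\mathrm{P}_1 h_0 = 0$ as the lemma is written. With the paper's conventions, $\mathrm{P}_1 h_0 = 0$ would make $h_0$ purely macroscopic, for which no such gain occurs. The paper in fact only invokes the second display with $h_0 = \Gamma(f_2, f_2)$, which is purely microscopic; the condition printed in the lemma is a slip, and your explanation should be phrased under $\mathrm{P}_0 h_0 = 0$ so that your sentence is internally consistent.
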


\section{Global wave structure}

\label{global-structure}

In this section we will prove the global wave structure of the equation $(%
\ref{Linearized})$ which is described in the following theorem.

\begin{theorem}
\label{thm-main} Let $\beta >4$ be sufficiently large. Assume that $f_{0}\in
L_{\xi ,\beta }^{\infty }L_{x}^{\infty }$ is compactly supported in the
variable $x$ for all $\xi $
\begin{equation*}
f_{0}\left( x,\xi \right) \equiv 0\text{ for }\left \vert x\right \vert \geq
1\text{, }\xi \in \mathbb{R}^{3}\text{.}
\end{equation*}%
Then there exists $\eps>0$ small enough such that the solution $f$ of $(\ref%
{Linearized})$ exists for $t>0$ and it can be written as $f=f_{1}+\sqrt{%
\mathcal{M}}f_{2}$, where $f_{1}$ and $f_{2}$ satisfy $\left( \ref%
{decom-System}\right) $ with
\begin{equation*}
\left \vert f_{1}\right \vert _{L_{\xi ,\beta }^{\infty }}\leq 2\mathcal{C}%
_{1}\varepsilon \left \Vert f_{0}\right \Vert _{L_{\xi ,\beta }^{\infty
}L_{x}^{\infty }}e^{-\frac{\nu _{0}}{2}\left( t+\left \vert x\right \vert
\right) }\text{,}
\end{equation*}%
and
\begin{align*}
& \quad \left \vert f_{2}\right \vert _{L_{\xi ,\beta }^{\infty }} \\
& \leq \mathfrak{B}\varepsilon \left \Vert f_{0}\right \Vert _{L_{\xi ,\beta
}^{\infty }L_{x}^{\infty }}\left[
\begin{array}{l}
\left( 1+t\right) ^{-2}e^{-\frac{\left( \left \vert x\right \vert -\mathbf{c}%
t\right) ^{2}}{\widehat{D}_{0}\left( 1+t\right) }}+\left( 1+t\right)
^{-3/2}e^{-\frac{\left \vert x\right \vert ^{2}}{\widehat{D}_{0}\left(
1+t\right) }} \\[2mm]
+\mathbf{1}_{\{ \left \vert x\right \vert \leq \mathbf{c}t\}}\left(
1+t\right) ^{-3/2}\left( 1+\frac{\left \vert x\right \vert ^{2}}{1+t}\right)
^{-3/2}+e^{-\frac{t+|x|}{\widehat{c}_{0}}}%
\end{array}%
\right] \\
& \quad +2\mathfrak{C}\left( \mathfrak{B}\varepsilon \left \Vert f_{0}\right
\Vert _{L_{\xi ,\beta }^{\infty }L_{x}^{\infty }}\right) ^{2}\left[ \left(
1+t\right) ^{-2}\left( 1+\frac{\left \vert x\right \vert ^{2}}{1+t}\right)
^{-\frac{3}{2}}+\left( 1+t\right) ^{-\frac{5}{2}}\left( 1+\frac{\left( \left
\vert x\right \vert -\mathbf{c}t\right) ^{2}}{1+t}\right) ^{-1}\right]
\end{align*}%
for some positive constants $\mathfrak{B},\mathcal{C}_{1},\mathfrak{C},\hat{D%
}_{0},\hat{c}_{0}$.
\end{theorem}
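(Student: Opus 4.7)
The plan is to set up a Picard iteration on the coupled integral system \eqref{eq:integral} with a carefully chosen ansatz that reflects the particle/fluid dichotomy. I define iterates $(f_1^{(n)}, f_2^{(n)})$ by plugging $(f_1^{(n-1)}, f_2^{(n-1)})$ into the right-hand side of \eqref{eq:integral}, starting from $(f_1^{(0)}, f_2^{(0)}) = (\varepsilon \mathbb{S}^t f_0, 0)$. The target ansatz in the $L^\infty_{\xi,\beta}$ norm is: $|f_1| \lesssim \varepsilon \|f_0\|_{L^\infty_{\xi,\beta}L^\infty_x} e^{-\nu_0(t+|x|)/2}$ (space-time exponential decay), while $|f_2|$ is bounded by the linear wave envelope of Lemma \ref{Green} at order $\varepsilon$ plus nonlinear corrections at order $\varepsilon^2$ concentrated inside the sound cone $|x| \leq \mathbf{c} t$. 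The iteration will be closed in a weighted $L^\infty$ space defined by these envelopes and a contraction argument will yield existence and the pointwise bounds simultaneously.

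First I would establish the $f_1$ bound. The initial source $\varepsilon \mathbb{S}^t f_0 = \varepsilon e^{-\nu(\xi)t} f_0(x - \xi t, \xi)$ inherits the compact support of $f_0$ transported along characteristics: since $f_0$ vanishes for $|x| \geq 1$, and $\nu(\xi) \geq \nu_0 \langle \xi \rangle$, one checks that this term is bounded by $\varepsilon \|f_0\|_{L^\infty_{\xi,\beta}L^\infty_x} \langle \xi \rangle^{-\beta} e^{-\nu_0(t+|x|)/2}$ after a triangle-inequality split between $|x-\xi t| \leq 1$ and the damping $e^{-\nu_0 \langle \xi \rangle t/2}$. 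The coupling through $\mathcal{K}_s$ contributes a factor $\eta = C/\beta + C_\beta/R^2$ by Lemma \ref{Estimate-KQ}, which can be made arbitrarily small by choosing $\beta$ and $R$ large enough, while the quadratic terms $Q(f_1,f_1) + Q(f_1,\sqrt{\mathcal{M}}f_2) + Q(\sqrt{\mathcal{M}}f_2,f_1)$ are each $O(\varepsilon^2)$ with space-time exponential decay (inherited from the $f_1$ ansatz and from the Gaussian-tail/space-time-bounded structure of $\sqrt{\mathcal{M}} f_2$). Lemma \ref{Lemma-gain decay} then reproduces the exponential envelope and closes the estimate with constant $2\mathcal{C}_1$ after choosing $\varepsilon, 1/\beta, 1/R$ small.

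Next, for $f_2$, I treat the linear and nonlinear sources separately. For $\int_0^t \mathbb{G}^{t-\tau}\mathcal{K}_b f_1(\tau)\,d\tau$, the bound on $f_1$ shows $\mathcal{K}_b f_1$ is space-time exponentially localized and integrable in $x$, so convolving with the Green's function envelope from Lemma \ref{Green} yields precisely the linear wave pattern of order $\varepsilon$ listed in the theorem via the convolution estimates of the paper (Lemmas \ref{exponential-easy-1}--\ref{poly-exponential}). For the nonlinear term $\int_0^t \mathbb{G}^{t-\tau}\Gamma(f_2,f_2)(\tau)\,d\tau$, I crucially exploit that $\Gamma(f_2,f_2)$ lies in $\ker(L)^\perp$ (i.e.\ $\mathrm{P}_0 \Gamma = 0$), so the \emph{sharper} branch of Lemma \ref{Green} applies, giving an extra $(1+t)^{-1/2}$ time decay. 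Substituting the $\varepsilon$-order ansatz for $f_2$ into the bilinear form produces space-time convolutions of Huygens--Huygens, Huygens--diffusion, diffusion--diffusion, and Riesz-type waves; these must be controlled by exactly the two $\varepsilon^2$-order envelopes in the theorem, which are polynomial versions of the Huygens and diffusion waves inside the cone.

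The main obstacle is the sharp estimation of these nonlinear wave convolutions, in particular the Huygens--Huygens case, where naive estimates recover only the linear decay rate rather than the improved $(1+t)^{-1/2}$ gain that the theorem asserts. Following the heuristic in Section \ref{nonlinear wave interaction}, I would split the $(y,s)$ integration into the regions described in Section \ref{wave-interaction}, identify the strong-interaction subregion $\{|\,|x-y|-(t-s)| \lesssim \sqrt{t-s}\} \cap \{|\,|y|-\mathbf{c}s| \lesssim \sqrt{s}\}$, and carefully estimate its volume to extract the extra half-power of $t$. Once these sharp convolution bounds are in place, all bilinear contributions fit inside the $\varepsilon^2$ envelope and the nonlinear $Q$ contributions in the $f_1$ equation are absorbed by the damped-transport decay via Lemmas \ref{Lemma-gain decay}--\ref{Lemma-gain decay-moving}. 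A standard contraction on the weighted $L^\infty$ space then produces the unique fixed point and the claimed bounds for $(f_1, f_2)$.
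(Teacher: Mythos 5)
Your proposal follows essentially the same route as the paper's proof: an iteration scheme on the coupled integral system with the exponential/space--time ansatz for $f_1$ closed via Lemma \ref{Lemma-gain decay}, the Green's function of Lemma \ref{Green} convolved with the exponentially localized source $\mathcal{K}_b f_1$ (Lemmas \ref{exponential-easy-1}--\ref{poly-exponential}) for the $\varepsilon$-order part of $f_2$, the extra $(1+t)^{-1/2}$ gain from the purely microscopic nonlinear source $\Gamma(f_2,f_2)$ fed into the sharper branch of Lemma \ref{Green}, and the nonlinear wave-interaction estimates (Lemmas \ref{Easy1}--\ref{nonmoving-moving}) to close the $\varepsilon^2$-order envelope. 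The only technical device you omit, which the paper uses to organize the Duhamel representation of $\int_0^t\mathbb{G}^{t-\tau}\Gamma(f_2,f_2)\,d\tau$, is the auxiliary splitting $f_{2,2}^{n+1}=h_1^{n+1}+h_2^{n+1}$ into a damped-transport piece with $\Gamma$ source and a piece sourced by $Kf_{2,2}^{n+1}$, which then invokes Lemmas \ref{Lemma-gain decay-nonmoving}--\ref{Lemma-gain decay-moving}; this is bookkeeping, not a conceptual departure.
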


\begin{proof}
To clarify the space-time structures of the solutions $f_{1}$ and $f_{2}$ to
$\left( \ref{decom-System}\right) $, we design an iteration $\{f^{n}\}$ with
$f^{n}=f_{1}^{n}+\sqrt{\mathcal{M}}f_{2}^{n}$ , as follows:

\begin{equation}
\left \{
\begin{array}{l}
\partial _{t}f_{1}^{n+1}+\xi \cdot \nabla _{x}f_{1}^{n+1}+\nu \left( \xi
\right) f_{1}^{n+1}=\mathcal{K}_{s}f_{1}^{n}+U(f_{1}^{n},f_{2}^{n})%
\vspace {3mm}
\\
\partial _{t}f_{2}^{n+1}+\xi \cdot \nabla _{x}f_{2}^{n+1}=Lf_{2}^{n+1}+%
\mathcal{K}_{b}f_{1}^{n+1}+\Gamma (f_{2}^{n},f_{2}^{n})%
\end{array}%
\right.  \label{iteration scheme}
\end{equation}%
with
\begin{equation*}
f_{1}^{n+1}\left( 0,x,\xi \right) =\varepsilon f_{0}\left( x,\xi \right)
\text{, }f_{2}^{n+1}\left( 0,x,\xi \right) =0\text{, }\ f_{1}^{0}\left(
t,x,\xi \right) =f_{2}^{0}\left( t,x,\xi \right) =0\text{,}
\end{equation*}%
where

\begin{equation*}
U(f_{1}^{n},f_{2}^{n})=Q\left( f_{1}^{n},f_{1}^{n}\right) +Q\left( f_{1}^{n},%
\sqrt{\mathcal{M}}f_{2}^{n}\right) +Q\left( \sqrt{\mathcal{M}}%
f_{2}^{n},f_{1}^{n}\right) \text{.}
\end{equation*}%
We shall study the space-time structures of $f_{1}^{n}$ and $f_{2}^{n}$. Now
we rewrite (\ref{iteration scheme}) as integral forms:%
\begin{equation*}
\left \{
\begin{array}{l}
f_{1}^{n+1}=\varepsilon \mathbb{S}^{t}f_{0}+\int_{0}^{t}\mathbb{S}^{t-\tau }%
\left[ \mathcal{K}_{s}f_{1}^{n}(\tau )+U(f_{1}^{n},f_{2}^{n})(\tau )\right]
d\tau =:f_{1,1}^{n+1}+f_{1,2}^{n+1}\text{,}%
\vspace {3mm}
\\
f_{2}^{n+1}=\int_{0}^{t}\mathbb{G}^{t-\tau }\mathcal{K}_{b}\left(
f_{1}^{n+1}\right) (\tau )d\tau +\int_{0}^{t}\mathbb{G}^{t-\tau }\Gamma
(f_{2}^{n},f_{2}^{n})(\tau )d\tau =:f_{2,1}^{n+1}+f_{2,2}^{n+1}\text{.}%
\end{array}%
\right.
\end{equation*}

First, one can see that $f_{1}^{1}=f_{1,1}^{1}$, $f_{2}^{1}=f_{2,1}^{1}$ and
$f_{1,2}^{1}=f_{2,2}^{1}=0$. Since $f_{0}$ is compactly supported in the
unit ball with respect to the variable $x$ uniformly for all $\xi $, we have
\begin{equation*}
\left \vert f_{1}^{1}\right \vert _{L_{\xi ,\beta }^{\infty }}\leq \mathcal{C%
}_{1}\varepsilon \Vert f_{0}\Vert _{L_{\xi ,\beta }^{\infty }L_{x}^{\infty
}}e^{-\frac{\nu _{0}}{2}(t+|x|)}
\end{equation*}%
for some constant $\mathcal{C}_{1}$. In view of Lemmas \ref{Estimate-KQ}, %
\ref{Green} and Lemmas \ref{exponential-easy-1} -- \ref{poly-exponential},
we get%
\begin{align*}
\left \vert f_{2}^{1}\right \vert _{L_{\xi ,\beta }^{\infty }}& =\left \vert
\int_{0}^{t}\mathbb{G}^{t-\tau }\mathcal{K}_{b}\left( f_{1}^{1}\right) (\tau
)d\tau \right \vert _{L_{\xi ,\beta }^{\infty }} \\
& \leq 2\mathcal{C}_{1}\varepsilon \left \Vert f_{0}\right \Vert _{L_{\xi
,\beta }^{\infty }L_{x}^{\infty }}\left[ \left( 2\pi \right) ^{\frac{3}{4}%
}e^{\frac{R^{2}}{4}}\left( \frac{C\nu _{1}}{\beta }\left( 1+R\right)
+C_{\beta }\right) \right] \\
& \cdot \mathcal{C}_{1}\left[
\begin{array}{l}
\left( 1+t\right) ^{-2}e^{-\frac{\left( \left \vert x\right \vert -\mathbf{c}%
t\right) ^{2}}{D_{0}\left( 1+t\right) }}+\left( 1+t\right) ^{-3/2}e^{-\frac{%
\left \vert x\right \vert ^{2}}{D_{0}\left( 1+t\right) }} \\[2mm]
+\mathbf{1}_{\{ \left \vert x\right \vert \leq \mathbf{c}t\}}\left(
1+t\right) ^{-3/2}\left( 1+\frac{\left \vert x\right \vert ^{2}}{1+t}\right)
^{-3/2}+e^{-\frac{t+|x|}{c_{0}}}%
\end{array}%
\right] \ast _{x,t}e^{-\frac{\nu _{0}}{2}(t+|x|)} \\
& \leq 2\mathcal{C}_{2}\mathcal{C}_{1}^{2}\left[ \left( 2\pi \right) ^{\frac{%
3}{4}}e^{\frac{R^{2}}{4}}\left( \frac{C\nu _{1}}{\beta }\left( 1+R\right)
+C_{\beta }\right) \right] \\
& \cdot \varepsilon \left \Vert f_{0}\right \Vert _{L_{\xi ,\beta }^{\infty
}L_{x}^{\infty }}\left[
\begin{array}{l}
\left( 1+t\right) ^{-2}e^{-\frac{\left( \left \vert x\right \vert -\mathbf{c}%
t\right) ^{2}}{\widehat{D}_{0}\left( 1+t\right) }}+\left( 1+t\right)
^{-3/2}e^{-\frac{\left \vert x\right \vert ^{2}}{\widehat{D}_{0}\left(
1+t\right) }} \\[2mm]
+\mathbf{1}_{\{ \left \vert x\right \vert \leq \mathbf{c}t\}}\left(
1+t\right) ^{-3/2}\left( 1+\frac{\left \vert x\right \vert ^{2}}{1+t}\right)
^{-3/2}+e^{-\frac{t+|x|}{\widehat{c}_{0}}}%
\end{array}%
\right] \\
& \leq \mathfrak{B}\varepsilon \left \Vert f_{0}\right \Vert _{L_{\xi ,\beta
}^{\infty }L_{x}^{\infty }}\left[
\begin{array}{l}
\left( 1+t\right) ^{-2}e^{-\frac{\left( \left \vert x\right \vert -\mathbf{c}%
t\right) ^{2}}{\widehat{D}_{0}\left( 1+t\right) }}+\left( 1+t\right)
^{-3/2}e^{-\frac{\left \vert x\right \vert ^{2}}{\widehat{D}_{0}\left(
1+t\right) }} \\[2mm]
+\mathbf{1}_{\{ \left \vert x\right \vert \leq \mathbf{c}t\}}\left(
1+t\right) ^{-3/2}\left( 1+\frac{\left \vert x\right \vert ^{2}}{1+t}\right)
^{-3/2}+e^{-\frac{t+|x|}{\widehat{c}_{0}}}%
\end{array}%
\right] \text{.}
\end{align*}

In fact, by definition of $f_{1,1}^{n}$, we have $f_{1,1}^{n}=f_{1,1}^{1}$
for all $n$ and so
\begin{equation*}
\left \vert f_{1,1}^{n}\right \vert _{L_{\xi ,\beta }^{\infty }}\leq
\mathcal{C}_{1}\varepsilon \Vert f_{0}\Vert _{L_{\xi ,\beta }^{\infty
}L_{x}^{\infty }}e^{-\frac{\nu _{0}}{2}(t+|x|)}\text{.}  \label{f11}
\end{equation*}%
Now we choose $\beta >4$ and $R>0$ sufficiently large such that the constant
$\eta \left( \beta ,R\right) $ defined in Lemma \ref{Lemma-gain decay}
satisfies
\begin{equation*}
\eta \left( \beta ,R\right) =\frac{C}{\beta }+\frac{C_{\beta }}{R^{2}}<\frac{%
1}{8}\text{.}
\end{equation*}%
Fixing such $\beta $ and $R$, we shall prove that if $\varepsilon >0$ is
sufficiently small, we have%
\begin{equation*}
\left \vert f_{1,2}^{n}\right \vert _{L_{\xi ,\beta }^{\infty }}\leq
\mathcal{C}_{1}\varepsilon \Vert f_{0}\Vert _{L_{\xi ,\beta }^{\infty
}L_{x}^{\infty }}e^{-\frac{\nu _{0}}{2}(t+|x|)}\text{,}  \label{f12}
\end{equation*}%
\begin{equation*}
\left \vert f_{2,1}^{n}\right \vert _{L_{\xi ,\beta }^{\infty }}\leq
\mathfrak{B}\varepsilon \left \Vert f_{0}\right \Vert _{L_{\xi ,\beta
}^{\infty }L_{x}^{\infty }}\left[
\begin{array}{l}
\left( 1+t\right) ^{-2}e^{-\frac{\left( \left \vert x\right \vert -\mathbf{c}%
t\right) ^{2}}{\widehat{D}_{0}\left( 1+t\right) }}+\left( 1+t\right)
^{-3/2}e^{-\frac{\left \vert x\right \vert ^{2}}{\widehat{D}_{0}\left(
1+t\right) }} \\[2mm]
+\mathbf{1}_{\{ \left \vert x\right \vert \leq \mathbf{c}t\}}\left(
1+t\right) ^{-3/2}\left( 1+\frac{\left \vert x\right \vert ^{2}}{1+t}\right)
^{-3/2}+e^{-\frac{t+|x|}{\widehat{c}_{0}}}%
\end{array}%
\right] \text{,}  \label{f21}
\end{equation*}%
\begin{equation*}
\left \vert f_{2,2}^{n}\right \vert _{L_{\xi ,\beta }^{\infty }}\leq 2%
\mathfrak{C}\left( \mathfrak{B}\varepsilon \left \Vert f_{0}\right \Vert
_{L_{\xi ,\beta }^{\infty }L_{x}^{\infty }}\right) ^{2}\left[ \left(
1+t\right) ^{-2}\left( 1+\frac{\left \vert x\right \vert ^{2}}{1+t}\right)
^{-\frac{3}{2}}+\left( 1+t\right) ^{-\frac{5}{2}}\left( 1+\frac{\left( \left
\vert x\right \vert -\mathbf{c}t\right) ^{2}}{1+t}\right) ^{-1}\right] \text{%
,}  \label{f22}
\end{equation*}%
for all $n\in \mathbb{N}$ by induction on $n$, here the large constant $%
\mathfrak{C}$ $>0$ will be determined later.

By induction hypothesis and Lemma \ref{Estimate-KQ}, we have%
\begin{eqnarray*}
\left \vert \nu ^{-1}U\left( f_{1}^{n},f_{2}^{n}\right) \right \vert
_{L_{\xi ,\beta }^{\infty }} &\leq &C\left( \left \vert f_{1}^{n}\right
\vert _{L_{\xi ,\beta }^{\infty }}^{2}+2\left \vert f_{1}^{n}\right \vert
_{L_{\xi ,\beta }^{\infty }}\left \vert f_{2}^{n}\right \vert _{L_{\xi
,\beta }^{\infty }}\right) \\
&\leq &C\varepsilon \Vert f_{0}\Vert _{L_{\xi ,\beta }^{\infty
}L_{x}^{\infty }}\left( 4\mathcal{C}_{1}+\mathfrak{B}\left( 1+2\mathfrak{CB}%
\varepsilon \left \Vert f_{0}\right \Vert _{L_{\xi ,\beta }^{\infty
}L_{x}^{\infty }}\right) \right) \\
&&\cdot \mathcal{C}_{1}\varepsilon \Vert f_{0}\Vert _{L_{\xi ,\beta
}^{\infty }L_{x}^{\infty }}e^{-\frac{\nu _{0}}{2}(t+|x|)}\text{.}
\end{eqnarray*}%
By Lemma \ref{Lemma-gain decay},
\begin{eqnarray*}
\left \vert f_{1,2}^{n+1}\right \vert _{L_{\xi ,\beta }^{\infty }} &=&\left
\vert \int_{0}^{t}\left \langle \xi \right \rangle ^{\beta }\mathbb{S}%
^{t-\tau }\left[ \mathcal{K}_{s}f_{1}^{n}(\tau )+U(f_{1}^{n},f_{2}^{n})(\tau
)\right] d\tau \right \vert \\
&\leq &\left[ 4\eta \left( \beta ,R\right) +2C\varepsilon \Vert f_{0}\Vert
_{L_{\xi ,\beta }^{\infty }L_{x}^{\infty }}\left( 4\mathcal{C}_{1}+\mathfrak{%
B}\left( 1+2\mathfrak{CB}\varepsilon \left \Vert f_{0}\right \Vert _{L_{\xi
,\beta }^{\infty }L_{x}^{\infty }}\right) \right) \right] \\
&&\cdot \mathcal{C}_{1}\varepsilon \Vert f_{0}\Vert _{L_{\xi ,\beta
}^{\infty }L_{x}^{\infty }}e^{-\frac{\nu _{0}}{2}(t+|x|)} \\
&\leq &\mathcal{C}_{1}\varepsilon \Vert f_{0}\Vert _{L_{\xi ,\beta }^{\infty
}L_{x}^{\infty }}e^{-\frac{\nu _{0}}{2}(t+|x|)}\text{,}
\end{eqnarray*}%
after we choose $\varepsilon >0$ sufficiently small such that%
\begin{equation*}
2C\varepsilon \Vert f_{0}\Vert _{L_{\xi ,\beta }^{\infty }L_{x}^{\infty
}}\left( 4\mathcal{C}_{1}+2\mathfrak{CB}\left( 1+\mathfrak{B}\varepsilon
\left \Vert f_{0}\right \Vert _{L_{\xi ,\beta }^{\infty }L_{x}^{\infty
}}\right) \right) <1/2\text{.}
\end{equation*}%
Therefore,
\begin{equation*}
\left \vert f_{1,2}^{n+1}\right \vert _{L_{\xi ,\beta }^{\infty }}\leq
\mathcal{C}_{1}\varepsilon \Vert f_{0}\Vert _{L_{\xi ,\beta }^{\infty
}L_{x}^{\infty }}e^{-\frac{\nu _{0}}{2}(t+|x|)}\text{,}
\end{equation*}%
and so
\begin{equation*}
\left \vert f_{1}^{n+1}\right \vert _{L_{\xi ,\beta }^{\infty }}\leq 2%
\mathcal{C}_{1}\varepsilon \Vert f_{0}\Vert _{L_{\xi ,\beta }^{\infty
}L_{x}^{\infty }}e^{-\frac{\nu _{0}}{2}(t+|x|)}\text{.}
\end{equation*}%
Likewise $f_{2,1}^{1}$, we have%
\begin{eqnarray*}
\left \vert f_{2,1}^{n+1}\right \vert _{L_{\xi ,\beta }^{\infty }} &=&\left
\vert \int_{0}^{t}\mathbb{G}^{t-\tau }\mathcal{K}_{b}\left(
f_{1}^{n+1}\right) (\tau )d\tau \right \vert _{L_{\xi ,\beta }^{\infty }} \\
&\leq &2\mathcal{C}_{1}\varepsilon \Vert f_{0}\Vert _{L_{\xi ,\beta
}^{\infty }L_{x}^{\infty }}\left[ \left( 2\pi \right) ^{\frac{3}{4}}e^{\frac{%
R^{2}}{4}}\left( \frac{C\nu _{1}}{\beta }\left( 1+R\right) +C_{\beta
}\right) \right] \\
&&\cdot \mathcal{C}_{1}\left[
\begin{array}{l}
\left( 1+t\right) ^{-2}e^{-\frac{\left( \left \vert x\right \vert -\mathbf{c}%
t\right) ^{2}}{D_{0}\left( 1+t\right) }}+\left( 1+t\right) ^{-3/2}e^{-\frac{%
\left \vert x\right \vert ^{2}}{D_{0}\left( 1+t\right) }} \\[2mm]
+\mathbf{1}_{\{ \left \vert x\right \vert \leq \mathbf{c}t\}}\left(
1+t\right) ^{-3/2}\left( 1+\frac{\left \vert x\right \vert ^{2}}{1+t}\right)
^{-3/2}+e^{-\frac{t+|x|}{c_{0}}}%
\end{array}%
\right] \ast _{x,t}e^{-\frac{\nu _{0}}{2}(t+|x|)} \\
&\leq &2\mathcal{C}_{2}\mathcal{C}_{1}^{2}\left[ \left( 2\pi \right) ^{\frac{%
3}{4}}e^{\frac{R^{2}}{4}}\left( \frac{C\nu _{1}}{\beta }\left( 1+R\right)
+C_{\beta }\right) \right] \\
&&\cdot \varepsilon \left \Vert f_{0}\right \Vert _{L_{\xi ,\beta }^{\infty
}L_{x}^{\infty }}\left[
\begin{array}{l}
\left( 1+t\right) ^{-2}e^{-\frac{\left( \left \vert x\right \vert -\mathbf{c}%
t\right) ^{2}}{\widehat{D}_{0}\left( 1+t\right) }}+\left( 1+t\right)
^{-3/2}e^{-\frac{\left \vert x\right \vert ^{2}}{\widehat{D}_{0}\left(
1+t\right) }} \\[2mm]
+\mathbf{1}_{\{ \left \vert x\right \vert \leq \mathbf{c}t\}}\left(
1+t\right) ^{-3/2}\left( 1+\frac{\left \vert x\right \vert ^{2}}{1+t}\right)
^{-3/2}+e^{-\frac{t+|x|}{\widehat{c}_{0}}}%
\end{array}%
\right] \\
&\leq &\mathfrak{B}\varepsilon \left \Vert f_{0}\right \Vert _{L_{\xi ,\beta
}^{\infty }L_{x}^{\infty }}\left[
\begin{array}{l}
\left( 1+t\right) ^{-2}e^{-\frac{\left( \left \vert x\right \vert -\mathbf{c}%
t\right) ^{2}}{\widehat{D}_{0}\left( 1+t\right) }}+\left( 1+t\right)
^{-3/2}e^{-\frac{\left \vert x\right \vert ^{2}}{\widehat{D}_{0}\left(
1+t\right) }} \\[2mm]
+\mathbf{1}_{\{ \left \vert x\right \vert \leq \mathbf{c}t\}}\left(
1+t\right) ^{-3/2}\left( 1+\frac{\left \vert x\right \vert ^{2}}{1+t}\right)
^{-3/2}+e^{-\frac{t+|x|}{\widehat{c}_{0}}}%
\end{array}%
\right] \text{.}
\end{eqnarray*}%
Also, there exits a constant $\mathcal{C}_{3}>1$ depending on $\widehat{D}%
_{0}$ and $\widehat{c}_{0}$ such that%
\begin{eqnarray}
&&\left \vert f_{2,1}^{n+1}\right \vert _{L_{\xi ,\beta }^{\infty }}  \notag
\\
&\leq &2\mathcal{C}_{2}\mathcal{C}_{1}^{2}\left[ \left( 2\pi \right) ^{\frac{%
3}{4}}e^{\frac{R^{2}}{4}}\left( \frac{C\nu _{1}}{\beta }\left( 1+R\right)
+C_{\beta }\right) \right]  \label{f21-1} \\
&&\cdot \varepsilon \left \Vert f_{0}\right \Vert _{L_{\xi ,\beta }^{\infty
}L_{x}^{\infty }}\left[
\begin{array}{l}
\left( 1+t\right) ^{-2}e^{-\frac{\left( \left \vert x\right \vert -\mathbf{c}%
t\right) ^{2}}{\widehat{D}_{0}\left( 1+t\right) }}+\left( 1+t\right)
^{-3/2}e^{-\frac{\left \vert x\right \vert ^{2}}{\widehat{D}_{0}\left(
1+t\right) }} \\[2mm]
+\mathbf{1}_{\{ \left \vert x\right \vert \leq \mathbf{c}t\}}\left(
1+t\right) ^{-\frac{3}{2}}\left( 1+\frac{\left \vert x\right \vert ^{2}}{1+t}%
\right) ^{-\frac{3}{2}}+e^{-\frac{t+|x|}{\widehat{c}_{0}}}%
\end{array}%
\right]  \notag \\
&\leq &2\mathcal{C}_{3}\mathcal{C}_{2}\mathcal{C}_{1}^{2}\left[ \left( 2\pi
\right) ^{\frac{3}{4}}e^{\frac{R^{2}}{4}}\left( \frac{C\nu _{1}}{\beta }%
\left( 1+R\right) +C_{\beta }\right) \right]  \notag \\
&&\cdot \varepsilon \left \Vert f_{0}\right \Vert _{L_{\xi ,\beta }^{\infty
}L_{x}^{\infty }}\left[ \left( 1+t\right) ^{-\frac{3}{2}}\left( 1+\frac{%
\left \vert x\right \vert ^{2}}{1+t}\right) ^{-\frac{3}{2}}+\left(
1+t\right) ^{-2}\left( 1+\frac{\left( \left \vert x\right \vert -\mathbf{c}%
t\right) ^{2}}{1+t}\right) ^{-1}\right]  \notag \\
&\leq &\mathfrak{B}\varepsilon \left \Vert f_{0}\right \Vert _{L_{\xi ,\beta
}^{\infty }L_{x}^{\infty }}\left[ \left( 1+t\right) ^{-\frac{3}{2}}\left( 1+%
\frac{\left \vert x\right \vert ^{2}}{1+t}\right) ^{-\frac{3}{2}}+\left(
1+t\right) ^{-2}\left( 1+\frac{\left( \left \vert x\right \vert -\mathbf{c}%
t\right) ^{2}}{1+t}\right) ^{-1}\right] \text{.}  \notag
\end{eqnarray}

Since $f_{2,2}^{n+1}$ satisfies the equation%
\begin{equation*}
\partial _{t}f_{2,2}^{n+1}+\xi \cdot \nabla
_{x}f_{2,2}^{n+1}=Lf_{2,2}^{n+1}+\Gamma \left( f_{2}^{n},f_{2}^{n}\right)
\text{ with }f_{2,2}^{n+1}\left( 0,x,\xi \right) =0\text{, }
\end{equation*}%
we further decompose $f_{2,2}^{n+1}$ into two parts as $%
f_{2,2}^{n+1}=h_{1}^{n+1}+h_{2}^{n+1}$ where $h_{1}^{n+1}$, $h_{2}^{n+1}$
satisfy the equations
\begin{equation*}
\left \{
\begin{array}{l}
\partial _{t}h_{1}^{n+1}+\xi \cdot \nabla _{x}h_{1}^{n+1}+\nu \left( \xi
\right) h_{1}^{n+1}=\Gamma \left( f_{2}^{n},f_{2}^{n}\right) \text{,}%
\vspace {3mm}
\\
\partial _{t}h_{2}^{n+1}+\xi \cdot \nabla _{x}h_{2}^{n+1}+\nu \left( \xi
\right) h_{2}^{n+1}=Kf_{2,2}^{n+1}\text{,}%
\end{array}%
\right.
\end{equation*}%
with $h_{1}^{n+1}\left( 0,x,\xi \right) =h_{2}^{n+1}\left( 0,x,\xi \right)
=0 $. By induction hypothesis and (\ref{f21-1}),
\begin{eqnarray*}
\left \vert f_{2}^{n}\right \vert _{L_{\xi ,\beta }^{\infty }} &=&\left
\vert f_{2,1}^{n}+f_{2,2}^{n}\right \vert _{_{L_{\xi ,\beta }^{\infty }}} \\
&\leq &\left[ \mathfrak{B}\varepsilon \left \Vert f_{0}\right \Vert _{L_{\xi
,\beta }^{\infty }L_{x}^{\infty }}+2\mathfrak{C}\left( \mathfrak{B}%
\varepsilon \left \Vert f_{0}\right \Vert _{L_{\xi ,\beta }^{\infty
}L_{x}^{\infty }}\right) ^{2}\right] \\
&&\cdot \left[ \left( 1+t\right) ^{-2}\left( 1+\frac{\left( \left \vert
x\right \vert -\mathbf{c}t\right) ^{2}}{1+t}\right) ^{-1}+\left( 1+t\right)
^{-3/2}\left( 1+\frac{\left \vert x\right \vert ^{2}}{1+t}\right) ^{-3/2}%
\right] \\
&\leq &2\mathfrak{B}\varepsilon \left \Vert f_{0}\right \Vert _{L_{\xi
,\beta }^{\infty }L_{x}^{\infty }}\left[ \left( 1+t\right) ^{-2}\left( 1+%
\frac{\left( \left \vert x\right \vert -\mathbf{c}t\right) ^{2}}{1+t}\right)
^{-1}+\left( 1+t\right) ^{-3/2}\left( 1+\frac{\left \vert x\right \vert ^{2}%
}{1+t}\right) ^{-3/2}\right] \text{,}
\end{eqnarray*}%
so that%
\begin{eqnarray}
\left \vert \nu ^{-1}\Gamma \left( f_{2}^{n},f_{2}^{n}\right) \right \vert
_{L_{\xi ,\beta }^{\infty }} &\leq &C\left \vert f_{2}^{n}\right \vert
_{L_{\xi ,\beta }^{\infty }}^{2}  \notag \\
&\leq &8C\left( \mathfrak{B}\varepsilon \left \Vert f_{0}\right \Vert
_{L_{\xi ,\beta }^{\infty }L_{x}^{\infty }}\right) ^{2}  \label{gamma-f2} \\
&&\cdot \left[ \left( 1+t\right) ^{-4}\left( 1+\frac{\left( \left \vert
x\right \vert -\mathbf{c}t\right) ^{2}}{1+t}\right) ^{-2}+\left( 1+t\right)
^{-3}\left( 1+\frac{\left \vert x\right \vert ^{2}}{1+t}\right) ^{-3}\right]
\text{.}  \notag
\end{eqnarray}
In view of Lemmas \ref{Lemma-gain decay-nonmoving} and \ref{Lemma-gain
decay-moving}, we have
\begin{eqnarray}
&&\left \vert \left \langle \xi \right \rangle ^{\beta }h_{1}^{n+1}\right
\vert  \notag \\
&=&\left \vert \left \langle \xi \right \rangle ^{\beta }\int_{0}^{t}\mathbb{%
S}^{t-\tau }\Gamma \left( f_{2}^{n},f_{2}^{n}\right) \left( \tau \right)
d\tau \right \vert  \label{h1} \\
&\leq &8C^{2}\left( \mathfrak{B}\varepsilon \left \Vert f_{0}\right \Vert
_{L_{\xi ,\beta }^{\infty }L_{x}^{\infty }}\right) ^{2}  \notag \\
&&\cdot \left[ \left( 1+t\right) ^{-4}\left( 1+\frac{\left( \left \vert
x\right \vert -\mathbf{c}t\right) ^{2}}{\left( 1+t\right) }\right)
^{-2}+\left( 1+t\right) ^{-3}\left( 1+\frac{\left \vert x\right \vert ^{2}}{%
\left( 1+t\right) }\right) ^{-3}\right]  \notag \\
&\leq &\mathfrak{C}\left( \mathfrak{B}\varepsilon \left \Vert f_{0}\right
\Vert _{L_{\xi ,\beta }^{\infty }L_{x}^{\infty }}\right) ^{2}\left[ \left(
1+t\right) ^{-4}\left( 1+\frac{\left( \left \vert x\right \vert -\mathbf{c}%
t\right) ^{2}}{\left( 1+t\right) }\right) ^{-2}+\left( 1+t\right)
^{-3}\left( 1+\frac{\left \vert x\right \vert ^{2}}{\left( 1+t\right) }%
\right) ^{-3}\right] \text{.}  \notag
\end{eqnarray}

As for $h_{2}^{n+1}$, we have%
\begin{equation*}
\left \vert \left \langle \xi \right \rangle ^{\beta }h_{2}^{n+1}\right
\vert =\left \vert \left \langle \xi \right \rangle ^{\beta }\int_{0}^{t}%
\mathbb{S}^{t-\tau }K\left( f_{2,2}^{n+1}\right) \left( \tau ,x,\xi \right)
d\tau \right \vert \text{.}
\end{equation*}%
It follows from Lemma \ref{pro1} that
\begin{equation*}
\left \vert \nu \left( \xi \right) ^{-1}\left \langle \xi \right \rangle
^{\beta }Kf_{2,2}^{n+1}\right \vert _{L_{\xi }^{\infty }}\leq \frac{C}{\nu
_{0}}\left \vert f_{2,2}^{n+1}\right \vert _{L_{\xi ,\beta -1}^{\infty }}%
\text{.}
\end{equation*}%
In view of Lemma \ref{Green} and (\ref{gamma-f2}), together with convolution
estimates in Section \ref{nonlinear wave interaction}, to be more specific,
Lemmas \ref{Easy1}--\ref{nonmoving-moving}, we obtain
\begin{eqnarray*}
\left \vert f_{2,2}^{n+1}\right \vert _{L_{\xi ,\beta -1}^{\infty }}
&=&\left \vert \int_{0}^{t}\left \langle \xi \right \rangle ^{\beta -1}%
\mathbb{G}^{t-\tau }\Gamma \left( f_{2}^{n},f_{2}^{n}\right) \left( \tau
\right) d\tau \right \vert \\
&\leq &\mathcal{C}_{1}\nu _{1}\left[
\begin{array}{l}
\left( 1+t\right) ^{-\frac{5}{2}}e^{-\frac{\left( \left \vert x\right \vert -%
\mathbf{c}t\right) ^{2}}{D_{0}\left( 1+t\right) }}+\left( 1+t\right)
^{-2}e^{-\frac{\left \vert x\right \vert ^{2}}{D_{0}\left( 1+t\right) }} \\%
[2mm]
+\mathbf{1}_{\{ \left \vert x\right \vert \leq \mathbf{c}t\}}\left(
1+t\right) ^{-2}\left( 1+\frac{\left \vert x\right \vert ^{2}}{1+t}\right)
^{-3/2}+e^{-\frac{t+|x|}{c_{0}}}%
\end{array}%
\right] \ast _{x,t}\left \vert \nu ^{-1}\Gamma \left(
f_{2}^{n},f_{2}^{n}\right) \right \vert _{L_{\xi ,\beta }^{\infty }} \\
&\leq &\mathcal{C}_{1}\nu _{1}8C\left( \mathfrak{B}\varepsilon \left \Vert
f_{0}\right \Vert _{L_{\xi ,\beta }^{\infty }L_{x}^{\infty }}\right) ^{2}%
\left[
\begin{array}{l}
\left( 1+t\right) ^{-\frac{5}{2}}e^{-\frac{\left( \left \vert x\right \vert -%
\mathbf{c}t\right) ^{2}}{D_{0}\left( 1+t\right) }}+\left( 1+t\right)
^{-2}e^{-\frac{\left \vert x\right \vert ^{2}}{D_{0}\left( 1+t\right) }} \\%
[2mm]
+\mathbf{1}_{\{ \left \vert x\right \vert \leq \mathbf{c}t\}}\left(
1+t\right) ^{-2}\left( 1+\frac{\left \vert x\right \vert ^{2}}{1+t}\right)
^{-3/2}+e^{-\frac{t+|x|}{c_{0}}}%
\end{array}%
\right] \\
&&\ast _{x,t}\left[ \left( 1+t\right) ^{-4}\left( 1+\frac{\left( \left \vert
x\right \vert -\mathbf{c}t\right) ^{2}}{1+t}\right) ^{-2}+\left( 1+t\right)
^{-3}\left( 1+\frac{\left \vert x\right \vert ^{2}}{1+t}\right) ^{-3}\right]
\\
&\leq &8C^{2}\mathcal{C}_{1}\nu _{1}\left( \mathfrak{B}\varepsilon \left
\Vert f_{0}\right \Vert _{L_{\xi ,\beta }^{\infty }L_{x}^{\infty }}\right)
^{2} \\
&&\cdot \left[ \left( 1+t\right) ^{-\frac{5}{2}}\left( 1+\frac{\left( \left
\vert x\right \vert -\mathbf{c}t\right) ^{2}}{1+t}\right) ^{-1}+\left(
1+t\right) ^{-2}\left( 1+\frac{\left \vert x\right \vert ^{2}}{1+t}\right)
^{-\frac{3}{2}}\right] \text{.}
\end{eqnarray*}%
Therefore, using Lemmas \ref{Lemma-gain decay-nonmoving} and \ref{Lemma-gain
decay-moving},
\begin{eqnarray}
&&\left \vert h_{2}^{n+1}\right \vert _{L_{\xi ,\beta }^{\infty }}  \notag \\
&\leq &C\cdot \frac{C}{\nu _{0}}8C^{2}\mathcal{C}_{1}\nu _{1}\left(
\mathfrak{B}\varepsilon \left \Vert f_{0}\right \Vert _{L_{\xi ,\beta
}^{\infty }L_{x}^{\infty }}\right) ^{2}  \label{h2} \\
&&\cdot \left[ \left( 1+t\right) ^{-2}\left( 1+\frac{\left \vert x\right
\vert ^{2}}{1+t}\right) ^{-\frac{3}{2}}+\left( 1+t\right) ^{-\frac{5}{2}%
}\left( 1+\frac{\left( \left \vert x\right \vert -\mathbf{c}t\right) ^{2}}{%
1+t}\right) ^{-1}\right]  \notag \\
&\leq &\mathfrak{C}\left( \mathfrak{B}\varepsilon \left \Vert f_{0}\right
\Vert _{L_{\xi ,\beta }^{\infty }L_{x}^{\infty }}\right) ^{2}\left[ \left(
1+t\right) ^{-2}\left( 1+\frac{\left \vert x\right \vert ^{2}}{1+t}\right)
^{-\frac{3}{2}}+\left( 1+t\right) ^{-\frac{5}{2}}\left( 1+\frac{\left( \left
\vert x\right \vert -\mathbf{c}t\right) ^{2}}{1+t}\right) ^{-1}\right]
\notag
\end{eqnarray}%
after we choose $\mathfrak{C}>0$ sufficiently large. Combining (\ref{h1})
and (\ref{h2}) gives%
\begin{equation*}
\left \vert f_{2,2}^{n+1}\right \vert _{L_{\xi ,\beta }^{\infty }}\leq 2%
\mathfrak{C}\left( \mathfrak{B}\varepsilon \left \Vert f_{0}\right \Vert
_{L_{\xi ,\beta }^{\infty }L_{x}^{\infty }}\right) ^{2}\left[ \left(
1+t\right) ^{-2}\left( 1+\frac{\left \vert x\right \vert ^{2}}{1+t}\right)
^{-\frac{3}{2}}+\left( 1+t\right) ^{-\frac{5}{2}}\left( 1+\frac{\left( \left
\vert x\right \vert -\mathbf{c}t\right) ^{2}}{1+t}\right) ^{-1}\right] \text{%
.}
\end{equation*}%
Consequently, we get the desired estimates for $f_{1,2}^{n}$, $f_{2,1}^{n}$,
$f_{2,2}^{n}$ by induction on $n$.

Finally, it is straightforward to prove that $\left \{ \left(
f_{1}^{n},f_{2}^{n}\right) \right \} $ is a Cauchy sequence in $L_{\xi
,\beta }^{\infty }\left( L_{x}^{2}\cap L_{x}^{\infty }\right) $. As a
consequence, $\left( f_{1}^{n},f_{2}^{n}\right) $ converges to $\left(
f_{1},f_{2}\right) $ in $L_{\xi ,\beta }^{\infty }\left( L_{x}^{2}\cap
L_{x}^{\infty }\right) $ and $\left( f_{1},f_{2}\right) $ solves the problem
(\ref{decom-System}) with%
\begin{equation*}
\left \vert f_{1}\right \vert _{L_{\xi ,\beta }^{\infty }}\leq 2\mathcal{C}%
_{1} \varepsilon \left \Vert f_{0}\right \Vert _{L_{\xi ,\beta }^{\infty
}L_{x}^{\infty }}e^{-\frac{\nu _{0}}{2}\left( t+\left \vert x\right \vert
\right) }\text{,}
\end{equation*}%
and
\begin{align*}
&\quad \left \vert f_{2}\right \vert _{L_{\xi ,\beta }^{\infty }} \\
&\leq \mathfrak{B}\varepsilon \left \Vert f_{0}\right \Vert _{L_{\xi ,\beta
}^{\infty }L_{x}^{\infty }}\left[
\begin{array}{l}
\left( 1+t\right) ^{-2}e^{-\frac{\left( \left \vert x\right \vert -\mathbf{c}%
t\right) ^{2}}{\widehat{D}_{0}\left( 1+t\right) }}+\left( 1+t\right)
^{-3/2}e^{-\frac{\left \vert x\right \vert ^{2}}{\widehat{D}_{0}\left(
1+t\right) }} \\[2mm]
+\mathbf{1}_{\{ \left \vert x\right \vert \leq \mathbf{c}t\}}\left(
1+t\right) ^{-3/2}\left( 1+\frac{\left \vert x\right \vert ^{2}}{1+t}\right)
^{-3/2}+e^{-\frac{t+|x|}{\widehat{c}_{0}}}%
\end{array}%
\right] \\
&\quad+2\mathfrak{C}\left( \mathfrak{B}\varepsilon \left \Vert f_{0}\right
\Vert _{L_{\xi ,\beta }^{\infty }L_{x}^{\infty }}\right) ^{2 }\left[ \left(
1+t\right) ^{-2}\left( 1+\frac{\left \vert x\right \vert ^{2}}{1+t}\right)
^{-\frac{3}{2}}+\left( 1+t\right) ^{-\frac{5}{2}}\left( 1+\frac{\left( \left
\vert x\right \vert -\mathbf{c}t\right) ^{2}}{1+t}\right) ^{-1}\right] \,.
\end{align*}%
The proof of Theorem \ref{thm-main} is completed.
\end{proof}

\section{Exponential decay outside the wave cone}

\label{weighted}

In Theorem \ref{thm-main}, the estimate for $f_{1}$ is exponentially sharp,
while the estimate of $f_{2}$ is only polynomially sharp. The reason is
that, to facilitate the closure of nonlinearity, we focused on the structure
inside sound wave cone, so we chose a polynomial ansatz. However, since our
initial data is compactly supported in space and $f_{2}$ represents the
fluid part with an essentially finite propagation speed. Therefore, we
expect a faster decay in the space-like region.

In this section, we will improve the behavior of $f_{2}$ outside the sound
wave cone. Indeed, we can prove that $f_{2}$ has exponential decay both in
space and time there. The result is stated as follows.

\begin{theorem}
\label{outside the cone}Under the same assumption of Theorem \ref{thm-main},
for any $0<\delta \ll 1$, if $\varepsilon >0$ is sufficiently small, there
exists a large positive constant $D$ depending on $\delta $, such that for $%
\left \vert x\right \vert >\left( \mathbf{c}+\delta \right) t$,
\begin{equation*}
\left \vert f_{2}\right \vert _{L_{\xi ,\beta }^{\infty }}\leq C_{\delta }%
\eps e^{-\frac{\left \langle x\right \rangle +t}{D}}\text{,}
\end{equation*}%
where the constant $C_{\delta }>0$ is independent of time and $C_{\delta
}\rightarrow \infty $ as $\delta \rightarrow 0$.
\end{theorem}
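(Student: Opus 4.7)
The plan is to promote the polynomial estimate of $f_2$ from Theorem \ref{thm-main} to genuine space-time exponential decay in the region $|x| > (\mathbf{c}+\delta)t$ by a weighted energy argument. I would introduce a weight of the form $w(t,x) = \exp(\kappa \phi(t,x))$ where $\phi$ is smooth, behaves like $(|x|+t)/D$ in the far space-like region $|x| \geq (\mathbf{c}+\delta)t$ and vanishes in the enlarged sound cone $|x| \leq (\mathbf{c}+\tfrac{\delta}{2})t$, with parameters $\kappa$, $D$ to be chosen. The weighted unknown $g = w f_2$ then solves
\[
\partial_t g + \xi \cdot \nabla_x g = L g + \kappa (\partial_t \phi + \xi \cdot \nabla_x \phi)\, g + w \mathcal{K}_b f_1 + w \Gamma(f_2, f_2),
\]
so the analysis reduces to controlling the sign-indefinite zeroth-order term $\kappa(\partial_t \phi + \xi \cdot \nabla_x \phi)$ against the dissipation of $L$, together with the two forcings on the right.

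First I would check that $w \mathcal{K}_b f_1$ is harmless: since $f_1$ decays like $e^{-\nu_0(t+|x|)/2}$ by Theorem \ref{thm-main}, choosing $\kappa/D$ much smaller than $\nu_0$ keeps $w\, \mathcal{K}_b f_1$ exponentially small in $L^\infty_{\xi,\beta}$ uniformly in $(t,x)$. The nonlinear piece $w\,\Gamma(f_2,f_2)$ is handled by a bootstrap: I would put the polynomial bound on $f_2$ from Theorem \ref{thm-main} on one factor and the weighted bound being established on the other factor, then close by the smallness of $\varepsilon$ using the bilinear estimate of Lemma \ref{pro1}.

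The core of the proof is the sign of the commutator. For $|\xi|\gg 1$, $\kappa|\xi \cdot \nabla_x \phi|$ is absorbed by $-\nu(\xi)\sim -\langle\xi\rangle$ as long as $\kappa$ is sufficiently small. For bounded $\xi$, I would do a macro-micro decomposition: the microscopic part $\mathrm{P}_1 g$ is controlled by the spectral gap of $L$, while the macroscopic part $\mathrm{P}_0 g$ receives no dissipation from $L$, so the sign of $\partial_t \phi + \xi \cdot \nabla_x \phi$ composed with $\mathrm{P}_0$ must itself furnish dissipation. This reduces to the eigenvalue structure \eqref{speed} of $\mathrm{P}_0(\xi \cdot \omega)\mathrm{P}_0$, whose largest eigenvalue is exactly $\mathbf{c} = \sqrt{5/3}$. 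Constructing $\phi$ so that $\nabla_x \phi$ is essentially radial and $\partial_t \phi \geq (\mathbf{c}+\tfrac{3\delta}{4})|\nabla_x \phi|$ on the active region yields a strict macroscopic dissipation of order $\kappa\delta$, which is the quantitative reason only the sharp speed $\mathbf{c}+\delta$ is needed.

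The main obstacle will be exactly this last step: ensuring that the sharp velocity $\mathbf{c}+\delta$ (rather than the much larger $M$ used in \cite{[LinWangLyuWu]}) still leaves a positive dissipation after commuting the weight with the high/low velocity cutoff and with the projections onto the eigenmodes $E_j$ from \eqref{speed}. Once the closed weighted $L^2$ bound is in hand by pairing the equation for $g$ against $g$, I would upgrade to $L^\infty_{\xi,\beta}$ by writing $g$ in mild form along the characteristics of $\mathbb{S}^t$ and applying the propagation estimates of Lemmas \ref{Lemma-gain decay}--\ref{Lemma-gain decay-moving} together with the Green's function bound of Lemma \ref{Green}, thereby transferring the spatial-temporal exponential decay of the weight to the pointwise bound claimed in the theorem. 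The blow-up $C_\delta \to \infty$ as $\delta \to 0$ is forced by the requirement $\kappa \lesssim \delta$ needed to make the macroscopic commutator dissipative.
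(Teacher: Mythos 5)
Your proposal captures the central mechanism of the paper's proof — a weighted energy estimate in which the spectral structure of $\mathrm{P}_{0}(\xi\cdot\omega)\mathrm{P}_{0}$ (eigenvalues $\pm\mathbf{c}$, $0$, from \eqref{speed}) is exactly what lets the dissipation overcome the zeroth-order commutator when the weight is tuned to the slightly enlarged cone $|x|\leq(\mathbf{c}+O(\delta))t$, and in which the smallness $\kappa\lesssim\delta$ (paper: $M=\mathbf{c}+25\delta$, $\ell$ large) forces $C_{\delta}\to\infty$ as $\delta\to0$. The treatment of the forcing $w\mathcal{K}_{b}f_{1}$ via the already-known space-time exponential decay of $f_{1}$, and the bootstrap on $w\Gamma(f_{2},f_{2})$ using the polynomial bound on one factor, also match the paper's strategy. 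The paper's actual weight is the simpler $w(t,x)=\exp\!\big((\langle x\rangle-Mt)/\ell\big)$, defined globally rather than cut off inside the cone; this avoids having to regularize and control derivatives of a truncated $\phi$, but your cutoff construction is not intrinsically problematic.

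The genuine gap is in your final step, the upgrade from the weighted $L^{2}$ bound to the $L^{\infty}_{\xi,\beta}L^{\infty}_{x}$ conclusion. You propose to "write $g$ in mild form along the characteristics of $\mathbb{S}^{t}$ and apply the propagation estimates of Lemmas~\ref{Lemma-gain decay}--\ref{Lemma-gain decay-moving} together with the Green's function bound of Lemma~\ref{Green}." None of these tools apply to the weighted unknown: Lemmas~\ref{Lemma-gain decay}--\ref{Lemma-gain decay-moving} act with the bare damped transport semigroup $\mathbb{S}^{t}$ (collision frequency $\nu$) on sources of very specific spatio-temporal shape (exponential, stationary polynomial, moving polynomial), while $u_{2}=wf_{2}$ solves an equation with the modified frequency $\widetilde{\nu}=\nu+w^{-1}(\partial_{t}w+\xi\cdot\nabla_{x}w)$ and does not have that shape; and $\mathbb{G}^{t}$ is the semigroup of $\partial_{t}+\xi\cdot\nabla_{x}-L$ with no commutator term, so it cannot be used to invert the weighted operator. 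The paper's route is different and more robust at this stage: it performs a Picard-type wave–remainder decomposition $u_{2}=W_{w}^{(m)}+\mathcal{R}_{w}^{(m)}$ (with $m=6$ iterates), proves pointwise $L^{\infty}_{\xi,\beta}$ and $L^{2}$ bounds on the explicitly-computed iterates $u_{2}^{(j)}$ (Lemmas~\ref{Estimate-u^(j)}, \ref{Regularization -u^(6)}), runs the weighted energy estimate only on the remainder to obtain an $H^{2}_{x}$ bound (Proposition~\ref{Regularization-R^(6)}), passes to $L^{2}_{\xi}L^{\infty}_{x}$ by Sobolev embedding, and then gains one more velocity smoothing step — $K:L^{2}_{\xi}\to L^{\infty}_{\xi,\beta}$ from Lemma~\ref{pro1} applied inside the Duhamel formula with $e^{-\widetilde{\nu}(t-\tau)}$ — to reach $L^{\infty}_{\xi,\beta}L^{\infty}_{x}$, finally closing with a bootstrap in $\varepsilon$. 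Without this iteration-and-regularization scheme (or an equivalent one), your plan does not close, so this step needs to be reworked even though the core energy computation is sound.
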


To attain this end, we consider the weighted nonlinear equation
corresponding to $\left( \ref{decom-System}\right) $. That is, let $%
u=f_{w}:=wf=u_{1}+\sqrt{\mathcal{M}}u_{2}$, $u_{i}=wf_{i}$ $\left( i=1\text{%
, }2\right) $, where the weight function is given by
\begin{equation*}
w\left( t,x\right) =\exp \left( \frac{\left \langle x\right \rangle -Mt}{%
\ell }\right) \text{,}
\end{equation*}%
$\mathbf{c}<M<\mathbf{c}+1$ and sufficiently large $\ell >0$. Note that by
Theorem \ref{thm-main}, $f_{1}$ decays in space and time exponentially, so
that $u_{1}$ can be controlled if $\ell $ is large enough, that is, there
exists $c_{0}>0$ such that
\begin{equation*}
\Vert u_{1}\Vert _{L_{\xi ,\beta }^{\infty }L_{x}^{\infty }}\text{, }\Vert
u_{1}\Vert _{L_{\xi ,\beta }^{\infty }L_{x}^{2}}\lesssim \eps e^{-c_{0}t}.
\end{equation*}%
In view of $\left( \ref{decom-System}\right) $, $u_{2}$ satisfies the
equation
\begin{equation*}
\left \{
\begin{array}{l}
\partial _{t}u_{2}+\xi \cdot \nabla _{x}u_{2}-w^{-1}\left( \partial
_{t}w+\xi \cdot \nabla _{x}w\right) u_{2}=Lu_{2}+\mathcal{K}_{b}u_{1}+\Gamma
(f_{2},u_{2})%
\vspace {3mm}
\\
u_{2}\left( 0,x,\xi \right) =0\text{.}%
\end{array}%
\right.  \label{Eq-u2}
\end{equation*}%
After choosing $\ell >0$ large such that \ $M\ell ^{-1}$ is small, we have
\begin{equation*}
\widetilde{\nu }\left( t,x,\xi \right) =\nu \left( \xi \right) +w^{-1}\left(
\partial _{t}w+\xi \cdot \nabla _{x}w\right) \geq \frac{3}{4}\nu \left( \xi
\right) \text{.}
\end{equation*}%
Under this situation, we are ready to estimate $u_{2}$. Let $T>0$ be a
finite number. Denote
\begin{equation*}
C_{u_{2},T}^{\infty }=\eps^{-1}\sup_{0\leq t\leq T}\left \Vert u_{2}\right
\Vert _{L_{\xi ,\beta }^{\infty }L_{x}^{\infty }}\text{, \  \  \  \ }%
C_{u_{2},T}^{2}=\eps^{-1}\sup_{0\leq t\leq T}\left \Vert u_{2}\right \Vert
_{L_{\xi ,\beta }^{\infty }L_{x}^{2}}\text{.}
\end{equation*}%
Also denote
\begin{equation*}
\begin{array}{ccc}
C_{f_{2}}^{\infty }=\eps^{-1}\sup \limits_{t\geq 0}\left( 1+t\right)
^{3/2}\left \Vert f_{2}\right \Vert _{L_{\xi ,\beta }^{\infty }L_{x}^{\infty
}}\text{,} &  & C_{f_{2}}^{2}=\eps^{-1}\sup \limits_{t\geq 0}\left(
1+t\right) ^{3/4}\left \Vert f_{2}\right \Vert _{L_{\xi ,\beta }^{\infty
}L_{x}^{2}}\text{,}%
\end{array}%
\end{equation*}%
which are finite due to Theorem \ref{thm-main}.

To estimate $u_{2}$, we design a Picard-type iteration: the zeroth order
approximation $u_{2}^{\left( 0\right) }$ is defined as%
\begin{equation*}
\partial _{t}u_{2}^{\left( 0\right) }+\xi \cdot \nabla _{x}u_{2}^{\left(
0\right) }+\widetilde{\nu }\left( \xi \right) u_{2}^{\left( 0\right) }=%
\mathcal{K}_{b}u_{1}+\Gamma(f_{2}, u_{2})\text{, \ }u_{2}^{\left( 0\right)
}\left( 0,x,\xi \right) =0\text{,\ }
\end{equation*}%
and define the $j$th order approximation $u_{2}^{\left( j\right) }$, $j\geq
1 $, inductively as
\begin{equation*}
\partial _{t}u_{2}^{\left( j\right) }+\xi \cdot \nabla _{x}u_{2}^{\left(
j\right) }+\widetilde{\nu }\left( \xi \right) u_{2}^{\left( j\right)
}=Ku_{2}^{\left( j-1\right) }\text{, \ }u_{2}^{\left( j-1\right) }\left(
0,x,\xi \right) =0\text{.}
\end{equation*}%
Thus, the wave part and the remainder part are defined respectively as
follows:%
\begin{equation*}
W_{w}^{(m)}=\sum_{j=0}^{m}u_{2}^{\left( j\right) }\text{, \ }\mathcal{R}%
_{w}^{\left( m\right) }=u_{2}-W_{w}^{\left( m\right) }\text{,}
\end{equation*}%
$\mathcal{R}_{w}^{\left( m\right) }$ solving the equation%
\begin{equation*}
\partial _{t}\mathcal{R}_{w}^{\left( m\right) }+\xi \cdot \nabla _{x}%
\mathcal{R}_{w}^{\left( m\right) } +\widetilde{\nu }\left( \xi \right)%
\mathcal{R}_{w}^{\left( m\right) }=K\mathcal{R}_{w}^{\left( m\right)
}+Ku_{2}^{\left( m\right) }\text{, \ }\mathcal{R}_{w}^{\left( m\right)
}\left( 0,x,\xi \right) =0\text{.}  \label{Remainder}
\end{equation*}

In fact, $m=6$ is enough to get the $H_{x}^{2}$ regularization estimate for
the remainder part. Following a similar argument as \cite{[LinWangLyuWu]},
we have

\begin{lemma}
\label{Estimate-u^(j)}Let $\beta >4$ be large enough. Then for $0\leq t\leq
T $,%
\begin{equation*}
\left \Vert u_{2}^{\left( j\right) }\right \Vert _{L_{\xi ,\beta }^{\infty
}L_{x}^{\infty }}\lesssim \eps e^{-c_{0}t}+\eps^{2}C_{u_{2},T}^{\infty
}\left( 1+t\right) ^{-\frac{3}{2}}\text{, \  \ }\left \Vert u_{2}^{\left(
j\right) }\right \Vert _{L_{\xi ,\beta }^{\infty }L_{x}^{2}}\lesssim \eps %
e^{-c_{0}t}+\eps^{2} C_{u_{2},T}^{2}\left( 1+t\right) ^{-\frac{3}{2}}\text{,}
\end{equation*}%
for all integers $j\geq 0$.
\end{lemma}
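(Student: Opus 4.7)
The plan is to solve each $u_2^{(j)}$ along the characteristics of the damped transport operator $\partial_t+\xi\cdot\nabla_x+\widetilde{\nu}(\xi)$, and to propagate the decay structure from the source through the iteration by induction on $j$. For the base case $j=0$, I would write, by Duhamel,
$$
u_2^{(0)}(t,x,\xi)=\int_0^t e^{-\int_\tau^t\widetilde{\nu}(s,x-\xi(t-s),\xi)\,ds}\bigl[\mathcal{K}_b u_1+\Gamma(f_2,u_2)\bigr](\tau,x-\xi(t-\tau),\xi)\,d\tau,
$$
use $\widetilde{\nu}\geq \tfrac{3}{4}\nu(\xi)$, and split the source. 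Since $\mathcal{K}_b=\mathcal{M}^{-1/2}\chi_{\{|\xi|<R\}}\mathcal{K}$ and $\mathcal{M}^{-1/2}\lesssim e^{R^2/4}$ on $\{|\xi|<R\}$, Lemma \ref{Estimate-KQ} gives $|\mathcal{K}_b u_1|_{L^\infty_{\xi,\beta}}\lesssim|u_1|_{L^\infty_{\xi,\beta}}\lesssim \varepsilon e^{-c_0\tau}$, while Lemma \ref{pro1} gives $|\nu^{-1}\Gamma(f_2,u_2)|_{L^\infty_{\xi,\beta}}\lesssim|f_2|_{L^\infty_{\xi,\beta}}|u_2|_{L^\infty_{\xi,\beta}}\lesssim \varepsilon^2 C_{f_2}^\infty C_{u_2,T}^\infty(1+\tau)^{-3/2}$ by the definitions of $C_{f_2}^\infty$ and $C_{u_2,T}^\infty$. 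Combined with the standard time estimates
$$
\int_0^t\nu(\xi)e^{-\tfrac{3}{4}\nu(\xi)(t-\tau)}(1+\tau)^{-3/2}\,d\tau\lesssim(1+t)^{-3/2}, \qquad \int_0^t\nu(\xi)e^{-\tfrac{3}{4}\nu(\xi)(t-\tau)}e^{-c_0\tau}\,d\tau\lesssim e^{-c_0 t}
$$
(the second valid after choosing $c_0<\tfrac{3}{4}\nu_0$), this closes the $L^\infty_{\xi,\beta}L^\infty_x$ bound at $j=0$.

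The $L^\infty_{\xi,\beta}L^2_x$ estimate follows by applying Minkowski in $L^2_x$ inside the Duhamel integral, noting that the shift $x\mapsto x-\xi(t-\tau)$ is measure preserving, and using the pointwise-in-$\xi$ bound $\langle\xi\rangle^\beta\|\Gamma(f_2,u_2)(\tau,\cdot,\xi)\|_{L^2_x}\lesssim \nu(\xi)\|f_2(\tau)\|_{L^\infty_{\xi,\beta}L^\infty_x}\|u_2(\tau)\|_{L^\infty_{\xi,\beta}L^2_x}$, so the $(1+\tau)^{-3/2}$ factor is still supplied by $f_2$. For the inductive step $j\to j+1$, the source of $u_2^{(j+1)}$ is simply $Ku_2^{(j)}$; Lemma \ref{pro1} yields $|Ku_2^{(j)}|_{L^\infty_{\xi,\beta}}\lesssim|u_2^{(j)}|_{L^\infty_{\xi,\beta-1}}\leq|u_2^{(j)}|_{L^\infty_{\xi,\beta}}$ and a parallel pointwise-in-$\xi$ bound for the $L^2_x$ norm. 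Feeding the inductive hypothesis $\varepsilon e^{-c_0\tau}+\varepsilon^2 C^\bullet_{u_2,T}(1+\tau)^{-3/2}$ into the same damped-transport time integral reproduces the claimed bound, since both the exponential and the polynomial parts are stable under convolution with $\nu(\xi)e^{-\tfrac{3}{4}\nu(\xi)(t-\tau)}$. Each iteration merely multiplies the constant by the bounded operator norm of $K$, and only $j\leq 6$ will be needed in the subsequent regularization argument, so uniformity in $j$ is not required.

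The main subtlety lies in the base case: the nonlinear source $\Gamma(f_2,u_2)$ contains $u_2$ itself, which a priori is only bounded by $\varepsilon C^\infty_{u_2,T}$ with no time decay. The sharp rate $(1+t)^{-3/2}$ must therefore be recovered entirely from the decay of $f_2$ inside the bilinear pairing. In the $L^2_x$ estimate this dictates the specific choice of slots, placing $f_2$ in $L^\infty_x$ and $u_2$ in $L^2_x$ (rather than the reverse), which is permitted by the bilinear structure of $\Gamma$ combined with Lemma \ref{pro1}. Once this pairing is in place, no smallness of $\varepsilon$ is needed at the level of this lemma — the smallness will be invoked later when the a priori constants $C^\infty_{u_2,T}$, $C^2_{u_2,T}$ are absorbed in a continuity argument.
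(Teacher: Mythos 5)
Your proposal is correct and follows the natural route one would expect for this lemma; the paper itself does not spell out a proof (it simply says ``Following a similar argument as \cite{[LinWangLyuWu]}''), so there is no in-paper argument to compare against, but yours is the standard Duhamel/iteration computation and all the key bounds you invoke are valid. The choice of slots in the bilinear term (placing $f_2$ in $L^\infty_x$ to harvest the $(1+\tau)^{-3/2}$ rate, and $u_2$ in $L^2_x$ to land on $C^2_{u_2,T}$) is indeed the crux of the $L^2_x$ estimate, and you identify it explicitly.

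One step deserves a slightly more careful justification than ``Lemma~\ref{pro1} plus the bilinear structure.'' Lemma~\ref{pro1} is a pointwise-in-$x$ statement, $\langle\xi\rangle^\beta\nu^{-1}(\xi)|\Gamma(g,h)(x,\xi)|\lesssim|g(x,\cdot)|_{L^\infty_{\xi,\beta}}|h(x,\cdot)|_{L^\infty_{\xi,\beta}}$. Taking $L^2_x$ of the right-hand side and applying H\"older gives $\|g\|_{L^\infty_{\xi,\beta}L^\infty_x}\,\bigl\| |h(\cdot,\cdot)|_{L^\infty_{\xi,\beta}}\bigr\|_{L^2_x}$, and the last factor is $\|h\|_{L^2_x L^\infty_{\xi,\beta}}$, which dominates (not is dominated by) the $\|h\|_{L^\infty_{\xi,\beta}L^2_x}$ norm you want. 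The correct derivation of
\[
\langle\xi\rangle^\beta\|\Gamma(f_2,u_2)(\tau,\cdot,\xi)\|_{L^2_x}\lesssim \nu(\xi)\|f_2(\tau)\|_{L^\infty_{\xi,\beta}L^\infty_x}\|u_2(\tau)\|_{L^\infty_{\xi,\beta}L^2_x}
\]
instead goes by applying Minkowski's integral inequality to the $(\xi_*,n)$-integral inside $\Gamma$ \emph{before} invoking the kernel estimates: push $\|\cdot\|_{L^2_x}$ through the velocity integral, then bound $\|f_2(\cdot,\xi'_*)u_2(\cdot,\xi')\|_{L^2_x}\leq\|f_2(\cdot,\xi'_*)\|_{L^\infty_x}\|u_2(\cdot,\xi')\|_{L^2_x}$, and finally use the same $\langle\xi\rangle$-weighted kernel bound that underlies Lemma~\ref{pro1}. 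This is a standard refinement and the conclusion is as you state; I flag it only because the shortcut as written would land on the wrong mixed norm.

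A couple of smaller checks that you glossed over but which do work: the time-integral estimate $\int_0^t\nu(\xi)e^{-\frac34\nu(\xi)(t-\tau)}(1+\tau)^{-3/2}\,d\tau\lesssim(1+t)^{-3/2}$ holds uniformly in $\xi$ because $a\mapsto a e^{-a(t-\tau)}$ with $a\geq\frac34\nu_0$ can be absorbed by exponential time decay on $\tau\leq t/2$; and in the inductive step, $|Ku_2^{(j)}|_{L^\infty_{\xi,\beta}}\lesssim|u_2^{(j)}|_{L^\infty_{\xi,\beta-1}}\leq|u_2^{(j)}|_{L^\infty_{\xi,\beta}}$, so one genuinely closes the $L^\infty_{\xi,\beta}$ norm without needing extra $\xi$-weight, and the constant growth in $j$ is irrelevant since only $j\leq 7$ occurs.
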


\begin{lemma}
\label{Regularization -u^(6)}For $0\leq t\leq T$,%
\begin{equation*}
\left \Vert u_{2}^{\left( 6\right) }\right \Vert _{L_{\xi
}^{2}H_{x}^{2}}\lesssim \eps e^{-c_{0}t}+\eps^{2} C_{u_{2},T}^{2}\left(
1+t\right) ^{-\frac{3}{2}} \text{.}
\end{equation*}
\end{lemma}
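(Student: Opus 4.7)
The plan is to propagate the $L^2_\xi L^2_x$ bound on $u_2^{(j)}$ given by Lemma~\ref{Estimate-u^(j)} up to the desired $L^2_\xi H^2_x$ bound by exploiting the classical regularization effect of the linearized collision kernel $K$ composed with damped transport. Iterating Duhamel's formula for the equation $(\partial_t + \xi\cdot\na_x + \tilde\nu)u_2^{(j)} = K u_2^{(j-1)}$ six times represents $u_2^{(6)}$ as a six-fold time simplex integral in which six copies of $K$ alternate with damped transport operators, ultimately acting on $u_2^{(0)}$. Each sandwich of the form $K \circ (\text{damped transport over time }\tau) \circ K$ gains fractional spatial regularity: the transport factor shifts the spatial argument by $\tau\xi$, so after the change of variables $z = x - \tau\xi$, $x$-derivatives can be converted into $\xi$-derivatives that fall onto the smooth Hilbert--Schmidt kernel of the outer $K$, which absorbs them.

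Concretely, I would first establish a single-step smoothing estimate of the form $\|K \circ (\text{damped transport}) \circ K g\|_{L^2_\xi H^{s_0}_x} \lesssim \|g\|_{L^2_\xi L^2_x}$ for some $s_0 > 0$, following the strategy of \cite{[LinWangLyuWu]}. Three such iterations yield a gain of at least two $x$-derivatives; the choice $m = 6$ in the statement is precisely tuned to achieve this. Feeding in the base bound $\|u_2^{(0)}(s)\|_{L^\infty_{\xi,\beta} L^2_x} \lesssim \eps e^{-c_0 s} + \eps^2 C_{u_2,T}^2 (1+s)^{-3/2}$ from Lemma~\ref{Estimate-u^(j)} and integrating over the time simplex reproduces the exponential-plus-polynomial decay structure: the exponential term $\eps e^{-c_0 s}$ trivially survives each bounded $K$-application, and the polynomial decay $(1+s)^{-3/2}$ is preserved by elementary convolution estimates on the time simplex.

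The principal obstacle lies in rigorously establishing the single-step smoothing estimate in the present inhomogeneous setting. The damping $\tilde\nu(t,x,\xi) = \nu(\xi) + w^{-1}(\partial_t w + \xi\cdot\na_x w)$ depends on $x$ through the weight $w(t,x)$, so the damped transport semigroup is not translation-invariant and its integrating factor $\exp(-\int \tilde\nu\, d\sigma)$ carries $x$-dependence. However, since both $\tilde\nu - \nu(\xi)$ and its spatial derivatives are of order $O(M/\ell)$ (hence uniformly bounded and small for $\ell$ large), this $x$-dependence only produces bounded perturbative factors and does not interfere with the core smoothing mechanism. Consequently, the proof reduces to an adaptation of the regularization argument in \cite{[LinWangLyuWu]} with only routine modifications.
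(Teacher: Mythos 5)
Your proposal correctly matches the approach the paper invokes by reference: the regularization is the Liu--Yu Mixture Lemma mechanism adapted in \cite{[LinWangLyuWu]}, in which iterating Duhamel produces chains of $K$ alternating with damped transport, and transport's mixing of $x$ and $\xi$ converts the $\xi$-smoothing of $K$ into $x$-smoothing, while the exponential damping in time preserves the $\eps e^{-c_0 t}+\eps^2 C_{u_2,T}^2(1+t)^{-3/2}$ structure of the base bound. You also correctly isolate why the $x$-dependence of $\widetilde{\nu}$ is benign: its spatial derivatives are $O(|\xi|/(\ell\langle x\rangle))$, small relative to $\nu(\xi)\sim 1+|\xi|$ for $\ell$ large, so differentiating the integrating factor $\exp(-\int\widetilde{\nu})$ produces only bounded, absorbable factors and the argument of \cite{[LinWangLyuWu]} carries over with routine modifications.
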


\begin{proposition}
\label{Regularization-R^(6)}(the $H_{x}^{2}$ regularization estimate for $%
\mathcal{R}_{w}^{\left( 6\right) }$) Let $0<\delta \ll 1$. Then for $M=%
\mathbf{c}+25\delta $ and large $\ell >0$, the corresponding weighted
remainder $\mathcal{R}_{w}^{\left( 6\right) }$ satisfies
\begin{equation*}
\left \Vert \mathcal{R}_{w}^{\left( 6\right) }\right \Vert _{L_{\xi
}^{2}H_{x}^{2}}\leq C_{\delta }\left(\eps+\eps^{2} C_{u_{2},T}^{2}\right)
\end{equation*}%
for some positive constant $C_{\delta }$, $C_{\delta }\rightarrow \infty $
as $\delta \rightarrow 0$.
\end{proposition}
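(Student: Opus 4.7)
The plan is to derive the bound by a weighted $L^2_\xi H^2_x$ energy estimate on the equation for $\mathcal{R}_w^{(6)}$, treating $K u_2^{(6)}$ as a forcing that is controlled through Lemma \ref{Regularization -u^(6)}. At the $L^2_{x,\xi}$ level I would pair the equation with $\mathcal{R}_w^{(6)}$; the transport term drops by antisymmetry, the linearized operator satisfies $\langle L g, g\rangle \leq -c_0|\mathrm{P}_1 g|^2_{L^2_\sigma}$ giving dissipation on the microscopic part, and the choice of large $\ell$ together with $\widetilde{\nu}\geq \tfrac{3}{4}\nu$ furnishes an extra positive $\nu$-weighted quantity that controls $\mathrm{P}_1\mathcal{R}_w^{(6)}$ in the $L^2_\sigma$ norm. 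The forcing $\langle K u_2^{(6)}, \mathcal{R}_w^{(6)}\rangle$ is handled by Cauchy--Schwarz combined with Lemma \ref{Regularization -u^(6)}, whose right-hand side $\varepsilon e^{-c_0 t}+\varepsilon^2 C_{u_2,T}^2(1+t)^{-3/2}$ is integrable in time.

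The principal obstacle lies in producing dissipation for the hydrodynamic piece $\mathrm{P}_0\mathcal{R}_w^{(6)}$, on which $L$ degenerates. This is precisely where the assumption $M=\mathbf{c}+25\delta$ enters. Decomposing $\mathrm{P}_0\mathcal{R}_w^{(6)}$ along the eigenbasis $\{E_j\}_{j=0}^{4}$ of $\mathrm{P}_0(\xi\cdot\omega)\mathrm{P}_0$ from (\ref{speed}), whose eigenvalues $\lambda_j$ satisfy $|\lambda_j|\leq\mathbf{c}$, the weight contribution $w^{-1}(\partial_t w+\xi\cdot\nabla_x w)=\ell^{-1}\bigl(\tfrac{x\cdot\xi}{\langle x\rangle}-M\bigr)$ yields, mode by mode, a positive net damping of order $(M-|\lambda_j|)/\ell\geq\delta/\ell$. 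Combining this macroscopic dissipation with the microscopic dissipation above (via a micro--macro test analogous to the one in \cite{[LinWangLyuWu]}) one closes a coercive bound on $\|\mathcal{R}_w^{(6)}\|_{L^2}$ with a constant degenerating like $\delta^{-1}$, which is the source of the blow-up $C_\delta\to\infty$ as $\delta\to 0$.

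To upgrade to $H^2_x$ I would apply $\nabla_x^\alpha$ for $|\alpha|\leq 2$ to the equation and iterate the same weighted energy estimate. The commutators $[\nabla_x^\alpha,\,w^{-1}(\partial_t w+\xi\cdot\nabla_x w)]$ are of lower order in $\ell^{-1}$ and are therefore absorbable by the positive damping, while the forcing terms $\nabla_x^\alpha(Ku_2^{(6)})$ are controlled by Lemma \ref{pro1} and the bound $\|u_2^{(6)}\|_{L^2_\xi H^2_x}\lesssim \varepsilon e^{-c_0 t}+\varepsilon^2 C_{u_2,T}^2(1+t)^{-3/2}$ from Lemma \ref{Regularization -u^(6)}. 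Integrating the resulting differential inequality with zero initial data via Gronwall yields the stated estimate $\|\mathcal{R}_w^{(6)}\|_{L^2_\xi H^2_x}\leq C_\delta(\varepsilon+\varepsilon^2 C_{u_2,T}^2)$; the whole argument rests on the quantitative gap $M-\mathbf{c}=25\delta$, which is also the delicate step to get right.
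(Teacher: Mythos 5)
Your plan coincides in all essential respects with the paper's proof: a weighted $L^2_\xi H^2_x$ energy estimate on the equation for $\mathcal{R}_w^{(6)}$, a micro--macro split in which coercivity of $L$ controls $\mathrm{P}_1\mathcal{R}_w^{(6)}$ in $L^2_\sigma$, the eigen-decomposition of $\mathrm{P}_0(\xi\cdot\omega)\mathrm{P}_0$ (eigenvalues $\pm\mathbf{c},0,0,0$) so that the weight contribution $\ell^{-1}(\tfrac{x\cdot\xi}{\langle x\rangle}-M)$ gives a net mode-by-mode damping of at least $(M-\mathbf{c})/\ell\sim\delta/\ell$, control of the forcing $Ku_2^{(6)}$ via Cauchy--Schwarz and the time-integrable bound of Lemma \ref{Regularization -u^(6)}, and finally a time integration closing at order $\eps+\eps^2 C_{u_2,T}^2$ with a $\delta$-degenerate constant.

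One technical point is stated imprecisely. You assert that the commutators $[\nabla_x^\alpha,\,w^{-1}(\partial_t w+\xi\cdot\nabla_x w)]$ are ``of lower order in $\ell^{-1}$''; they are not --- they scale as $\ell^{-1}$, exactly like the macroscopic damping $\delta/\ell$, and do not come with an a priori $\delta$ smallness. The paper resolves this by working with the specific weighted functional
$B(t)=\|\mathcal{R}_w^{(6)}\|_{L^2}^2+\frac{\delta^2}{4\mathfrak{D}^4}\sum_i\|\partial_{x_i}\mathcal{R}_w^{(6)}\|_{L^2}^2+\bigl(\frac{\delta^2}{4\mathfrak{D}^4}\bigr)^2\sum_{i,j}\|\partial_{x_ix_j}^2\mathcal{R}_w^{(6)}\|_{L^2}^2$,
whose $\delta$-dependent factors inject the missing smallness when the cross-terms from the commutators are Young-split between the two energy levels. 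Also, the macroscopic ``damping'' is dropped rather than exploited at the end: one is left with $\frac{d}{dt}\sqrt{B(t)}\lesssim\|u_2^{(6)}\|_{L^2_\xi H^2_x}$, integrated from $B(0)=0$, rather than a Gronwall argument. These are details of execution; your conceptual outline is the same as the paper's, and in particular you correctly locate the role of the gap $M-\mathbf{c}=25\delta$ and the source of the blow-up $C_\delta\to\infty$.
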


\begin{proof}
Note that
\begin{equation*}
w^{-1}\left( \partial _{t}w+\xi \cdot \nabla _{x}w\right) =-\frac{M}{\ell }+%
\frac{\xi \cdot x}{\ell \left \langle x\right \rangle }\text{,}
\end{equation*}%
\begin{equation*}
\left \vert \partial _{x_{i}}\left[ w^{-1}\left( \partial _{t}w+\xi \cdot
\nabla _{x}w\right) \right] \right \vert \leq \frac{2\left \vert \xi \right
\vert }{\ell \left \langle x\right \rangle }\text{, \ }\left \vert \partial
_{x_{i}x_{j}}^{2}\left[ w^{-1}\left( \partial _{t}w+\xi \cdot \nabla
_{x}w\right) \right] \right \vert \leq \frac{6\left \vert \xi \right \vert }{%
\ell \left \langle x\right \rangle ^{2}}
\end{equation*}%
for $1\leq i$, $j\leq 3$. Let $0<\delta \ll 1$. Consider the quantity%
\begin{equation*}
B\left( t\right) :=\left \Vert \mathcal{R}_{w}^{\left( 6\right) }\right
\Vert _{L_{\xi }^{2}L_{x}^{2}}^{2}+\frac{\delta ^{2}}{4\mathfrak{D}^{4}}%
\sum_{i=1}^{3}\left \Vert \partial _{x_{i}}\mathcal{R}_{w}^{\left( 6\right)
}\right \Vert _{L_{\xi }^{2}L_{x}^{2}}^{2}+\left( \frac{\delta ^{2}}{4%
\mathfrak{D}^{4}}\right) ^{2}\sum_{i,j=1}^{3}\left \Vert \partial
_{x_{i}x_{j}}^{2}\mathcal{R}_{w}^{\left( 6\right) }\right \Vert _{L_{\xi
}^{2}L_{x}^{2}}^{2}\text{,}
\end{equation*}%
where the constant $\mathfrak{D}=\left( \int_{\mathbb{R}^{3}}\sum_{j=0}^{4}%
\left \vert \xi \right \vert \chi _{j}^{2}d\xi \right) ^{1/2}>1$. The direct
computation gives
\begin{eqnarray*}
\frac{1}{2}\frac{d}{dt}\left \Vert \mathcal{R}_{w}^{\left( 6\right) }\right
\Vert _{L_{\xi }^{2}L_{x}^{2}}^{2} &=&\int_{\mathbb{R}^{3}}\left \langle
\left( -\frac{M}{\ell }+\frac{\xi \cdot x}{\ell \left \langle x\right
\rangle }\right) \left( \mathrm{P}_{0}\mathcal{R}_{w}^{\left( 6\right) }+%
\mathrm{P}_{1}\mathcal{R}_{w}^{\left( 6\right) }\right) ,\left( \mathrm{P}%
_{0}\mathcal{R}_{w}^{\left( 6\right) }+\mathrm{P}_{1}\mathcal{R}_{w}^{\left(
6\right) }\right) \right \rangle _{\xi }dx \\
&&+\int_{\mathbb{R}^{n}}\left \langle L\mathcal{R}_{w}^{\left( 6\right) },%
\mathcal{R}_{w}^{\left( 6\right) }\right \rangle _{\xi }+\int_{\mathbb{R}%
^{3}}\left \langle Ku_{2}^{\left( 6\right) },\mathcal{R}_{w}^{\left(
6\right) }\right \rangle _{\xi }dx\text{,}
\end{eqnarray*}%
\begin{eqnarray*}
&&\frac{1}{2}\frac{d}{dt}\left \Vert \partial _{x_{i}}\mathcal{R}%
_{w}^{\left( 6\right) }\right \Vert _{L_{\xi }^{2}L_{x}^{2}}^{2} \\
&=&\int_{\mathbb{R}^{3}}\left \langle \left( -\frac{M}{\ell }+\frac{\xi
\cdot x}{\ell \left \langle x\right \rangle }\right) \left( \mathrm{P}%
_{0}\partial _{x_{i}}\mathcal{R}_{w}^{\left( 6\right) }+\mathrm{P}%
_{1}\partial _{x_{i}}\mathcal{R}_{w}^{\left( 6\right) }\right) ,\left(
\mathrm{P}_{0}\partial _{x_{i}}\mathcal{R}_{w}^{\left( 6\right) }+\mathrm{P}%
_{1}\partial _{x_{i}}\mathcal{R}_{w}^{\left( 6\right) }\right) \right
\rangle _{\xi }dx \\
&&+\int_{\mathbb{R}^{3}}\left \langle L\left( \partial _{x_{i}}\mathcal{R}%
_{w}^{\left( 6\right) }\right) ,\partial _{x_{i}}\mathcal{R}_{w}^{\left(
6\right) }\right \rangle _{\xi }dx \\
&&+\int_{\mathbb{R}^{3}}\left \langle \mathcal{R}_{w}^{\left( 6\right)
}\partial _{x_{i}}\left[ w^{-1}\left( \partial _{t}w+\xi \cdot \nabla
_{x}w\right) \right] ,\partial _{x_{i}}\mathcal{R}_{w}^{\left( 6\right)
}\right \rangle _{\xi }dx+\int_{\mathbb{R}^{3}}\left \langle K\partial
_{x_{i}}u_{2}^{\left( 6\right) },\partial _{x_{i}}\mathcal{R}_{w}^{\left(
6\right) }\right \rangle _{\xi }dx\text{,}
\end{eqnarray*}%
\begin{eqnarray*}
&&\frac{1}{2}\frac{d}{dt}\left \Vert \partial _{x_{i}x_{j}}^{2}\mathcal{R}%
_{w}^{\left( 6\right) }\right \Vert _{L_{\xi }^{2}L_{x}^{2}}^{2} \\
&=&\int_{\mathbb{R}^{3}}\left \langle \left( -\frac{M}{\ell }+\frac{\xi
\cdot x}{\ell \left \langle x\right \rangle }\right) \left( \mathrm{P}%
_{0}\partial _{x_{i}x_{j}}^{2}\mathcal{R}_{w}^{\left( 6\right) }+\mathrm{P}%
_{1}\partial _{x_{i}x_{j}}^{2}\mathcal{R}_{w}^{\left( 6\right) }\right)
,\left( \mathrm{P}_{0}\partial _{x_{i}x_{j}}^{2}\mathcal{R}_{w}^{\left(
6\right) }+\mathrm{P}_{1}\partial _{x_{i}x_{j}}^{2}\mathcal{R}_{w}^{\left(
6\right) }\right) \right \rangle _{\xi }dx \\
&&+\int_{\mathbb{R}^{3}}\left \langle L\left( \partial _{x_{i}x_{j}}^{2}%
\mathcal{R}_{w}^{\left( 6\right) }\right) ,\partial _{x_{i}x_{j}}^{2}%
\mathcal{R}_{w}^{\left( 6\right) }\right \rangle _{\xi }+\int_{\mathbb{R}%
^{3}}\left \langle \partial _{x_{i}}\mathcal{R}_{w}^{\left( 6\right) }\cdot
\partial _{x_{j}}\left[ w^{-1}\left( \partial _{t}w+\xi \cdot \nabla
_{x}w\right) \right] ,\partial _{x_{i}x_{j}}^{2}\mathcal{R}_{w}^{\left(
6\right) }\right \rangle _{\xi }dx \\
&&+\int_{\mathbb{R}^{3}}\left \langle \partial _{x_{j}}\mathcal{R}%
_{w}^{\left( 6\right) }\cdot \partial _{x_{i}}\left[ w^{-1}\left( \partial
_{t}w+\xi \cdot \nabla _{x}w\right) \right] ,\partial _{x_{i}x_{j}}^{2}%
\mathcal{R}_{w}^{\left( 6\right) }\right \rangle _{\xi }dx \\
&&+\int_{\mathbb{R}^{3}}\left \langle \mathcal{R}_{w}^{\left( 6\right)
}\partial _{x_{i}x_{j}}^{2}\left[ w^{-1}\left( \partial _{t}w+\xi \cdot
\nabla _{x}w\right) \right] ,\partial _{x_{i}x_{j}}^{2}\mathcal{R}%
_{w}^{\left( 6\right) }\right \rangle _{\xi }dx \\
&&+\int_{\mathbb{R}^{3}}\left \langle K\left( \partial
_{x_{i}x_{j}}^{2}u_{2}^{\left( 6\right) }\right) ,\partial _{x_{i}x_{j}}^{2}%
\mathcal{R}_{w}^{\left( 6\right) }\right \rangle _{\xi }dx\text{.}
\end{eqnarray*}%
Using the fact that in (\ref{speed})
\begin{equation*}
\mathrm{P}_{0}\left( \frac{x\cdot \xi }{\left \vert x\right \vert }\right)
\mathrm{P}_{0}g=\lambda _{1}\left \langle g,E_{1}\right \rangle _{\xi
}E_{1}+\lambda _{2}\left \langle g,E_{2}\right \rangle _{\xi }E_{2}\text{, }%
x\neq 0\text{,}
\end{equation*}%
where $\lambda _{1}=-\lambda _{2}=\mathbf{c}$, we have
\begin{eqnarray*}
&&\frac{1}{2}\frac{d}{dt}B\left( t\right) \\
&=&-\frac{1}{\ell }\left( M-\mathbf{c}-\delta -3\delta -\frac{81\delta ^{3}}{%
4\mathfrak{D}^{4}}\right) \left \Vert \mathrm{P}_{0}\mathcal{R}_{w}^{\left(
6\right) }\right \Vert _{L_{\xi }^{2}L_{x}^{2}}^{2}-\left( \nu _{0}-\frac{1}{%
\ell }-\frac{\mathfrak{D}^{2}}{4\ell \delta }-\frac{3\delta }{\ell \mathfrak{%
D}^{2}}-\frac{81\delta ^{3}}{4\ell \mathfrak{D}^{6}}\right) \left \Vert
\mathrm{P}_{1}\mathcal{R}_{w}^{\left( 6\right) }\right \Vert _{L_{\sigma
}^{2}L_{x}^{2}}^{2} \\
&&+\frac{\delta ^{2}}{4\mathfrak{D}^{4}}\sum_{i=1}^{3}\left \{ -\frac{1}{%
\ell }\left( M-\mathbf{c}-\delta -6\delta \right) \left \Vert \mathrm{P}%
_{0}\partial _{x_{i}}\mathcal{R}_{w}^{\left( 6\right) }\right \Vert _{L_{\xi
}^{2}L_{x}^{2}}^{2}-\left( \nu _{0}-\frac{1}{\ell }-\frac{\mathfrak{D}^{2}}{%
4\ell \delta }-\frac{6\delta ^{3}}{4\ell \mathfrak{D}^{6}}\right) \left
\Vert \mathrm{P}_{1}\partial _{x_{i}}\mathcal{R}_{w}^{\left( 6\right)
}\right \Vert _{L_{\sigma }^{2}L_{x}^{2}}^{2}\right \} \\
&&+\left( \frac{\delta ^{2}}{4\mathfrak{D}^{4}}\right)
^{2}\sum_{i,j=1}^{3}\left \{ -\frac{1}{\ell }\left( M-\mathbf{c}-4\delta
\right) \left \Vert \mathrm{P}_{0}\partial _{x_{i}x_{j}}^{2}\mathcal{R}%
^{\left( 6\right) }\right \Vert _{L_{\xi }^{2}L_{x}^{2}}^{2}-\left( \nu _{0}-%
\frac{1}{\ell }-\frac{\mathfrak{D}^{2}}{4\ell \delta }-\frac{3\delta }{\ell
\mathfrak{D}^{2}}\right) \left \Vert \mathrm{P}_{1}\partial _{x_{i}x_{j}}^{2}%
\mathcal{R}_{w}^{\left( 6\right) }\right \Vert _{L_{\sigma }^{2}L_{x}^{2}\xi
}^{2}\right \} \\
&&+C\left \Vert u_{2}^{\left( 6\right) }\right \Vert _{L_{\xi
}^{2}L_{x}^{2}}\left \Vert \mathcal{R}_{w}^{\left( 6\right) }\right \Vert
_{L_{\xi }^{2}L_{x}^{2}}+\frac{\delta ^{2}}{4\mathfrak{D}^{4}}\sum_{i=1}^{3}%
\left[ C\left \Vert \partial _{x_{i}}u_{2}^{\left( 6\right) }\right \Vert
_{L_{\xi }^{2}L_{x}^{2}}\left \Vert \partial _{x_{i}}\mathcal{R}_{w}^{\left(
6\right) }\right \Vert _{L_{\xi }^{2}L_{x}^{2}}\right] \\
&&+\left( \frac{\delta ^{2}}{4\mathfrak{D}^{4}}\right) ^{2}\sum_{i,j=1}^{3}%
\left[ C\left \Vert \partial _{x_{i}x_{j}}^{2}u_{2}^{\left( 6\right) }\right
\Vert _{L_{\xi }^{2}L_{x}^{2}}\left \Vert \partial _{x_{i}x_{j}}^{2}\mathcal{%
R}_{w}^{\left( 6\right) }\right \Vert _{L_{\xi }^{2}L_{x}^{2}}\right] \\
&\leq &3C\left[ \left \Vert u_{2}^{\left( 6\right) }\right \Vert _{L_{\xi
}^{2}L_{x}^{2}}^{2}+\frac{\delta ^{2}}{4\mathfrak{D}^{4}}\sum_{i=1}^{3}\left%
\Vert \partial _{x_{i}}u_{2}^{\left( 6\right) }\right \Vert _{L_{\xi
}^{2}L_{x}^{2}}^{2}+\left( \frac{\delta ^{2}}{4\mathfrak{D}^{4}}\right)
^{2}\sum_{i,j=1}^{3}\left \Vert \partial _{x_{i}x_{j}}^{2}u_{2}^{\left(
6\right) }\right \Vert _{L_{\xi }^{2}L_{x}^{2}}^{2}\right] ^{1/2}\sqrt{%
B\left( t\right) } \\
&\leq &3C\left \Vert u_{2}^{\left( 6\right) }\right \Vert _{L_{\xi
}^{2}H_{x}^{2}}\sqrt{B\left( t\right) }
\end{eqnarray*}%
if we choose $M=$ $\mathbf{c}+25\delta $, and then choose $\ell >0$
sufficiently large such that $\nu _{0}-\frac{166}{\ell }-\frac{\mathfrak{D}%
^{2}}{4\ell \delta }>0$. Under this choice, we have
\begin{equation*}
\sqrt{B\left( t\right) }\leq \int_{0}^{t}3C\left \Vert u_{2}^{\left(
6\right) }\right \Vert _{L_{\xi }^{2}H_{x}^{2}}d\tau \lesssim \eps+\eps%
^{2}C_{u_{2},T}^{2}
\end{equation*}%
by Lemma \ref{Regularization -u^(6)}, so that
\begin{equation*}
\left \Vert \mathcal{R}_{w}^{\left( 6\right) }\right \Vert _{L_{\xi
}^{2}H_{x}^{2}}\leq C_{\delta }\left( \eps+\eps^{2}C_{u_{2},T}^{2}\right)
\text{,}
\end{equation*}%
for some positive constant $C_{\delta }$, $C_{\delta }\rightarrow \infty $
as $\delta \rightarrow 0$. The proof of this proposition is complete.
\end{proof}

Therefore, the Sobolev inequality implies that
\begin{equation*}
\left \Vert \mathcal{R}_{w}^{\left( 6\right) }\right \Vert _{L_{\xi
}^{2}L_{x}^{\infty }}\lesssim C_{\delta }\left( \eps+\eps^{2}C_{u_{2},T}^{2}%
\right) .
\end{equation*}%
Combining this with Lemma \ref{Estimate-u^(j)}, we have%
\begin{eqnarray*}
\left \Vert u_{2}\right \Vert _{L_{\xi }^{2}L_{x}^{\infty }} &\leq &\left
\Vert W^{\left( 6\right) }\right \Vert _{L_{\xi }^{2}L_{x}^{\infty }}+\left
\Vert \mathcal{R}_{w}^{\left( 6\right) }\right \Vert _{L_{\xi
}^{2}L_{x}^{\infty }} \\
&\lesssim &\left \Vert W^{\left( 6\right) }\right \Vert _{L_{\xi ,\beta
}^{\infty }L_{x}^{\infty }}+\left \Vert \mathcal{R}_{w}^{\left( 6\right)
}\right \Vert _{L_{\xi }^{2}L_{x}^{\infty }} \\
&\lesssim &\eps e^{-c_{0}t}+\eps^{2}C_{u_{2},T}^{\infty }\left( 1+t\right)
^{-\frac{3}{2}}+C_{\delta }\left( \eps+\eps^{2}C_{u_{2},T}^{2}\right) \,.
\end{eqnarray*}%
According to the wave-remainder decomposition, $u_{2}=W^{\left( 7\right) }+%
\mathcal{R}_{w}^{\left( 7\right) }$ and
\begin{equation*}
\mathcal{R}_{w}^{\left( 7\right) }=\int_{0}^{t}\exp \left( -\int_{\tau }^{t}%
\widetilde{\nu }\left( r,x-\left( t-r\right) \xi ,\xi \right) dr\right)
\left( K\mathcal{R}_{w}^{\left( 6\right) }\right) \left( \tau \right) d\tau
\text{.}
\end{equation*}%
Since
\begin{equation*}
\left \Vert \mathcal{R}_{w}^{\left( 7\right) }\right \Vert _{L_{\xi
}^{\infty }L_{x}^{\infty }}\lesssim \int_{0}^{t}e^{-\frac{3}{4}\nu
_{0}\left( t-\tau \right) }\left \Vert \mathcal{R}_{w}^{\left( 6\right)
}\right \Vert _{L_{\xi }^{2}L_{x}^{\infty }}\left( \tau \right) d\tau \text{,%
}
\end{equation*}%
we get%
\begin{equation*}
\left \Vert u_{2}\right \Vert _{L_{\xi }^{\infty }L_{x}^{\infty }}\lesssim %
\eps e^{-c_{0}t}+\eps^{2}C_{u_{2},T}^{\infty }\left( 1+t\right) ^{-\frac{3}{2%
}}+C_{\delta }\left( \eps+\eps^{2}C_{u_{2},T}^{2}\right) \,.
\end{equation*}%
By finite steps of bootstrap argument, we obtain the desired $L_{\xi ,\beta
}^{\infty }L_{x}^{\infty }$ estimate for $u_{2}$. Similarly, by Lemma \ref%
{Estimate-u^(j)}, Proposition \ref{Regularization-R^(6)}, and the bootstrap
argument, we obtain $L_{\xi ,\beta }^{\infty }L_{x}^{2}$ estimate for $u_{2}$
as well. We summarize the estimates for $u_{2}$ as below.

\begin{proposition}
\label{Estimate-u2}Let $\beta >4$ be large enough and let $0<\delta \ll 1$.
Then for $M=\mathbf{c}+25\delta $ and large $\ell >0$, the corresponding $%
u_{2}$ satisfies
\begin{equation*}
C_{u_{2},T}^{2}\leq C_{1}\left[ 1+\eps C_{u_{2},T}^{2}+C_{\delta }\left( 1+%
\eps C_{u_{2},T}^{2}\right) \right]  \label{u2-L2}
\end{equation*}%
\begin{equation*}
C_{u_{2},T}^{\infty }\leq C_{2}\left[ 1+\eps C_{u_{2},T}^{\infty }+C_{\delta
}\left( 1+\eps C_{u_{2},T}^{2}\right) \right]  \label{u2-L^infinity}
\end{equation*}%
for some positive constants $C_{1}$ and $C_{2}$ dependent on $\Vert
f_{0}\Vert _{L_{\xi ,\beta }^{\infty }\left( L_{x}^{\infty }\cap
L_{x}^{2}\right) }$ but independent of $T$ and $\delta $.
\end{proposition}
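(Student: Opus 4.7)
The proposition is essentially the bookkeeping step that records the estimates already assembled in the preceding paragraphs in their final self-referential form, so my plan is to turn those estimates into closed inequalities by taking $\sup_{0\le t\le T}$ and normalizing by $\varepsilon^{-1}$. For the $L^\infty_{\xi,\beta}L^2_x$ bound I would use the wave-remainder splitting $u_2=W^{(6)}+\mathcal{R}_w^{(6)}$: Lemma \ref{Estimate-u^(j)} controls $W^{(6)}$ by $\varepsilon e^{-c_0 t}+\varepsilon^2 C_{u_2,T}^2(1+t)^{-3/2}$ in $L^\infty_{\xi,\beta}L^2_x$, while Proposition \ref{Regularization-R^(6)} directly gives $\|\mathcal{R}_w^{(6)}\|_{L^2_\xi L^2_x}\le C_\delta(\varepsilon+\varepsilon^2 C_{u_2,T}^2)$. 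To promote $L^2_\xi$ to $L^\infty_{\xi,\beta}$ I would run a short Picard-type bootstrap, writing $\mathcal{R}_w^{(6)}=u_2^{(7)}+\mathcal{R}_w^{(7)}$ etc., where each further iteration gains one power of $\langle\xi\rangle^{-1}$ via Lemma \ref{pro1}, and the damped transport estimate applied to the remainder converts the $L^2_\xi L^2_x$ bound on $K\mathcal{R}_w^{(6)}$ into $L^\infty_{\xi,\beta}L^2_x$ control; after finitely many such steps the weight $\beta$ is attained. Taking $\sup_{t\in[0,T]}$ and dividing by $\varepsilon$ yields exactly the inequality for $C_{u_2,T}^2$ with some constant $C_1$ independent of $T$ and $\delta$.

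For the $L^\infty_{\xi,\beta}L^\infty_x$ bound the argument is identical in shape, but one additionally uses the Sobolev embedding $H^2_x\hookrightarrow L^\infty_x$ to pass from $\|\mathcal{R}_w^{(6)}\|_{L^2_\xi H^2_x}\le C_\delta(\varepsilon+\varepsilon^2 C_{u_2,T}^2)$ to $\|\mathcal{R}_w^{(6)}\|_{L^2_\xi L^\infty_x}\lesssim C_\delta(\varepsilon+\varepsilon^2 C_{u_2,T}^2)$. Combined with the $L^\infty_{\xi,\beta}L^\infty_x$ bound on $W^{(6)}$ from Lemma \ref{Estimate-u^(j)} and the identity $\mathcal{R}_w^{(7)}=\int_0^t \exp(-\int_\tau^t\widetilde\nu\,dr)\,K\mathcal{R}_w^{(6)}(\tau)\,d\tau$, the damped transport smoothing produces $\|u_2\|_{L^\infty_\xi L^\infty_x}\lesssim \varepsilon e^{-c_0 t}+\varepsilon^2 C_{u_2,T}^\infty (1+t)^{-3/2}+C_\delta(\varepsilon+\varepsilon^2 C_{u_2,T}^2)$. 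A finite bootstrap in $\beta$ — each step costing one gain $\langle\xi\rangle^{-1}$ on $K$ — upgrades this to $L^\infty_{\xi,\beta}L^\infty_x$, and taking $\sup_t$ and dividing by $\varepsilon$ closes the inequality for $C_{u_2,T}^\infty$ with some $C_2$.

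The coupling between $C_{u_2,T}^\infty$ and $C_{u_2,T}^2$ (the reason $C_{u_2,T}^2$ appears on the right-hand side of the $L^\infty$ inequality but not vice versa) is entirely due to the nonlinear source $\Gamma(f_2,u_2)$ inside the equation for $u_2^{(0)}$: by the bilinear estimate of Lemma \ref{pro1} and the a priori bound $C_{f_2}^\infty<\infty$ from Theorem \ref{thm-main}, the contribution of $\Gamma(f_2,u_2)$ to the $H^2_x$ energy identity for $\mathcal{R}_w^{(6)}$ is absorbed as $\varepsilon\, C_\delta\, C_{u_2,T}^2$ on the $L^2$ side, which then feeds into the $L^\infty$ estimate through $C_\delta\varepsilon^2 C_{u_2,T}^2$. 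The constants $C_1,C_2$ depend on $\|f_0\|_{L^\infty_{\xi,\beta}(L^\infty_x\cap L^2_x)}$ through $C_{f_2}^\infty,C_{f_2}^2$ but not on $T$ or $\delta$, because $C_\delta$ has been pulled out explicitly.

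The main obstacle — the weighted energy estimate for $\mathcal{R}_w^{(6)}$ with sharp constants involving $(M-\mathbf{c}-k\delta)$ and the balance with $\mathrm{P}_1$ dissipation — has already been carried out in Proposition \ref{Regularization-R^(6)}; after that, Proposition \ref{Estimate-u2} is simply a matter of normalizing and collecting terms, so I do not expect any further analytic difficulty beyond verifying that the bootstrap in the $\beta$-weight terminates in finitely many steps, which is automatic since each application of $K$ gains a fixed factor $\langle\xi\rangle^{-1}$.
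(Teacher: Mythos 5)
Your proposal matches the paper's argument almost step for step: wave--remainder decomposition at level six, Sobolev embedding $H^2_x\hookrightarrow L^\infty_x$ for $\mathcal{R}_w^{(6)}$, the explicit Duhamel formula for $\mathcal{R}_w^{(7)}$ to convert $L^2_\xi$ control into $L^\infty_\xi$ control, and a finite bootstrap in the $\beta$-weight. One small caveat worth noting: the very first step of the bootstrap (passing from $L^2_\xi$ to $L^\infty_\xi$, no weight yet) does not follow from Lemma \ref{pro1} as stated, which only gives the weight gain $L^\infty_{\xi,\eta}\to L^\infty_{\xi,\eta+1}$; it rests on the separate (standard, and used implicitly in the paper's inequality $\|\mathcal{R}_w^{(7)}\|_{L^\infty_\xi L^\infty_x}\lesssim\int_0^t e^{-\frac{3}{4}\nu_0(t-\tau)}\|\mathcal{R}_w^{(6)}\|_{L^2_\xi L^\infty_x}\,d\tau$) fact that the integral operator $K$ maps $L^2_\xi$ boundedly into $L^\infty_\xi$.
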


Now, we are ready to prove Theorem \ref{outside the cone}. For any fixed $%
0<\delta \ll 1$, we take $M=\mathbf{c}+25\delta $, and consider the weight
function%
\begin{equation*}
w\left( x,t\right) =\exp \left( \frac{\left \langle x\right \rangle -Mt}{%
\ell }\right) \, \text{,}
\end{equation*}%
with $\ell >0$ being chosen large. In view of Proposition \ref{Estimate-u2},
choosing $\varepsilon >0$ sufficiently small gives
\begin{equation*}
\left \vert u_{2}\right \vert _{L_{\xi ,\beta }^{\infty }}\leq CC_{\delta }%
\eps \text{,}
\end{equation*}%
for some positive constant $C$ dependent on $\Vert f_{0}\Vert _{L_{\xi
,\beta }^{\infty }\left( L_{x}^{\infty }\cap L_{x}^{2}\right) }$.

Note that $\left \vert x\right \vert >\left( M+\delta \right) t=\left(
\mathbf{c}+26\delta \right) t$, we have%
\begin{eqnarray*}
\left \langle x\right \rangle -Mt &=&\frac{\frac{\delta }{2M}\left \langle
x\right \rangle }{2+\frac{\delta }{2M}}+\frac{2\left \langle x\right \rangle
}{2+\frac{\delta }{2M}}-Mt \\
&>&\frac{\frac{\delta }{2M}\left \langle x\right \rangle }{2+\frac{\delta }{%
2M}}+\frac{\frac{\delta }{2}t}{2+\frac{\delta }{2M}}\geq \frac{\delta \left(
\left \langle x\right \rangle +t\right) }{4M+\delta }\text{.}
\end{eqnarray*}%
Then for $\left \vert x\right \vert >\left( \mathbf{c}+26\delta \right) t$,
\begin{equation*}
e^{\frac{\delta \left( \left \langle x\right \rangle +t\right) }{\left(
4M+\delta \right) \ell }}\left \vert f_{2}\right \vert _{L_{\xi ,\beta
}^{\infty }}\leq \left \vert u_{2}\right \vert _{L_{\xi ,\beta }^{\infty
}}\leq C C_{\delta }\eps
\end{equation*}%
which implies that
\begin{equation*}
\left \vert f_{2}\right \vert _{L_{\xi ,\beta }^{\infty }}\leq C C_{\delta }%
\eps e^{-\frac{\delta \left( \left \langle x\right \rangle +t\right) }{%
\left( 4M+\delta \right) \ell }} \text{.}
\end{equation*}%
The proof of Theorem \ref{outside the cone} is complete.

\section{The transition from polynomial tail to Gaussian tail}

\label{f1-velocity}

In this section, we study the behavior of $f_{1}$ in the microscopic
variable as $t$ increases. We provide a quantitative description of how the
velocity variable transitions from a polynomial tail to a Gaussian tail. The
result is as follows:

\begin{theorem}
\label{Improvement-f1}Let $\beta >4$, $R>0$ be sufficiently large and $%
0<\kappa <\min \{1/4$, $\nu _{0}/2\}$. Assume that $f_{0}$ satisfies the
same condition as in Theorem \ref{thm-main}. If $\varepsilon >0$ is
sufficiently small, then there exists a constant $\overline{C}_{\beta }>0$
only depending on $\beta $ such that%
\begin{equation*}
\left \vert f_{1}\left( t,x,\xi \right) \right \vert \leq \overline{C}%
_{\beta }\varepsilon \left \langle \xi \right \rangle ^{-\beta }e^{-\kappa
\rho \left( \xi ,t\right) }\left \Vert f_{0}\left( x,\xi \right) \right
\Vert _{L_{\xi ,\beta }^{\infty }L_{x}^{\infty }}
\end{equation*}%
for all $t\geq 0$, $x\in \mathbb{R}^{3}$, $\xi \in \mathbb{R}^{3}$, where $%
\rho \left( \xi ,t\right) =\left \langle \xi \right \rangle \left(
\left
\langle \xi \right \rangle \wedge t\right) $. Consequently, for any
fixed $t>0$, then
\begin{equation*}
|\mathbf{1}_{\{ \left<\xi \right> \leq t\}}f_{1}|\lesssim \varepsilon \left
\langle \xi \right \rangle ^{-\beta } e^{-\ka \left<\xi \right>^{2}}
\end{equation*}
and
\begin{equation*}
|\mathbf{1}_{\{ \left<\xi \right> >t\}}f_{1}|\lesssim \varepsilon \left
\langle \xi \right \rangle ^{-\beta } e^{-\ka t^{2}}\,.
\end{equation*}
\end{theorem}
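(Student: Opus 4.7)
The plan is to track the improved velocity decay of $f_{1}$ by bounding the weighted quantity $g(t,x,\xi):=e^{\kappa \rho(\xi,t)} f_{1}(t,x,\xi)$ in $L^{\infty}_{\xi,\beta}$ uniformly in $(t,x)$. Since $\rho(\xi,t)=\langle \xi \rangle (\langle \xi \rangle \wedge t)$ is independent of $x$, multiplying the $f_{1}$-equation of $(\ref{decom-System})$ by $e^{\kappa \rho(\xi,t)}$ produces
\begin{equation*}
\partial_{t} g+\xi\cdot\nabla_{x} g+\tilde{\nu}(t,\xi)\,g = e^{\kappa\rho(\xi,t)}\bigl[\mathcal{K}_{s} f_{1}+Q(f_{1},f_{1})+Q(f_{1},\sqrt{\mathcal{M}}f_{2})+Q(\sqrt{\mathcal{M}}f_{2},f_{1})\bigr],
\end{equation*}
with $\tilde{\nu}(t,\xi)=\nu(\xi)-\kappa\partial_{t}\rho(\xi,t)$. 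Because $\partial_{t}\rho(\xi,t)=\langle\xi\rangle\,\mathbf{1}_{\{t<\langle\xi\rangle\}}\leq \langle\xi\rangle\leq \nu(\xi)/\nu_{0}$, the hypothesis $\kappa<\nu_{0}/2$ gives $\tilde{\nu}\geq \nu(\xi)/2$. Integrating along characteristics, the gain-of-decay mechanism of Lemma~\ref{Lemma-gain decay} applies verbatim with $\tilde{\nu}$ in place of $\nu$.

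The heart of the argument is to establish weighted analogs of Lemma~\ref{Estimate-KQ}: for $\kappa<1/4$, $\beta>4$, and $R>0$ large,
\begin{align*}
\chi_{\{|\xi|\geq R\}} \langle\xi\rangle^{\beta}\bigl|e^{\kappa \rho(\xi,\tau)} \mathcal{K}f\bigr| &\leq \Bigl(\tfrac{C}{\beta}+\tfrac{C_{\beta}}{R^{2}}\Bigr)\nu(\xi)\, \bigl|e^{\kappa \rho(\cdot,\tau)} f\bigr|_{L^{\infty}_{\xi,\beta}},\\
\langle\xi\rangle^{\beta}\bigl|e^{\kappa \rho(\xi,\tau)} Q(f,\sqrt{\mathcal{M}}h)\bigr| &\leq C_{\beta}\nu(\xi)\, \bigl|e^{\kappa\rho(\cdot,\tau)} f\bigr|_{L^{\infty}_{\xi,\beta}} |h|_{L^{\infty}_{\xi,\beta}},
\end{align*}
together with the symmetric estimate for $Q(\sqrt{\mathcal{M}}h,f)$ and the analog for $Q(f,f)$. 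The key algebraic ingredient is the subadditivity
\begin{equation*}
\rho(\xi,\tau) \leq \rho(\xi',\tau)+\rho(\xi_{\ast}',\tau),
\end{equation*}
which follows from the energy conservation $|\xi|^{2}\leq |\xi'|^{2}+|\xi_{\ast}'|^{2}$ and $\langle\xi\rangle\leq \langle\xi'\rangle+\langle\xi_{\ast}'\rangle$ by a case distinction based on whether each of $\langle\xi\rangle,\langle\xi'\rangle,\langle\xi_{\ast}'\rangle$ lies above or below $\tau$. Using this subadditivity, the factor $e^{\kappa\rho(\xi,\tau)}$ in the gain term redistributes across the post-collisional velocities, and the constraint $\kappa<1/4$ is exactly what makes $e^{\kappa\langle\xi'\rangle^{2}}$ and $e^{\kappa\langle\xi_{\ast}'\rangle^{2}}$ absorbable by the $\sqrt{\mathcal{M}}$ weights appearing in $\mathcal{K}$ and in $Q(\cdot,\sqrt{\mathcal{M}}\,\cdot)$; the loss terms are dispatched directly because $e^{\kappa\langle\xi\rangle^{2}}\sqrt{\mathcal{M}(\xi)}\lesssim 1$ when $\kappa<1/4$.

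With the commutator bounds in hand, the proof closes by bootstrapping from Theorem~\ref{thm-main}. Setting
\begin{equation*}
A(T) := \sup_{0\leq t\leq T,\,x\in\mathbb{R}^{3}} \bigl|e^{\kappa\rho(\cdot,t)} f_{1}(t,x,\cdot)\bigr|_{L^{\infty}_{\xi,\beta}},
\end{equation*}
inserting the weighted Duhamel formula into the integral representation of $f_{1}$ and using the time integration $\int_{0}^{t} e^{-\nu(\xi)(t-\tau)/2}\nu(\xi)\,d\tau\lesssim 1$ yields
\begin{equation*}
A(T) \leq \varepsilon \|f_{0}\|_{L^{\infty}_{\xi,\beta}L^{\infty}_{x}} + 2\eta(\beta,R)\,A(T) + C\varepsilon\bigl(1+\mathfrak{B}\bigr)\|f_{0}\|_{L^{\infty}_{\xi,\beta}L^{\infty}_{x}}\, A(T),
\end{equation*}
where $\eta(\beta,R)=C/\beta+C_{\beta}/R^{2}<1/8$ under the choices of $\beta$ and $R$ from Theorem~\ref{thm-main}, and in each quadratic term one copy of $f_{1}$ is absorbed into $A(T)$ while the other is controlled by the pointwise bounds on $f_{1}$ and $\sqrt{\mathcal{M}}f_{2}$ already established. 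For $\varepsilon$ small, this closes to $A(T)\leq 2\overline{C}_{\beta}\varepsilon\|f_{0}\|$, uniformly in $T$. The two corollaries are immediate: on $\{\langle\xi\rangle\leq t\}$ one has $\rho(\xi,t)=\langle\xi\rangle^{2}$, yielding the Gaussian tail $e^{-\kappa\langle\xi\rangle^{2}}$, while on $\{\langle\xi\rangle>t\}$ one has $\rho(\xi,t)=\langle\xi\rangle t>t^{2}$, yielding $e^{-\kappa t^{2}}$.

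The principal obstacle is the weighted commutator estimate, which is more delicate than its unweighted counterpart because $\rho$ mixes the two scales $\langle\xi\rangle^{2}$ and $\langle\xi\rangle t$: proving the subadditivity requires careful handling of the borderline regime in which some of $\langle\xi\rangle,\langle\xi'\rangle,\langle\xi_{\ast}'\rangle$ lie above $\tau$ and others below (where one must exploit $a^{2}\geq(c-b)(c+b)\geq 2\tau(c-b)$ with $a=\langle\xi'\rangle$, $b=\langle\xi_{\ast}'\rangle$, $c=\langle\xi\rangle$). Once this pointwise comparison is secured, the remaining operator-level bounds follow by the integration techniques underlying Lemma~\ref{Estimate-KQ}, and the rest of the proof is structurally parallel to the unweighted iteration used for Theorem~\ref{thm-main}.
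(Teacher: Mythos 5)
Your proposal is correct and follows essentially the same route as the paper: multiply by $e^{\kappa\rho(\xi,t)}$, prove weighted commutator bounds for $\mathcal{K}_s$ and $Q$ via a superadditivity of $\rho$, and close a Duhamel/iteration bound in $L^\infty_{\xi,\beta}$. The only cosmetic difference is that you invoke the subadditivity directly in the form $\rho(\xi,\tau)\leq\rho(\xi',\tau)+\rho(\xi_*',\tau)$ (a corollary of the paper's Lemma~\ref{Weight-estimate} together with $|\xi|^2\leq|\xi'|^2+|\xi_*'|^2$ and monotonicity of $\overline{\rho}$), which lets you simply drop the weight factor and cite the unweighted estimates of \cite{[Cao]}, whereas the paper first performs a Carleman-type change of variables, integrates over the orthogonal plane, and then applies the same subadditivity to $\xi+\eta$ and the $\eta$-parallel component $\xi_\perp$ — both routes land on identical bounds.
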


Firstly, we need some estimate for the weight function $e^{\kappa \rho
\left( \xi ,t\right) }$. For simplicity, we define
\begin{equation}
\rho \left( \xi ,t\right) =\overline{\rho }\left( \left \vert \xi \right
\vert ,t\right) \hbox{, }\xi \in \mathbb{R}^{3}\hbox{, }t\geq 0\hbox{,}
\label{weight-exponent}
\end{equation}%
with $\overline{\rho }\left( z,t\right) =\left( 1+z^{2}\right) ^{1/2}\left(
\left( 1+z^{2}\right) ^{1/2}\wedge t\right) $, $t\geq 0$. Now, we give an
inequality regarding the function $\overline{\rho }$.

\begin{lemma}
\label{Weight-estimate}Let $\overline{\rho }\left( z,t\right) $ be a
function defined by
\begin{equation*}
\overline{\rho }\left( z,t\right) =\left( 1+z^{2}\right) ^{1/2}\left( \left(
1+z^{2}\right) ^{1/2}\wedge t\right)
\end{equation*}%
for $z\in \mathbb{R}$ and $t\geq 0$. Then
\begin{equation*}
\overline{\rho }\left( a,t\right) +\overline{\rho }\left( b,t\right) \geq
\overline{\rho }\left( \sqrt{a^{2}+b^{2}},t\right)
\end{equation*}%
for all $a$, $b\geq 0$.
\end{lemma}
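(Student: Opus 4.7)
The plan is to reformulate the claim as a subadditivity statement for a single--variable auxiliary function, thereby reducing the two--variable minimum structure in $\bar\rho$ to a standard fact about concave functions vanishing at the origin.

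Introduce, for fixed $t\ge 0$, the function $\phi:[0,\infty)\to[0,\infty)$ defined by
\begin{equation*}
\phi(u)=\min\bigl(u,\,t\sqrt{u}\bigr).
\end{equation*}
Then by the very definition of $\bar\rho$,
\begin{equation*}
\bar\rho(z,t)=\sqrt{1+z^{2}}\,\min\bigl(\sqrt{1+z^{2}},\,t\bigr)=\phi(1+z^{2}),
\end{equation*}
so the claim $\bar\rho(a,t)+\bar\rho(b,t)\ge \bar\rho(\sqrt{a^{2}+b^{2}},t)$ is equivalent to
\begin{equation*}
\phi(1+a^{2})+\phi(1+b^{2})\ \ge\ \phi(1+a^{2}+b^{2}).
\end{equation*}

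The key step is to establish two properties of $\phi$: (i) $\phi$ is nondecreasing on $[0,\infty)$, and (ii) $\phi$ is concave on $[0,\infty)$ with $\phi(0)=0$. Monotonicity is immediate since both $u\mapsto u$ and $u\mapsto t\sqrt u$ are nondecreasing. For concavity, note that $\phi(u)=u$ on $[0,t^{2}]$ (linear, hence concave) and $\phi(u)=t\sqrt u$ on $[t^{2},\infty)$ (strictly concave); at the junction $u=t^{2}$ both pieces agree and the left derivative $1$ exceeds the right derivative $t/(2t)=1/2$, so the piecewise function is concave on all of $[0,\infty)$. (For $t=0$ the function $\phi\equiv 0$ is trivially concave.) Any concave function on $[0,\infty)$ with $\phi(0)=0$ is subadditive:
\begin{equation*}
\phi(x+y)\le \phi(x)+\phi(y),\qquad x,y\ge 0,
\end{equation*}
which follows from writing $x=\tfrac{x}{x+y}(x+y)+\tfrac{y}{x+y}\cdot 0$ (and symmetrically for $y$) and adding the two concavity inequalities.

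With these properties in hand the proof concludes in one line: since $1+a^{2}+b^{2}\le (1+a^{2})+(1+b^{2})$, monotonicity of $\phi$ yields $\phi(1+a^{2}+b^{2})\le \phi\bigl((1+a^{2})+(1+b^{2})\bigr)$, and subadditivity then gives $\phi\bigl((1+a^{2})+(1+b^{2})\bigr)\le \phi(1+a^{2})+\phi(1+b^{2})$. I do not anticipate a genuine obstacle here; the only delicate point is the concavity check at the junction $u=t^{2}$, where one must verify that the slope actually decreases (rather than increases) as $u$ crosses $t^{2}$. A purely case-by-case verification according to the relative sizes of $t$, $\sqrt{1+a^{2}}$, $\sqrt{1+b^{2}}$, and $\sqrt{1+a^{2}+b^{2}}$ is also available as a backup, but the concavity argument is shorter and more transparent.
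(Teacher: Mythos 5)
Your proof is correct, and it takes a genuinely different and more conceptual route than the paper's. The paper verifies the inequality by a four-case analysis based on the relative order of $t$, $a$, $b$, and $\sqrt{a^2+b^2}$ (after reducing by symmetry to $0<a<b$), with the hardest case ($b>t>a$) handled by squaring both sides and an algebraic computation. You instead observe that $\overline{\rho}(z,t)=\phi(1+z^2)$ with $\phi(u)=\min(u,t\sqrt u)$, prove $\phi$ is nondecreasing and concave with $\phi(0)=0$, and then invoke subadditivity of such functions plus the elementary bound $1+a^2+b^2\le(1+a^2)+(1+b^2)$. Both arguments are sound. Your reformulation is shorter and makes the structural reason for the inequality transparent (concavity through the origin), whereas the paper's proof is fully elementary but requires keeping track of several cases. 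One very small simplification you could have made: $\phi$ is the pointwise minimum of the affine function $u\mapsto u$ and the concave function $u\mapsto t\sqrt u$, so concavity is immediate from the fact that the hypograph of a minimum is the intersection of hypographs, avoiding the junction-derivative check altogether.
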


\proof%
If either $a=0$ or $b=0$, it is trivial. By symmetry, we may assume that $%
b>a>0$ and so $\sqrt{a^{2}+b^{2}}>b>a>0$. In the following we discuss the
inequality in four cases.$%
\vspace {3mm}%
$

Case 1: $t\geq \sqrt{a^{2}+b^{2}}>b>a>0$. Then
\begin{equation*}
\overline{\rho }\left( a,t\right) +\overline{\rho }\left( b,t\right)
=2+a^{2}+b^{2}=1+\overline{\rho }\left( \sqrt{a^{2}+b^{2}},t\right) \geq
\overline{\rho }\left( \sqrt{a^{2}+b^{2}},t\right) \hbox{.}
\end{equation*}

Case 2: $\sqrt{a^{2}+b^{2}}>b>a>t$. Then
\begin{eqnarray*}
\overline{\rho }\left( \sqrt{a^{2}+b^{2}},t\right) &=&\left(
1+a^{2}+b^{2}\right) ^{1/2}t\leq \left( 1+a^{2}+1+b^{2}\right) ^{1/2}t \\
&\leq &\left( 1+a^{2}\right) ^{1/2}t+\left( 1+b^{2}\right) ^{1/2}t=\overline{%
\rho }\left( a,t\right) +\overline{\rho }\left( b,t\right) \hbox{.}
\end{eqnarray*}

Case 3: $\sqrt{a^{2}+b^{2}}>t>b>a$. Then%
\begin{equation*}
\overline{\rho }\left( a,t\right) +\overline{\rho }\left( b,t\right)
=2+a^{2}+b^{2}\geq \left( 1+a^{2}+b^{2}\right) ^{1/2}t=\overline{\rho }%
\left( \sqrt{a^{2}+b^{2}},t\right) \hbox{.}
\end{equation*}

Case 4: $\sqrt{a^{2}+b^{2}}>b>t>a$. Then%
\begin{eqnarray*}
&&\left[ \overline{\rho }\left( a,t\right) +\overline{\rho }\left(
b,t\right) \right] ^{2}-\left[ \overline{\rho }\left( \sqrt{a^{2}+b^{2}}%
,t\right) \right] ^{2} \\
&=&\left[ 1+a^{2}+\left( 1+b^{2}\right) ^{1/2}t\right] ^{2}-\left(
1+a^{2}+b^{2}\right) t^{2} \\
&=&\left( 1+a^{2}\right) ^{2}+2\left( 1+a^{2}\right) \left( 1+b^{2}\right)
^{1/2}t-a^{2}t^{2}\geq 2\left( 1+a^{2}\right) t^{2}-a^{2}t^{2}\geq 0\hbox{.}
\end{eqnarray*}

As $a=b>0$, it is a consequence of Case 1-Case 3. Gathering all the cases,
the proof is complete. $%
\hfill%
\square $

According to this lemma, we can prove the following weighted estimate
regarding $Q$ and $\mathcal{K}$.

\begin{lemma}
\label{Weighted-Q}Let $\beta >4$, $\kappa >0$ and let $\rho \left( \xi
,t\right) $ be defined by $(\ref{weight-exponent})$. Then there exists a
constant $C_{\beta }>0$ depending only on $\beta $ such that
\begin{equation}  \label{rho1}
\int_{\mathbb{R}^{3}}\int_{\mathbb{S}^{2}}\left \vert \left( \xi -\xi _{\ast
}\right) \cdot n\right \vert \frac{\left \langle \xi \right \rangle ^{\beta }%
}{\left \langle \xi _{\ast }^{\prime }\right \rangle ^{\beta }\left \langle
\xi ^{\prime }\right \rangle ^{\beta }}\frac{e^{\kappa \rho \left( \xi
,t\right) }}{e^{\kappa \rho \left( \xi _{\ast }^{\prime },t\right)
}e^{\kappa \rho \left( \xi ^{\prime },t\right) }}dnd\xi_{*}\leq \left( \frac{%
C}{\beta }+\frac{C_{\beta }}{\left \langle \xi \right \rangle ^{2}}\right)
\nu \left( \xi \right) \hbox{.}
\end{equation}
Moreover, if $0<\ka<\frac{1}{4}$, we have
\begin{equation}  \label{rho2}
\int_{\mathbb{R}^{3}}\int_{\mathbb{S}^{2}}\left \vert \left( \xi -\xi _{\ast
}\right) \cdot n\right \vert \frac{\left \langle \xi \right \rangle ^{\beta
} }{\left \langle \xi _{\ast }^{\prime }\right \rangle ^{\beta }}e^{-\frac{1%
}{4}|\xi^{\prime 2}}\frac{e^{\kappa \rho \left( \xi ,t\right) }}{e^{\kappa
\rho \left( \xi _{\ast }^{\prime },t\right) }e^{\kappa \rho \left( \xi
^{\prime },t\right) }}dnd\xi_{*}\leq \left( \frac{C}{\beta }+\frac{C_{\beta }%
}{\left \langle \xi \right \rangle ^{2}}\right) \nu \left( \xi \right) %
\hbox{.}
\end{equation}
\end{lemma}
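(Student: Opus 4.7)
The plan is to reduce both inequalities to the unweighted estimates already recorded in the Cao lemma at the beginning of Section~\ref{pre}. The central observation is that, on the elastic collision manifold, the exponential weight ratio
\[
\frac{e^{\kappa \rho(\xi,t)}}{e^{\kappa \rho(\xi',t)}\,e^{\kappa \rho(\xi_{\ast}',t)}}
\]
is pointwise bounded by $1$. Once this is established, the weighted integrands in (\ref{rho1}) and (\ref{rho2}) are dominated by their unweighted counterparts, and both conclusions follow at once from the two Cao estimates.

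To establish the pointwise bound I first note that $\overline{\rho}(z,t)$ is non-decreasing in $z\geq 0$, since both factors $\langle z\rangle$ and $\langle z\rangle \wedge t$ are non-decreasing on $[0,\infty)$. Energy conservation for the elastic collision gives
\[
|\xi'|^{2}+|\xi_{\ast}'|^{2}\;=\;|\xi|^{2}+|\xi_{\ast}|^{2}\;\geq\;|\xi|^{2},
\]
so that, by monotonicity,
\[
\overline{\rho}\bigl(\sqrt{|\xi'|^{2}+|\xi_{\ast}'|^{2}},\,t\bigr)\;\geq\;\overline{\rho}(|\xi|,t)\;=\;\rho(\xi,t).
\]
Applying Lemma~\ref{Weight-estimate} with $a=|\xi'|$ and $b=|\xi_{\ast}'|$ then chains these two estimates into
\[
\rho(\xi',t)+\rho(\xi_{\ast}',t)\;\geq\;\overline{\rho}\bigl(\sqrt{|\xi'|^{2}+|\xi_{\ast}'|^{2}},\,t\bigr)\;\geq\;\rho(\xi,t),
\]
which is precisely the desired bound on the weight ratio.

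With the weight pointwise controlled by $1$, inequality (\ref{rho1}) is immediate from the second inequality in the Cao lemma, and (\ref{rho2}) follows at once from the first. I therefore expect no serious obstacle: the substantive content has already been captured by the sub-additivity of Lemma~\ref{Weight-estimate}, and what remains is a direct application of energy conservation together with the elementary monotonicity of $\overline{\rho}$. The restriction $\kappa<1/4$ in (\ref{rho2}) is not used in this reduction itself; it is the natural range in which the Gaussian factor $e^{-|\xi'|^{2}/4}$ will continue to dominate the weight $e^{\kappa\rho(\xi',t)}\leq e^{\kappa\langle \xi'\rangle^{2}}$ when this lemma is invoked downstream in the proof of Theorem~\ref{Improvement-f1}.
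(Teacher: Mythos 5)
Your proof is correct, and it takes a genuinely different and more economical route than the paper's. The paper's proof redoes the Carleman-representation analysis with the weight built in: it passes to variables $\eta=\xi'-\xi$, $\omega=\xi_*'-\xi$ with $\eta\perp\omega$, explicitly integrates the $\omega$-plane to collect a factor $e^{-\kappa\rho(\xi_\perp,t)}\langle\xi_\perp\rangle^{-(\beta-2)}$, and only then applies Lemma~\ref{Weight-estimate} (to the triple $\xi$, $\xi+\eta$, $\xi_\perp$), finally invoking an internal step of the proof of Lemma~2.11 in \cite{[Cao]} to finish. Your proof instead makes the single global observation that, by Lemma~\ref{Weight-estimate} applied to $a=|\xi'|$, $b=|\xi_*'|$, energy conservation $|\xi'|^2+|\xi_*'|^2=|\xi|^2+|\xi_*|^2\ge|\xi|^2$, and the evident monotonicity of $z\mapsto\overline{\rho}(z,t)$, one has $\rho(\xi',t)+\rho(\xi_*',t)\ge\rho(\xi,t)$ pointwise on the collision manifold, so the exponential ratio is $\le 1$ and both (\ref{rho1}) and (\ref{rho2}) reduce immediately to the two \emph{stated} Cao estimates (using also $|(\xi-\xi_*)\cdot n|\le|\xi-\xi_*|$ and $(1+|\xi|)^2\ge\langle\xi\rangle^2$). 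This is shorter and more modular than the paper's argument, since it quotes only the stated Cao lemma rather than its internal proof. Your remark that the restriction $\kappa<1/4$ plays no role in (\ref{rho2}) itself, but is recorded for downstream use where $e^{\kappa\rho(\xi,\tau)}\sqrt{\mathcal{M}}$ must be bounded (as in Corollary~\ref{Cor-Q}), is also accurate — the paper's own proof of (\ref{rho2}) does not use $\kappa<1/4$ either.
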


\begin{proof}
Let $\eta =\xi ^{\prime }-\xi $ and $\omega =\xi _{\ast }-\xi ^{\prime }$.
Then $\eta \bot \omega $, $\xi ^{\prime }=\xi +\eta $, $\xi _{\ast }=\xi
+\eta +\omega $, $\xi _{\ast }^{\prime }=\xi +\omega $. By change of
variables,
\begin{eqnarray*}
&&\int_{\mathbb{R}^{3}}\int_{\mathbb{S}^{2}}\left \vert \left( \xi -\xi
_{\ast }\right) \cdot n\right \vert \frac{\left \langle \xi \right \rangle
^{\beta }}{\left \langle \xi _{\ast }^{\prime }\right \rangle ^{\beta }\left
\langle \xi ^{\prime }\right \rangle ^{\beta }}\frac{e^{\kappa \rho \left(
\xi ,t\right) }}{e^{\kappa \rho \left( \xi _{\ast }^{\prime },t\right)
}e^{\kappa \rho \left( \xi ^{\prime },t\right) }}dnd\xi_{*} \\
&\leq &\int_{\mathbb{R}^{3}}\int_{\{ \omega \hbox{ }:\hbox{ }\eta \bot
\omega \}}\frac{1}{\left \vert \eta \right \vert }\frac{\left \langle \xi
\right \rangle ^{\beta }}{\left \langle \xi +\omega \right \rangle ^{\beta
}\left \langle \xi +\eta \right \rangle ^{\beta }}\frac{e^{\kappa \rho
\left( \xi ,t\right) }}{e^{\kappa \rho \left( \xi +\omega ,t\right)
}e^{\kappa \rho \left( \xi +\eta ,t\right) }}d\omega d\eta =:\mathrm{I}%
\hbox{.}
\end{eqnarray*}%
We split $\xi $ into two parts%
\begin{equation*}
\xi =\xi _{\Vert }+\xi _{\bot }\hbox{, }\xi _{\bot }=\frac{\left( \xi \cdot
\eta \right) \eta }{\left \vert \eta \right \vert ^{2}}\hbox{, }\xi _{\Vert
}\Vert \omega \hbox{, }\xi _{\bot }\bot \omega \hbox{, }\left \vert \xi
\right \vert ^{2}=\left \vert \xi _{\Vert }\right \vert ^{2}+\left \vert \xi
_{\bot }\right \vert ^{2}\hbox{.}
\end{equation*}%
Then%
\begin{eqnarray*}
\mathrm{I} &=&\int_{\mathbb{R}^{3}}\frac{\left \langle \xi \right \rangle
^{\beta }e^{\kappa \left[ \rho \left( \xi ,t\right) -\rho \left( \xi +\eta
,t\right) \right] }}{\left \langle \xi +\eta \right \rangle ^{\beta }\left
\vert \eta \right \vert }\int_{\{ \omega \hbox{ }:\hbox{ }\eta \bot \omega
\}}\frac{e^{-\kappa \left( 1+\left \vert \xi _{\Vert }+\omega \right \vert
^{2}+\left \vert \xi _{\bot }\right \vert ^{2}\right) ^{1/2}\left( \left(
1+\left \vert \xi _{\Vert }+\omega \right \vert ^{2}+\left \vert \xi _{\bot
}\right \vert ^{2}\right) ^{1/2}\wedge t\right) }}{\left( 1+\left \vert \xi
_{\Vert }+\omega \right \vert ^{2}+\left \vert \xi _{\bot }\right \vert
^{2}\right) ^{\beta /2}}d\omega d\eta \\
&=&\int_{\mathbb{R}^{3}}\frac{\left \langle \xi \right \rangle ^{\beta
}e^{\kappa \left[ \rho \left( \xi ,t\right) -\rho \left( \xi +\eta ,t\right) %
\right] }}{\left \langle \xi +\eta \right \rangle ^{\beta }\left \vert \eta
\right \vert }\int_{\mathbb{R}^{2}}\frac{e^{-\kappa \left( 1+\left \vert
\varpi \right \vert ^{2}+\left \vert \xi _{\bot }\right \vert ^{2}\right)
^{1/2}\left[ \left( 1+\left \vert \varpi \right \vert ^{2}+\left \vert \xi
_{\bot }\right \vert ^{2}\right) ^{1/2}\wedge t\right] }}{\left( 1+\left
\vert \varpi \right \vert ^{2}+\left \vert \xi _{\bot }\right \vert
^{2}\right) ^{\beta /2}}d\varpi d\eta \hbox{.}
\end{eqnarray*}%
Making a change of variable by $\varpi =\sqrt{1+\left \vert \xi _{\bot
}\right \vert ^{2}}z$ gives%
\begin{eqnarray*}
&&\int_{\mathbb{R}^{2}}\frac{e^{-\kappa \left( 1+\left \vert \varpi \right
\vert ^{2}+\left \vert \xi _{\bot }\right \vert ^{2}\right) ^{1/2}\left[
\left( 1+\left \vert \varpi \right \vert ^{2}+\left \vert \xi _{\bot }\right
\vert ^{2}\right) ^{1/2}\wedge t\right] }}{\left( 1+\left \vert \varpi
\right \vert ^{2}+\left \vert \xi _{\bot }\right \vert ^{2}\right) ^{\beta
/2}}d\varpi \\
&=&\left( 1+\left \vert \xi _{\bot }\right \vert ^{2}\right) ^{-\frac{\beta
-2}{2}}\int_{\mathbb{R}^{2}}\frac{e^{-\kappa \left( 1+\left \vert \xi _{\bot
}\right \vert ^{2}\right) ^{1/2}\left( 1+\left \vert z\right \vert
^{2}\right) ^{1/2}\left[ \left( 1+\left \vert \xi _{\bot }\right \vert
^{2}\right) ^{1/2}\left( 1+\left \vert z\right \vert ^{2}\right)
^{1/2}\wedge t\right] }}{\left( 1+\left \vert z\right \vert ^{2}\right)
^{\beta /2}}dz \\
&=&\frac{2\pi }{\left \langle \xi _{\bot }\right \rangle ^{\beta -2}}%
\int_{1}^{\infty }\frac{e^{-\kappa \left \langle \xi _{\bot }\right \rangle
\zeta \left[ \left \langle \xi _{\bot }\right \rangle \zeta \wedge t\right] }%
}{\zeta ^{\beta -1}}d\zeta \hbox{.}
\end{eqnarray*}%
If $\left \langle \xi _{\bot }\right \rangle \geq t$, then
\begin{eqnarray*}
\frac{2\pi }{\left \langle \xi _{\bot }\right \rangle ^{\beta -2}}%
\int_{1}^{\infty }\frac{e^{-\kappa \left \langle \xi _{\bot }\right \rangle
\zeta \left[ \left \langle \xi _{\bot }\right \rangle \zeta \wedge t\right] }%
}{\zeta ^{\beta -1}}d\zeta &=&\frac{2\pi }{\left \langle \xi _{\bot }\right
\rangle ^{\beta -2}}\int_{1}^{\infty }\frac{e^{-\kappa \left \langle \xi
_{\bot }\right \rangle t\zeta }}{\zeta ^{\beta -1}}d\zeta \\
&\leq &\frac{2\pi }{\beta -2}\frac{e^{-\kappa \left \langle \xi _{\bot
}\right \rangle t}}{\left \langle \xi _{\bot }\right \rangle ^{\beta -2}}=%
\frac{2\pi }{\beta -2}\frac{e^{-\kappa \rho (\xi _{\bot },t)}}{\left \langle
\xi _{\bot }\right \rangle ^{\beta -2}}\hbox{.}
\end{eqnarray*}%
If $\left \langle \xi _{\bot }\right \rangle <t$, then $\left \langle \xi
_{\bot }\right \rangle \zeta <t$ for $\zeta <t/\left \langle \xi _{\bot
}\right \rangle $ and thus%
\begin{eqnarray*}
\frac{2\pi }{\left \langle \xi _{\bot }\right \rangle ^{\beta -2}}%
\int_{1}^{\infty }\frac{e^{-\kappa \left \langle \xi _{\bot }\right \rangle
\zeta \left[ \left \langle \xi _{\bot }\right \rangle \zeta \wedge \tau %
\right] }}{\zeta ^{\beta -1}}d\zeta &=&\frac{2\pi }{\left \langle \xi _{\bot
}\right \rangle ^{\beta -2}}\left[ \int_{1}^{\frac{t}{\left \langle \xi
_{\bot }\right \rangle }}\frac{e^{-\kappa \left \langle \xi _{\bot }\right
\rangle ^{2}\zeta ^{2}}}{\zeta ^{\beta -1}}d\zeta +\int_{\frac{t}{\left
\langle \xi _{\bot }\right \rangle }}^{\infty }\frac{e^{-\kappa \left
\langle \xi _{\bot }\right \rangle \zeta t}}{\zeta ^{\beta -1}}d\zeta \right]
\\
&\leq &\frac{2\pi }{\beta -2}\frac{1}{\left \langle \xi _{\bot }\right
\rangle ^{\beta -2}}\left( e^{-\kappa \left \langle \xi _{\bot }\right
\rangle ^{2}}+e^{-\kappa t^{2}}\right) \leq \frac{4\pi }{\beta -2}\frac{%
e^{-\kappa \rho (\xi _{\bot },t)}}{\left \langle \xi _{\bot }\right \rangle
^{\beta -2}}\hbox{.}
\end{eqnarray*}%
Therefore,
\begin{equation*}
\mathrm{I}\leq \frac{4\pi }{\beta -2}\int_{\mathbb{R}^{3}}\frac{\left
\langle \xi \right \rangle ^{\beta }}{\left \langle \xi +\eta \right \rangle
^{\beta }\left \langle \xi _{\bot }\right \rangle ^{\beta -2}}\frac{%
e^{\kappa \left[ \rho \left( \xi ,t\right) -\rho \left( \xi +\eta ,t\right)
-\rho (\xi _{\bot },t)\right] }}{\left \vert \eta \right \vert }d\eta %
\hbox{.}
\end{equation*}%
Observe that $\left \vert \xi _{\bot }\right \vert ^{2}+\left \vert \xi
+\eta \right \vert ^{2}=\left \vert \xi _{\bot }\right \vert
^{2}+\left
\vert \xi \right \vert ^{2}+2\xi \cdot \eta +\left \vert \eta
\right \vert ^{2}=\left \vert \xi _{\bot }\right \vert ^{2}+\left \vert \xi
\right \vert ^{2}+2\xi _{\bot }\cdot \eta +\left \vert \eta \right \vert
^{2}=\left \vert \xi \right \vert ^{2}+\left \vert \xi _{\bot }+\eta
\right
\vert ^{2}$. Hence,
\begin{eqnarray*}
\rho \left( \xi ,t\right) -\rho \left( \xi +\eta ,t\right) -\rho (\xi _{\bot
},t) &=&\overline{\rho }\left( \left \vert \xi \right \vert ,t\right) -%
\overline{\rho }\left( \left \vert \xi +\eta \right \vert ,t\right) -%
\overline{\rho }(\left \vert \xi _{\bot }\right \vert ,t) \\
&\leq &\overline{\rho }\left( \sqrt{\left \vert \xi _{\bot }\right \vert
^{2}+\left \vert \xi +\eta \right \vert ^{2}},t\right) -\overline{\rho }%
\left( \left \vert \xi +\eta \right \vert ,t\right) -\overline{\rho }(\left
\vert \xi _{\bot }\right \vert ,t)\leq 0\hbox{,}
\end{eqnarray*}%
the last inequality being valid due to Lemma \ref{Weight-estimate}. It
follows
\begin{equation*}
\mathrm{I}\leq \frac{4\pi }{\beta -2}\int_{\mathbb{R}^{3}}\frac{1}{\left
\vert \eta \right \vert }\frac{\left \langle \xi \right \rangle ^{\beta }}{%
\left \langle \xi +\eta \right \rangle ^{\beta }\left \langle \xi _{\bot
}\right \rangle ^{\beta -2}}d\eta =\frac{4\pi }{\beta -2}\int_{\mathbb{R}%
^{3}}\frac{1}{\left \vert \xi ^{\prime }-\xi \right \vert }\frac{\left
\langle \xi \right \rangle ^{\beta }}{\left \langle \xi ^{\prime }\right
\rangle ^{\beta }\left \langle \xi _{\bot }\right \rangle ^{\beta -2}}d\xi
^{\prime }\hbox{.}
\end{equation*}%
According to the estimate in the proof of \cite[Lemma 2.11]{[Cao]}, there
exists a constant $C_{\beta }>0$ depending only on $\beta $ such that%
\begin{equation*}
\mathrm{I}\leq \frac{4\pi }{\beta -2}\int_{\mathbb{R}^{3}}\frac{1}{\left
\vert \xi ^{\prime }-\xi \right \vert }\frac{\left \langle \xi \right
\rangle ^{\beta }}{\left \langle \xi ^{\prime }\right \rangle ^{\beta }\left
\langle \xi _{\bot }\right \rangle ^{\beta -2}}d\xi ^{\prime }\leq \left(
\frac{C}{\beta }+\frac{C_{\beta }}{\left \langle \xi \right \rangle ^{2}}%
\right) \nu \left( \xi \right) \hbox{,}
\end{equation*}%
where $C>0$ is a universal constant. This completes the proof of (\ref{rho1}%
).

For the estimate of (\ref{rho2}), applying the same argument as (\ref{rho1}%
), one gets
\begin{align*}
&\quad \int_{\mathbb{R}^{3}}\int_{\mathbb{S}^{2}}\left \vert \left( \xi -\xi
_{\ast }\right) \cdot n\right \vert \frac{\left \langle \xi \right \rangle
^{\beta } }{\left \langle \xi _{\ast }^{\prime }\right \rangle ^{\beta }}e^{-%
\frac{1}{4}|\xi^{\prime 2}}\frac{e^{\kappa \rho \left( \xi ,t\right) }}{%
e^{\kappa \rho \left( \xi _{\ast }^{\prime },t\right) }e^{\kappa \rho \left(
\xi ^{\prime },t\right) }}dnd\xi_{*} \\
&\leq \frac{4\pi }{\beta -2}\int_{\mathbb{R}^{3}}\frac{1}{\left \vert \xi
^{\prime }-\xi \right \vert }\frac{\left \langle \xi \right \rangle ^{\beta }%
}{e^{\frac{1}{4}|\xi^{\prime 2}}\left \langle \xi _{\bot }\right \rangle
^{\beta -2}}d\xi ^{\prime }\,.
\end{align*}
Then one can modify the argument in \cite[Lemma 2.11]{[Cao]} (in fact, it is
easier) to conclude our result.
\end{proof}

\begin{corollary}
\label{Cor-Q}Let $\beta >4$, $\kappa >0$ and let $\rho \left( \xi ,t\right) $
be defined by $(\ref{weight-exponent})$. Then there exists a constant $%
C_{\beta }^{^{\prime \prime }}>0$ depending only on $\beta $ such that%
\begin{equation*}
e^{\kappa \rho \left( \xi ,t\right) }\left \langle \xi \right \rangle
^{\beta }\left \vert Q\left( g,h\right) \right \vert \leq C_{\beta
}^{^{\prime \prime }}\nu \left( \xi \right) \left \vert e^{\kappa \rho
\left( \xi ,t\right) }g\right \vert _{L_{\xi ,\beta }^{\infty }}\left \vert
e^{\kappa \rho \left( \xi ,t\right) }h\right \vert _{L_{\xi ,\beta }^{\infty
}}\hbox{.}
\end{equation*}
Moreover, for any $R>0$, and $0<\ka<\frac{1}{4}$, we have
\begin{equation*}
\chi _{\{ \left \vert \xi \right \vert \geq R\}}e^{\kappa \rho \left( \xi
,t\right) }\left \langle \xi \right \rangle ^{\beta }\left \vert \mathcal{K}%
f\right \vert \leq \left( \frac{C}{\beta }+\frac{C_{\beta }}{R^{2}}\right)
\nu \left( \xi \right) \left \vert e^{\ka \rho(\xi,t)}f\right \vert _{L_{\xi
,\beta }^{\infty }}\,.
\end{equation*}
\end{corollary}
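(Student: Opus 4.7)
The proof is a structured application of Lemma \ref{Weighted-Q}, decomposing $Q$ and $\mathcal{K}$ into gain and loss parts so that each either matches the integrands of \eqref{rho1}--\eqref{rho2} or is estimated by elementary means.

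For the first inequality, I would write $Q(g,h)=\int\!\int|q\cdot n|\,g(\xi_*')h(\xi')\,dn\,d\xi_*-\pi\,h(\xi)\int|q|\,g(\xi_*)\,d\xi_*$. For the gain part, factor out $|e^{\kappa\rho}g|_{L_{\xi,\beta}^\infty}|e^{\kappa\rho}h|_{L_{\xi,\beta}^\infty}$ by using $|g(\xi_*')|\leq\langle\xi_*'\rangle^{-\beta}e^{-\kappa\rho(\xi_*',t)}|e^{\kappa\rho}g|_{L_{\xi,\beta}^\infty}$ (and analogously for $h(\xi')$); what remains after multiplying by $\langle\xi\rangle^\beta e^{\kappa\rho(\xi,t)}$ is precisely the integrand of \eqref{rho1}, yielding a factor of order $\nu(\xi)$. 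For the loss part, $e^{\kappa\rho(\xi_*,t)}\geq 1$ and $\beta>4$ give $\int|q|\langle\xi_*\rangle^{-\beta}\,d\xi_*\lesssim 1+|\xi|\sim\nu(\xi)$, while the outer weight is absorbed directly via $|h(\xi)|\langle\xi\rangle^\beta e^{\kappa\rho(\xi,t)}\leq|e^{\kappa\rho}h|_{L_{\xi,\beta}^\infty}$.

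For the second inequality I would decompose $\mathcal{K}f$ into two gain terms (from $\mathcal{M}(\xi_*')f(\xi')$ and $\mathcal{M}(\xi')f(\xi_*')$) and a loss term ($\pi\mathcal{M}(\xi)\int|q|f(\xi_*)\,d\xi_*$ after the $n$-integration). For the gain term $\mathcal{M}(\xi')f(\xi_*')$, the key step is to split $\mathcal{M}(\xi')\lesssim e^{-|\xi'|^2/4}\cdot e^{-|\xi'|^2/4}$: one Gaussian factor supplies the one appearing in the integrand of \eqref{rho2}, while the other absorbs the leftover weight $e^{\kappa\rho(\xi',t)}\leq e^{\kappa\langle\xi'\rangle^2}$, which is uniformly bounded against $e^{-|\xi'|^2/4}$ \emph{precisely} under the hypothesis $\kappa<1/4$, since then $\kappa\langle\xi'\rangle^2-|\xi'|^2/4\leq\kappa$ uniformly in $\xi'$. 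Applying \eqref{rho2} then produces $(C/\beta+C_\beta/\langle\xi\rangle^2)\nu(\xi)\leq(C/\beta+C_\beta/R^2)\nu(\xi)$ on $\{|\xi|\geq R\}$. The other gain term is handled identically, invoking the symmetric counterpart of \eqref{rho2} in which $\xi'$ and $\xi_*'$ are exchanged; this variant holds with the same bound because the change-of-variables argument proving \eqref{rho2} is symmetric under swapping the two post-collisional velocities.

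For the loss term on $\{|\xi|\geq R\}$, the Maxwellian $\mathcal{M}(\xi)\lesssim e^{-|\xi|^2/2}$ readily absorbs the weight $e^{\kappa\rho(\xi,t)}\lesssim e^{\kappa|\xi|^2}$ with exponential room to spare since $\kappa<1/4<1/2$, so $\chi_{\{|\xi|\geq R\}}\langle\xi\rangle^\beta e^{-(1/2-\kappa)|\xi|^2}\leq C_\beta/R^2$, and the remaining $\xi_*$-integral is estimated as in the loss part of $Q$. Summing the three contributions yields the stated bound. The only delicate point is the threshold $\kappa<1/4$ in the gain terms: it is exactly what is needed to match the inequality $\rho(\xi,t)\leq\langle\xi\rangle^2$ with the splitting $e^{-|\xi'|^2/2}=e^{-|\xi'|^2/4}\cdot e^{-|\xi'|^2/4}$ and reduce cleanly to \eqref{rho2}, and the corollary's assumption is tuned exactly to this threshold.
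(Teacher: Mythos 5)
Your treatment of the $Q$ estimate is exactly the paper's: decompose into gain and loss, factor out $|e^{\kappa\rho}g|_{L^\infty_{\xi,\beta}}|e^{\kappa\rho}h|_{L^\infty_{\xi,\beta}}$, apply \eqref{rho1} to the gain, and compute the loss directly using $e^{\kappa\rho(\xi_*,t)}\geq 1$ and $\beta>4$. The paper then stops there with ``the estimate of $\mathcal{K}$ is similar,'' so the $\mathcal{K}$ portion of your argument fills in details the paper leaves implicit, and your structure — two gain terms handled via \eqref{rho2}, plus a loss term in which $\mathcal{M}(\xi)$ on $\{|\xi|\geq R\}$ supplies the $C_\beta/R^2$ factor — is the right one. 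The use of $\kappa<1/4$ to absorb $e^{\kappa\rho(\xi',t)}\leq e^{\kappa\langle\xi'\rangle^2}$ against one factor $e^{-|\xi'|^2/4}$ of the Maxwellian, so that the remaining $e^{-|\xi'|^2/4}$ matches the Gaussian in \eqref{rho2}, is correct and is indeed the reason the corollary carries that hypothesis.

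One justification is imprecise and deserves tightening. For the gain term with $\mathcal{M}(\xi_*')f(\xi')$ you need the counterpart of \eqref{rho2} in which $\xi'$ and $\xi_*'$ are swapped, i.e.\ the polynomial weight sits on $\xi'$ and the Gaussian on $\xi_*'$. You assert this follows because ``the change-of-variables argument proving \eqref{rho2} is symmetric under swapping the two post-collisional velocities,'' but the Carleman change of variables used in the proof of Lemma~\ref{Weighted-Q} is \emph{not} symmetric in $\eta$ and $\omega$: the Jacobian factor $1/|\eta|$ is attached to the three-dimensional variable $\eta=\xi'-\xi$, while $\omega=\xi_*'-\xi$ ranges over the two-plane $\eta^\perp$, and the proof crucially integrates $\omega$ first (extracting the $(\beta-2)^{-1}$ smallness). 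If you literally swap the roles of $\eta$ and $\omega$ in that computation you do not reproduce the $C/\beta$ factor. The correct justification for the swapped inequality is that the original $(n,\xi_*)$-integral itself is invariant under $\xi'\leftrightarrow\xi_*'$ — most transparently, passing to the $\sigma$-representation, the hard-sphere kernel is invariant under $\sigma\to-\sigma$, which exchanges $\xi'$ and $\xi_*'$ — so the swapped integral is in fact \emph{equal} to \eqref{rho2}, not merely provable by an analogous argument. With that replacement the proof is complete and correct.
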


\begin{proof}
We only prove the estimate of $Q$ since the estimate of $\mathcal{K}$ is
similar. By definition of $Q$,
\begin{eqnarray*}
e^{\kappa \rho \left( \xi ,t\right) }\left \langle \xi \right \rangle
^{\beta }\left \vert Q\left( g,h\right) \right \vert &\leq &\left \vert
e^{\kappa \rho \left( \xi ,t\right) }g\right \vert _{L_{\xi ,\beta }^{\infty
}}\left \vert e^{\kappa \rho \left( \xi ,t\right) }h\right \vert _{L_{\xi
,\beta }^{\infty }} \\
&&\cdot \left[ \int_{\mathbb{R}^{3}}\int_{\mathbb{S}^{2}}\left \vert \left(
\xi -\xi _{\ast }\right) \cdot n\right \vert \frac{\left \langle \xi \right
\rangle ^{\beta }}{\left \langle \xi _{\ast }^{\prime }\right \rangle
^{\beta }\left \langle \xi ^{\prime }\right \rangle ^{\beta }}\frac{%
e^{\kappa \rho \left( \xi ,t\right) }}{e^{\kappa \rho \left( \xi _{\ast
}^{\prime },t\right) }e^{\kappa \rho \left( \xi ^{\prime },t\right) }}dnd\xi
_{\ast }\right. \\
&&\left. +\int_{\mathbb{R}^{3}}\int_{\mathbb{S}^{2}}\left \vert \left( \xi
-\xi _{\ast }\right) \cdot n\right \vert \frac{1}{\left \langle \xi _{\ast
}\right \rangle ^{\beta }e^{\kappa \rho \left( \xi _{\ast },\tau \right) }}%
dnd\xi _{\ast }\right] \hbox{.}
\end{eqnarray*}%
In view of Lemma \ref{Weighted-Q},%
\begin{equation*}
\int_{\mathbb{R}^{3}}\int_{\mathbb{S}^{2}}\left \vert \left( \xi -\xi _{\ast
}\right) \cdot n\right \vert \frac{\left \langle \xi \right \rangle ^{\beta }%
}{\left \langle \xi _{\ast }^{\prime }\right \rangle ^{\beta }\left \langle
\xi ^{\prime }\right \rangle ^{\beta }}\frac{e^{\kappa \rho \left( \xi
,t\right) }}{e^{\kappa \rho \left( \xi _{\ast }^{\prime },t\right)
}e^{\kappa \rho \left( \xi ^{\prime },t\right) }}dnd\xi _{\ast }\leq
C_{\beta }^{\prime }\nu \left( \xi \right)
\end{equation*}%
for some $C_{\beta }^{\prime }>0$. By straightforward computation,
\begin{eqnarray*}
&&\int_{\mathbb{R}^{3}}\int_{\mathbb{S}^{2}}\left \vert \left( \xi -\xi
_{\ast }\right) \cdot n\right \vert \frac{1}{\left \langle \xi _{\ast
}\right \rangle ^{\beta }e^{\kappa \rho \left( \xi _{\ast },\tau \right) }}%
dnd\xi _{\ast } \\
&\leq &4\pi \left[ \int_{\left \vert \xi _{\ast }\right \vert \leq \left
\vert \xi \right \vert }\left \vert \xi -\xi _{\ast }\right \vert \frac{1}{%
\left \langle \xi _{\ast }\right \rangle ^{\beta }}d\xi _{\ast }+\int_{\left
\vert \xi _{\ast }\right \vert >\left \vert \xi \right \vert }\left \vert
\xi -\xi _{\ast }\right \vert \frac{1}{\left \langle \xi _{\ast }\right
\rangle ^{\beta }}d\xi _{\ast }\right] \\
&\leq &\left( 4\pi \right) ^{2}\left( \frac{2\left \vert \xi \right \vert }{%
\beta -3}+\frac{2}{\beta -4}\frac{1}{\left \langle \xi \right \rangle
^{\beta -4}}\right) \hbox{.}
\end{eqnarray*}%
Hence,
\begin{equation*}
e^{\kappa \rho \left( \xi ,t\right) }\left \langle \xi \right \rangle
^{\beta }\left \vert Q\left( g,h\right) \right \vert \leq C_{\beta
}^{^{\prime \prime }}\nu \left( \xi \right) \left \vert e^{\kappa \rho
\left( \xi ,t\right) }g\right \vert _{L_{\xi ,\beta }^{\infty }}\left \vert
e^{\kappa \rho \left( \xi ,t\right) }h\right \vert _{L_{\xi ,\beta }^{\infty
}}
\end{equation*}%
for some constant $C_{\beta }^{^{\prime \prime }}>0$ only depending upon $%
\beta $, as desired.
\end{proof}

\begin{proof}[\textbf{Proof of Theorem \protect \ref{Improvement-f1}}]
In virtue of $\left( \ref{decom-System}\right) $, $f_{1}$ can be expressed
as
\begin{equation*}
f_{1}=\mathbb{S}^{t}f_{0}+\int_{0}^{t}\mathbb{S}^{t-\tau }\left[
K_{s}f_{1}+Q\left( f_{1},f_{1}\right) +Q\left( f_{1},\sqrt{\mathcal{M}}%
f_{2}\right) +Q\left( \sqrt{\mathcal{M}}f_{2},f_{1}\right) \right] \left(
\tau \right) d\tau \hbox{.}
\end{equation*}%
Let $T>0$ be any number and denote
\begin{equation*}
u\left( t,x,\xi \right) =\left \langle \xi \right \rangle ^{\beta }e^{\kappa
\rho \left( \xi ,t\right) }f_{1}\left( t,x,\xi \right) \hbox{.}
\end{equation*}%
Multiplying $\left \langle \xi \right \rangle ^{\beta }e^{\kappa \rho \left(
\xi ,t\right) }$ on both sides of the integral equation gives
\begin{eqnarray*}
\left \langle \xi \right \rangle ^{\beta }e^{\kappa \rho \left( \xi
,t\right) }f_{1} &=&\varepsilon \mathbb{S}^{t}\left( \left \langle \xi
\right \rangle ^{\beta }e^{\kappa \rho \left( \xi ,t\right) }f_{0}\right)
+\int_{0}^{t}\left \langle \xi \right \rangle ^{\beta }e^{\kappa \rho \left(
\xi ,t\right) }\mathbb{S}^{t-\tau }Q\left( f_{1},f_{1}\right) \left( \tau
\right) d\tau \\
&&+\int_{0}^{t}\left \langle \xi \right \rangle ^{\beta }e^{\kappa \rho
\left( \xi ,t\right) }\mathbb{S}^{t-\tau }\left[ K_{s}f_{1}+Q\left( f_{1},%
\sqrt{\mathcal{M}}f_{2}\right) +Q\left( \sqrt{\mathcal{M}}f_{2},f_{1}\right) %
\right] \left( \tau \right) d\tau \\
&=&: \mathrm{I}+\mathrm{II}+\mathrm{III}\hbox{.}
\end{eqnarray*}

At first, it is easy to see
\begin{equation}
\left \vert \mathrm{I}\right \vert \leq \varepsilon \left \Vert \left
\langle \xi \right \rangle ^{\beta }e^{-\nu _{0}\left \langle \xi \right
\rangle t+\kappa \left \langle \xi \right \rangle t}\left \vert f_{0}\right
\vert \right \Vert _{L_{\xi }^{\infty }L_{x}^{\infty }}\leq \varepsilon
\left \Vert f_{0}\right \Vert _{L_{\xi ,\beta }^{\infty }L_{x}^{\infty }}%
\hbox{,}  \label{weighted-I}
\end{equation}%
since $\kappa \rho \left( \xi ,t\right) \leq \kappa \left \langle \xi
\right
\rangle t$.

As for $\mathrm{II}$ and $\mathrm{III}$, we can find
\begin{equation*}
e^{-\nu \left( \xi \right) \left( t-\tau \right) }e^{\kappa \rho \left( \xi
,t\right) }\leq e^{-\left( \nu _{0}-\kappa \right) \left \langle \xi \right
\rangle \left( t-\tau \right) }e^{\kappa \rho \left( \xi ,\tau \right) }
\end{equation*}%
for all $\xi $ and $0\leq \tau \leq t$. To see this, for $\left \langle \xi
\right \rangle >t$, we have $\rho \left( \xi ,t\right) =\left \langle \xi
\right \rangle t$ and thus%
\begin{equation*}
e^{-\nu \left( \xi \right) \left( t-\tau \right) }e^{\kappa \rho \left( \xi
,t\right) }=e^{-\nu \left( \xi \right) \left( t-\tau \right) }e^{\kappa
\left \langle \xi \right \rangle t}\leq e^{-\left( \nu _{0}-\kappa \right)
\left \langle \xi \right \rangle \left( t-\tau \right) }e^{\kappa \left
\langle \xi \right \rangle \tau }=e^{-\left( \nu _{0}-\kappa \right) \left
\langle \xi \right \rangle \left( t-\tau \right) }e^{\kappa \rho \left( \xi
,\tau \right) }\hbox{;}
\end{equation*}%
for $\left \langle \xi \right \rangle \leq t$, we have $\rho \left( \xi
,t\right) =\left \langle \xi \right \rangle ^{2}$, so that
\begin{eqnarray*}
e^{-\nu \left( \xi \right) \left( t-\tau \right) }e^{\kappa \rho \left( \xi
,t\right) } &\leq &e^{-\nu _{0}\left \langle \xi \right \rangle \left(
t-\tau \right) }e^{\kappa \left \langle \xi \right \rangle ^{2}}=e^{-\left(
\nu _{0}-\kappa \right) \left \langle \xi \right \rangle \left( t-\tau
\right) }e^{\kappa \left \langle \xi \right \rangle \left( \left \langle \xi
\right \rangle -t\right) +\kappa \left \langle \xi \right \rangle \tau } \\
&\leq &e^{-\left( \nu _{0}-\kappa \right) \left \langle \xi \right \rangle
\left( t-\tau \right) }e^{\kappa \rho \left( \xi ,\tau \right) }\hbox{.}
\end{eqnarray*}%
Therefore,%
\begin{equation*}
\left \vert \mathrm{II}\right \vert \leq \int_{0}^{t}e^{-\left( \nu
_{0}-\kappa \right) \left \langle \xi \right \rangle \left( t-\tau \right)
}e^{\kappa \rho \left( \xi ,\tau \right) }\left \langle \xi \right \rangle
^{\beta }\left \vert Q\left( f_{1},f_{1}\right) \left( \tau \right) \right
\vert d\tau \hbox{,}
\end{equation*}%
\begin{equation*}
\left \vert \mathrm{III}\right \vert \leq \int_{0}^{\infty }e^{-\left( \nu
_{0}-\kappa \right) \left \langle \xi \right \rangle \left( t-\tau \right)
}\left \langle \xi \right \rangle ^{\beta }e^{\kappa \rho \left( \xi ,\tau
\right) }\left \vert \left[ K_{s}f_{1}+Q\left( f_{1},\sqrt{\mathcal{M}}%
f_{2}\right) +Q\left( \sqrt{\mathcal{M}}f_{2},f_{1}\right) \right] \left(
\tau \right) \right \vert d\tau \hbox{.}
\end{equation*}%
By Corollary \ref{Cor-Q}, we have
\begin{equation}
\left \vert \mathrm{II}\right \vert \leq C_{\beta }^{\prime \prime
}\sup_{0\leq t\leq T}\left \Vert u\right \Vert _{L_{\xi }^{\infty
}L_{x}^{\infty }}^{2}\int_{0}^{t}e^{-\left( \nu _{0}-\kappa \right) \left
\langle \xi \right \rangle \left( t-\tau \right) }\nu \left( \xi \right)
d\tau \leq \frac{C_{\beta }^{\prime \prime }\nu_{1}}{\nu _{0}-\kappa }\left(
\sup_{0\leq t\leq T}\left \Vert u\right \Vert _{L_{\xi }^{\infty
}L_{x}^{\infty }}\right) ^{2}\hbox{.}  \label{weighted-II}
\end{equation}%
Regarding $\mathrm{III}$, in view of Theorem \ref{thm-main} and Corollary %
\ref{Cor-Q}, together the fact that%
\begin{equation*}
e^{\kappa \rho \left( \xi ,\tau \right) }\sqrt{\mathcal{M}}=e^{\kappa \left
\langle \xi \right \rangle \left( \left \langle \xi \right \rangle \wedge
\tau \right) }e^{-\frac{\left \vert \xi \right \vert ^{2}}{4}}\leq e^{\kappa
\left \langle \xi \right \rangle ^{2}-\frac{\left \vert \xi \right \vert ^{2}%
}{4}}\leq e^{1/4}\hbox{,}
\end{equation*}%
we have
\begin{eqnarray*}
&&\int_{0}^{\infty }e^{-\left( \nu _{0}-\kappa \right) \left \langle \xi
\right \rangle \left( t-\tau \right) }\left \langle \xi \right \rangle
^{\beta }e^{\kappa \rho \left( \xi ,\tau \right) }\left \vert Q\left( f_{1},%
\sqrt{\mathcal{M}}f_{2}\right) +Q\left( \sqrt{\mathcal{M}}f_{2},f_{1}\right)
\right \vert d\tau \\
&\leq &2C_{\beta }^{\prime \prime }\sup_{0\leq t\leq T}\left \Vert u\right
\Vert _{L_{\xi }^{\infty }L_{x}^{\infty }}\cdot \sup_{0\leq t\leq T}\left
\Vert e^{\kappa \rho \left( \xi ,\tau \right) }\sqrt{\mathcal{M}}f_{2}\right
\Vert _{L_{\xi ,\beta }^{\infty }L_{x}^{\infty }}\int_{0}^{t}e^{-\left( \nu
_{0}-\kappa \right) \left \langle \xi \right \rangle \left( t-\tau \right)
}\nu \left( \xi \right) d\tau \\
&\leq &\left( \frac{\widetilde{C}_{\beta }\varepsilon }{\nu _{0}-\kappa }%
\left \Vert f_{0}\right \Vert _{L_{\xi ,\beta }^{\infty }L_{x}^{\infty
}}\right) \sup_{0\leq t\leq T}\left \Vert u\right \Vert _{L_{\xi }^{\infty
}L_{x}^{\infty }}\hbox{.}
\end{eqnarray*}%
By Corollary \ref{Cor-Q},
\begin{eqnarray*}
&&\int_{0}^{\infty }e^{-\left( \nu _{0}-\kappa \right) \left \langle \xi
\right \rangle \left( t-\tau \right) }\left \langle \xi \right \rangle
^{\beta }e^{\kappa \rho \left( \xi ,\tau \right) }\left \vert
K_{s}f_{1}\right \vert d\tau \\
&\leq &\left( \frac{C}{\beta }+\frac{C_{\beta }}{R^{2}}\right) \sup_{0\leq
t\leq T}\left \Vert u\right \Vert _{L_{\xi }^{\infty }L_{x}^{\infty
}}\int_{0}^{\infty }e^{-\left( \nu _{0}-\kappa \right) \left \langle \xi
\right \rangle \left( t-\tau \right) }\nu \left( \xi \right) d\tau \\
&\leq &\frac{\nu_{1}}{\nu _{0}-\kappa }\left( \frac{C}{\beta }+\frac{%
C_{\beta }}{R^{2}}\right) \sup_{0\leq t\leq T}\left \Vert u\right \Vert
_{L_{\xi }^{\infty }L_{x}^{\infty }}\leq \frac{1}{4}\sup_{0\leq t\leq
T}\left \Vert u\right \Vert _{L_{\xi }^{\infty }L_{x}^{\infty }}\hbox{,}
\end{eqnarray*}%
after choosing $\beta >4$ and $R>0$ sufficiently large. Hence,%
\begin{equation}
\left \vert \mathrm{III}\right \vert \leq \left( \frac{1}{4}+\frac{%
\widetilde{C}_{\beta }\varepsilon }{\nu _{0}-\kappa }\left \Vert f_{0}\right
\Vert _{L_{\xi ,\beta }^{\infty }L_{x}^{\infty }}\right) \sup_{0\leq t\leq
T}\left \Vert u\right \Vert _{L_{\xi }^{\infty }L_{x}^{\infty }}\hbox{.}
\label{weighted-III}
\end{equation}

Combining (\ref{weighted-I}), (\ref{weighted-II}) and (\ref{weighted-III}),
we have%
\begin{eqnarray*}
\sup_{0\leq t\leq T}\left \Vert u\right \Vert _{L_{\xi }^{\infty
}L_{x}^{\infty }} &\leq &\varepsilon \left \Vert f_{0}\right \Vert _{L_{\xi
,\beta }^{\infty }L_{x}^{\infty }}+\left( \frac{1}{4}+\frac{\widetilde{C}%
_{\beta }\varepsilon }{\nu _{0}-\kappa }\left \Vert f_{0}\right \Vert
_{L_{\xi ,\beta }^{\infty }L_{x}^{\infty }}\right) \sup_{0\leq t\leq T}\left
\Vert u\right \Vert _{L_{\xi }^{\infty }L_{x}^{\infty }} \\
&&+\frac{C_{\beta }^{\prime \prime }\nu_{1}}{\nu _{0}-\kappa }\left(
\sup_{0\leq t\leq T}\left \Vert u\right \Vert _{L_{\xi }^{\infty
}L_{x}^{\infty }}\right) ^{2}\hbox{.}
\end{eqnarray*}%
We may assume that $\frac{C_{\beta }^{\prime \prime }\nu_{1}}{\nu
_{0}-\kappa }>1$. Choosing $\varepsilon >0$ sufficiently small such that%
\begin{equation*}
\frac{\widetilde{C}_{\beta }\varepsilon }{\nu _{0}-\kappa }\left \Vert
f_{0}\right \Vert _{L_{\xi ,\beta }^{\infty }L_{x}^{\infty }}<\frac{1}{4}%
\hbox{ and }\left( \frac{4C_{\beta }^{\prime \prime }\nu_{1}}{\nu
_{0}-\kappa }\right) ^{2}\varepsilon \left \Vert f_{0}\left( x,\xi \right)
\right \Vert _{L_{\xi ,\beta }^{\infty }L_{x}^{\infty }}<1\hbox{,}
\end{equation*}%
we obtain%
\begin{equation*}
\sup_{0\leq t\leq T}\left \Vert u\right \Vert _{L_{\xi }^{\infty
}L_{x}^{\infty }}\leq 2\varepsilon \left \Vert f_{0}\right \Vert _{L_{\xi
,\beta }^{\infty }L_{x}^{\infty }}+\frac{2C_{\beta }^{\prime \prime }\nu_{1}%
}{\nu _{0}-\kappa }\left( \sup_{0\leq t\leq T}\left \Vert u\right \Vert
_{L_{\xi }^{\infty }L_{x}^{\infty }}\right) ^{2}\hbox{.}
\end{equation*}%
Since $\left \Vert u\left( 0,x,\xi \right) \right \Vert _{L_{\xi }^{\infty
}L_{x}^{\infty }}=\left \Vert f_{1}\left( 0,x,\xi \right) \right \Vert
_{L_{\xi ,\beta }^{\infty }L_{x}^{\infty }}=\varepsilon \left \Vert
f_{0}\left( x,\xi \right) \right \Vert _{L_{\xi ,\beta }^{\infty
}L_{x}^{\infty }}$,
\begin{equation*}
\sup_{0\leq t\leq T}\left \Vert u\right \Vert _{L_{\xi }^{\infty
}L_{x}^{\infty }}\leq \left( \frac{4C_{\beta }^{\prime \prime }\nu_{1}}{\nu
_{0}-\kappa }\right) \varepsilon \left \Vert f_{0}\left( x,\xi \right)
\right \Vert _{L_{\xi ,\beta }^{\infty }L_{x}^{\infty }}\hbox{,}
\end{equation*}%
for any finite $T>0$. Consequently,
\begin{equation*}
\left \langle \xi \right \rangle ^{\beta }e^{\kappa \rho \left( \xi
,t\right) }\left \vert f_{1}\left( t,x,\xi \right) \right \vert =\left \vert
u\left( t,x,\xi \right) \right \vert \leq \overline{C}_{\beta }\varepsilon
\left \Vert f_{0}\left( x,\xi \right) \right \Vert _{L_{\xi ,\beta }^{\infty
}L_{x}^{\infty }}
\end{equation*}%
for some constant $\overline{C}_{\beta }>0$ depending only on $\beta $,
i.e.,
\begin{equation*}
\left \vert f_{1}\left( t,x,\xi \right) \right \vert \leq \overline{C}%
_{\beta }\varepsilon \left \langle \xi \right \rangle ^{-\beta }e^{-\kappa
\rho \left( \xi ,t\right) }\left \Vert f_{0}\left( x,\xi \right) \right
\Vert _{L_{\xi ,\beta }^{\infty }L_{x}^{\infty }}
\end{equation*}%
for all $t\geq 0$, $x\in \mathbb{R}^{3}$, $\xi \in \mathbb{R}^{3}$.
\end{proof}

\section{Some convolution estimates}

\label{wave-interaction} In this section, we will compute the interactions
between different wave patterns, which are essential for determining the
precise space-time structure of the solution. Although these estimates
appear complicated, there is a clear physical picture behind them (see
Section \ref{nonlinear wave interaction} for some illustrations). The proofs
in fact aim to translate this heuristic picture into refined convolution
estimates.

To facilitate the estimates, we decompose space-time domain into the
following $5$ regions:
\begin{align*}
D_{1}& =\left \{ |x|\leq \sqrt{1+t}\right \} \,, \\
\ D_{2}& =\left \{ \mathbf{c}t-\sqrt{1+t}\leq |x|\leq \mathbf{c}t+\sqrt{1+t}%
\right \} \,, \\
D_{3}& =\left \{ |x|\geq \mathbf{c}t+\sqrt{1+t}\right \} \,, \\
D_{4}& =\left \{ \sqrt{1+t}\leq |x|\leq \frac{1}{2}\mathbf{c}t\right \} \,,
\\
D_{5}& =\left \{ \frac{1}{2}\mathbf{c}t\leq |x|\leq \mathbf{c}t-\sqrt{1+t}%
\right \} \,.
\end{align*}

\subsection{Linear interaction}

\begin{lemma}[Diffusion wave convolved with exponential decay]
\label{exponential-easy-1}
\begin{equation*}
\left( 1+t\right) ^{-3/2}e^{-\frac{\left \vert x\right \vert ^{2}}{%
D_{0}\left( 1+t\right) }}\ast _{x,t}e^{-\frac{t+\left \vert x\right \vert }{%
c_{0}}}\lesssim \left( 1+t\right) ^{-3/2}e^{-\frac{\left \vert x\right \vert
^{2}}{\widehat{D}\left( 1+t\right) }}+e^{-\frac{t+\left \vert x\right \vert
}{\widehat{c}}}\hbox{,}
\end{equation*}%
for some constants $\widehat{c}$ and $\widehat{D}>0$.
\end{lemma}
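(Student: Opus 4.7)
The LHS convolution is $I(t,x) = \int_0^t \int_{\mathbb{R}^{3}} G(x-y, t-s)\, S(y,s)\, dy\, ds$, where $G(z,\tau) = (1+\tau)^{-3/2} e^{-|z|^{2}/(D_{0}(1+\tau))}$ is the diffusion kernel and $S(y,s) = e^{-(s+|y|)/c_{0}}$ is the exponentially decaying source. The intuition is that $S$ is essentially supported in a bounded space-time neighborhood of the origin, so $I$ should inherit the diffusion-wave shape from $G$, up to an exponentially decaying correction coming from the tails of $S$. The plan is to turn this heuristic into a clean four-region decomposition of the integration domain.

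I split the domain along the thresholds $s \leq t/2$ vs.\ $s \geq t/2$ and $|y| \leq |x|/2$ vs.\ $|y| \geq |x|/2$, obtaining four pieces. In the principal regime $s \leq t/2$ and $|y| \leq |x|/2$, the bounds $1+t-s \geq (1+t)/2$ and $|x-y| \geq |x|/2$ give the pointwise estimate $G(x-y,t-s) \lesssim (1+t)^{-3/2} e^{-|x|^{2}/(4D_{0}(1+t))}$, which combined with $\int_{0}^{\infty}\!\!\int_{\mathbb{R}^{3}} S\, dy\, ds \lesssim 1$ yields the diffusion-wave term $(1+t)^{-3/2} e^{-|x|^{2}/(\widehat{D}(1+t))}$ (for any $\widehat{D} \geq 4D_{0}$). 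In the regime $s \leq t/2$, $|y| \geq |x|/2$, I extract $e^{-|x|/(4c_{0})}$ from $e^{-|y|/c_{0}}$ and bound the Gaussian pointwise by $1$, obtaining $C(1+t)^{-3/2} e^{-|x|/(4c_{0})}$; this is then absorbed into $(1+t)^{-3/2} e^{-|x|^{2}/(\widehat{D}(1+t))} + e^{-(t+|x|)/\widehat{c}}$ by comparing $|x|$ with $1+t$: the elementary inequality $|x|/(4c_{0}) \geq |x|^{2}/(\widehat{D}(1+t))$ holds when $|x| \leq \widehat{D}(1+t)/(4c_{0})$ (producing the diffusion term), while when $|x| \geq \widehat{D}(1+t)/(4c_{0})$ the factor $|x|/(4c_{0})$ can be split as $|x|/(8c_{0}) + t/(8c_{0})$ (producing the exponential term).

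For the two pieces with $s \geq t/2$, I use $e^{-s/c_{0}} \leq e^{-t/(2c_{0})}$ to extract exponential time decay, while the $|x|$-decay is obtained either from $e^{-|y|/c_{0}} \leq e^{-|x|/(2c_{0})}$ (when $|y| \geq |x|/2$) or from the Gaussian tail (when $|y| \leq |x|/2$, so $|x-y| \geq |x|/2$); in the latter case I factor $e^{-|x-y|^{2}/(D_{0}(1+t-s))}$ into two equal halves, using the first to match the standard Gaussian normalization for the $y$-integral and the second pointwise to extract the spatial decay. After integrating in $s$ via the change of variable $\tau = 1+t-s$, both sub-pieces fit into $e^{-(t+|x|)/\widehat{c}}$.

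No step is a genuine obstacle; the only care required is choosing $\widehat{D}$ and $\widehat{c}$ large enough to absorb all four sub-case bounds simultaneously, and in the $s \geq t/2$, $|y| \leq |x|/2$ sub-case to perform the Gaussian split cleanly so that the pointwise bound $e^{-|x|^{2}/(cD_{0}(1+t-s))}$ can be controlled after the $s$-integration. The same four-region split (with modifications for moving fronts) will serve as the template for the subsequent Huygens-wave and Riesz-wave convolution lemmas, which are considerably more delicate.
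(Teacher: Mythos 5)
The paper omits its own proof of Lemma~\ref{exponential-easy-1}, stating only that it ``is easy,'' so there is no in-paper argument to compare against. Your four-region split of the convolution domain (at $s=t/2$ and $|y|=|x|/2$) is correct, and all four contributions close as you claim. Two spots deserve a word of precision. First, in the regime $|x| \geq \widehat{D}(1+t)/(4c_{0})$ you say that $|x|/(4c_{0})$ ``can be split as $|x|/(8c_{0}) + t/(8c_{0})$''; this is not an identity but the inequality $|x|/(4c_{0}) \geq |x|/(8c_{0}) + t/(8c_{0})$, which holds because the case hypothesis forces $|x| \geq 1+t \geq t$ once one takes $\widehat{D} \geq 4c_{0}$ — consistent with your remark that $\widehat{D}$ must be chosen large enough, but worth making explicit. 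Second, the $s\geq t/2$, $|y|\leq |x|/2$ piece gives a bound of the form $e^{-t/(2c_{0})}\,e^{-|x|^{2}/(CD_{0}(1+t))}$; you assert this fits under $e^{-(t+|x|)/\widehat{c}}$, which is true for $\widehat{c}$ large enough (split on $|x|\lesssim (1+t)$ versus $|x|\gtrsim (1+t)$ to see it), but the shorter route is to observe $e^{-t/(2c_{0})} \lesssim (1+t)^{-3/2}$ and absorb this piece into the diffusion-wave term instead. Finally, note that for the harder lemmas in this section the paper decomposes the \emph{output} variables $(x,t)$ into the regions $D_{1},\dots,D_{5}$ rather than the convolution domain as you do here; your approach is perfectly adequate for the present lemma, but be prepared to adopt the paper's template for the moving-front convolutions.
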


\begin{lemma}[Huygens wave convolved with exponential decay]
\label{moving-exponential}
\begin{eqnarray*}
A &=&\left( 1+t\right) ^{-2}e^{-\frac{\left( \left \vert x\right \vert -%
\mathbf{c}t\right) ^{2}}{D_{0}\left( 1+t\right) }}\ast _{x,t}e^{-\frac{%
t+\left \vert x\right \vert }{c_{0}}} \\
&\lesssim &\left( 1+t\right) ^{-2}e^{-\frac{\left( \left \vert x\right \vert
-\mathbf{c}t\right) ^{2}}{\widehat{D}\left( 1+t\right) }}+e^{-\frac{t+\left
\vert x\right \vert }{\widehat{c}}}\hbox{,}
\end{eqnarray*}%
for some constants $\widehat{c}$ and $\widehat{D}>0$.
\end{lemma}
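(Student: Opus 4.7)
The plan is to exploit the fact that $e^{-(s+|y|)/c_{0}}$ is strongly localized near the space-time origin while the Huygens kernel varies slowly on that scale, so that the convolution essentially reproduces the Huygens wave at $(t,x)$ up to an exponentially decaying remainder. The argument will parallel that of Lemma~\ref{exponential-easy-1} for the stationary diffusion wave, the only new ingredient being the shift between the Huygens centers at $(t-s,x-y)$ and at $(t,x)$.

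First I would split the $(s,y)$-integration domain into $\Omega_{1}=\{s\le t/2,\ |y|+s\le \lambda\sqrt{1+t}\}$ with $\lambda>0$ to be chosen small, and its complement $\Omega_{2}$. On $\Omega_{1}$ the source is concentrated and the Huygens factors at $(t-s,x-y)$ are well-approximated by those at $(t,x)$. Using $(1+t-s)^{-2}\le 4(1+t)^{-2}$, the key pointwise estimate is the triangle-type bound
\[
\bigl|(|x-y|-\mathbf{c}(t-s))-(|x|-\mathbf{c}t)\bigr|\le |y|+\mathbf{c}s,
\]
together with the elementary inequality $b^{2}\ge \tfrac12 a^{2}-(a-b)^{2}$, which yields
\[
e^{-\frac{(|x-y|-\mathbf{c}(t-s))^{2}}{D_{0}(1+t-s)}}\le e^{-\frac{(|x|-\mathbf{c}t)^{2}}{\widehat{D}(1+t)}}\cdot e^{\frac{(|y|+\mathbf{c}s)^{2}}{D_{0}(1+t-s)}}.
\]
Choosing $\lambda$ small so that $(|y|+\mathbf{c}s)^{2}/(D_{0}(1+t-s))$ stays bounded on $\Omega_{1}$ renders the amplification factor harmless, and the residual integral of $e^{-(s+|y|)/c_{0}}$ over $\Omega_{1}$ is $O(1)$, producing the target Huygens bound on this piece.

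On $\Omega_{2}$, either $s>t/2$ or $|y|+s>\lambda\sqrt{1+t}$; in either case one can extract a factor $e^{-ct/\widehat{c}}$ or $e^{-c\sqrt{1+t}/\widehat{c}}$ from $e^{-(s+|y|)/c_{0}}$, leaving the $L^{1}$ factor $e^{-(s+|y|)/(2c_{0})}$ to be integrated against the Huygens kernel. Since the $y$-integral of the Huygens kernel grows like $(1+t-s)^{5/2}$ (shell of radius $\mathbf{c}(t-s)$ and thickness $\sqrt{1+t-s}$) and the subsequent $s$-integral produces at worst $(1+t)^{3/2}$, the total contribution is a polynomial times $e^{-ct/\widehat{c}}$. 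This is then converted into the $e^{-(t+|x|)/\widehat{c}}$ form by the standard dichotomy: if $|x|\lesssim t$ the bound $e^{-ct/\widehat{c}}$ already dominates $e^{-(t+|x|)/\widehat{c}'}$ after enlarging the constant, while if $|x|\gg t$ the original Huygens Gaussian $e^{-(|x|-\mathbf{c}t)^{2}/(D_{0}(1+t))}$ supplies the extra $|x|$-exponential decay.

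The principal difficulty is the calibration of $\lambda$ in the $\Omega_{1}$ step: one must ensure both that $(|y|+\mathbf{c}s)^{2}/(1+t-s)$ remains $O(1)$ on $\Omega_{1}$ and that the restriction $|y|+s>\lambda\sqrt{1+t}$ on $\Omega_{2}$ produces a genuine exponential gain large enough to beat the $(1+t)^{3/2}$ loss coming from the Huygens shell volume. A secondary technical point is the conversion of the exponential-in-$t$ residue on $\Omega_{2}$ into exponential-in-$(t+|x|)$, which requires the case split on $|x|$ versus $\mathbf{c}t$ described above; this is the only genuinely new feature compared with Lemma~\ref{exponential-easy-1}, and it is forced by the moving wave center in the Huygens kernel.
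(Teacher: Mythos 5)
Your $\Omega_1$ estimate is sound: the triangle bound $\bigl|(|x-y|-\mathbf{c}(t-s))-(|x|-\mathbf{c}t)\bigr|\le |y|+\mathbf{c}s$ together with $b^2\ge \tfrac12 a^2-(a-b)^2$, the bound $1+t-s\ge(1+t)/2$ on $\{s\le t/2\}$, and the constraint $|y|+s\le\lambda\sqrt{1+t}$ do give a uniformly bounded amplification factor, and the result is the Huygens term $(1+t)^{-2}e^{-(|x|-\mathbf{c}t)^2/(\widehat D(1+t))}$ up to a constant. This is a cleaner organization than the paper's, which splits the \emph{output} variable $(x,t)$ into five regions $D_1,\dots,D_5$ rather than splitting the \emph{integration} domain $(s,y)$.

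However, the $\Omega_2$ step has a genuine gap. With the threshold set at $\lambda\sqrt{1+t}$, the factor you can extract from $e^{-(s+|y|)/c_0}$ on $\Omega_2$ is $e^{-c\sqrt{1+t}}$, not $e^{-ct/\widehat c}$ as you claim; and once you bound the Huygens Gaussian by $1$ to perform the $y$-integral, your $\Omega_2$ contribution is a polynomial in $t$ times $e^{-c\sqrt{1+t}}$ with \emph{no $x$-dependence whatsoever}. This does not lie below the target bound $(1+t)^{-2}e^{-(|x|-\mathbf{c}t)^2/(\widehat D(1+t))}+e^{-(t+|x|)/\widehat c}$: take for instance $|x|=t^2$, where both target terms are $O(e^{-ct^2})$ while your estimate is only $O(e^{-c\sqrt t})$. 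Raising the threshold to $\lambda t$ in order to extract $e^{-ct}$ on $\Omega_2$ does not help either: on the enlarged $\Omega_1$ the amplification factor $e^{(|y|+\mathbf{c}s)^2/(D_0(1+t-s))}$ can then grow like $e^{c\lambda^2 t}$, which overwhelms the Gaussian you are trying to preserve. So the two constraints — keeping the $\Omega_1$ amplification bounded, and extracting enough decay on $\Omega_2$ — are in conflict within your decomposition.

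Your final paragraph's proposed dichotomy (``if $|x|\gg t$ the original Huygens Gaussian supplies the extra $|x|$-exponential decay'') does not repair this, because by that point in the argument the Gaussian has already been thrown away; to use it you would have to re-open the $\Omega_2$ integral and compare $|y|$ to $\bigl||x|-\mathbf{c}t\bigr|$ (as the paper does with its splittings $|y|\lessgtr\frac{|x|+\mathbf{c}\tau}{2}$, etc.), at which point the analysis necessarily becomes a case split on the geometry of $(x,t)$ relative to the sound cone — that is, you are pushed back to something like the paper's five-region decomposition. The core missing idea is that the exponential source must be played off against the Huygens Gaussian \emph{pointwise}, in a way that depends on whether $(x,t)$ is inside, near, or outside the cone; a pure $(s,y)$-split with a threshold depending only on $t$ cannot produce the required $x$-decay on the bad region.
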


\begin{lemma}[Riesz wave convolved with exponential decay]
\label{poly-exponential}
\begin{eqnarray*}
B &=&\mathbf{1}_{\{ \left \vert x\right \vert \leq \mathbf{c}t\}}\left(
1+t\right) ^{-3/2}\left( 1+\frac{\left \vert x\right \vert ^{2}}{1+t}\right)
^{-3/2}\ast _{x,t}e^{-\frac{t+\left \vert x\right \vert }{c_{0}}} \\
&\lesssim &\mathbf{1}_{\{ \left \vert x\right \vert <\mathbf{c}t\}}\left(
1+t\right) ^{-3/2}\left( 1+\frac{\left \vert x\right \vert ^{2}}{1+t}\right)
^{-3/2}+e^{-\frac{t+\left \vert x\right \vert }{\widehat{c}_{0}}}+\left(
1+t\right) ^{-2}e^{-\frac{\left( \left \vert x\right \vert -\mathbf{c}%
t\right) ^{2}}{\widehat{D}_{0}\left( 1+t\right) }}\hbox{,}
\end{eqnarray*}%
for some constants $\widehat{c}_{0},\widehat{D}_{0}>0$.
\end{lemma}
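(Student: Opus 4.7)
The plan is to split the time integral $\int_0^t=\int_0^{t/2}+\int_{t/2}^t=:B_1(x,t)+B_2(x,t)$ and, on each piece, exploit the region decomposition $D_1,\ldots,D_5$ together with the identity $(1+s)^{-3/2}(1+|y|^2/(1+s))^{-3/2}=(1+s+|y|^2)^{-3/2}$.

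For the early-time piece $B_1$, the factor $e^{-(t-s)/c_0}\leq e^{-t/(2c_0)}$ extracts decay in $t$, while integrating the spatial exponential against the uniform bound $(1+s+|y|^2)^{-3/2}\leq(1+s)^{-3/2}$ and then in $s$ gives $B_1\lesssim e^{-t/(2c_0)}$. To upgrade this to $e^{-(t+|x|)/\widehat{c}_0}$, I would separate two cases: if $|x|\leq \mathbf{c}t$, then $t\geq(t+|x|)/(\mathbf{c}+1)$ permits a direct conversion; if $|x|\geq\mathbf{c}t$, the support condition $|y|\leq\mathbf{c}s\leq\mathbf{c}t/2$ forces $|x-y|\geq|x|/2$, so that the spatial exponential supplies the extra factor $e^{-|x|/(2c_0)}$.

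For the late-time piece $B_2$, I would use $(1+s)\sim(1+t)$ together with $\int_{t/2}^{t}e^{-(t-s)/c_0}\,ds\lesssim 1$ to reduce the estimate to the spatial convolution
\[
h(x,t):=\int_{|y|\leq\mathbf{c}t}(1+t+|y|^2)^{-3/2}e^{-|x-y|/c_0}\,dy.
\]
In the inside-cone regions $D_1,D_4,D_5$, I split the $y$-integration at $|y|=|x|/2$: on $\{|y|\leq|x|/2\}$ the inequality $|x-y|\geq|x|/2$ yields $e^{-|x|/(2c_0)}$, which beats any polynomial in $|x|$; on $\{|y|>|x|/2\}$ the integrand is bounded by $C(1+t+|x|^2)^{-3/2}$ and the remaining $L^1$ norm of $e^{-|x-y|/c_0}$ is $O(1)$. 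Combined with the $(1+t)^{-3/2}$ prefactor from $B_2$, this yields the Riesz bound $(1+t)^{-3/2}(1+|x|^2/(1+t))^{-3/2}$.

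The main obstacle is the outside-cone region $D_3$, where $|y|\leq\mathbf{c}t$ forces $|x-y|\geq r:=|x|-\mathbf{c}t\geq\sqrt{1+t}$, and a spherical integration of $(1+t+|y|^2)^{-3/2}$ over $\{|y|\leq\mathbf{c}t\}$ gives $h(x,t)\lesssim e^{-r/c_0}\log(2+t)$. This pure exponential must be converted into the Huygens-type Gaussian $(1+t)^{-2}e^{-r^2/(\widehat{D}_0(1+t))}$ or absorbed into $e^{-(t+|x|)/\widehat{c}_0}$. When $\sqrt{1+t}\leq r\leq 1+t$, the inequality $r^2/(1+t)\leq r$ combined with $\widehat{D}_0>c_0$ lets us write $e^{-r/c_0}=e^{-r/\widehat{D}_0}\,e^{-r(1/c_0-1/\widehat{D}_0)}$, and since $r\geq\sqrt{1+t}\gg\log(1+t)$ for $t$ large, the second factor dominates $(1+t)^{2}\log(2+t)$, leaving the desired Gaussian. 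When $r>1+t$, the bound $t+|x|\leq 2(1+\mathbf{c})r$ gives $e^{-r/c_0}\leq e^{-(t+|x|)/\widehat{c}_0}$ with $\widehat{c}_0=2(1+\mathbf{c})c_0$, and the logarithm is absorbed into a slight enlargement of $\widehat{c}_0$. The cone boundary layer $D_2$ is handled by continuity, since $r=O(\sqrt{1+t})$ there makes $(1+t)^{-2}e^{-r^2/(\widehat{D}_0(1+t))}\sim(1+t)^{-2}$, which is already implied by the Riesz analysis of the inside-cone regions. The exponential-to-Gaussian conversion, where $\widehat{D}_0$ and $\widehat{c}_0$ must be tuned so that polynomial and logarithmic losses are uniformly absorbed across the transition between $D_2$ and $D_3$, is the most delicate step.
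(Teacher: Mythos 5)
Your strategy coincides with the paper's: split the time integral at $t/2$, split the $y$-integral at $|y|=|x|/2$, extract exponential factors, and finally convert the residual exponential $e^{-(|x|-\mathbf{c}t)/c_0}$ into a Huygens Gaussian or a space-time exponential according to whether $|x|-\mathbf{c}t$ is small or large compared with $1+t$. The paper organizes the cases simply as $|x|\gtrless\mathbf{c}t$ rather than running through all of $D_1,\ldots,D_5$, but the estimates are the same.

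As written, though, your $B_2$ analysis has a bookkeeping inconsistency that leaves a real gap on the inside-cone piece $\{|y|\leq|x|/2\}$. You define $h(x,t)=\int_{|y|\leq\mathbf{c}t}(1+t+|y|^2)^{-3/2}e^{-|x-y|/c_0}\,dy$, so the factor $(1+t)^{-3/2}$ is already inside $h$ via $(1+t)^{-3/2}\bigl(1+\tfrac{|y|^2}{1+t}\bigr)^{-3/2}=(1+t+|y|^2)^{-3/2}$; there is no separate ``$(1+t)^{-3/2}$ prefactor from $B_2$'' to multiply by afterward. More importantly, on $\{|y|\leq|x|/2\}$ the assertion that $e^{-|x|/(2c_0)}$ ``beats any polynomial in $|x|$'' does not by itself control the Riesz target $(1+t+|x|^2)^{-3/2}$: for bounded $|x|$ and $t\to\infty$ the left side is $O(1)$ while the target decays like $(1+t)^{-3/2}$. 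The fix, which is what the paper does in its Case~2, is to keep the pointwise bound $(1+t+|y|^2)^{-3/2}\leq(1+t)^{-3/2}$ on this piece and spend only part of the spatial exponential on extraction, say $e^{-|x-y|/c_0}\leq e^{-|x|/(4c_0)}e^{-|x-y|/(2c_0)}$, integrating the rest; this produces $(1+t)^{-3/2}e^{-|x|/(4c_0)}\lesssim(1+t)^{-3/2}\bigl(1+\tfrac{|x|^2}{1+t}\bigr)^{-3/2}$, as needed. The same remark explains why your $D_3$ bound picks up a spurious $\log(2+t)$: by splitting the exponential you get $(1+t)^{-3/2}e^{-r/(2c_0)}$ (indeed the paper obtains $(1+t)^{-3}e^{-(|x|-\mathbf{c}t)/(2c_0)}$) with no logarithm, and then the conversion to the Gaussian for $\sqrt{1+t}\leq r\lesssim 1+t$ and to the space-time exponential for $r\gtrsim 1+t$ proceeds exactly as in your last paragraph, which matches the paper's dichotomy in Case~1.
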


\bigskip The proof of Lemma \ref{exponential-easy-1} is easy and hence we
omit it.

\begin{proof}[\textbf{Proof of Lemma \protect \ref{moving-exponential}}
(Huygens wave convolved with exponential decay)]
We rewrite%
\begin{equation*}
A=\int_{0}^{t}\int_{\mathbb{R}^{3}}\left( 1+\tau \right) ^{-2}e^{-\frac{%
\left( \left \vert y\right \vert -\mathbf{c}\tau \right) ^{2}}{D_{0}\left(
1+\tau \right) }}e^{-\frac{\left( t-\tau \right) +\left \vert x-y\right
\vert }{c_{0}}}dyd\tau \hbox{.}
\end{equation*}%
We discuss the integral $A$ in each domain $D_{i}$ $\left( 1\leq i\leq
5\right) $ for which $\left( x,t\right) $ belongs to. \newline
\newline
\textbf{Case 1:} $\left( x,t\right) \in D_{1}$. Direct computation gives%
\begin{eqnarray*}
A &\lesssim &\int_{0}^{\frac{t}{2}}\int_{\mathbb{R}^{3}}\left( 1+\tau
\right) ^{-2}e^{-\frac{t}{2c_{0}}-\frac{\left \vert x-y\right \vert }{c_{0}}%
}dyd\tau +\int_{\frac{t}{2}}^{t}\int_{\left \vert y\right \vert \leq \frac{%
\mathbf{c}\tau }{2}}\left( 1+\tau \right) ^{-2}e^{-\frac{\left( \left \vert
y\right \vert -\mathbf{c}\tau \right) ^{2}}{D_{0}\left( 1+\tau \right) }}e^{-%
\frac{\left( t-\tau \right) +\left \vert x-y\right \vert }{c_{0}}}dyd\tau \\
&&+\int_{\frac{t}{2}}^{t}\int_{\left \vert y\right \vert >\frac{\mathbf{c}%
\tau }{2}}\left( 1+\tau \right) ^{-2}e^{-\frac{\left( \left \vert y\right
\vert -\mathbf{c}\tau \right) ^{2}}{D_{0}\left( 1+\tau \right) }}e^{-\frac{%
\left( t-\tau \right) +\left \vert x-y\right \vert }{c_{0}}}dyd\tau \\
&\lesssim &e^{-\frac{t}{2c_{0}}}+e^{-\frac{\mathbf{c}^{2}t}{32D_{0}}}\int_{%
\frac{t}{2}}^{t}\int_{\mathbb{R}^{3}}\left( 1+\tau \right) ^{-2}e^{-\frac{%
\left \vert x-y\right \vert }{c_{0}}}dyd\tau \\
&&+\int_{\frac{t}{2}}^{t}\int_{\left \vert y\right \vert >\frac{\mathbf{c}%
\tau }{2}}\left( 1+\tau \right) ^{-2}e^{-\frac{\left( \left \vert y\right
\vert -\mathbf{c}\tau \right) ^{2}}{D_{0}\left( 1+\tau \right) }}e^{-\frac{%
\left( t-\tau \right) +\left \vert x-y\right \vert }{c_{0}}}dyd\tau \\
&\lesssim &e^{-\frac{t}{2c_{0}}}+e^{-\frac{\mathbf{c}t}{32D_{0}}}+\int_{%
\frac{t}{2}}^{t}\int_{\left \vert y\right \vert >\frac{\mathbf{c}\tau }{2}%
}\left( 1+\tau \right) ^{-2}e^{-\frac{\left( \left \vert y\right \vert -%
\mathbf{c}\tau \right) ^{2}}{D_{0}\left( 1+\tau \right) }}e^{-\frac{\left(
t-\tau \right) +\left \vert x-y\right \vert }{c_{0}}}dyd\tau \hbox{.}
\end{eqnarray*}

Note that if $\frac{t}{2}\leq \tau \leq t$, $\left \vert y\right \vert >%
\frac{\mathbf{c}\tau }{2}$, then%
\begin{equation*}
\left \vert x-y\right \vert \geq \left \vert y\right \vert -\left \vert
x\right \vert \geq \frac{\mathbf{c}t}{4}-\sqrt{1+t}\geq \frac{\mathbf{c}t}{8}%
+\frac{\mathbf{c}t}{8}-\sqrt{1+t}\geq \frac{\mathbf{c}t}{8}
\end{equation*}%
for $t\geq 40$, so that%
\begin{eqnarray*}
&&\int_{\frac{t}{2}}^{t}\int_{\left \vert y\right \vert >\frac{\mathbf{c}%
\tau }{2}}\left( 1+\tau \right) ^{-2}e^{-\frac{\left( \left \vert y\right
\vert -\mathbf{c}\tau \right) ^{2}}{D_{0}\left( 1+\tau \right) }}e^{-\frac{%
\left( t-\tau \right) +\left \vert x-y\right \vert }{c_{0}}}dyd\tau \\
&\lesssim &e^{-\frac{\mathbf{c}t}{8c_{0}}}\int_{\frac{t}{2}}^{t}\int_{\left
\vert y\right \vert >\frac{\mathbf{c}\tau }{2}}\left( 1+\tau \right)
^{-2}e^{-\frac{\left( \left \vert y\right \vert -\mathbf{c}\tau \right) ^{2}%
}{D_{0}\left( 1+\tau \right) }}dyd\tau \\
&\lesssim &e^{-\frac{\mathbf{c}t}{8c_{0}}}\int_{\frac{t}{2}}^{t}\left(
1+\tau \right) ^{-2+\frac{5}{2}}d\tau \lesssim e^{-\frac{\mathbf{c}t}{16c_{0}%
}}\hbox{,}
\end{eqnarray*}%
for $t\geq 40$. Consequently,
\begin{equation*}
A\lesssim e^{-\frac{t}{2c_{0}}}+e^{-\frac{\mathbf{c}t}{32D_{0}}}+e^{-\frac{%
\mathbf{c}t}{16c_{0}}}\lesssim e^{-\frac{t+\left \vert x\right \vert }{%
\widehat{c}}}
\end{equation*}%
for all $t\geq 0$, where $\frac{1}{\widehat{c}}=\frac{1}{2}\min \{ \frac{1}{%
2c_{0}},\frac{\mathbf{c}}{32D_{0}},\frac{\mathbf{c}}{16c_{0}}\}$. \bigskip
\newline
\textbf{Case 2:} $\left( x,t\right) \in D_{2}$.
\begin{eqnarray*}
A &\lesssim &\int_{0}^{\frac{t}{2}}\int_{\mathbb{R}^{3}}\left( 1+\tau
\right) ^{-2}e^{-\frac{t}{2c_{0}}}e^{-\frac{\left \vert x-y\right \vert }{%
c_{0}}}dyd\tau +\int_{\frac{t}{2}}^{t}\int_{\mathbb{R}^{3}}\left( 1+t\right)
^{-2}e^{-\frac{\left( t-\tau \right) +\left \vert x-y\right \vert }{c_{0}}%
}dyd\tau \\
&\lesssim &e^{-\frac{t}{2c_{0}}}+\left( 1+t\right) ^{-2}\lesssim \left(
1+t\right) ^{-2}\lesssim \left( 1+t\right) ^{-2}e^{-\frac{\left( \left \vert
x\right \vert -\mathbf{c}t\right) ^{2}}{D\left( 1+t\right) }}\hbox{.}
\end{eqnarray*}
\newline
\textbf{Case 3:} $\left( x,t\right) \in D_{3}$. We split the integral $A$
into four parts%
\begin{eqnarray*}
A &=&\int_{0}^{\frac{t}{2}}\left( \int_{\left \vert y\right \vert \leq \frac{%
\left \vert x\right \vert +\mathbf{c}\tau }{2}}+\int_{\left \vert y\right
\vert >\frac{\left \vert x\right \vert +\mathbf{c}\tau }{2}}\right) \left(
\cdots \right) dyd\tau +\int_{\frac{t}{2}}^{t}\left( \int_{\left \vert
y\right \vert \leq \frac{\left \vert x\right \vert +\mathbf{c}\tau }{2}%
}+\int_{\left \vert y\right \vert >\frac{\left \vert x\right \vert +\mathbf{c%
}\tau }{2}}\right) \left( \cdots \right) dyd\tau \\
&=&:A_{11}+A_{12}+A_{21}+A_{22}\hbox{.}
\end{eqnarray*}

Note that if $\left \vert y\right \vert \leq \frac{\left \vert x\right \vert
+\mathbf{c}\tau }{2}$, then
\begin{equation*}
\left \vert x-y\right \vert \geq \left \vert x\right \vert -\left \vert
y\right \vert \geq \frac{\left \vert x\right \vert -\mathbf{c}\tau }{2}=%
\frac{\left \vert x\right \vert -\mathbf{c}t}{2}+\frac{\mathbf{c}\left(
t-\tau \right) }{2}\hbox{.}
\end{equation*}%
if $\left \vert y\right \vert >\frac{\left \vert x\right \vert +\mathbf{c}%
\tau }{2}$, then%
\begin{equation*}
\left \vert y\right \vert -\mathbf{c}\tau >\frac{\left \vert x\right \vert -%
\mathbf{c}\tau }{2}=\frac{\left \vert x\right \vert -\mathbf{c}t}{2}+\frac{%
\mathbf{c}\left( t-\tau \right) }{2}\hbox{.}
\end{equation*}%
It immediately follows that%
\begin{eqnarray*}
A_{11} &\lesssim &\int_{0}^{\frac{t}{2}}\int_{\left \vert y\right \vert \leq
\frac{\left \vert x\right \vert +\mathbf{c}\tau }{2}}\left( 1+\tau \right)
^{-2}e^{-\frac{\left \vert x-y\right \vert }{2c_{0}}}e^{-\frac{t}{2c_{0}}%
}e^{-\frac{\left \vert x-y\right \vert }{2c_{0}}}dyd\tau \\
&\lesssim &\int_{0}^{\frac{t}{2}}\int_{\mathbb{R}^{3}}\left( 1+\tau \right)
^{-2}e^{-\frac{\left \vert x-y\right \vert }{2c_{0}}}e^{-\frac{t}{2c_{0}}-%
\frac{1}{2c_{0}}\left( \frac{\left \vert x\right \vert }{2}-\frac{\mathbf{c}t%
}{4}\right) }dyd\tau \lesssim e^{-\frac{t}{2c_{0}}-\frac{1}{2c_{0}}\left(
\frac{\left \vert x\right \vert }{2}-\frac{\mathbf{c}t}{4}\right) }\lesssim
e^{-\frac{t}{2c_{0}}-\frac{\left \vert x\right \vert }{8c_{0}}}\hbox{,}
\end{eqnarray*}%
\begin{equation*}
A_{12}\lesssim \int_{0}^{\frac{t}{2}}\int_{\mathbb{R}^{3}}\left( 1+\tau
\right) ^{-2}e^{-\frac{\left( \left \vert x\right \vert -\mathbf{c}t\right)
^{2}}{4D_{0}\left( 1+t\right) }}e^{-\frac{t}{2c_{0}}}e^{-\frac{\left \vert
x-y\right \vert }{c_{0}}}\lesssim e^{-\frac{\left( \left \vert x\right \vert
-\mathbf{c}t\right) ^{2}}{4D_{0}\left( 1+t\right) }}e^{-\frac{t}{2c_{0}}}%
\hbox{,}
\end{equation*}%
\begin{equation*}
A_{22}\lesssim \int_{\frac{t}{2}}^{t}\int_{\left \vert y\right \vert \geq
\frac{\left \vert x\right \vert +\mathbf{c}\tau }{2}}\left( 1+t\right)
^{-2}e^{-\frac{\left( \left \vert x\right \vert -\mathbf{c}t\right) ^{2}}{%
4D_{0}\left( 1+t\right) }}e^{-\frac{\left( t-\tau \right) +\left \vert
x-y\right \vert }{c_{0}}}dyd\tau \lesssim \left( 1+t\right) ^{-2}e^{-\frac{%
\left( \left \vert x\right \vert -\mathbf{c}t\right) ^{2}}{4D_{0}\left(
1+t\right) }}\hbox{.}
\end{equation*}%
As for $A_{21}$,%
\begin{eqnarray*}
A_{21} &\lesssim &\int_{\frac{t}{2}}^{t}\int_{\left \vert y\right \vert \leq
\frac{\left \vert x\right \vert +\mathbf{c}\tau }{2}}\left( 1+\tau \right)
^{-2}e^{-\frac{\left( \left \vert y\right \vert -\mathbf{c}\tau \right) ^{2}%
}{D_{0}\left( 1+\tau \right) }}e^{-\frac{\left \vert x\right \vert -\mathbf{c%
}t}{2c_{0}}}dyd\tau \\
&\lesssim &e^{-\frac{\left \vert x\right \vert -\mathbf{c}t}{2c_{0}}}\int_{%
\frac{t}{2}}^{t}\left( 1+\tau \right) ^{-2+\frac{5}{2}}d\tau \lesssim \left(
1+t\right) ^{\frac{3}{2}}e^{-\frac{\left \vert x\right \vert -\mathbf{c}t}{%
2c_{0}}}\lesssim \left( 1+t\right) ^{\frac{3}{2}}e^{-\frac{\sqrt{1+t}}{4c_{0}%
}}e^{-\frac{\left \vert x\right \vert -\mathbf{c}t}{4c_{0}}} \\
&\lesssim &\left( 1+t\right) ^{-2}e^{-\frac{\left \vert x\right \vert -%
\mathbf{c}t}{4c_{0}}}\hbox{,}
\end{eqnarray*}%
since $\left \vert x\right \vert -\mathbf{c}t\geq \sqrt{1+t}$. Now we
discuss two cases: (i) $\left \vert x\right \vert -\mathbf{c}t\leq \frac{1}{2%
}\left( 1+t\right) $, and (ii) $\left \vert x\right \vert -\mathbf{c}t\geq
\frac{1}{2}\left( 1+t\right) $. For case (i),

\begin{equation*}
\left \vert x\right \vert -\mathbf{c}t\geq \frac{2\left( \left \vert x\right
\vert -\mathbf{c}t\right) ^{2}}{1+t}\hbox{,}
\end{equation*}%
which follows that
\begin{equation*}
e^{-\frac{\left \vert x\right \vert -\mathbf{c}t}{4c_{0}}}\lesssim e^{-\frac{%
\left( \left \vert x\right \vert -\mathbf{c}t\right) ^{2}}{2c_{0}\left(
1+t\right) }}\hbox{.}
\end{equation*}%
For case (ii), it is easy to see
\begin{equation*}
\left( 1+t\right) \leq 2\left( \left \vert x\right \vert -\mathbf{c}t\right)
\end{equation*}%
and thus
\begin{equation*}
\left \vert x\right \vert =\left \vert \left \vert x\right \vert -\mathbf{c}%
t+\mathbf{c}t\right \vert \leq \left( \left \vert x\right \vert -\mathbf{c}%
t\right) +\mathbf{c}t\leq \left( 1+2\mathbf{c}\right) \left( \left \vert
x\right \vert -\mathbf{c}t\right) <4\left( \left \vert x\right \vert -%
\mathbf{c}t\right) \hbox{,}
\end{equation*}
so that%
\begin{equation*}
e^{-\frac{\left \vert x\right \vert -\mathbf{c}t}{4c_{0}}}=e^{-\frac{\left
\vert x\right \vert -\mathbf{c}t}{8c_{0}}}e^{-\frac{\left \vert x\right
\vert -\mathbf{c}t}{8c_{0}}}\lesssim e^{-\frac{\left( 1+t\right) }{16c_{0}}%
}e^{-\frac{\left \vert x\right \vert }{32c_{0}}}\lesssim e^{-\frac{t}{16c_{0}%
}}e^{-\frac{\left \vert x\right \vert }{32c_{0}}}\hbox{.}
\end{equation*}%
Hence,
\begin{equation*}
A_{21}\lesssim \left( 1+t\right) ^{-2}e^{-\frac{\left( \left \vert x\right
\vert -\mathbf{c}t\right) ^{2}}{2c_{0}\left( 1+t\right) }}+e^{-\frac{t}{%
16c_{0}}}e^{-\frac{\left \vert x\right \vert }{32c_{0}}}\hbox{.}
\end{equation*}%
Combining all above estimates, we have%
\begin{equation*}
A\lesssim \left( 1+t\right) ^{-2}e^{-\frac{\left( \left \vert x\right \vert -%
\mathbf{c}t\right) ^{2}}{\widehat{D}\left( 1+t\right) }}+e^{-\frac{t+\left
\vert x\right \vert }{\widehat{c}}}
\end{equation*}%
for some constants $\widehat{c}$ and $\widehat{D}>0$. \bigskip \newline
\textbf{Case 4}: $\left( x,t\right) \in D_{4}$. We split the integral into
two parts%
\begin{equation*}
A=\left( \int_{0}^{\frac{2}{3}t}+\int_{\frac{2}{3}t}^{t}\right) \int_{%
\mathbb{R}^{3}}\left( \cdots \right) dyd\tau =:A_{1}+A_{2}\hbox{.}
\end{equation*}

For $A_{1}$,%
\begin{equation*}
A_{1}\lesssim \int_{0}^{\frac{2}{3}t}\int_{\mathbb{R}^{3}}\left( 1+\tau
\right) ^{-2}e^{-\frac{\left \vert x-y\right \vert }{c_{0}}}e^{-\frac{t}{%
3c_{0}}}dyd\tau \lesssim e^{-\frac{t}{3c_{0}}}\lesssim e^{-\frac{t}{6c_{0}}%
}e^{-\frac{\left \vert x\right \vert }{3c_{0}\mathbf{c}}}
\end{equation*}%
since $\sqrt{1+t}\leq \left \vert x\right \vert \leq \mathbf{c}t/2$.

For $A_{2}$, we decompose $\mathbb{R}^{3}$ into two parts
\begin{equation*}
A_{2}=\int_{\frac{2}{3}t}^{t}\left( \int_{\left \vert y\right \vert \leq
\frac{\left \vert x\right \vert +\mathbf{c}\tau }{2}}+\int_{\left \vert
y\right \vert >\frac{\left \vert x\right \vert +\mathbf{c}\tau }{2}}\right)
\left( \cdots \right) dyd\tau =:A_{21}+A_{22}\hbox{.}
\end{equation*}%
If $\frac{2}{3}t\leq \tau \leq t$, $\left \vert y\right \vert \leq \frac{%
\left \vert x\right \vert +\mathbf{c}\tau }{2}$, then%
\begin{equation*}
\mathbf{c}\tau -\left \vert y\right \vert \geq \frac{\mathbf{c}\tau -\left
\vert x\right \vert }{2}\geq \frac{\mathbf{c}t}{3}-\frac{\mathbf{c}t}{4}=%
\frac{\mathbf{c}t}{12}\geq \frac{\mathbf{c}t-\left \vert x\right \vert }{12}%
\hbox{.}
\end{equation*}%
If $\frac{2}{3}t\leq \tau \leq t$, $\left \vert y\right \vert >\frac{%
\left
\vert x\right \vert +\mathbf{c}\tau }{2}$, then%
\begin{equation*}
\left \vert x-y\right \vert \geq \left \vert y\right \vert -\left \vert
x\right \vert \geq \frac{\mathbf{c}\tau -\left \vert x\right \vert }{2}\geq
\frac{\mathbf{c}t}{12}\geq \frac{\mathbf{c}t-\left \vert x\right \vert }{12}%
\hbox{.}
\end{equation*}%
Hence,
\begin{eqnarray*}
A_{21} &\lesssim &\int_{\frac{2}{3}t}^{t}\int_{\left \vert y\right \vert
\leq \frac{\left \vert x\right \vert +\mathbf{c}\tau }{2}}\left( 1+\tau
\right) ^{-2}e^{-\frac{\left( \left \vert y\right \vert -\mathbf{c}\tau
\right) ^{2}}{D_{0}\left( 1+\tau \right) }}e^{-\frac{\left( t-\tau \right)
+\left \vert x-y\right \vert }{c_{0}}}dyd\tau \\
&\lesssim &\left( 1+t\right) ^{-2}e^{-\frac{\left( \mathbf{c}t-\left \vert
x\right \vert \right) ^{2}}{144D_{0}\left( 1+t\right) }}\hbox{,}
\end{eqnarray*}%
and%
\begin{eqnarray*}
A_{22} &\lesssim &\int_{\frac{2}{3}t}^{t}\int_{\left \vert y\right \vert >%
\frac{\left \vert x\right \vert +\mathbf{c}\tau }{2}}\left( 1+t\right)
^{-2}e^{-\frac{t-\tau }{c_{0}}}e^{-\frac{\left \vert x-y\right \vert }{2c_{0}%
}}e^{-\frac{\mathbf{c}t-\left \vert x\right \vert }{24c_{0}}}dyd\tau \\
&\lesssim &\left( 1+t\right) ^{-2}e^{-\frac{\mathbf{c}t-\left \vert x\right
\vert }{24c_{0}}}\hbox{.}
\end{eqnarray*}%
Since $t\geq 1$ and $\mathbf{c}t-\left \vert x\right \vert >\frac{\mathbf{c}t%
}{2}\geq \frac{\mathbf{c}}{4}\left( 1+t\right) $ for $\left( x,t\right) \in
D_{4}$,
\begin{equation*}
\mathbf{c}\left( 1+t\right) \leq 4\left( \mathbf{c}t-\left \vert x\right
\vert \right) \hbox{,}
\end{equation*}%
and thus%
\begin{equation*}
\left \vert x\right \vert =\left \vert \left \vert x\right \vert -\mathbf{c}%
t+\mathbf{c}t\right \vert \leq \mathbf{c}t-\left \vert x\right \vert +%
\mathbf{c}t\leq 5\left( \mathbf{c}t-\left \vert x\right \vert \right) %
\hbox{,}
\end{equation*}%
which implies that
\begin{equation*}
e^{-\frac{\mathbf{c}t-\left \vert x\right \vert }{24c_{0}}}=e^{-\frac{%
\mathbf{c}t-\left \vert x\right \vert }{48c_{0}}}e^{-\frac{\mathbf{c}t-\left
\vert x\right \vert }{48c_{0}}}\lesssim e^{-\frac{\mathbf{c}\left(
1+t\right) }{192c_{0}}}e^{-\frac{\left \vert x\right \vert }{240c_{0}}%
}\lesssim e^{-\frac{\mathbf{c}t}{192c_{0}}}e^{-\frac{\left \vert x\right
\vert }{240c_{0}}}\hbox{.}
\end{equation*}%
Therefore,
\begin{equation*}
A_{22}\lesssim e^{-\frac{\mathbf{t}}{96c_{0}}}e^{-\frac{\left \vert x\right
\vert }{192c_{0}}}\hbox{.}
\end{equation*}%
Combining this with $A_{1}$ and $A_{11}$, we get the desired estimate%
\begin{equation*}
A\lesssim e^{-\frac{t}{6c_{0}}}e^{-\frac{\left \vert x\right \vert }{3c_{0}%
\mathbf{c}}}+\left( 1+t\right) ^{-2}e^{-\frac{\left( \mathbf{c}t-\left \vert
x\right \vert \right) ^{2}}{144D_{0}\left( 1+t\right) }}+e^{-\frac{\mathbf{t}%
}{96c_{0}}}e^{-\frac{\left \vert x\right \vert }{192c_{0}}}\hbox{.}
\end{equation*}
\newline
\textbf{Case 5:} $\left( x,t\right) \in D_{5}$. We split the integral $A$
into three parts%
\begin{equation*}
A=\left( \int_{0}^{\frac{t}{2}}+\int_{\frac{t}{2}}^{\frac{t}{2}+\frac{\left
\vert x\right \vert }{2\mathbf{c}}}+\int_{\frac{t}{2}+\frac{\left \vert
x\right \vert }{2\mathbf{c}}}^{t}\right) \int_{\mathbb{R}^{3}}\left( \cdots
\right) dyd\tau =:A_{1}+A_{2}+A_{3}\hbox{.}
\end{equation*}

It immediately follows that%
\begin{equation*}
A_{1}\lesssim \int_{0}^{\frac{t}{2}}\int_{\mathbb{R}^{3}}\left( 1+\tau
\right) ^{-2}e^{-\frac{\left \vert x-y\right \vert }{c_{0}}}e^{-\frac{t}{%
2c_{0}}}dyd\tau \lesssim e^{-\frac{t}{2c_{0}}}\lesssim e^{-\frac{t}{4c_{0}}-%
\frac{\left \vert x\right \vert }{4\mathbf{c}c_{0}}}\hbox{,}
\end{equation*}%
and%
\begin{eqnarray*}
A_{2} &\lesssim &\int_{\frac{t}{2}}^{\frac{t}{2}+\frac{\left \vert x\right
\vert }{2\mathbf{c}}}\int_{\mathbb{R}^{3}}\left( 1+\tau \right) ^{-2}e^{-%
\frac{\left( \left \vert y\right \vert -\mathbf{c}\tau \right) ^{2}}{%
D_{0}\left( 1+\tau \right) }}e^{-\frac{\left( t-\tau \right) +\left \vert
x-y\right \vert }{c_{0}}}dyd\tau \\
&\lesssim &\left( 1+t\right) ^{-2}\int_{\frac{t}{2}}^{\frac{t}{2}+\frac{%
\left \vert x\right \vert }{2\mathbf{c}}}\int_{\mathbb{R}^{3}}e^{-\frac{%
\left \vert x-y\right \vert }{c_{0}}}e^{-\frac{t-\tau }{2c_{0}}}e^{-\frac{1}{%
2c_{0}}\left( \frac{t}{2}-\frac{\left \vert x\right \vert }{2\mathbf{c}}%
\right) }dyd\tau \\
&\lesssim &\left( 1+t\right) ^{-2}e^{-\frac{\mathbf{c}t-\left \vert x\right
\vert }{4c_{0}\mathbf{c}}}\hbox{.}
\end{eqnarray*}

For $A_{3}$, we decompose $\mathbb{R}^{3}$ into two parts%
\begin{equation*}
A_{3}=\int_{\frac{t}{2}+\frac{\left \vert x\right \vert }{2\mathbf{c}}%
}^{t}\left( \int_{\left \vert y\right \vert \leq \frac{\left \vert x\right
\vert +\mathbf{c}\tau }{2}}+\int_{\left \vert y\right \vert >\frac{\left
\vert x\right \vert +\mathbf{c}\tau }{2}}\right) \left( \cdots \right)
dyd\tau =A_{31}+A_{32}\hbox{.}
\end{equation*}%
If $\frac{t}{2}+\frac{\left \vert x\right \vert }{2\mathbf{c}}\leq \tau \leq
t$, $\left \vert y\right \vert \leq \frac{\left \vert x\right \vert +\mathbf{%
c}\tau }{2}$, then
\begin{equation*}
\mathbf{c}\tau -\left \vert y\right \vert \geq \frac{\mathbf{c}\tau -\left
\vert x\right \vert }{2}\geq \frac{\mathbf{c}t-\left \vert x\right \vert }{4}%
\hbox{.}
\end{equation*}%
If $\frac{t}{2}+\frac{\left \vert x\right \vert }{2\mathbf{c}}\leq \tau \leq
t$, $\left \vert y\right \vert >\frac{\left \vert x\right \vert +\mathbf{c}%
\tau }{2}$, then%
\begin{equation*}
\left \vert x-y\right \vert \geq \left \vert y\right \vert -\left \vert
x\right \vert >\frac{\mathbf{c}\tau -\left \vert x\right \vert }{2}\geq
\frac{\mathbf{c}t-\left \vert x\right \vert }{4}\hbox{.}
\end{equation*}%
Hence,
\begin{eqnarray*}
A_{3} &=&A_{31}+A_{32} \\
&\lesssim &\int_{\frac{t}{2}+\frac{\left \vert x\right \vert }{2\mathbf{c}}%
}^{t}\int_{\left \vert y\right \vert \leq \frac{\left \vert x\right \vert +%
\mathbf{c}\tau }{2}}\left( 1+t\right) ^{-2}e^{-\frac{\left( \mathbf{c}%
t-\left \vert x\right \vert \right) ^{2}}{16D_{0}\left( 1+t\right) }}e^{-%
\frac{\left( t-\tau \right) +\left \vert x-y\right \vert }{c_{0}}}dyd\tau \\
&&+\int_{\frac{t}{2}+\frac{\left \vert x\right \vert }{2\mathbf{c}}%
}^{t}\int_{\left \vert y\right \vert >\frac{\left \vert x\right \vert +%
\mathbf{c}\tau }{2}}\left( 1+t\right) ^{-2}e^{-\frac{\left \vert x-y\right
\vert }{2c_{0}}}e^{-\frac{t-\tau }{c_{0}}}e^{-\frac{\mathbf{c}t-\left \vert
x\right \vert }{8c_{0}}}dyd\tau \\
&\lesssim &\left( 1+t\right) ^{-2}e^{-\frac{\left( \mathbf{c}t-\left \vert
x\right \vert \right) ^{2}}{16D_{0}\left( 1+t\right) }}+\left( 1+t\right)
^{-2}e^{-\frac{\mathbf{c}t-\left \vert x\right \vert }{8c_{0}}}\hbox{.}
\end{eqnarray*}%
Since $\sqrt{1+t}\leq \mathbf{c}t-\left \vert x\right \vert \leq \frac{%
\mathbf{c}t}{2}\leq \frac{\mathbf{c}}{2}\left( 1+t\right) $, we have%
\begin{equation*}
\mathbf{c}t-\left \vert x\right \vert \geq \frac{2\left( \mathbf{c}t-\left
\vert x\right \vert \right) ^{2}}{\mathbf{c}\left( 1+t\right) }\hbox{,}
\end{equation*}%
and thus
\begin{equation*}
e^{-\frac{\mathbf{c}t-\left \vert x\right \vert }{4c_{0}\mathbf{c}}}\lesssim
e^{-\frac{\left( \mathbf{c}t-\left \vert x\right \vert \right) ^{2}}{2c_{0}%
\mathbf{c}^{2}\left( 1+t\right) }}\hbox{, \  \ }e^{-\frac{\mathbf{c}t-\left
\vert x\right \vert }{8c_{0}}}\lesssim e^{-\frac{\left( \mathbf{c}t-\left
\vert x\right \vert \right) ^{2}}{4c_{0}\mathbf{c}\left( 1+t\right) }}%
\hbox{.}
\end{equation*}%
Consequently,
\begin{equation*}
A_{2}\lesssim \left( 1+t\right) ^{-2}e^{-\frac{\left( \mathbf{c}t-\left
\vert x\right \vert \right) ^{2}}{2c_{0}\mathbf{c}^{2}\left( 1+t\right) }}%
\hbox{, \
\ }A_{3}\lesssim \left( 1+t\right) ^{-2}e^{-\frac{\left( \mathbf{c}t-\left
\vert x\right \vert \right) ^{2}}{16D_{0}\left( 1+t\right) }}+\left(
1+t\right) ^{-2}e^{-\frac{\left( \mathbf{c}t-\left \vert x\right \vert
\right) ^{2}}{4c_{0}\mathbf{c}\left( 1+t\right) }}\hbox{.}
\end{equation*}%
Combining this with $A_{1}$, we get the desired estimate.
\end{proof}

\begin{proof}[\textbf{Proof of Lemma \protect \ref{poly-exponential}} (Riesz
wave convolved with exponential decay). ]
We compute the integral for two cases: $\left \vert x\right \vert >\mathbf{c}%
t$ and $\left \vert x\right \vert \leq \mathbf{c}t$. \newline
\newline
\textbf{Case 1:} $\left \vert x\right \vert >\mathbf{c}t$. Let $0\leq \tau
\leq t$. If $\left \vert x\right \vert >\mathbf{c}t$ and $\left \vert
y\right
\vert <\mathbf{c}\tau $, then
\begin{equation*}
\left \vert x-y\right \vert \geq \left \vert x\right \vert -\left \vert
y\right \vert \geq \left \vert x\right \vert -\mathbf{c}t+\mathbf{c}t-\left
\vert y\right \vert \geq \left \vert x\right \vert -\mathbf{c}t\geq 0\hbox{.}
\end{equation*}%
Hence,
\begin{eqnarray*}
&&\int_{0}^{t}\int_{\mathbb{R}^{3}}e^{-\frac{\left( t-\tau \right) +\left
\vert x-y\right \vert }{c_{0}}}1_{\{ \left \vert y\right \vert <\mathbf{c}%
\tau \}}\left( 1+\tau \right) ^{-3/2}\left( 1+\frac{\left \vert y\right
\vert ^{2}}{1+\tau }\right) ^{-3/2}dyd\tau \\
&\lesssim &e^{-\frac{\left \vert x\right \vert -\mathbf{c}t}{2c_{0}}%
}\int_{0}^{t}\int_{\mathbb{R}^{3}}e^{-\frac{t-\tau }{c_{0}}}e^{-\frac{\left
\vert x-y\right \vert }{2c_{0}}}\left( 1+\tau \right) ^{-3/2}\left( 1+\frac{%
\left \vert y\right \vert ^{2}}{1+\tau }\right) ^{-3/2}dyd\tau \\
&=&e^{-\frac{\left \vert x\right \vert -\mathbf{c}t}{2c_{0}}}\left(
\int_{0}^{\frac{t}{2}}+\int_{\frac{t}{2}}^{t}\right) \int_{\mathbb{R}%
^{3}}\left( \cdots \right) dyd\tau =:B_{1}+B_{2}\hbox{.}
\end{eqnarray*}%
It immediately follows that
\begin{equation*}
B_{1}\leq e^{-\frac{t}{2c_{0}}}e^{-\frac{\left \vert x\right \vert -\mathbf{c%
}t}{2c_{0}}}\int_{0}^{\frac{t}{2}}\int_{\mathbb{R}^{3}}e^{-\frac{\left \vert
x-y\right \vert }{2c_{0}}}\left( 1+\tau \right) ^{-3/2}dyd\tau \lesssim e^{-%
\frac{t}{2c_{0}}}e^{-\frac{\left \vert x\right \vert -\mathbf{c}t}{2c_{0}}}%
\hbox{.}
\end{equation*}%
As for $B_{2}$,%
\begin{eqnarray*}
B_{2} &=&e^{-\frac{\left \vert x\right \vert -\mathbf{c}t}{2c_{0}}}\int_{%
\frac{t}{2}}^{t}\left( \int_{\left \vert y\right \vert \leq \frac{\left
\vert x\right \vert }{2}}+\int_{\left \vert y\right \vert >\frac{\left \vert
x\right \vert }{2}}\right) e^{-\frac{t-\tau }{c_{0}}}e^{-\frac{\left \vert
x-y\right \vert }{2c_{0}}}\left( 1+\tau \right) ^{-3/2}\left( 1+\frac{\left
\vert y\right \vert ^{2}}{1+\tau }\right) ^{-3/2}dyd\tau \\
&\leq &e^{-\frac{\left \vert x\right \vert -\mathbf{c}t}{2c_{0}}}e^{-\frac{%
\left \vert x\right \vert }{8c_{0}}}\int_{\frac{t}{2}}^{t}\int_{\left \vert
y\right \vert \leq \frac{\left \vert x\right \vert }{2}}e^{-\frac{t-\tau }{%
c_{0}}}e^{-\frac{\left \vert x-y\right \vert }{4c_{0}}}dyd\tau \\
&&+\left( 1+t\right) ^{-3/2}\left( 1+\frac{\left \vert x\right \vert ^{2}}{%
1+t}\right) ^{-3/2}e^{-\frac{\left \vert x\right \vert -\mathbf{c}t}{2c_{0}}%
}\int_{\frac{t}{2}}^{t}\int_{\left \vert y\right \vert >\frac{\left \vert
x\right \vert }{2}}e^{-\frac{t-\tau }{c_{0}}}e^{-\frac{\left \vert x-y\right
\vert }{2c_{0}}}dyd\tau \\
&\lesssim &e^{-\frac{\left \vert x\right \vert -\mathbf{c}t}{2c_{0}}}e^{-%
\frac{\left \vert x\right \vert }{8c_{0}}}+\left( 1+t\right) ^{-3}e^{-\frac{%
\left \vert x\right \vert -\mathbf{c}t}{2c_{0}}}\lesssim \left( 1+t\right)
^{-3}e^{-\frac{\left \vert x\right \vert -\mathbf{c}t}{2c_{0}}}\hbox{.}
\end{eqnarray*}%
since $\left \vert x\right \vert >\mathbf{c}t$.

Now, we consider the cases: $0\leq \left \vert x\right \vert -\mathbf{c}%
t\leq 2\mathbf{c}\left( 1+t\right) $ and $\left \vert x\right \vert -\mathbf{%
c}t\geq 2\mathbf{c}\left( 1+t\right) $. For $0\leq \left \vert x\right \vert
-\mathbf{c}t\leq 2\mathbf{c}\left( 1+t\right) $, we have%
\begin{equation*}
\left \vert x\right \vert -\mathbf{c}t\geq \frac{\left( \left \vert x\right
\vert -\mathbf{c}t\right) ^{2}}{2\mathbf{c}\left( 1+t\right) }\hbox{,}
\end{equation*}%
so that%
\begin{equation*}
e^{-\frac{\left \vert x\right \vert -\mathbf{c}t}{2c_{0}}}\leq e^{-\frac{%
\left( \left \vert x\right \vert -\mathbf{c}t\right) ^{2}}{4\mathbf{c}%
c_{0}\left( 1+t\right) }}\hbox{.}
\end{equation*}%
For $\left \vert x\right \vert -\mathbf{c}t\geq 2\mathbf{c}\left( 1+t\right)
$, it follows that%
\begin{equation*}
\left \vert x\right \vert -\mathbf{c}t=\frac{\left \vert x\right \vert }{2}+%
\frac{\left \vert x\right \vert }{2}-\mathbf{c}t\geq \frac{\left \vert
x\right \vert }{2}+\frac{\mathbf{c}t}{2}\geq \frac{\left \vert x\right \vert
}{2}+\frac{t}{2}\hbox{,}
\end{equation*}%
and thus
\begin{equation*}
e^{-\frac{\left \vert x\right \vert -\mathbf{c}t}{2c_{0}}}\leq e^{-\frac{%
\left \vert x\right \vert +t}{4c_{0}}}\hbox{.}
\end{equation*}%
Consequently,
\begin{eqnarray*}
&&\int_{0}^{t}\int_{\mathbb{R}^{3}}e^{-\frac{\left( t-\tau \right) +\left
\vert x-y\right \vert }{c_{0}}}1_{\{ \left \vert y\right \vert <\mathbf{c}%
\tau \}}\left( 1+\tau \right) ^{-3/2}\left( 1+\frac{\left \vert y\right
\vert ^{2}}{1+\tau }\right) ^{-3/2}dyd\tau \\
&\lesssim &\left( 1+t\right) ^{-3}e^{-\frac{\left( \left \vert x\right \vert
-\mathbf{c}t\right) ^{2}}{4\mathbf{c}c_{0}\left( 1+t\right) }}+e^{-\frac{%
\left \vert x\right \vert +t}{4c_{0}}}\hbox{.}
\end{eqnarray*}
\newline
\textbf{Case 2:} $\left \vert x\right \vert <\mathbf{c}t$. Direct
computation gives
\begin{eqnarray*}
B &\lesssim &e^{-\frac{t}{2c_{0}}}\int_{0}^{t/2}\int_{\mathbb{R}^{3}}e^{-%
\frac{\left( t-\tau \right) +\left \vert x-y\right \vert }{c_{0}}}dyd\tau \\
&&+\left( 1+t\right) ^{-3/2}\left( 1+\frac{\left \vert x\right \vert ^{2}}{%
1+t}\right) ^{-3/2}\int_{t/2}^{t}\int_{\left \vert y\right \vert >\frac{%
\left \vert x\right \vert }{2}}e^{-\frac{\left( t-\tau \right) +\left \vert
x-y\right \vert }{c_{0}}}dyd\tau \\
&&+\left( 1+t\right) ^{-3/2}e^{-\frac{\left \vert x\right \vert }{4c_{0}}%
}\int_{t/2}^{t}\int_{\left \vert y\right \vert \leq \frac{\left \vert
x\right \vert }{2}}e^{-\frac{t-\tau }{c_{0}}}e^{-\frac{\left \vert x-y\right
\vert }{2c_{0}}}dyd\tau \\
&\lesssim &e^{-\frac{t}{2c_{0}}}+\left( 1+t\right) ^{-3/2}\left( 1+\frac{%
\left \vert x\right \vert ^{2}}{1+t}\right) ^{-3/2} \\
&\lesssim &e^{-\frac{t+\left \vert x\right \vert }{4cc_{0}}}+\mathbf{1}_{\{
\left \vert x\right \vert <\mathbf{c}t\}}\left( 1+t\right) ^{-3/2}\left( 1+%
\frac{\left \vert x\right \vert ^{2}}{1+t}\right) ^{-3/2}\hbox{.}
\end{eqnarray*}

To sum up,
\begin{equation*}
B\lesssim \mathbf{1}_{\{ \left \vert x\right \vert <\mathbf{c}t \}}\left(
1+t\right) ^{-3/2}\left( 1+\frac{\left \vert x\right \vert ^{2}}{1+t}\right)
^{-3/2}+e^{-\frac{t+\left \vert x\right \vert }{\widehat{c}_{0}}}+\left(
1+t\right) ^{-2}e^{-\frac{\left( \left \vert x\right \vert -\mathbf{c}%
t\right) ^{2}}{\widehat{D}_{0}\left( 1+t\right) }}
\end{equation*}%
for some constants $\widehat{D}_{0}$, $\widehat{c}_{0}>0$.
\end{proof}

\subsection{Nonlinear wave interaction}

\label{nonlinear wave interaction}

\begin{lemma}[Diffusion wave convolved with Diffusion wave]
\label{Easy1}
\begin{eqnarray*}
&&\left( 1+t \right) ^{-2}\left( 1+\frac{\left \vert x\right \vert ^{2}}{1+t
}\right) ^{-\frac{3}{2}}*_{x,t}\left( 1+t \right) ^{-3}\left( 1+\frac{\left
\vert x\right \vert ^{2}}{ 1+t }\right) ^{-3}  \notag \\
&\lesssim& \left( 1+t \right) ^{-2}\left( 1+\frac{\left \vert x\right \vert
^{2}}{ 1+t }\right) ^{-\frac{3}{2}}\hbox{.}  \label{E1}
\end{eqnarray*}
\end{lemma}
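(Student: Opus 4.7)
The plan is to bound the convolution
\[
I(t,x)=\int_{0}^{t}\!\!\int_{\mathbb{R}^{3}}(1+t-\tau)^{-2}\Bigl(1+\tfrac{|x-y|^{2}}{1+t-\tau}\Bigr)^{-3/2}(1+\tau)^{-3}\Bigl(1+\tfrac{|y|^{2}}{1+\tau}\Bigr)^{-3}dy\,d\tau
\]
by splitting the time integral at $t/2$ into $I=I_{1}+I_{2}$ ($I_{1}=\int_{0}^{t/2}$, $I_{2}=\int_{t/2}^{t}$) and, inside each piece, decomposing the $y$-integral via the triangle-inequality dichotomy $\{|y|\leq|x|/2\}\cup\{|y|>|x|/2\}$. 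This standard split is what allows the factor $(1+|x|^{2}/(1+t))^{-3/2}$ to be peeled off cleanly in each subregion.

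For $I_{1}$, we use $1+t-\tau\sim 1+t$ so that the propagator is bounded by $(1+t)^{-2}(1+|x-y|^{2}/(1+t))^{-3/2}$. On $\{|y|\leq |x|/2\}$ we have $|x-y|\geq |x|/2$, which lets us extract $(1+|x|^{2}/(1+t))^{-3/2}$ from the propagator; the source integrates to at most $\|\phi_{S}(\tau)\|_{L^{1}_{y}}\lesssim(1+\tau)^{-3/2}$, which is integrable in $\tau$. On $\{|y|>|x|/2\}$ we use the tail bound
\[
\int_{|y|>R}\Bigl(1+\tfrac{|y|^{2}}{1+\tau}\Bigr)^{-3}dy\lesssim(1+\tau)^{3/2}\Bigl(1+\tfrac{R^{2}}{1+\tau}\Bigr)^{-3/2}
\]
with $R=|x|/2$ to extract the same factor from the source (using $\tau\leq t/2$ to upgrade $1+\tau$ to $1+t$ in the denominator), combined with the $L^{\infty}$-bound $(1+t)^{-2}$ of the propagator; the leftover $(1+\tau)^{-3/2}$ is again integrable.

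For $I_{2}$, we use $1+\tau\sim 1+t$ so the source is bounded by $(1+t)^{-3}(1+|y|^{2}/(1+t))^{-3}$, and split $y$ by $|x-y|>|x|/2$ vs.\ $|x-y|\leq |x|/2$. In the first subregion we extract $(1+|x|^{2}/(1+t-\tau))^{-3/2}$ from the propagator and pair it with the source's $L^{1}$-mass $(1+t)^{-3/2}$; after the change $s=t-\tau$ the remaining integral becomes $(1+t)^{-3/2}\int_{0}^{t/2}(1+s)^{-2}(1+|x|^{2}/(1+s))^{-3/2}ds$, which a direct calculation (splitting at $s=|x|^{2}$) shows is dominated by $(1+t)^{-2}(1+|x|^{2}/(1+t))^{-3/2}$. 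In the second subregion $|x-y|\leq|x|/2$ forces $|y|\geq|x|/2$, so the source yields the required $(1+|x|^{2}/(1+t))^{-3/2}$ (constant in $y$), and one is left with $\int_{|z|\leq|x|/2}(1+|z|^{2}/(1+t-\tau))^{-3/2}dz$, whose value is at most $(1+t-\tau)^{3/2}$ times a logarithm, both absorbed by the faster temporal decay of the source.

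The main obstacle is that the propagator's spatial profile $(1+|z|^{2}/\sigma)^{-3/2}$ is only marginally (logarithmically) integrable in $\mathbb{R}^{3}$, so a straightforward application of Young's inequality $L^{1}\!*\!L^{\infty}$ on the propagator would produce a divergent or insufficiently decaying bound. The whole strategy above is designed to avoid this: whenever the log-divergent spatial integral of the propagator would appear, we instead perform the spatial integration on the faster-decaying source factor $(1+|y|^{2}/(1+\tau))^{-3}$ (which has finite mass $\sim(1+\tau)^{-3/2}$), and the only logarithmic factors that survive are absorbed by the extra polynomial time decay provided by $\phi_{S}$.
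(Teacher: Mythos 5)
The piece $I_2^{(1)}$ (the contribution from $\tau\in[t/2,t]$ and $|x-y|>|x|/2$) has a genuine gap. You assert the inequality
\[
(1+t)^{-3/2}\int_0^{t/2}(1+s)^{-2}\Bigl(1+\frac{|x|^2}{1+s}\Bigr)^{-3/2}\,ds\lesssim (1+t)^{-2}\Bigl(1+\frac{|x|^2}{1+t}\Bigr)^{-3/2},
\]
but this is false for small $|x|$: at $x=0$ the left side equals $(1+t)^{-3/2}\bigl(1-(1+t/2)^{-1}\bigr)\sim(1+t)^{-3/2}$, which strictly exceeds the right side $(1+t)^{-2}$ for large $t$. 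More generally, in the regime $|x|\lesssim\sqrt{1+t}$ the inner $s$-integral is comparable to $\min\{1,|x|^{-2}\}$, so your bound on $I_2^{(1)}$ only reaches the required decay $(1+t)^{-2}$ when $|x|\gtrsim(1+t)^{1/4}$, not on the whole diffusive range.

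The root cause is that for small $|x|$ the constraint $|x-y|>|x|/2$ is nearly vacuous, and after extracting $(1+|x|^2/(1+t-\tau))^{-3/2}$ you discard all remaining spatial localization of the propagator; what is left, $(1+t-\tau)^{-2}$, has $\tau$-integral of order $1$, so you only recover the source's $L^1_y$-mass $(1+t)^{-3/2}$ with no additional gain. The lemma is nonetheless true, and your treatments of $I_1$ and $I_2^{(2)}$ are correct. To repair $I_2$ in the regime $|x|\lesssim\sqrt{1+t}$ one must retain the source's spatial localization rather than its bare $L^1$-mass: split the $y$-integral at $|y|\sim\sqrt{1+t}$; on the inner ball bound the source in $L^\infty$ by $(1+t)^{-3}$ and integrate the propagator over that ball, which gives $(1+t-\tau)^{-1/2}$ times a logarithm whose $\tau$-integral is $O\bigl((1+t)^{1/2}\log(1+t)\bigr)$ and hence a contribution $O\bigl((1+t)^{-5/2}\log(1+t)\bigr)$; on $|y|\gtrsim\sqrt{1+t}$ use the $|y|^{-6}$ tail of the source together with $|x-y|\gtrsim |y|$ to close. (The paper omits the proof of this lemma, so there is no argument in the text to compare against.)
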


\begin{lemma}[Space-time exponential decay convolved with Huygens wave]
\label{Easy2}
\begin{eqnarray*}
&&e^{-\frac{t+|x|}{c_{0}}}*_{x,t}\left(1+t\right) ^{-4}\left( 1+\frac{\left(
\left \vert x\right \vert -\mathbf{c}t \right) ^{2}}{1+t }\right) ^{-2}
\notag \\
&\lesssim &\left( 1+t\right) ^{-4}\left( 1+\frac{\left( \left \vert x\right
\vert -\mathbf{c}t\right) ^{2}}{1+t}\right) ^{-2}\hbox{.}  \label{E2}
\end{eqnarray*}
\end{lemma}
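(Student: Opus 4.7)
The approach is to mimic the five-region proof of Lemma~\ref{moving-exponential}, adapting the endgame so that the output carries the polynomial shell shape $(1+(|x|-\mathbf{c}t)^{2}/(1+t))^{-2}$ rather than a Gaussian shell. Writing
\begin{equation*}
 J(x,t) := \int_{0}^{t}\!\!\int_{\mathbb{R}^{3}} e^{-\frac{(t-\tau)+|x-y|}{c_{0}}} (1+\tau)^{-4}\Bigl(1+\tfrac{(|y|-\mathbf{c}\tau)^{2}}{1+\tau}\Bigr)^{-2} dy\,d\tau,
\end{equation*}
I would first split the time integral at $\tau=t/2$. On $\tau\in[0,t/2]$, the factor $e^{-(t-\tau)/c_{0}}\le e^{-t/(2c_{0})}$, together with a further $y$-splitting at $|y|=|x|/2$ (exactly as in the $D_{3}$ treatment of Lemma~\ref{moving-exponential}), yields a pure space-time exponential $e^{-(t+|x|)/\widehat{c}}$, which is absorbed into the target via the elementary inequality $e^{-z}\lesssim(1+z^{2})^{-N}$. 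On $\tau\in[t/2,t]$ one factors out $(1+\tau)^{-4}\lesssim(1+t)^{-4}$ and reduces to bounding
\begin{equation*}
 H(x,t):=\int_{t/2}^{t}\!\int_{\mathbb{R}^{3}}e^{-\frac{(t-\tau)+|x-y|}{c_{0}}}\Bigl(1+\tfrac{(|y|-\mathbf{c}\tau)^{2}}{1+\tau}\Bigr)^{-2}dy\,d\tau
\end{equation*}
by $(1+(|x|-\mathbf{c}t)^{2}/(1+t))^{-2}$.

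In $D_{1}\cup D_{2}\cup D_{4}$ the target is of order one, and the crude bound $(1+(|y|-\mathbf{c}\tau)^{2}/(1+\tau))^{-2}\le1$ combined with the integrability of the exponential kernel suffices. The delicate regions are $D_{3}$ and $D_{5}$, where I would split the $y$-integral at $|y|=(|x|+\mathbf{c}\tau)/2$: on the second subregion the inequality $||y|-\mathbf{c}\tau|\ge||x|-\mathbf{c}t|/2$ (together with $|x|-\mathbf{c}\tau\ge|x|-\mathbf{c}t$ and $1+\tau\lesssim 1+t$) pulls the desired polynomial factor $(1+(|x|-\mathbf{c}t)^{2}/(1+t))^{-2}$ directly out of the shell; on the first subregion one gains $e^{-||x|-\mathbf{c}t|/(2c_{0})}$ from $e^{-|x-y|/c_{0}}$, and this exponential is upgraded to the polynomial target via the elementary inequality
\begin{equation*}
 e^{-z/(2c_{0})}\lesssim\bigl(1+z^{2}/(1+t)\bigr)^{-2},\qquad z\ge 0,\;t\ge 0,
\end{equation*}
which follows from $\sup_{z\ge 0}z^{4}e^{-z/(2c_{0})}<\infty$ and $(1+t)^{2}\ge1$.

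The main obstacle I anticipate is preserving the sharp prefactor $(1+t)^{-4}$ through the $D_{3}$ and $D_{5}$ case analyses. A naive global bound on the shell polynomial gives $\int_{\mathbb{R}^{3}}(1+(|y|-\mathbf{c}\tau)^{2}/(1+\tau))^{-2}dy\sim(1+\tau)^{5/2}$, which is far too lossy; the correct argument must instead use $e^{-|x-y|/c_{0}}$ to localize $y$ on a unit-scale neighbourhood of $x$ before bounding the shell factor pointwise, while simultaneously extracting the polynomial $||x|-\mathbf{c}t|$-decay through the two-piece $y$-splitting described above. This bookkeeping is analogous to the treatment of $A_{21}$ in the proof of Lemma~\ref{moving-exponential}, but is more delicate because the target tail is only polynomial, so the comparison between $(|y|-\mathbf{c}\tau)^{2}/(1+\tau)$ and $(|x|-\mathbf{c}t)^{2}/(1+t)$ must be carried out without the forgiveness afforded by a Gaussian.
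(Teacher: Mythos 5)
The paper itself omits the proof of Lemma~\ref{Easy2}, so there is no reference argument to compare against; I evaluate your proposal on its own terms. Your overall skeleton is sound: split the time integral at $\tau=t/2$, factor out $(1+\tau)^{-4}\lesssim(1+t)^{-4}$ on $[t/2,t]$, and use the exponential kernel $e^{-((t-\tau)+|x-y|)/c_0}$ to localize $(y,\tau)$ near $(x,t)$ before comparing the shell factor pointwise; and the elementary inequality $e^{-z/C}\lesssim(1+z^{2}/(1+t))^{-2}$ is exactly the right device for upgrading exponential gains into the polynomial target. However, there are two concrete gaps.

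First, your claim that ``in $D_1\cup D_2\cup D_4$ the target is of order one'' is false for $D_1$ and $D_4$. In both regions $|x|\le\tfrac{1}{2}\mathbf{c}t$, so $||x|-\mathbf{c}t|\ge\tfrac{1}{2}\mathbf{c}t$ and hence $\bigl(1+\tfrac{(|x|-\mathbf{c}t)^2}{1+t}\bigr)^{-2}\sim(1+t)^{-2}$, not $O(1)$; the crude bound $\bigl(1+\tfrac{(|y|-\mathbf{c}\tau)^2}{1+\tau}\bigr)^{-2}\le1$ therefore loses a factor $(1+t)^{2}$ there and does not give the lemma.

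Second, the $y$-split at $|y|=(|x|+\mathbf{c}\tau)/2$ together with the chain ``$|y|-\mathbf{c}\tau>\tfrac{|x|-\mathbf{c}\tau}{2}\ge\tfrac{|x|-\mathbf{c}t}{2}$'' only produces a useful lower bound on $||y|-\mathbf{c}\tau|$ when $|x|>\mathbf{c}t$, i.e.\ in $D_3$. In $D_5$ one has $|x|<\mathbf{c}t$, so $|x|-\mathbf{c}t<0$ and this inequality is vacuous; moreover the sign of $|x|-\mathbf{c}\tau$ changes as $\tau$ sweeps $[t/2,t]$, so the same split gives $\mathbf{c}\tau-|y|\ge\tfrac{\mathbf{c}\tau-|x|}{2}$ (inside-the-shell), which is only useful once $\tau$ is large enough, e.g.\ after the extra time-cut $\tau\ge\tfrac{t}{2}+\tfrac{|x|}{2\mathbf{c}}$ used in Case~5 of Lemma~\ref{moving-exponential}. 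Your sketch does not account for this additional split.

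Both defects are repaired at once, and the five-region decomposition can be dropped entirely, by the following uniform argument for $\tau\in[t/2,t]$. Set $a=||x|-\mathbf{c}t|$ and split according to whether $|x-y|+\mathbf{c}(t-\tau)<a/2$ or $\ge a/2$. On the first set, the reverse triangle inequality $||y|-\mathbf{c}\tau|\ge||x|-\mathbf{c}t|-|x-y|-\mathbf{c}(t-\tau)$ gives $||y|-\mathbf{c}\tau|\ge a/2$, and since $1+\tau\le1+t$ the shell factor is pointwise $\lesssim\bigl(1+\tfrac{a^2}{1+t}\bigr)^{-2}$; on the second set one extracts $e^{-a/C}$ from the kernel and invokes your elementary inequality. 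Combined with the $\tau\le t/2$ piece (where $e^{-t/(2c_0)}$ plus the $|y|\lessgtr|x|/2$ split yields decay dominating the target), this closes the lemma without any $D_i$ case analysis.
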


\begin{lemma}[Riesz wave convolved with Huygens wave]
\label{poly-moving}
\begin{eqnarray*}
I &=&\mathbf{1}_{\{ \left \vert x\right \vert \leq \mathbf{c}t\}}\left(
1+t\right) ^{-2}\left( 1+\frac{\left \vert x\right \vert ^{2}}{1+t}\right)
^{-\frac{3}{2}}\ast _{x,t}\left( 1+t\right) ^{-4}\left( 1+\frac{\left( \left
\vert x\right \vert -\mathbf{c}t\right) ^{2}}{1+t}\right) ^{-2}  \notag \\
&\lesssim &\left( 1+t\right) ^{-2}\left( 1+\frac{\left \vert x\right \vert
^{2}}{1+t}\right) ^{-\frac{3}{2}}+\left( 1+t\right) ^{-5/2}\left( 1+\frac{%
\left( \left \vert x\right \vert -\mathbf{c}t\right) ^{2}}{1+t}\right) ^{-1}%
\hbox{.}  \label{C11}
\end{eqnarray*}
\end{lemma}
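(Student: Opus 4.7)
The plan is to bound $I$ by splitting space-time according to the location of the target point $(x,t)$ into the five regions $D_1,\dots,D_5$ introduced at the beginning of Section \ref{wave-interaction}, and within each region further decomposing the $\tau$-integration at $\tau = t/2$ together with a partition of the $y$-integration (intersected with the cone condition $|y| \le \mathbf{c}\tau$) into a core $\{|y| \le |x|/2\}$ and its complement. This dyadic decomposition mirrors the scheme already used in the proofs of Lemmas \ref{moving-exponential} and \ref{poly-exponential} for the linear interactions, so the basic mechanics are in place; the novelty lies in adapting it to the polynomial-against-polynomial setting.

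In the spectator regions $D_1$, $D_2$, and $D_3$ the analysis is largely routine. In $D_1$ the target Riesz factor is already of size $(1+t)^{-2}$, and integration against the Huygens kernel (whose total $y$-mass is of order $(1+t-\tau)^{-3/2}$) gives no loss worse than the claimed Riesz bound. In $D_3$ the cone constraint $|y| \le \mathbf{c}\tau \le \mathbf{c}t$ together with $|x| \ge \mathbf{c}t + \sqrt{1+t}$ forces $|x-y| \ge (|x|-\mathbf{c}t)/2$, so we can pull out the factor $(1+(|x|-\mathbf{c}t)^2/(1+t))^{-1}$ from the Huygens kernel and integrate the remaining Riesz piece. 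In $D_2$, the Huygens kernel is controlled by its peak value $(1+t)^{-4}$, and the Riesz piece integrates to a contribution absorbable by the Huygens output term.

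The main obstacle is the genuine interaction region $D_4 \cup D_5$, inside the cone but away from both the origin and the wavefront, where the Riesz bulk near the origin and the Huygens annulus $\{|x-y|\approx \mathbf{c}(t-\tau)\}$ can both be near their maxima. For $\tau$ close to $t$, the Huygens kernel concentrates on a spherical shell of width $\sqrt{1+t-\tau}$, while the Riesz kernel localizes $y$ in $|y| \lesssim \sqrt{1+\tau}$; the time window on which these two localizations meet has length $O(\sqrt{1+t})$ around the value $\tau_\ast$ solving $\mathbf{c}(t-\tau_\ast) = |x|$. The delicate point is that a naive estimate replacing the Huygens factor by its peak $(1+t-\tau)^{-4}$ only yields a $(1+t)^{-3/2}$ bound in $D_5$, short of the target $(1+t)^{-5/2}$ by a half-power. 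Recovering this missing half requires precise accounting of the shell width $\sqrt{1+t-\tau}$ together with the polynomial tail of the Riesz kernel away from its core, exactly as in the heuristic volume computation sketched in the introduction to this section.

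Once the regional contributions are assembled, the stationary Riesz bound $(1+t)^{-2}(1+|x|^2/(1+t))^{-3/2}$ dominates in $D_1 \cup D_4$ (well inside the cone), while the moving polynomial bound $(1+t)^{-5/2}(1+(|x|-\mathbf{c}t)^2/(1+t))^{-1}$ dominates in $D_2 \cup D_3 \cup D_5$ (near or beyond the wavefront). The only genuinely new input beyond Lemmas \ref{exponential-easy-1}--\ref{Easy2} is the sharp bookkeeping in $D_4$ and $D_5$; the remaining steps are direct adaptations of the region-by-region estimates already displayed above.
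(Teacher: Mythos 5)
The proposal correctly identifies the five-region decomposition $D_1,\ldots,D_5$ and is sound in the easy regions $D_1,D_2,D_3$, but the treatment of the hard region $D_4\cup D_5$ is both wrong in its advertised decomposition and missing the key mechanism, so there is a genuine gap.

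First, the claimed decomposition --- a $\tau$-split at $\tau=t/2$ combined with a $y$-split at $|y|\le |x|/2$ --- is not what closes the estimate in $D_4\cup D_5$. In $D_4$, say, the early-time piece $\tau\le t/2$ with the Huygens factor crudely bounded by $(1+t)^{-4}$ and the Riesz piece integrated out gives only $(1+t)^{-13/4}$, and this is \emph{not} $\lesssim (1+t)^{-1/2}|x|^{-3}$ once $|x|\gg(1+t)^{11/12}$, so the Riesz output term is not recovered near the outer boundary of $D_4$; one really does need to exploit the Huygens shell localization in that range. The paper instead uses the $|x|$-dependent cut points $\tau=t/2-|x|/(2\mathbf{c})$ and $\tau=t-|x|/(2\mathbf{c})$ (and analogous points in $D_5$), with the early and late pieces handled by cone/distance arguments that depend on $\mathbf{c}(t-\tau)-|x|$ or $|x|-\mathbf{c}(t-\tau)$, not on $|y|\lessgtr|x|/2$.

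Second, your interaction picture in $D_4\cup D_5$ is geometrically off. You write that ``the Riesz kernel localizes $y$ in $|y|\lesssim\sqrt{1+\tau}$'' and that the interaction window around $\tau_\ast=t-|x|/\mathbf{c}$ has length $O(\sqrt{1+t})$. But $(1+\tau)^{-2}(1+|y|^2/(1+\tau))^{-3/2}\mathbf{1}_{\{|y|\le\mathbf{c}\tau\}}$ has only a polynomial tail out to the cone radius $\mathbf{c}\tau$, and the Huygens shell $\{|x-y|\approx\mathbf{c}(t-\tau)\}$ meets the cone $\{|y|\le\mathbf{c}\tau\}$ for every $\tau\gtrsim (t-|x|/\mathbf{c})/2$, so the interaction range is of length $O(t)$, not $O(\sqrt{t})$. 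This is precisely why a thin-window account will not produce the result.

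Third, and most importantly, you name the missing half-power but do not supply the device that recovers it. The paper's $I_2$ estimate in $D_4$ and $D_5$ passes to spherical coordinates, changes variable to $z=\sqrt{|x|^2+r^2-2r|x|\cos\theta}$ with $\sin\theta\,d\theta=\frac{z}{r|x|}\,dz$, and integrates out $z$ against the Huygens shell; the Jacobian produces the factor $1/|x|$, the $z$-integral produces $(1+t-\tau)^{3/2}$, and the radial integral against the Riesz tail produces $(1+\tau)$. That combination, $|x|^{-1}\int (1+t-\tau)^{-5/2}(1+\tau)^{-1}\,d\tau$, is exactly what yields $(1+t)^{-1}|x|^{-5/2}\lesssim(1+t)^{-1/2}|x|^{-3}$ in $D_4$ (and analogously in $D_5$). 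Appealing to ``the heuristic volume computation'' is not enough here: the heuristic in the section introduction is for Huygens--Huygens and Huygens--diffusion interactions, where the source does localize; the Riesz source does not, and the angular Jacobian factor $1/|x|$ is where the gain actually lives. Without spelling this out, the proposal does not establish the claimed bound.
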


\begin{lemma}[Huygens wave convolved with Huygens wave]
\label{moving-moving}
\begin{eqnarray*}
J&=&\left( 1+t\right) ^{-5/2}e^{-\frac{\left( \left \vert x\right \vert -%
\mathbf{c}t \right) ^{2}}{D_{0}\left( 1+t\right) }}*_{x,t}\left(1+t\right)
^{-4}\left( 1+\frac{\left( \left \vert x\right \vert -\mathbf{c}t \right)
^{2}}{1+t }\right) ^{-2}  \label{C10} \\
&\lesssim &\left( 1+t\right) ^{-2}\left( 1+\frac{\left \vert x\right \vert
^{2}}{1+t}\right) ^{-\frac{3}{2}}+\left( 1+t\right) ^{-5/2}\left( 1+\frac{%
\left( \left \vert x\right \vert -\mathbf{c}t\right) ^{2}}{1+t}\right) ^{-1}%
\hbox{.}  \notag
\end{eqnarray*}
\end{lemma}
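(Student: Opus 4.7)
The plan is to estimate
\[
J = \int_0^t\!\!\int_{\mathbb{R}^3} (1+t-\tau)^{-5/2} e^{-\frac{(|x-y|-\mathbf{c}(t-\tau))^2}{D_0(1+t-\tau)}} (1+\tau)^{-4}\Bigl(1+\tfrac{(|y|-\mathbf{c}\tau)^2}{1+\tau}\Bigr)^{-2} dy\, d\tau
\]
by partitioning the $(x,t)$-plane into the five domains $D_1,\ldots,D_5$ introduced at the start of Section \ref{wave-interaction} and, within each domain, splitting the $\tau$-integral at $\tau=t/2$ and the $y$-integral according to whether $|y|$ lies near the source cone $|y|=\mathbf{c}\tau$ or near the observation point $x$. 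The structure of the argument parallels that of Lemmas \ref{moving-exponential} and \ref{poly-moving}, but the source here carries only polynomial (not exponential) decay across its cone, so the geometric interaction volume enters more subtly.

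The key reduction is the following geometric input, illustrated by the crossed-cone picture after equation \eqref{decom-System}: both kernels are of order their prefactors precisely on the set
\[
\Omega_\tau(x,t) = \bigl\{y \in \mathbb{R}^3 : \bigl||x-y|-\mathbf{c}(t-\tau)\bigr|\le \sqrt{1+t-\tau},\ \bigl||y|-\mathbf{c}\tau\bigr|\le \sqrt{1+\tau}\bigr\},
\]
and $|\Omega_\tau(x,t)|\lesssim \sqrt{(1+\tau)(1+t-\tau)}\cdot \min\{1+\tau,\,1+t-\tau\}$ when $(x,t)$ is inside the acoustic cone, while the estimate degenerates only through an additional exponential or polynomial weight coming from the residual $||x|-\mathbf{c}t|$ when $(x,t)$ is outside. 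Outside $\Omega_\tau(x,t)$ I use the Gaussian factor in the propagator or the second-order polynomial decay in the source to absorb all dependence on $x$, exactly as in the proofs of Lemmas \ref{moving-exponential} and \ref{poly-moving}.

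With these tools, the case analysis is a routine (if lengthy) variation on those lemmas. In $D_1$ I control the factor $\bigl(1+|x|^2/(1+t)\bigr)^{-3/2}$ by $1$, and the worst contribution comes from $\tau$ near $t$ with $|y|\sim \mathbf{c}\tau$, producing the claimed $(1+t)^{-2}$ bound via the volume estimate. In $D_2$ the Huygens weight of the source is of order $1$ on $\Omega_\tau$, and integrating $(1+t-\tau)^{-5/2}\cdot |\Omega_\tau|$ against $(1+\tau)^{-4}$ over $[0,t]$ yields $(1+t)^{-5/2}$. In $D_3$ I decompose $\mathbb{R}^3$ by whether $|y|\le (|x|+\mathbf{c}\tau)/2$ or $|y|>(|x|+\mathbf{c}\tau)/2$, as in Case 3 of Lemma \ref{moving-exponential}, to convert the gap $|x|-\mathbf{c}t$ into either the Huygens denominator $(1+(|x|-\mathbf{c}t)^2/(1+t))^{-1}$ or an exponential that can be absorbed into the diffusion bound.

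The main obstacle will be the transition regions $D_4$ and $D_5$, which the authors flagged as the strong-interaction zones. Here neither kernel dominates and a naive estimate loses a factor of $(1+t)^{1/2}$. To avoid this loss, inside $D_4$ I use that $|x|\le \mathbf{c}t/2$ forces $|x-y|\ge \mathbf{c}\tau - |x|\gtrsim \mathbf{c}t - |x|$ whenever $|y|\le (|x|+\mathbf{c}\tau)/2$, which converts the Gaussian into $\exp(-c(\mathbf{c}t-|x|)^2/(1+t))$ that I convert to the diffusion-type polynomial bound $(1+|x|^2/(1+t))^{-3/2}$ using $|x|\sim \mathbf{c}t-|x|$ there. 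Inside $D_5$ the roles swap: $\mathbf{c}t-|x|\ge \sqrt{1+t}$ with $|x|\gtrsim \mathbf{c}t$, and I use the polynomial weight of the source together with the sharp volume estimate of $\Omega_\tau$ restricted by $|x|\le \mathbf{c}t+r+s$ (with $|r|\lesssim \sqrt{1+t-\tau}$, $|s|\lesssim \sqrt{1+\tau}$) to recover the Huygens bound $(1+t)^{-5/2}(1+(|x|-\mathbf{c}t)^2/(1+t))^{-1}$. In both cases splitting at $\tau=t/2$ keeps one of the factors $(1+\tau)^{-4}$ or $(1+t-\tau)^{-5/2}$ pointwise large enough to cover the volume gain, which is precisely the mechanism that makes the nonlinear wave one half power faster than the linear one.
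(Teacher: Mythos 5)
Your high-level plan — partition $(x,t)$ into $D_1,\ldots,D_5$, split time at $t/2$ (and further near $(t\pm|x|/\mathbf{c})/2$), split space near the source cone versus near the observation point — matches the paper's strategy, and your physical picture of crossing wave cones is exactly the one the paper presents. However, the technical core of the proposal, the interaction-volume estimate, is wrong, and the geometric bounds you invoke in $D_4$ do not hold, so the proof as written has a genuine gap.

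The volume of the intersection of the two thickened spherical shells is, up to constants, not
$\sqrt{(1+\tau)(1+t-\tau)}\cdot\min\{1+\tau,1+t-\tau\}$ but rather is governed by the torus formula
\[
|\Omega_\tau(x,t)|\;\sim\;\min\Bigl\{\tfrac{(1+\tau)^{3/2}(1+t-\tau)^{3/2}}{|x|},\;(1+\tau)^{5/2},\;(1+t-\tau)^{5/2}\Bigr\},
\]
as one sees by passing to the variables $(|y|,|x-y|,\phi)$ with Jacobian $\frac{|y|\,|x-y|}{|x|}$. The crucial $1/|x|$ factor is missing from your expression. For $\sqrt{1+t}\leq|x|\leq\tfrac12\mathbf{c}t$ (i.e.\ most of $D_4$) with $\tau\approx t/2$, your formula gives $\sim t^2$ while the true volume is $\sim t^{5/2}/|x|$, which is strictly larger; so your claimed inequality $|\Omega_\tau|\lesssim\ldots$ fails, and the estimate $J\lesssim\int(1+t-\tau)^{-5/2}(1+\tau)^{-4}|\Omega_\tau|\,d\tau$ that you build on it is not justified (it would underestimate the true interaction contribution by a factor of roughly $t/|x|$, i.e.\ up to $\sqrt{t}$). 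The $1/|x|$ decay is exactly the ingredient that produces the sharp rates in the lemma, and in the paper's rigorous proof it enters through the change of variables $z=\sqrt{|x|^2+r^2-2r|x|\cos\theta}$, $\sin\theta\,d\theta=\tfrac{z}{r|x|}\,dz$ in spherical coordinates, a device your proposal never uses.

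Your geometric claims in $D_4$ are also incorrect. If $|y|\leq(|x|+\mathbf{c}\tau)/2$, the triangle inequality gives $|x-y|\geq|x|-|y|\geq(|x|-\mathbf{c}\tau)/2$, which can be negative; the lower bound $|x-y|\geq\mathbf{c}\tau-|x|$ requires $|y|\geq\mathbf{c}\tau$, contradicting your restriction. And even where such a bound holds, $\mathbf{c}\tau-|x|\gtrsim\mathbf{c}t-|x|$ fails as $|x|\uparrow\mathbf{c}t/2$. Similarly, the conversion step "using $|x|\sim\mathbf{c}t-|x|$" only holds on the single ray $|x|=\mathbf{c}t/2$; near the inner boundary $|x|\sim\sqrt{1+t}$ of $D_4$ one has $\mathbf{c}t-|x|\gg|x|$. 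Finally, note that in $D_3$ the paper requires an interpolation between two one-sided bounds (writing $J_{12}=(J_{12})^{1/3}(J_{12})^{2/3}$), and in $D_4$ and $D_5$ it splits the time axis into several pieces pivoting around $\tfrac{t}{4}-\tfrac{|x|}{4\mathbf{c}}$ and $\tfrac{t}{2}+\tfrac14(t+\tfrac{|x|}{\mathbf{c}})$, so that outside the pivot window one of the two factors is pointwise small; a single split at $t/2$ does not suffice in those regions. To repair the proposal you would need to replace your $|\Omega_\tau|$ bound by one carrying the $1/|x|$ factor, or directly adopt the spherical-coordinate substitution, and redo the case analysis in $D_4$ and $D_5$ with the finer time splits.
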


\begin{lemma}[Huygens wave convolved with diffusion wave]
\label{moving-nonmoving}
\begin{eqnarray*}
K&=&\left( 1+t\right) ^{-5/2}e^{-\frac{\left( \left \vert x\right \vert -%
\mathbf{c}t \right) ^{2}}{D_{0}\left( 1+t\right) }}*_{x,t}\left(1+t\right)
^{-3}\left( 1+\frac{\left \vert x\right \vert ^{2}}{1+t }\right) ^{-3}
\label{C7} \\
&\lesssim &\left( 1+t\right) ^{-2}\left( 1+\frac{\left \vert x\right \vert
^{2}}{1+t}\right) ^{-\frac{3}{2}}+\left( 1+t\right) ^{-5/2}\left( 1+\frac{%
\left( \left \vert x\right \vert -\mathbf{c}t\right) ^{2}}{1+t}\right) ^{-1}%
\hbox{.}  \notag
\end{eqnarray*}
\end{lemma}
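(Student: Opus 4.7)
The plan is to adapt the region-based strategy used to prove Lemmas~\ref{moving-exponential} and \ref{poly-moving}, decomposing the integration domain according to the position of $(x,t)$ within the five regions $D_{1},\ldots,D_{5}$, and in each region splitting the time integral as $\int_{0}^{t}=\int_{0}^{t/2}+\int_{t/2}^{t}$ together with a spatial split based on whether $y$ lies near the origin or far from it. The geometric picture is that the Huygens propagator concentrates on the backward cone $|x-y|=\mathbf{c}(t-\tau)$ of thickness $\sqrt{t-\tau}$, while the diffusion source concentrates in a ball $|y|\lesssim \sqrt{1+\tau}$ around the origin. The output receives a diffusion-type contribution when the backward cone meets the source near the origin, and a Huygens-type contribution when $(x,t)$ itself lies on or near the forward acoustic cone.

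The three easy regions are dispatched by standard estimates. In $D_{1}$ ($|x|\leq \sqrt{1+t}$), the backward cone from $(x,t)$ has center at distance $\mathbf{c}(t-\tau)$ from $x$, far from the origin whenever $\tau$ is bounded away from $t$; the Huygens Gaussian then yields exponential decay in $t$, which fits inside the target diffusion term $(1+t)^{-2}(1+|x|^{2}/(1+t))^{-3/2}\approx (1+t)^{-2}$. In $D_{3}$ ($|x|\geq \mathbf{c}t+\sqrt{1+t}$), either $|x-y|$ is far from $\mathbf{c}(t-\tau)$ and the Huygens Gaussian dominates, or $|y|$ is large and the source's polynomial decay dominates; in either case one recovers the Huygens target via the dichotomy used in Case~3 of Lemma~\ref{moving-exponential}. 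In $D_{2}$ (near the cone), the polynomial factor $(1+|y|^{2}/(1+\tau))^{-3}$ integrates to a constant times $(1+\tau)^{3/2}$ on its effective support, and pairing this with $(1+t-\tau)^{-5/2}$ and the on-cone Huygens Gaussian yields the target Huygens factor.

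The nontrivial estimates are for $(x,t)\in D_{4}\cup D_{5}$. For $\tau\in[0,t/2]$, the cone center $\mathbf{c}(t-\tau)\in[\mathbf{c}t/2,\mathbf{c}t]$ is much larger than $|x|$ throughout $D_{4}$, so for $y$ in the source support the Huygens Gaussian supplies exponential decay in $t$, which fits trivially into either target. The bulk contribution thus comes from $\tau\in[t/2,t]$, where the source decays like $\tau^{-3}\approx t^{-3}$ and the spatial integral is localized to the spherical shell $\bigl|\,|x-y|-\mathbf{c}(t-\tau)\bigr|\lesssim \sqrt{t-\tau}$. In $D_{4}$, I split the spatial integration into $|y|\leq |x|/2$ versus $|y|>|x|/2$: in the former, $|x-y|\geq |x|/2$ controls the Huygens Gaussian unless $\mathbf{c}(t-\tau)\approx |x|/2$, which restricts $\tau$ to a band; in the latter, the polynomial factor $(1+|y|^{2}/(1+\tau))^{-3}\lesssim (1+|x|^{2}/(1+t))^{-3}$ is extracted and the remaining integral produces the diffusion target $(1+t)^{-2}(1+|x|^{2}/(1+t))^{-3/2}$. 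In $D_{5}$, the point $(x,t)$ sits inside the cone, so the backward cone does sweep through the source support; the triangle inequality $\bigl|\,|x|-|y|\bigr|\leq |x-y|\leq |x|+|y|$ is used to convert the Huygens Gaussian in $|x-y|-\mathbf{c}(t-\tau)$ into one in $|x|-\mathbf{c}(t-\tau)$ up to an error controlled by $|y|\lesssim \sqrt{1+\tau}$, reducing the problem to a one-dimensional $\tau$-integration.

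The main obstacle is the strong-interaction region in $D_{5}$, where the backward cone sweeps through the source across a narrow window of times $\tau\approx t-|x|/\mathbf{c}$. Inside this window neither Gaussian is effective, so the decay must come entirely from the product of time factors combined with the shell volume. The sharp accounting reduces to a one-dimensional integration of a Gaussian in $|x|-\mathbf{c}(t-\tau)$ against the polynomial source profile, producing exactly $(1+t)^{-5/2}(1+(|x|-\mathbf{c}t)^{2}/(1+t))^{-1}$. Pulling through the error terms from the triangle inequality requires splitting into subcases according to whether $(|x|-\mathbf{c}t)^{2}/(1+t)$ is small or large: in the large regime the surviving Huygens Gaussian tail suffices, while in the small regime the explicit polynomial factor arising from the $\tau$-integration provides the bound. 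This is structurally parallel to the final subcase splitting of Case~3 in Lemma~\ref{moving-exponential}, and once the bookkeeping is carried out in each of the five regions the claimed estimate follows.
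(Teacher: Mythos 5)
Your high-level strategy matches the paper's: decompose the space-time region into $D_1,\ldots,D_5$, split the $\tau$-integral around the strong-interaction window $\tau\approx t-|x|/\mathbf{c}$, and split $\mathbb{R}^3$ according to the position of $y$ relative to $|x|$ or $\mathbf{c}(t-\tau)$. Your treatment of $D_1$, $D_2$, $D_3$ is consistent in spirit with the paper's, and your physical picture (backward Huygens shell of thickness $\sqrt{t-\tau}$ sweeping through the diffusive ball around the origin) is exactly the heuristic the paper records in Section \ref{nonlinear wave interaction}. The gap is in how you make the strong-interaction region rigorous.

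For the bulk contribution in $D_4\cup D_5$ (the middle time band), you propose to bound the inner spatial integral by converting, via the triangle inequality, the Huygens Gaussian in $|x-y|-\mathbf{c}(t-\tau)$ into a Gaussian in $|x|-\mathbf{c}(t-\tau)$ ``up to an error controlled by $|y|\lesssim\sqrt{1+\tau}$,'' thereby reducing to a one-dimensional $\tau$-integral. Making this precise requires a bound of the form
\[
e^{-\frac{(|x-y|-\mathbf{c}(t-\tau))^2}{D_0(1+t-\tau)}}\;\lesssim\; e^{-\frac{(|x|-\mathbf{c}(t-\tau))^2}{2D_0(1+t-\tau)}}\,e^{\frac{|y|^2}{D_0(1+t-\tau)}},
\]
and the second factor is the problem. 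In the strong-interaction window one has $t-\tau\sim|x|/\mathbf{c}$, while the source is only effectively supported in $|y|\lesssim\sqrt{1+\tau}\sim\sqrt{t}$ (with polynomial, not compact, tails). Hence $e^{|y|^2/D_0(1+t-\tau)}$ can be as large as $e^{Ct/|x|}$. In $D_5$ this is harmless since $|x|\gtrsim t$, but in $D_4$ one has $|x|$ as small as $\sqrt{1+t}$, and the factor blows up like $e^{C\sqrt{t}}$, which ruins the estimate. Alternatively, if you drop the Gaussian entirely and simply use the time-band restriction (``$\mathbf{c}(t-\tau)\approx|x|$'' has width $O(|x|)$ in $\tau$ and source yields $\int(1+|y|^2/(1+\tau))^{-3}dy\lesssim(1+\tau)^{3/2}$), you land on $|x|^{-3/2}(1+t)^{-3/2}$, which dominates the target $(1+t)^{-1/2}|x|^{-3}$ only when $|x|\lesssim(1+t)^{2/3}$, not across all of $D_4$.

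The missing ingredient is the angular localization imposed by the Huygens shell, which is exactly what the paper captures with the spherical change of variables $z=\sqrt{|x|^2+r^2-2r|x|\cos\theta}$, $\sin\theta\,d\theta=\frac{z}{r|x|}\,dz$. After this substitution the Gaussian is a genuine Gaussian in $z$ (equivalently in $\tau$ after re-parametrizing), the $z$-integration produces $\int_{||x|-r|}^{|x|+r}z\,dz\lesssim r|x|$, and the subsequent $\tau$- and $r$-integrations yield the sharp $(1+|x|)^{-5/2}(1+t)^{-1}$, which is indeed $\lesssim(1+t)^{-2}(1+|x|^2/(1+t))^{-3/2}$ throughout $D_4$. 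Without this device (or an equivalent explicit accounting of the shell–ball intersection volume), the bound in $D_4$ does not close, so your outline is not yet a proof.
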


\begin{lemma}[Diffusion wave convolutied with Huygens wave]
\label{nonmoving-moving}
\begin{eqnarray*}
L &=&\left( 1+t\right) ^{-2}e^{-\frac{\left \vert x\right \vert ^{2}}{%
D_{0}\left( 1+t\right) }}\ast _{x,t}\left( 1+t\right) ^{-4}\left( 1+\frac{%
\left( \left \vert x\right \vert -\mathbf{c}t\right) ^{2}}{1+t}\right) ^{-2}
\\
&\lesssim &\left( 1+t\right) ^{-2}\left( 1+\frac{\left \vert x\right \vert
^{2}}{1+t}\right) ^{-\frac{3}{2}}+\left( 1+t\right) ^{-5/2}\left( 1+\frac{%
\left( \left \vert x\right \vert -\mathbf{c}t\right) ^{2}}{1+t}\right) ^{-1}%
\hbox{.}
\end{eqnarray*}
\end{lemma}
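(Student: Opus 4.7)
The plan is to mimic the proof scheme of Lemmas \ref{moving-exponential} and \ref{moving-moving}: partition the space-time domain into the five regions $D_1,\ldots,D_5$ and bound the convolution
\begin{equation*}
L(x,t)=\int_0^t\!\!\int_{\mathbb{R}^3}(1+t-s)^{-2}e^{-|x-y|^2/(D_0(1+t-s))}(1+s)^{-4}\Bigl(1+\tfrac{(|y|-\mathbf{c}s)^2}{1+s}\Bigr)^{-2}dy\,ds
\end{equation*}
separately on each region, checking in each case that it is dominated by the sum of the stationary diffusion profile $(1+t)^{-2}(1+|x|^2/(1+t))^{-3/2}$ and the polynomial Huygens profile $(1+t)^{-5/2}(1+(|x|-\mathbf{c}t)^2/(1+t))^{-1}$. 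In every region I split the time integral at $s=t/2$: on $[0,t/2]$, where $1+t-s\sim 1+t$, the diffusion kernel is bounded pointwise in $t$ and the Huygens factor is integrated in $(y,s)$ using its transverse $L^1_y$ mass $\sim (1+s)^{-3/2}$; on $[t/2,t]$, where $1+s\sim 1+t$, the prefactor $(1+s)^{-4}$ is frozen in $t$ and we convolve the residual diffusion kernel against the Huygens profile in $y$.

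In $D_1$ and $D_2$ the two target profiles are essentially $(1+t)^{-2}$ and $(1+t)^{-5/2}$ up to constants, and both contributions follow from crude $L^1_yL^\infty_t$/$L^\infty_yL^1_t$ estimates. In $D_3$ (outside the cone, $|x|\geq \mathbf{c}t+\sqrt{1+t}$) the Gaussian $e^{-|x-y|^2/(D_0(1+t-s))}$ restricts $y$ effectively to a ball around $x$; combined with $|x|-\mathbf{c}t\geq \sqrt{1+t}$, this forces $|y|-\mathbf{c}s\geq \tfrac12(|x|-\mathbf{c}t)$ for $y$ in the bulk, extracting the factor $(1+(|x|-\mathbf{c}t)^2/(1+t))^{-1}$ from the Huygens source exactly as in Lemma \ref{moving-moving}. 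The small complementary piece is handled by the standard trick of trading half of the exponential weight in $|x-y|$ for a factor controlling $\mathbf{c}t-|x|$.

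The main obstacle will be regions $D_4$ and $D_5$ (inside and near the cone boundary), where the Gaussian bulk near $y=x$ of width $\sqrt{1+t-s}$ may overlap the Huygens annulus $\{||y|-\mathbf{c}s|\lesssim\sqrt{1+s}\}$. Following the strategy of Lemma \ref{moving-exponential}, I will further split the $s$-integral at $s=\tfrac12(t+|x|/\mathbf{c})$. On the earlier sub-interval the estimate $\mathbf{c}s-|x|\leq -\tfrac14(\mathbf{c}t-|x|)$, together with the Gaussian, produces an arbitrary polynomial gain in $(\mathbf{c}t-|x|)^2/(1+t)$. On the later sub-interval I will change variables $y=x+\sqrt{1+t-s}\,z$ and decompose $z=z_\parallel+z_\perp$ along and transverse to $x/|x|$, reducing the inner integral to a one-dimensional convolution between a unit Gaussian in $z_\parallel$ and the radial Huygens profile $(1+((|x|+\sqrt{1+t-s}z_\parallel)-\mathbf{c}s)^2/(1+s))^{-2}$, which produces the expected rate $(1+t)^{-5/2}(1+(|x|-\mathbf{c}t)^2/(1+t))^{-1}$. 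The heuristic from Section \ref{nonlinear wave interaction} confirms sharpness: the volume of the strong-interaction set $\{(y,s):|x-y|\leq \sqrt{1+t-s},\ ||y|-\mathbf{c}s|\leq\sqrt{1+s}\}$ scales in precisely this way. Collecting the contributions from all five regions yields the claimed estimate.
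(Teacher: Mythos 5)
Your general framework (five regions $D_1,\ldots,D_5$, coarse bounds in $D_1,D_2$, a Gaussian-forces-$|y|\approx|x|$ argument in $D_3$) is aligned with the paper. The problem is in $D_4\cup D_5$, where your proposal contains a concrete error and a gap.

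First, the claimed inequality on the earlier sub-interval is false. You assert that for $s\leq\tfrac12(t+|x|/\mathbf{c})$ one has $\mathbf{c}s-|x|\leq-\tfrac14(\mathbf{c}t-|x|)$. For this to hold on the whole sub-interval you would need $\tfrac12(\mathbf{c}t+|x|)\leq\tfrac14(5|x|-\mathbf{c}t)$, i.e.\ $|x|\geq\mathbf{c}t$ — but in $D_4\cup D_5$ one has $|x|\leq\mathbf{c}t-\sqrt{1+t}<\mathbf{c}t$. In fact, since the interaction time for this pairing is $s\approx|x|/\mathbf{c}$ (the diffusion propagator of $L$ is pinned at $y\approx x$, so the Huygens annulus $|y|\approx\mathbf{c}s$ crosses it when $\mathbf{c}s\approx|x|$), the quantity $\mathbf{c}s-|x|$ vanishes inside your ``earlier'' sub-interval, so there is no polynomial gain to extract there. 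You have imported the split $s=\tfrac12(t+|x|/\mathbf{c})$ from Lemma \ref{moving-exponential}, but in that lemma the Huygens wave is the propagator and the exponential is the source, which pins the interaction at $\tau\approx t$; here the geometry is reversed. The paper's Case 4 accordingly splits $[0,t]$ at $\tfrac{|x|}{2\mathbf{c}}$ and $\tfrac{t}{2}$, bracketing the true interaction time $|x|/\mathbf{c}$ rather than using a threshold near $t$.

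Second, the $[0,t/2]$ bound is too crude. Bounding the diffusion kernel pointwise by $(1+t)^{-2}$ and integrating the $L^1_y$ mass of the Huygens source gives only $(1+t)^{-2}$, which is not dominated by $(1+t)^{-2}(1+|x|^2/(1+t))^{-3/2}$ once $|x|\gg\sqrt{1+t}$, as happens throughout $D_4\cup D_5$. To recover the missing factor $(1+|x|^2/(1+t))^{-3/2}$ you need the spatial decomposition that the paper uses in $L_1$ of Case 4: for $|y|\leq\tfrac34|x|$ one has $|x-y|\geq\tfrac{|x|}{4}$ so the Gaussian supplies the decay in $|x|$; for $|y|>\tfrac34|x|$ and $\tau\leq\tfrac{|x|}{2\mathbf{c}}$ one has $|y|-\mathbf{c}\tau\geq\tfrac{|x|}{4}$ so the Huygens source itself supplies $(1+|x|^2)^{-2}$. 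Without this dichotomy (or the spherical-coordinates reduction the paper uses on the middle interval $[\tfrac{|x|}{2\mathbf{c}},\tfrac{t}{2}]$), your estimate does not close. Your change-of-variables idea on the final sub-interval is reasonable in spirit, but it cannot rescue the other two pieces.
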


\bigskip

We omit the proof of Lemma \ref{Easy1} and Lemma \ref{Easy2}. Before we
proceed to the detailed proof of Lemmas \ref{poly-moving}-\ref%
{nonmoving-moving}, let us present some heuristic calculations to help
understand the mechanism of nonlinear wave interactions. We use two examples
for illustration: the convolution of two Huygens waves (Lemma \ref%
{moving-moving}) and convolution of diffusion wave with a Huygens wave
(Lemma \ref{nonmoving-moving}).

{\color{black} }

\begin{proof}[\protect \underline{Heuristic estimate for Lemma \protect \ref%
{moving-moving}}]
Set $\mathbf{c}=1$ for simplicity. Then
\begin{equation}  \label{eq:J}
J=\int_{0}^{t} \int_{{\mathbb{R}}^3} (1+t-s)^{-5/2} e^{-\frac{(\lvert
x-y\rvert-(t-s))^2}{D_0(t-s)}} (1+s)^{-4} \Bigl( 1+\frac{\left( \left \vert
y\right \vert -s\right) ^{2}}{1+s}\Bigr) ^{-2} dy ds.
\end{equation}
We can interpret $(1+t-s)^{-5/2} e^{-\frac{(\lvert x-y\rvert-(t-s))^2}{%
D_0(t-s)}} $ as a receiver located at $(x,t)$ that can only receive signals
along the wave cone concentrated on $\lvert x-y\rvert=(t-s)$ with thickness $%
\sqrt{t-s}$. Similarly, $(1+s)^{-4} \Bigl( 1+\frac{\left( \left
\vert
y\right \vert -s\right) ^{2}}{1+s}\Bigr) ^{-2} $ can be viewed as a sender
located at $(0,0)$ that sends signals along the wave cone concentrated on $%
\lvert y\rvert=s$ with thickness $\sqrt{s}$.

\begin{figure}[htbp]
\centering
\subfigure[]{
		\includegraphics[width=0.55\textwidth]{f-mov-mov.jpg}
		\label{fig:image1}
	} 
\subfigure[]{
		\includegraphics[width=0.4\textwidth]{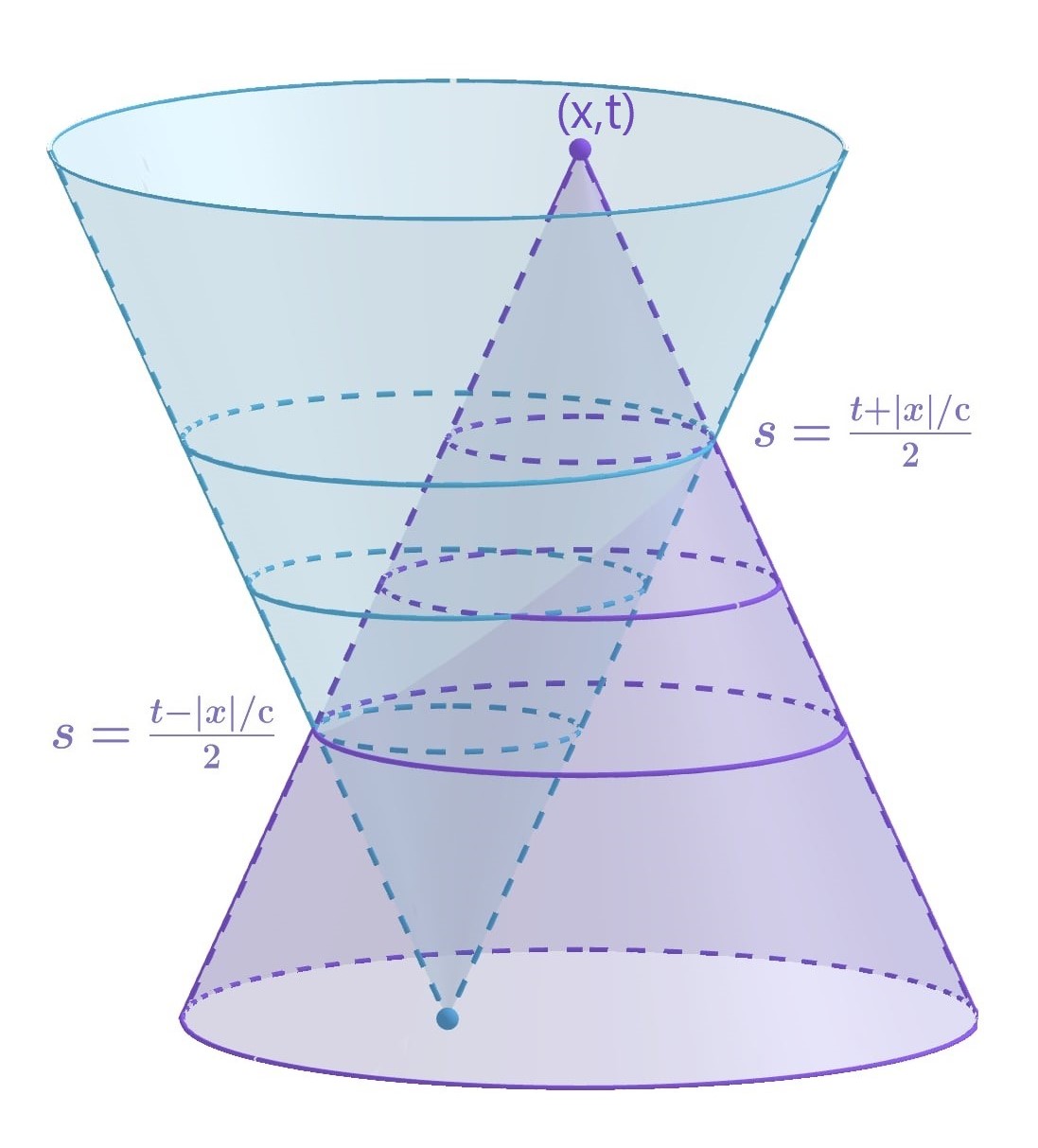}
		\label{fig:image2}
	} 
\caption{Interaction between two Huygens waves}
\end{figure}

During the interaction process, the interaction becomes strong in the
following space-time region (See Figure \ref{fig:image1})
\begin{equation}
E=\left \{ (y,s)\big|\, \lvert \lvert x-y\rvert -(t-s)\rvert \leq O(1)\sqrt{%
t-s}\mbox{ and }\lvert \lvert y\rvert -s\rvert \leq O(1)\sqrt{s}\right \} .
\label{eq:int-dom}
\end{equation}%
Inside this region, the space decay terms in the convolution are of order $%
O(1)$, and the decay is mainly from time factor. The key point is the sharp
estimate of the volume for the strong interaction region.

If $\lvert x \rvert>t$, as $s$ increases from $0$ to $t$, the receiving and
sending wave cones are almost disjoint, so the interaction is very weak.

Now we focus the case where $O(1)\sqrt{t}<\lvert x \rvert<t-O(1)\sqrt{t}$.
The interaction process starts at $s=(t-\lvert x\rvert )/2$ and ends at $%
s=(t+\lvert x\rvert )/2$ (See Figure \ref{fig:image2}). To satisfy the
condition in \eqref{eq:int-dom}, one has
\begin{equation*}
\left \{ \begin{aligned} &\lvert \lvert x-y \rvert -(t-s)\rvert \leq
O(1)\sqrt{t-s},\\ &\lvert \lvert y \rvert -s\rvert \leq O(1)\sqrt{s}.
\end{aligned} \right.
\end{equation*}
Let $r=\lvert y\rvert $, $\theta$ be the angle between $x$ and $y$, and set $%
O(1)=1$. The above constraints are equivalent to
\begin{equation}  \label{eq:mov-mov-heu}
\left \{ \begin{aligned} &\lvert \sqrt{\lvert x\rvert^2+r^2-2r|x|\cos
\theta} -(t-s)\rvert \leq \sqrt{t-s},\\ &\lvert r -s\rvert \leq \sqrt{s},
\end{aligned} \right.
\end{equation}
The first equation of \eqref{eq:mov-mov-heu} implies {\small
\begin{equation*}
\frac{\lvert x\rvert^2+r^2-(t-s)^2-(t-s)}{2r\lvert x\rvert } - \frac{%
(t-s)^{3/2}}{r\lvert x\rvert }\leq \cos \theta \leq \frac{\lvert
x\rvert^2+r^2-(t-s)^2-(t-s)}{2r\lvert x\rvert } + \frac{(t-s)^{3/2}}{r\lvert
x\rvert }
\end{equation*}
} This leads to
\begin{equation*}
\Delta \cos \theta \approx \frac{(t-s)^{3/2}}{r\lvert x\rvert }.
\end{equation*}
For $s$ large, $r\approx s$, $\Delta r\approx \sqrt{s}$. Then inside the
strong interaction region, \eqref{eq:J} is approximated by

\begin{align*}
J& \approx \int_{\frac{t-\lvert x\rvert }{2}}^{\frac{t+\lvert x\rvert }{2}%
}(1+t-s)^{-5/2}(1+s)^{-4}\underset{r^{2}dr}{\underbrace{s^{2}\sqrt{s}}}\,%
\underset{\sin \theta d\theta }{\underbrace{\frac{(t-s)^{3/2}}{s\lvert
x\rvert }}}ds \\
& \lesssim \int_{\frac{t-\lvert x\rvert }{2}}^{\frac{t+\lvert x\rvert }{2}%
}(1+t-s)^{-1}(1+s)^{-5/2}ds\frac{1}{\lvert x\rvert } \\
& \lesssim \Bigl(\int_{\frac{t-\lvert x\rvert }{2}}^{\frac{t}{2}%
}(1+t)^{-1}(1+s)^{-5/2}ds+\int_{\frac{t}{2}}^{\frac{t+\lvert x\rvert }{2}%
}(1+t-s)^{-1}(1+t)^{-5/2}ds\Bigr)\frac{1}{\lvert x\rvert } \\
& \lesssim \Bigl[(1+t)^{-1}(1+t-\lvert x\rvert )^{-3/2}+(1+t)^{-5/2}\ln
\frac{t}{t-\lvert x\rvert }\Bigr]\frac{1}{\lvert x\rvert }.
\end{align*}%
When $\sqrt{t}\leq \lvert x\rvert \leq t/2$, it is bounded by
\begin{equation*}
(1+t)^{-5/2}\lvert x\rvert ^{-1}\lesssim (1+t)^{-2}\Bigl(1+\frac{\lvert
x\rvert ^{2}}{1+t}\Bigr)^{-3/2}.
\end{equation*}%
When $t/2\leq \lvert x\rvert \leq t-\sqrt{t}$, it is bounded by
\begin{equation*}
t^{-2}(1+t-\lvert x\rvert )^{-3/2}+(1+t)^{-5/2}\frac{t^{2}}{(t-\lvert
x\rvert )^{2}}(1+t)^{-1}\lesssim t^{-5/2}\Bigl(1+\frac{(t-\lvert x\rvert
)^{2}}{1+t}\Bigr)^{-1}.
\end{equation*}%
This is the desired estimate in Lemma \ref{moving-moving}.
\end{proof}

\begin{proof}[\protect \underline{Heuristic estimate for Lemma \protect \ref%
{moving-nonmoving}}]
Set $\mathbf{c}=1$, and one has
\begin{equation*}
K=\int_{0}^{t}\int_{\mathbb{R}^3}\left( 1+t-s\right) ^{-5/2}e^{-\frac{\left(
\left \vert x-y\right \vert -(t-s) \right) ^{2}}{D_{0}\left( 1+t-s\right) }%
}\left(1+s\right) ^{-3}\left( 1+\frac{\left \vert y\right \vert ^{2}}{1+s }%
\right) ^{-3}dyds.
\end{equation*}
Similar as heuristics argument of Lemma \ref{moving-moving}, we view $%
(1+t-s)^{-5/2}e^{-\frac{\left( \left \vert x-y\right \vert -(t-s) \right)
^{2}}{D_{0}\left( 1+t-s\right) }}$ as a receiver and $\left(1+s\right)
^{-3}\left( 1+\frac{\left \vert y\right \vert ^{2}}{1+s }\right) ^{-3} $ as
a sender, to have the following figure representing the interaction:
\begin{figure}[h]
\centering
\includegraphics[width=0.6\textwidth]{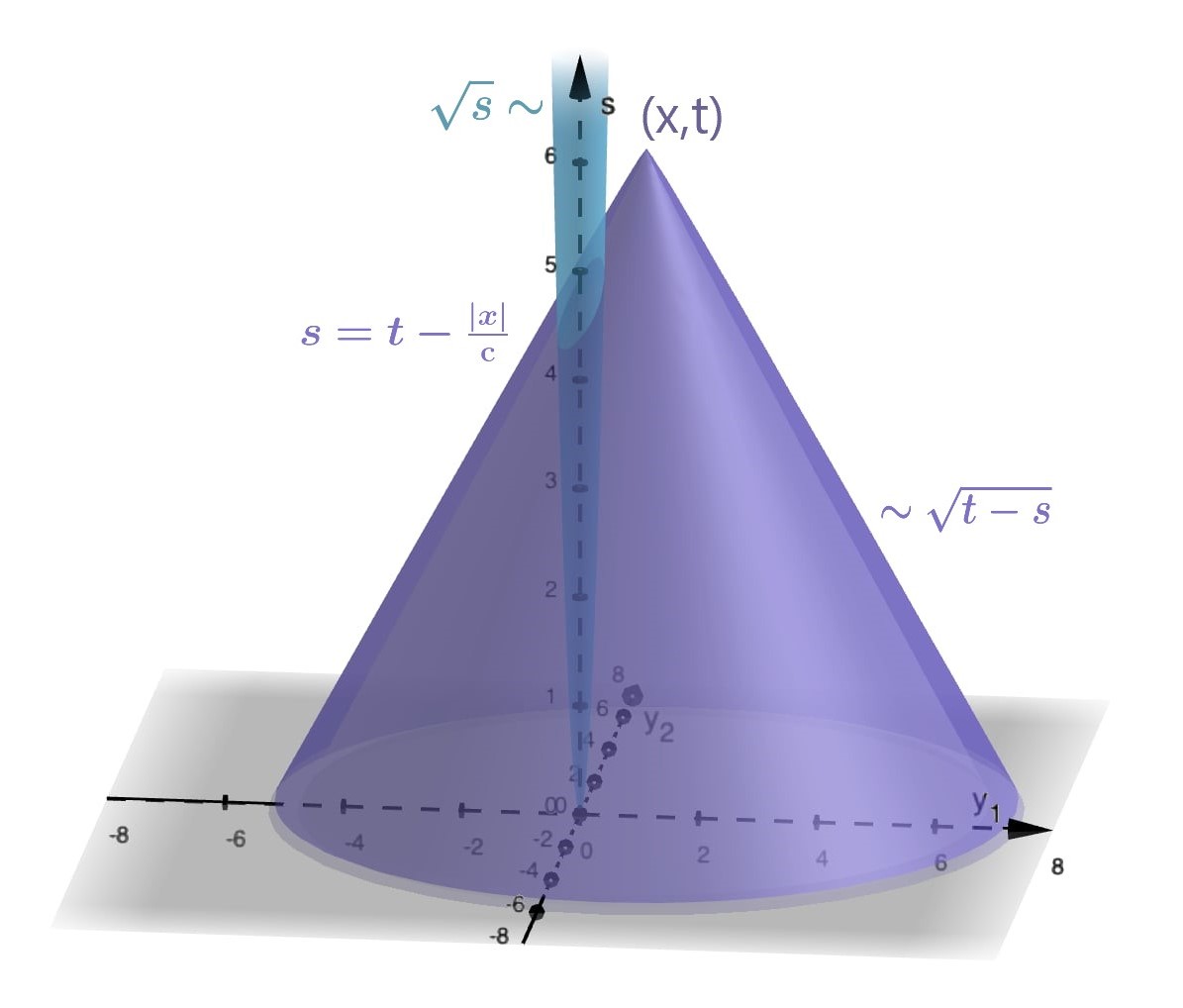}
\caption{Interaction between Huygens and diffusion wave}
\end{figure}

For $O(1)\sqrt{t}\leq \lvert x \rvert \leq t-O(1)\sqrt{t}$, the interaction
occurs at $s\approx t-\lvert x\rvert$ and its duration is of the order $%
\sqrt{s}$.
So the volume of the interaction region in space-time is approximated by
\begin{equation*}
\underset{\mbox{vol of space}}{\underbrace{s^{3/2}}} \cdot \underset{%
\mbox{vol of time}}{\underbrace{\sqrt{s}}}
\end{equation*}
The contribution of the integrand during the interaction is approximated by
\begin{equation*}
(1+t-s)^{-5/2}(1+s)^{-3}\Big|_{s\approx t- \lvert x\rvert }.
\end{equation*}
So we conclude
\begin{align*}
K & \approx (1+t-s)^{-5/2}(1+s)^{-3} s^2 \Big|_{s\approx t-\lvert x\rvert
}\approx (1+\lvert x \rvert)^{-5/2} (1+t- \lvert x\rvert) ^{-1} \\
&\lesssim
\begin{cases}
(1+t)^{-1} (1+\lvert x\rvert )^{-5/2}\lesssim (1+t)^{-2}\Bigl(1+\frac{\lvert
x \rvert^2 }{1+t}\Bigr)^{-3/2}, & \sqrt{t}\leq \lvert x\rvert \leq t/2, \\
(1+t)^{-5/2}(1+t-\lvert x\rvert)^{-1}\lesssim (1+t)^{-5/2} \Bigl(1+\frac{%
(t-\lvert x\rvert)^2}{1+t}\Bigr)^{-1}, & t/2 \leq \lvert x\rvert \leq t-%
\sqrt{t}.%
\end{cases}%
\end{align*}
This yields the desired estimate in Lemma \ref{moving-nonmoving}.
\end{proof}

Other convolutions can be estimated heuristically in a similar manner. It
turns out that the convolutions in Lemmas \ref{moving-moving} and \ref%
{moving-nonmoving} are the dominated ones among all the nonlinear wave
couplings.

We now begin the rigorous proofs, transforming the previous heuristic
calculations into refined (and complex) convolution estimates!

\begin{proof}[\textbf{Proof of Lemma \protect \ref{poly-moving}}]
(Riesz wave convolved with Huygens wave). \newline
\newline
\textbf{Case 1:} $\left( x,t\right) \in D_{1}$. Direct computation gives%
\begin{eqnarray*}
I &=&\left( \int_{0}^{\frac{t}{2}}+\int_{\frac{t}{2}}^{t}\right) \int_{%
\mathbb{R}^{3}}\left( \cdots \right) dyd\tau \\
&\lesssim &\left( 1+t\right) ^{-2}\int_{0}^{\frac{t}{2}}\int_{\mathbb{R}%
^{3}}\left( 1+\tau \right) ^{-4}\left( 1+\frac{\left( \left \vert y\right
\vert -\mathbf{c}\tau \right) ^{2}}{1+\tau }\right) ^{-2}dyd\tau \\
&&+\left( 1+t\right) ^{-4}\int_{\frac{t}{2}}^{t}\int_{\left \vert x-y\right
\vert \leq \mathbf{c}\left( t-\tau \right) }\left( 1+t-\tau \right)
^{-2}\left( 1+\frac{\left \vert x-y\right \vert ^{2}}{D_{0}\left( 1+t-\tau
\right) }\right) ^{-\frac{3}{2}}dyd\tau \\
&\lesssim &\left( 1+t\right) ^{-2}+\left( 1+t\right) ^{-4}\left(
1+t\right)^{3/4} \lesssim \left( 1+t\right) ^{-2}\left( 1+\frac{\left \vert
x\right \vert ^{2}}{1+t}\right) ^{-\frac{3}{2}}\hbox{.}
\end{eqnarray*}
\newline
\textbf{Case 2:} $\left( x,t\right) \in D_{2}$. We split the integral $I$
into two parts%
\begin{equation*}
I=\left( \int_{0}^{\frac{t}{4}}+\int_{\frac{t}{4}}^{t}\right) \int_{\mathbb{R%
}^{3}}\left( \cdots \right) dyd\tau =:I_{1}+I_{2}\hbox{.}
\end{equation*}

For $I_{2}$, one can see that
\begin{eqnarray*}
I_{2} &\lesssim &\left( 1+t\right) ^{-4}\int_{\frac{t}{4}}^{t}\int_{\mathbb{R%
}^{3}}\mathbf{1}_{\{ \left \vert x-y\right \vert \leq \mathbf{c}\left(
t-\tau \right) \}}\left( 1+t-\tau \right) ^{-2}\left( 1+\frac{\left \vert
x-y\right \vert ^{2}}{D_{0}\left( 1+t-\tau \right) }\right) ^{-\frac{3}{2}%
}dyd\tau \\
&\lesssim &\left( 1+t\right) ^{-4}\int_{\frac{t}{4}}^{t} \left( 1+t-\tau
\right)^{-1/4} d\tau \lesssim \left( 1+t\right) ^{-4}\left( 1+t\right)^{3/4}
\\
&\lesssim &\left( 1+t\right) ^{-5/2}\left( 1+\frac{\left( \left \vert
x\right \vert -\mathbf{c}t\right) ^{2}}{1+t}\right) ^{-1}\hbox{.}
\end{eqnarray*}

For $I_{1}$, we further decompose $\mathbb{R}^{3}$ into two parts%
\begin{equation*}
I_{1}=\left( \int_{0}^{\frac{t}{4}}\int_{\left \vert y\right \vert \leq
\frac{2}{3}\left \vert x\right \vert }+\int_{0}^{\frac{t}{4}}\int_{\left
\vert y\right \vert >\frac{2}{3}\left \vert x\right \vert }\right) \left(
\cdots \right) dyd\tau =I_{11}+I_{12}\hbox{.}
\end{equation*}%
If $\left \vert y\right \vert \leq \frac{2}{3}\left \vert x\right \vert $,
then we have
\begin{equation*}
\left \vert x-y\right \vert \geq \left \vert x\right \vert -\left \vert
y\right \vert \geq \frac{\left \vert x\right \vert }{3}\hbox{,}
\end{equation*}%
and thus
\begin{eqnarray*}
I_{11} &\lesssim &\left( 1+t\right) ^{-2}\left( 1+\frac{\left \vert x\right
\vert ^{2}}{1+t}\right) ^{-\frac{3}{2}}\int_{0}^{\frac{t}{4}}\int_{\mathbb{R}%
^{3}}\left( 1+\tau \right) ^{-4}\left( 1+\frac{\left( \left \vert y\right
\vert -\mathbf{c}\tau \right) ^{2}}{1+\tau }\right) ^{-2}dyd\tau \\
&\lesssim &\left( 1+t\right) ^{-2}\left( 1+\frac{\left \vert x\right \vert
^{2}}{1+t}\right) ^{-\frac{3}{2}}\int_{0}^{\frac{t}{4}}\left( 1+\tau \right)
^{-4}\left( 1+\tau \right) ^{\frac{5}{2}}d\tau \\
&\lesssim &\left( 1+t\right) ^{-2}\left( 1+\frac{\left \vert x\right \vert
^{2}}{1+t}\right) ^{-\frac{3}{2}}\hbox{.}
\end{eqnarray*}%
If $\left \vert y\right \vert >\frac{2}{3}\left \vert x\right \vert $ and $%
0\leq \tau \leq \frac{t}{4}$,
\begin{equation*}
\left \vert y\right \vert -\mathbf{c}\tau \geq \frac{2}{3}\left \vert
x\right \vert -\mathbf{c}\tau \geq \frac{\left \vert x\right \vert }{3}+%
\frac{1}{3}\left( \mathbf{c}t-\sqrt{1+t}\right) -\frac{\mathbf{c}t}{4}\geq
\frac{\left \vert x\right \vert }{3}+\frac{1}{24}\mathbf{c}t
\end{equation*}%
$\allowbreak \allowbreak \allowbreak $for $t\geq 40$. We mention here that $%
\left( x,t\right) \in D_{2}$ with $t\geq \frac{2+2\sqrt{1+\mathbf{c}^{2}}}{%
\mathbf{c}^{2}}(\doteqdot 3.15)$ implies that $\left \vert x\right \vert
\geq \sqrt{1+t}$. Hence for $t\geq 40$,
\begin{eqnarray*}
I_{12} &\lesssim &\left( 1+\left \vert x\right \vert ^{2}\right) ^{-\frac{3}{%
2}}\int_{0}^{\frac{t}{4}}\int_{\left \vert y\right \vert >\frac{2}{3}\left
\vert x\right \vert }\mathbf{1}_{\{ \left \vert x-y\right \vert \leq \mathbf{%
c}\left( t-\tau \right) \}}\left( 1+t-\tau \right) ^{-2}\left( 1+\frac{\left
\vert x-y\right \vert ^{2}}{D_{0}\left( 1+t-\tau \right) }\right) ^{-\frac{3%
}{2}} \\
&&\cdot \left( 1+\tau \right) ^{-\frac{5}{2}}\left( 1+\frac{\left( \left
\vert y\right \vert -\mathbf{c}\tau \right) ^{2}}{1+\tau }\right) ^{-\frac{1%
}{2}}dyd\tau \\
&\lesssim &\left( 1+\left \vert x\right \vert ^{2}\right) ^{-\frac{3}{2}%
}\int_{0}^{\frac{t}{4}}\int_{\left \{ \left \vert y\right \vert >\frac{2}{3}%
\left \vert x\right \vert \right \} \cap \{ \left \vert x-y\right \vert \leq
\mathbf{c}\left( t-\tau \right) \}}\left( 1+t-\tau \right) ^{-2}\left( 1+%
\frac{\left \vert x-y\right \vert ^{2}}{D_{0}\left( 1+t-\tau \right) }%
\right) ^{-\frac{3}{2}} \\
&&\cdot \left( 1+\tau \right) ^{-\frac{5}{2}}\left( 1+\frac{\left \vert
x-y\right \vert ^{2}}{1+t-\tau }\right) ^{-\frac{1}{2}}dyd\tau \\
&\lesssim &\left( 1+\left \vert x\right \vert ^{2}\right) ^{-\frac{3}{2}%
}\int_{0}^{\frac{t}{4}}\left( 1+\tau \right) ^{-\frac{5}{2}}\int_{\mathbb{R}%
^{3}}\left( 1+t-\tau \right) ^{-2}\left( 1+\frac{\left \vert x-y\right \vert
^{2}}{D_{0}\left( 1+t-\tau \right) }\right) ^{-2}dyd\tau \\
&\lesssim &(1+t)^{-1/2}\left( 1+\left \vert x\right \vert ^{2}\right) ^{-%
\frac{3}{2}}\lesssim \left( 1+t\right) ^{-2}\left( 1+\frac{\left \vert
x\right \vert ^{2}}{1+t}\right) ^{-\frac{3}{2}}\hbox{,}
\end{eqnarray*}%
and for $0\leq t\leq 40$,
\begin{eqnarray*}
I_{12} &\lesssim &\left( 1+t\right) ^{-\frac{3}{2}}\int_{0}^{\frac{t}{4}%
}\int_{\mathbb{R}^{3}}\left( 1+\tau \right) ^{-4}\left( 1+\frac{\left( \left
\vert y\right \vert -\mathbf{c}\tau \right) ^{2}}{1+\tau }\right)
^{-2}dyd\tau \\
&\lesssim &C\lesssim \left( 1+t\right) ^{-5/2}\left( 1+\frac{\left( \left
\vert x\right \vert -\mathbf{c}t\right) ^{2}}{1+t}\right) ^{-1}\hbox{.}
\end{eqnarray*}%
Thereupon,
\begin{equation*}
I_{1}\lesssim \left( 1+t\right) ^{-2}\left( 1+\frac{\left \vert x\right
\vert ^{2}}{1+t}\right) ^{-\frac{3}{2}}+\left( 1+t\right) ^{-5/2}\left( 1+%
\frac{\left( \left \vert x\right \vert -\mathbf{c}t\right) ^{2}}{1+t}\right)
^{-1}\hbox{,}
\end{equation*}%
and so
\begin{equation*}
I\lesssim \left( 1+t\right) ^{-2}\left( 1+\frac{\left \vert x\right \vert
^{2}}{1+t}\right) ^{-\frac{3}{2}}+\left( 1+t\right) ^{-5/2}\left( 1+\frac{%
\left( \left \vert x\right \vert -\mathbf{c}t\right) ^{2}}{1+t}\right) ^{-1}%
\hbox{.}
\end{equation*}
\newline
\textbf{Case 3:} $\left( x,t\right) \in D_{3}$. We split the integral $I$
into two parts%
\begin{equation*}
I=\left( \int_{0}^{\frac{t}{2}}+\int_{\frac{t}{2}}^{t}\right) \int_{\mathbb{R%
}^{3}}\left( \cdots \right) dyd\tau =:I_{1}+I_{2}\hbox{.}
\end{equation*}

For $I_{1}$, we decompose $\mathbb{R}^{3}$ into two parts as follows:
\begin{equation*}
I_{1}=\int_{0}^{\frac{t}{2}}\left( \int_{\left \vert y\right \vert \leq
\frac{2}{3}\left \vert x\right \vert }+\int_{\left \vert y\right \vert >%
\frac{2}{3}\left \vert x\right \vert }\right) \left( \ldots \right) dyd\tau
=:I_{11}+I_{12}\hbox{.}
\end{equation*}%
If $\left \vert y\right \vert \leq \frac{2}{3}\left \vert x\right \vert $,
then
\begin{equation*}
\left \vert x-y\right \vert \geq \frac{\left \vert x\right \vert }{3}\hbox{,}
\end{equation*}%
and thus%
\begin{eqnarray*}
I_{11} &\lesssim &\left( 1+t\right) ^{-2}\left( 1+\frac{\left \vert x\right
\vert ^{2}}{1+t}\right) ^{-\frac{3}{2}}\int_{0}^{\frac{t}{2}}\int_{\mathbb{R}%
^{3}}\left( 1+\tau \right) ^{-4}\left( 1+\frac{\left( \left \vert y\right
\vert -\mathbf{c}\tau \right) ^{2}}{1+\tau }\right) ^{-2}dyd\tau \\
&\lesssim &\left( 1+t\right) ^{-2}\left( 1+\frac{\left \vert x\right \vert
^{2}}{1+t}\right) ^{-\frac{3}{2}}\hbox{.}
\end{eqnarray*}%
If $\left \vert y\right \vert >\frac{2}{3}\left \vert x\right \vert $ and $%
0\leq \tau \leq \frac{t}{2}$, we have%
\begin{equation*}
\left \vert y\right \vert -\mathbf{c}\tau \geq \frac{2}{3}\left \vert
x\right \vert -\frac{\mathbf{c}t}{2}=\frac{1}{6}\left \vert x\right \vert +%
\frac{1}{2}\left( \left \vert x\right \vert -\mathbf{c}t\right) \geq \frac{1%
}{6}\mathbf{c}t\hbox{,}
\end{equation*}%
so that%
\begin{eqnarray*}
I_{12} &\lesssim &\left( 1+\left \vert x\right \vert ^{2}\right) ^{-\frac{3}{%
2}}\int_{0}^{\frac{t}{2}}\int_{\left \vert y\right \vert >\frac{2}{3}\left
\vert x\right \vert }\mathbf{1}_{\{ \left \vert x-y\right \vert \leq \mathbf{%
c}\left( t-\tau \right) \}}\left( 1+t-\tau \right) ^{-2}\left( 1+\frac{\left
\vert x-y\right \vert ^{2}}{D_{0}\left( 1+t-\tau \right) }\right) ^{-\frac{3%
}{2}} \\
&&\cdot \left( 1+\tau \right) ^{-\frac{5}{2}}\left( 1+\frac{\left( \left
\vert y\right \vert -\mathbf{c}\tau \right) ^{2}}{1+\tau }\right) ^{-\frac{1%
}{2}}dyd\tau \\
&\lesssim &\left( 1+\left \vert x\right \vert ^{2}\right) ^{-\frac{3}{2}%
}\int_{0}^{\frac{t}{2}}\int_{\left \{ \left \vert y\right \vert \geq \frac{2%
}{3}\left \vert x\right \vert \right \} \cap \{ \left \vert x-y\right \vert
\leq \mathbf{c}\left( t-\tau \right) \}}\left( 1+t-\tau \right) ^{-2}\left(
1+\tau \right) ^{-\frac{5}{2}}\cdot \left( 1+\frac{\left \vert x-y\right
\vert ^{2}}{\left( 1+t-\tau \right) }\right) ^{-2}dyd\tau \\
&\lesssim &\left( 1+\left \vert x\right \vert ^{2}\right) ^{-\frac{3}{2}%
}\int_{0}^{\frac{t}{2}}\left( 1+\tau \right) ^{-\frac{5}{2}}\cdot \int_{%
\mathbb{R}^{3}}\left( 1+t-\tau \right) ^{-2}\left( 1+\frac{\left \vert
x-y\right \vert ^{2}}{\left( 1+t-\tau \right) }\right) ^{-2}dyd\tau \\
&\lesssim &(1+t)^{-1/2}\left( 1+\left \vert x\right \vert ^{2}\right) ^{-%
\frac{3}{2}}\lesssim (1+t)^{-1/2}\left( \frac{1+t}{2}+\frac{\left \vert
x\right \vert ^{2}}{2}\right) ^{-\frac{3}{2}}\lesssim \left( 1+t\right)
^{-2}\left( 1+\frac{\left \vert x\right \vert ^{2}}{1+t}\right) ^{-\frac{3}{2%
}}
\end{eqnarray*}%
since $\left \vert x\right \vert \geq \sqrt{1+t}$. Therefore,%
\begin{equation*}
I_{1}\lesssim \left( 1+t\right) ^{-2}\left( 1+\frac{\left \vert x\right
\vert ^{2}}{1+t}\right) ^{-\frac{3}{2}}\hbox{.}
\end{equation*}

For $I_{2}$, we decompose $\mathbb{R}^{3}$ into two parts
\begin{equation*}
I_{2}=\int_{\frac{t}{2}}^{t}\left( \int_{\left \vert y\right \vert -\mathbf{c%
}\tau \leq \frac{\left \vert x\right \vert -ct}{2}}+\int_{\left \vert
y\right \vert -\mathbf{c}\tau >\frac{\left \vert x\right \vert -ct}{2}%
}\right) \left( \cdots \right) dyd\tau =I_{21}+I_{22}\hbox{. }
\end{equation*}%
If $\left \vert y\right \vert -\mathbf{c}\tau \leq \frac{\left \vert
x\right
\vert -\mathbf{c}t}{2}$, we have
\begin{equation*}
\left \vert x-y\right \vert \geq \left \vert x\right \vert -\left( \frac{%
\left \vert x\right \vert -\mathbf{c}t}{2}\right) -\mathbf{c}\tau \geq \frac{%
\left \vert x\right \vert -\mathbf{c}t}{2}\hbox{,}
\end{equation*}%
and thus%
\begin{eqnarray*}
I_{21} &\lesssim &\left( 1+\left( \left \vert x\right \vert -\mathbf{c}%
t\right) ^{2}\right) ^{-\frac{3}{2}}\int_{\frac{t}{2}}^{t}\int_{\mathbb{R}%
^{3}}(1+t-\tau)^{-1/2}\left( 1+\tau \right) ^{-4}\left( 1+\frac{\left( \left
\vert y\right \vert -\mathbf{c}\tau \right) ^{2}}{1+\tau }\right)
^{-2}dyd\tau \\
&\lesssim &\left( 1+t\right) ^{-\frac{3}{2}}\left( 1+\frac{\left( \left
\vert x\right \vert -\mathbf{c}t\right) ^{2}}{1+t}\right) ^{-\frac{3}{2}%
}\int_{\frac{t}{2}}^{t}(1+t-\tau)^{-1/2}\left( 1+\tau \right) ^{-4}\left(
1+\tau \right) ^{\frac{5}{2}}d\tau \\
&\lesssim &\left( 1+t\right) ^{-5/2}\left( 1+\frac{\left( \left \vert
x\right \vert -\mathbf{c}t\right) ^{2}}{1+t}\right) ^{-\frac{3}{2}}\hbox{.}
\end{eqnarray*}%
If $\left \vert y\right \vert -\mathbf{c}\tau >\frac{\left \vert
x\right
\vert -\mathbf{c}t}{2}$, then
\begin{eqnarray*}
I_{22} &\lesssim &\left( 1+t\right) ^{-4}\left( 1+\frac{\left( \left \vert
x\right \vert -\mathbf{c}t\right) ^{2}}{1+t}\right) ^{-2}\int_{\frac{t}{2}%
}^{t}\int_{\{ \left \vert x-y\right \vert \leq \mathbf{c}\left( t-\tau
\right) \}}\left( 1+t-\tau \right) ^{-2}\left( 1+\frac{\left \vert x-y\right
\vert ^{2}}{D_{0}\left( 1+t-\tau \right) }\right) ^{-\frac{3}{2}}dyd\tau \\
&\lesssim &\left( 1+t\right) ^{-4}\left( 1+\frac{\left( \left \vert x\right
\vert -\mathbf{c}t\right) ^{2}}{1+t}\right) ^{-2}\int_{\frac{t}{2}%
}^{t}(1+t-\tau)^{-1/2}\ln \left( 1+t-\tau \right) d\tau \\
& \lesssim& \left( 1+t\right) ^{-3}\left( 1+\frac{\left( \left \vert x\right
\vert -\mathbf{c}t\right) ^{2}}{1+t}\right) ^{-2}\hbox{.}
\end{eqnarray*}%
Therefore,
\begin{equation*}
I_{2}\lesssim \left( 1+t\right) ^{-5/2}\left( 1+\frac{\left( \left \vert
x\right \vert -\mathbf{c}t\right) ^{2}}{1+t}\right) ^{-\frac{3}{2}}\hbox{.}
\end{equation*}%
Combining all the estimates, we get desired estimate%
\begin{equation*}
I\lesssim \left( 1+t\right) ^{-2}\left( 1+\frac{\left \vert x\right \vert
^{2}}{1+t}\right) ^{-\frac{3}{2}}+\left( 1+t\right) ^{-5/2}\left( 1+\frac{%
\left( \left \vert x\right \vert -\mathbf{c}t\right) ^{2}}{1+t}\right) ^{-%
\frac{3}{2}}\hbox{.}
\end{equation*}
\newline
\textbf{Case 4:} $\left( x,t\right) \in D_{4}$. In this region $\sqrt{1+t}%
\leq \left \vert x\right \vert \leq \frac{\mathbf{c}t}{2}$. We split the
integral $I $ into three parts:%
\begin{eqnarray*}
I &=&\int_{0}^{t}\int_{\mathbb{R}^{3}}\left( 1+t-\tau \right) ^{-4}\left( 1+%
\frac{\left( \left \vert x-y\right \vert -\mathbf{c}\left( t-\tau \right)
\right) ^{2}}{1+t-\tau }\right) ^{-2}\left( 1+\tau \right) ^{-2}\left( 1+%
\frac{\left \vert y\right \vert ^{2}}{D_{0}\left( 1+\tau \right) }\right) ^{-%
\frac{3}{2}}\mathbf{1}_{\{ \left \vert y\right \vert \leq \mathbf{c}\tau
\}}dyd\tau \\
&=&\left( \int_{0}^{\frac{t}{2}-\frac{\left \vert x\right \vert }{2\mathbf{c}%
}}+\int_{\frac{t}{2}-\frac{\left \vert x\right \vert }{2\mathbf{c}}}^{t-%
\frac{\left \vert x\right \vert }{2\mathbf{c}}}+\int_{t-\frac{\left \vert
x\right \vert }{2\mathbf{c}}}^{t}\right) \int_{\mathbb{R}^{3}}\left( \cdots
\right) dyd\tau =:I_{1}+I_{2}+I_{3}\hbox{.}
\end{eqnarray*}

For $I_{1}$, we decompose $\mathbb{R}^{3}$ into two parts%
\begin{equation*}
I_{1}=\int_{0}^{\frac{t}{2}-\frac{\left \vert x\right \vert }{2\mathbf{c}}%
}\left( \int_{\left \vert y\right \vert \leq \frac{\mathbf{c}\left( t-\tau
\right) -\left \vert x\right \vert }{2}}+\int_{\left \vert y\right \vert >%
\frac{\mathbf{c}\left( t-\tau \right) -\left \vert x\right \vert }{2}%
}\right) =:I_{11}+I_{12}\hbox{.}
\end{equation*}%
If $0\leq \tau \leq \frac{t}{2}-\frac{\left \vert x\right \vert }{2\mathbf{c}%
}$ and $\left \vert y\right \vert \leq \frac{\mathbf{c}\left( t-\tau \right)
-\left \vert x\right \vert }{2}$, then%
\begin{equation*}
\mathbf{c}\left( t-\tau \right) -\left \vert x-y\right \vert \geq \mathbf{c}%
\left( t-\tau \right) -\left \vert x\right \vert -\left \vert y\right \vert
\geq \frac{\mathbf{c}t-\left \vert x\right \vert }{4}\hbox{,}
\end{equation*}%
so that
\begin{eqnarray*}
I_{11} &\lesssim &\left( 1+t+\left \vert x\right \vert \right) ^{-4}\left( 1+%
\frac{\left( \mathbf{c}t-\left \vert x\right \vert \right) ^{2}}{1+t}\right)
^{-2}\int_{0}^{\frac{t}{2}}\int_{\left \vert y\right \vert \leq \mathbf{c}%
\tau }\left( 1+\tau \right) ^{-2}\left( 1+\frac{\left \vert y\right \vert
^{2}}{D_{0}\left( 1+\tau \right) }\right) ^{-\frac{3}{2}}dyd\tau \\
&\lesssim &\left( 1+t\right) ^{-4}\left( 1+\frac{\left( \mathbf{c}t-\left
\vert x\right \vert \right) ^{2}}{1+t}\right) ^{-2}\left( 1+t\right)^{3/4}
\lesssim \left( 1+t\right) ^{-13/4}\left( 1+\frac{\left( \mathbf{c}t-\left
\vert x\right \vert \right) ^{2}}{1+t}\right) ^{-2}\hbox{.}
\end{eqnarray*}%
\ If $0\leq \tau \leq \frac{t}{2}-\frac{\left \vert x\right \vert }{2\mathbf{%
c}}$ and $\left \vert y\right \vert >\frac{\mathbf{c}\left( t-\tau \right)
-\left \vert x\right \vert }{2}$, then%
\begin{equation*}
\left \vert y\right \vert >\frac{\mathbf{c}\left( t-\tau \right) -\left
\vert x\right \vert }{2}\geq \frac{\mathbf{c}t-\left \vert x\right \vert }{4}%
\hbox{,}
\end{equation*}%
and thus%
\begin{eqnarray*}
I_{12} &\lesssim &\left( 1+\frac{t}{2}+\frac{\left \vert x\right \vert }{2%
\mathbf{c}}\right) ^{-4}\left( 1+\left( \mathbf{c}t-\left \vert x\right
\vert \right) ^{2}\right) ^{-\frac{3}{2}}\int_{0}^{\frac{t}{2}-\frac{\left
\vert x\right \vert }{\mathbf{c}}}\int_{\mathbb{R}^{3}}(1+\tau)^{-1/2}\left(
1+\frac{\left( \left \vert x-y\right \vert -\mathbf{c}\left( t-\tau \right)
\right) ^{2}}{1+t-\tau }\right) ^{-2}dyd\tau \\
&\lesssim &\left( 1+t\right) ^{-4}\left( 1+\left( \mathbf{c}t-\left \vert
x\right \vert \right) ^{2}\right) ^{-\frac{3}{2}}\int_{0}^{\frac{t}{2}%
}(1+\tau)^{-1/2}\left( 1+t-\tau \right) ^{\frac{5}{2}}d\tau \\
&\lesssim &\left( 1+t\right) ^{-4}\left( \frac{\left( \mathbf{c}t-\left
\vert x\right \vert \right) ^{2}}{2}+\frac{1+t}{2}\right) ^{-\frac{3}{2}%
}\left( 1+t\right) ^{\frac{7}{2}}\lesssim \left( 1+t\right) ^{-5/2}\left( 1+%
\frac{\left( \mathbf{c}t-\left \vert x\right \vert \right) ^{2}}{1+t}\right)
^{-1}
\end{eqnarray*}%
since $\mathbf{c}t-\left \vert x\right \vert \geq \frac{\mathbf{c}t}{2}$.
Therefore,%
\begin{equation*}
I_{1}\lesssim \left( 1+t\right) ^{-5/2}\left( 1+\frac{\left( \mathbf{c}%
t-\left \vert x\right \vert \right) ^{2}}{1+t}\right) ^{-1}\hbox{.}
\end{equation*}

For $I_{2}$, we use the spherical coordinates to obtain
\begin{eqnarray*}
I_{2} &\lesssim &\int_{\frac{t}{2}-\frac{\left \vert x\right \vert }{2%
\mathbf{c}}}^{t-\frac{\left \vert x\right \vert }{2\mathbf{c}}%
}\int_{0}^{\infty }\int_{0}^{\pi }\left( 1+t-\tau \right) ^{-4}\left( 1+%
\frac{\left( \sqrt{\left \vert x\right \vert ^{2}+r^{2}-2r\left \vert
x\right \vert \cos \theta }-\mathbf{c}\left( t-\tau \right) \right) ^{2}}{%
1+t-\tau }\right) ^{-2} \\
&&\cdot \left( 1+\tau \right) ^{-2}\left( 1+\frac{\left \vert r\right \vert
^{2}}{\left( 1+\tau \right) }\right) ^{-\frac{3}{2}}\mathbf{1}_{\{r\leq
\mathbf{c}\tau \}}r^{2}\sin \theta d\theta drd\tau \\
&\lesssim &\int_{\frac{t}{2}-\frac{\left \vert x\right \vert }{2\mathbf{c}}%
}^{t-\frac{\left \vert x\right \vert }{2\mathbf{c}}}\int_{0}^{\infty
}\int_{\left \vert \left \vert x\right \vert -r\right \vert }^{\left \vert
x\right \vert +r}\left( 1+t-\tau \right) ^{-4}\left( 1+\frac{\left( z-%
\mathbf{c}\left( t-\tau \right) \right) ^{2}}{1+t-\tau }\right) ^{-2}z\left(
1+\tau \right) ^{-2} \\
&&\cdot \left( 1+\frac{r^{2}}{\left( 1+\tau \right) }\right) ^{-\frac{3}{2}}%
\frac{r}{\left \vert x\right \vert }\mathbf{1}_{\{r\leq \mathbf{c}\tau
\}}dzdrd\tau \\
&\lesssim &\left \vert x\right \vert ^{-1}\int_{\frac{t}{2}-\frac{\left
\vert x\right \vert }{2\mathbf{c}}}^{t-\frac{\left \vert x\right \vert }{2%
\mathbf{c}}}\int_{0}^{\infty }\left( 1+t-\tau \right) ^{-4+\frac{3}{2}%
}\left( 1+\tau \right) ^{-2}\cdot r\left( 1+\frac{r^{2}}{\left( 1+\tau
\right) }\right) ^{-\frac{3}{2}}\mathbf{1}_{\{r\leq \mathbf{c}\tau \}}drd\tau
\\
&\lesssim &\left \vert x\right \vert ^{-1}\int_{\frac{t}{2}-\frac{\left
\vert x\right \vert }{2\mathbf{c}}}^{t-\frac{\left \vert x\right \vert }{2%
\mathbf{c}}}\left( 1+t-\tau \right) ^{-\frac{5}{2}}\left( 1+\tau \right)
^{-1 }d\tau \\
&\lesssim &\left \vert x\right \vert ^{-1}\left( 1+\frac{\left \vert x\right
\vert }{2\mathbf{c}}\right) ^{-\frac{3}{2}}\left( 1+\frac{t}{2}-\frac{\left
\vert x\right \vert }{2\mathbf{c}}\right) ^{-1} \\
&\lesssim &(1+t)^{-1/2}\left( 1+\left \vert x\right \vert \right)
^{-3}\lesssim \left( 1+t\right) ^{-2}\left( 1+\frac{\left \vert x\right
\vert ^{2}}{1+t}\right) ^{-\frac{3}{2}}\hbox{.}
\end{eqnarray*}%
by setting $z=\sqrt{\left \vert x\right \vert ^{2}+r^{2}-2r\left \vert
x\right \vert \cos \theta }$ and $\sin \theta d\theta =\frac{z}{r\left \vert
x\right \vert }dz$. $%
\vspace {3mm}%
$

For $I_{3}$, we decompose $\mathbb{R}^{3}$ into two parts%
\begin{equation*}
I_{3}=\int_{t-\frac{\left \vert x\right \vert }{2\mathbf{c}}}^{t}\left(
\int_{\left \vert y\right \vert \leq \frac{\left \vert x\right \vert -%
\mathbf{c}\left( t-\tau \right) }{2}}+\int_{\left \vert y\right \vert >\frac{%
\left \vert x\right \vert -\mathbf{c}\left( t-\tau \right) }{2}}\right)
=:I_{31}+I_{32}\hbox{.}
\end{equation*}%
If $t-\frac{\left \vert x\right \vert }{2\mathbf{c}}\leq \tau \leq t$, $%
\left \vert y\right \vert \leq \frac{\left \vert x\right \vert -\mathbf{c}%
\left( t-\tau \right) }{2}$, then
\begin{equation*}
\left \vert x-y\right \vert -\mathbf{c}\left( t-\tau \right) \geq \left
\vert x\right \vert -\left \vert y\right \vert -\mathbf{c}\left( t-\tau
\right) \geq \frac{\left \vert x\right \vert -\mathbf{c}\left( t-\tau
\right) }{2}\geq \frac{\left \vert x\right \vert }{4}\hbox{.}
\end{equation*}%
If $t-\frac{\left \vert x\right \vert }{2\mathbf{c}}\leq \tau \leq t$, $%
\left \vert y\right \vert >\frac{\left \vert x\right \vert -\mathbf{c}\left(
t-\tau \right) }{2}$, then%
\begin{equation*}
\left \vert y\right \vert >\frac{\left \vert x\right \vert -\mathbf{c}\left(
t-\tau \right) }{2}\geq \frac{\left \vert x\right \vert }{4}\hbox{.}
\end{equation*}%
Hence,%
\begin{eqnarray*}
I_{31} &\lesssim &\left( 1+\left \vert x\right \vert ^{2}\right) ^{-\frac{3}{%
2}}\int_{t-\frac{\left \vert x\right \vert }{2\mathbf{c}}}^{t}\int_{\left
\vert y\right \vert \leq \frac{\left \vert x\right \vert -\mathbf{c}\left(
t-\tau \right) }{2}}\left( 1+t-\tau \right) ^{-\frac{5}{2}}\left( 1+\tau
\right) ^{-2}\left( 1+\frac{\left \vert y\right \vert ^{2}}{D_{0}\left(
1+\tau \right) }\right) ^{-2}dyd\tau \\
&\lesssim &(1+t)^{-1/2}\left( 1+\left \vert x\right \vert ^{2}\right) ^{-%
\frac{3}{2}}\lesssim \left( 1+t\right) ^{-2}\left( 1+\frac{\left \vert
x\right \vert ^{2}}{\left( 1+t\right) }\right) ^{-\frac{3}{2}}\hbox{,}
\end{eqnarray*}%
since $\left \vert x\right \vert \geq \sqrt{1+t}$, and%
\begin{eqnarray*}
I_{32} &\lesssim &\left( 1+t\right) ^{-2}\left( 1+\frac{\left \vert x\right
\vert ^{2}}{\left( 1+t\right) }\right) ^{-\frac{3}{2}}\int_{t-\frac{\left
\vert x\right \vert }{2\mathbf{c}}}^{t}\int_{\mathbb{R}^{3}}\left( 1+t-\tau
\right) ^{-4}\left( 1+\frac{\left( \left \vert x-y\right \vert -\mathbf{c}%
\left( t-\tau \right) \right) ^{2}}{1+t-\tau }\right) ^{-2}dyd\tau \\
&\lesssim &\left( 1+t\right) ^{-2}\left( 1+\frac{\left \vert x\right \vert
^{2}}{\left( 1+t\right) }\right) ^{-\frac{3}{2}}\hbox{.}
\end{eqnarray*}%
Therefore,
\begin{equation*}
I_{3}\lesssim \left( 1+t\right) ^{-2}\left( 1+\frac{\left \vert x\right
\vert ^{2}}{\left( 1+t\right) }\right) ^{-\frac{3}{2}}\hbox{.}
\end{equation*}%
Combining all the estimates yields
\begin{equation*}
I\lesssim \left( 1+t\right) ^{-2}\left( 1+\frac{\left \vert x\right \vert
^{2}}{\left( 1+t\right) }\right) ^{-\frac{3}{2}}+\left( 1+t\right)
^{-5/2}\left( 1+\frac{\left( \mathbf{c}t-\left \vert x\right \vert \right)
^{2}}{1+t}\right) ^{-1}\hbox{.}
\end{equation*}
\newline
\textbf{Case 5:} $\left( x,t\right) \in D_{5}$. In this region $\mathbf{c}%
t/2\leq \left \vert x\right \vert \leq \mathbf{c}t-\sqrt{1+t}$. Now we split
the integral $I$ into three parts%
\begin{eqnarray*}
I &=&\int_{0}^{t}\int_{\mathbb{R}^{3}}\left( 1+t-\tau \right) ^{-4}\left( 1+%
\frac{\left( \left \vert x-y\right \vert -\mathbf{c}\left( t-\tau \right)
\right) ^{2}}{1+t-\tau }\right) ^{-2}\left( 1+\tau \right) ^{-2 }\left( 1+%
\frac{\left \vert y\right \vert ^{2}}{D_{0}\left( 1+\tau \right) }\right) ^{-%
\frac{3}{2}}\mathbf{1}_{\{ \left \vert y\right \vert \leq \mathbf{c}\tau
\}}dyd\tau \\
&=&\left( \int_{0}^{\frac{t}{2}-\frac{\left \vert x\right \vert }{2\mathbf{c}%
}}+\int_{\frac{t}{2}-\frac{\left \vert x\right \vert }{2\mathbf{c}}}^{\frac{1%
}{2}\left( t+\frac{3}{2}\left( t-\frac{\left \vert x\right \vert }{\mathbf{c}%
}\right) \right) }+\int_{\frac{1}{2}\left( t+\frac{3}{2}\left( t-\frac{\left
\vert x\right \vert }{\mathbf{c}}\right) \right) }^{t}\right) \int_{\mathbb{R%
}^{3}}\left( \cdots \right) dyd\tau =:I_{1}+I_{2}+I_{3}\hbox{.}
\end{eqnarray*}

For $I_{1}$, the estimate is the same as the $I_{1}$ of Case 4, so
\begin{equation*}
I_{1}\lesssim \left( 1+t\right) ^{-5/2}\left( 1+\frac{\left( \mathbf{c}%
t-\left \vert x\right \vert \right) ^{2}}{1+t}\right) ^{-1}\hbox{.}
\end{equation*}

For $I_{2}$, we use the spherical coordinates to obtain%
\begin{eqnarray*}
I_{2} &=&\int_{\frac{t}{2}-\frac{\left \vert x\right \vert }{2\mathbf{c}}}^{%
\frac{1}{2}\left( t+\frac{3}{2}\left( t-\frac{\left \vert x\right \vert }{%
\mathbf{c}}\right) \right) }\int_{0}^{\infty }\int_{0}^{\pi }\left( 1+t-\tau
\right) ^{-4}\left( 1+\frac{\left( \sqrt{\left \vert x\right \vert
^{2}+r^{2}-2r\left \vert x\right \vert \cos \theta }-\mathbf{c}\left( t-\tau
\right) \right) ^{2}}{1+t-\tau }\right) ^{-2} \\
&&\cdot \left( 1+\tau \right) ^{-2}\left( 1+\frac{r^{2}}{D_{0}\left( 1+\tau
\right) }\right) ^{-\frac{3}{2}}\mathbf{1}_{\{r\leq \mathbf{c}\tau
\}}r^{2}\sin \theta d\theta drd\tau \\
&=&\frac{1}{\left \vert x\right \vert }\int_{\frac{t}{2}-\frac{\left \vert
x\right \vert }{2\mathbf{c}}}^{\frac{1}{2}\left( t+\frac{3}{2}\left( t-\frac{%
\left \vert x\right \vert }{\mathbf{c}}\right) \right) }\int_{0}^{\infty
}\int_{\left \vert \left \vert x\right \vert -r\right \vert }^{\left \vert
x\right \vert +r}\left( 1+t-\tau \right) ^{-4}\left( 1+\frac{\left( z-%
\mathbf{c}\left( t-\tau \right) \right) ^{2}}{1+t-\tau }\right) ^{-2}z\left(
1+\tau \right) ^{-2} \\
&&\cdot r\left( 1+\frac{r^{2}}{D_{0}\left( 1+\tau \right) }\right) ^{-\frac{3%
}{2}}\mathbf{1}_{\{r\leq \mathbf{c}\tau \}}dzdrd\tau \\
&\lesssim &\left \vert x\right \vert ^{-1}\int_{\frac{t}{2}-\frac{\left
\vert x\right \vert }{2\mathbf{c}}}^{\frac{1}{2}\left( t+\frac{3}{2}\left( t-%
\frac{\left \vert x\right \vert }{\mathbf{c}}\right) \right) }\left(
1+t-\tau \right) ^{-\frac{5}{2}}\left( 1+\tau \right) ^{-1}d\tau \\
&\lesssim &\left \vert x\right \vert ^{-1}\left( 1+\frac{3\left \vert
x\right \vert }{4\mathbf{c}}-\frac{t}{4}\right) ^{-\frac{3}{2}}\left( 1+%
\frac{t}{2}-\frac{\left \vert x\right \vert }{2\mathbf{c}}\right) ^{-1} \\
&\lesssim &\left \vert x\right \vert ^{-1}(1+t)^{-1/2}\left( 1+\frac{t}{4}-%
\frac{\left \vert x\right \vert }{4\mathbf{c}}+\frac{\left \vert x\right
\vert }{\mathbf{c}}-\frac{t}{2}\right) ^{-1}\left( 1+\frac{t}{2}-\frac{\left
\vert x\right \vert }{2\mathbf{c}}\right) ^{-1} \\
&\lesssim & \left( 1+t\right) ^{-5/2}\left( 1+\frac{\left( \mathbf{c}t-\left
\vert x\right \vert \right) ^{2}}{1+t}\right) ^{-1}
\end{eqnarray*}%
by setting $z=\sqrt{\left \vert x\right \vert ^{2}+r^{2}-2r\left \vert
x\right \vert \cos \theta }$ and $\sin \theta d\theta =\frac{z}{r\left \vert
x\right \vert }dz$.$%
\vspace {3mm}%
$

For $I_{3}$, we decompose $\mathbb{R}^{3}$ into two parts%
\begin{equation*}
I_{3}=\int_{\frac{1}{2}\left( t+\frac{3}{2}\left( t-\frac{\left \vert
x\right \vert }{\mathbf{c}}\right) \right) }^{t}\left( \int_{\left \vert
y\right \vert \leq \frac{\left \vert x\right \vert -\mathbf{c}\left( t-\tau
\right) }{2}}+\int_{\left \vert y\right \vert >\frac{\left \vert x\right
\vert -\mathbf{c}\left( t-\tau \right) }{2}}\right) \left( \cdots \right)
dyd\tau =:I_{31}+I_{32}\hbox{.}
\end{equation*}%
If $\frac{1}{2}\left( t+\frac{3}{2}\left( t-\frac{\left \vert x\right \vert
}{\mathbf{c}}\right) \right) \leq \tau \leq t$, $\left \vert y\right \vert
\leq \frac{\left \vert x\right \vert -\mathbf{c}\left( t-\tau \right) }{2}$,
then
\begin{equation*}
\left \vert x-y\right \vert -\mathbf{c}\left( t-\tau \right) \geq \frac{%
\left \vert x\right \vert -\mathbf{c}\left( t-\tau \right) }{2}\geq \frac{%
\mathbf{c}t+\left \vert x\right \vert }{8}= \frac{\mathbf{c}t-\left \vert
x\right \vert }{8}+\frac{|x|}{4}\hbox{.}
\end{equation*}%
If $\frac{1}{2}\left( t+\frac{3}{2}\left( t-\frac{\left \vert x\right \vert
}{\mathbf{c}}\right) \right) \leq \tau \leq t$, $\left \vert y\right \vert >%
\frac{\left \vert x\right \vert -\mathbf{c}\left( t-\tau \right) }{2}$, then
\begin{equation*}
\left \vert y\right \vert >\frac{\left \vert x\right \vert -\mathbf{c}\left(
t-\tau \right) }{2}\geq \frac{\mathbf{c}t-\left \vert x\right \vert }{8}+%
\frac{|x|}{4} \hbox{.}
\end{equation*}%
Hence,
\begin{eqnarray*}
I_{31} &\lesssim &\left( 1+\left( \mathbf{c}t-\left \vert x\right \vert
\right) ^{2}\right) ^{-2}\int_{\frac{1}{2}\left( t+\frac{3}{2}\left( t-\frac{%
\left \vert x\right \vert }{\mathbf{c}}\right) \right) }^{t}\int_{|y|\leq
\mathbf{c}\tau}\left( 1+t-\tau \right) ^{-2}\left( 1+\tau \right) ^{-2
}\left( 1+\frac{\left \vert y\right \vert ^{2}}{D_{0}\left( 1+\tau \right) }%
\right) ^{-\frac{3}{2}}dyd\tau \\
&\lesssim &\left( 1+\left( \mathbf{c}t-\left \vert x\right \vert \right)
^{2}\right) ^{-2}\int_{\frac{1}{2}\left( t+\frac{3}{2}\left( t-\frac{\left
\vert x\right \vert }{\mathbf{c}}\right) \right) }^{t}\left( 1+t-\tau
\right) ^{-2}(1+\tau)^{-1/4}d\tau \\
&\lesssim &(1+t)^{-1/4}\left( 1+\left( \mathbf{c}t-\left \vert x\right \vert
\right) ^{2}\right) ^{-2}\lesssim \left( 1+t\right) ^{-9/4}\left( 1+\frac{%
\left( \mathbf{c}t-\left \vert x\right \vert \right) ^{2}}{1+t}\right) ^{-2}%
\hbox{.}
\end{eqnarray*}
On the other hand,
\begin{eqnarray*}
I_{31} &\lesssim &\left( 1+|x|^{2}\right) ^{-2}\int_{\frac{1}{2}\left( t+%
\frac{3}{2}\left( t-\frac{\left \vert x\right \vert }{\mathbf{c}}\right)
\right) }^{t}\int_{|y|\leq \mathbf{c}\tau}\left( 1+t-\tau \right)
^{-2}\left( 1+\tau \right) ^{-2 }\left( 1+\frac{\left \vert y\right \vert
^{2}}{D_{0}\left( 1+\tau \right) }\right) ^{-\frac{3}{2}}dyd\tau \\
&\lesssim &(1+t)^{-17/4}\hbox{.}
\end{eqnarray*}
By interpolation, one has
\begin{equation*}
I_{31} \lesssim (1+t)^{-13/4}\left( 1+\frac{\left( \mathbf{c}t-\left \vert
x\right \vert \right) ^{2}}{1+t}\right) ^{-1}\,.
\end{equation*}
For $I_{32}$,
\begin{eqnarray*}
I_{32} &\lesssim &\int_{\frac{1}{2}\left( t+\frac{3}{2}\left( t-\frac{\left
\vert x\right \vert }{\mathbf{c}}\right) \right) }^{t}\left( 1+\left \vert
x\right \vert -\mathbf{c}\left( t-\tau \right) \right) ^{-3}(1+\tau)^{-1/2}
\\
&&\cdot \int_{\left \vert y\right \vert >\frac{\left \vert x\right \vert -%
\mathbf{c}\left( t-\tau \right) }{2}}\left( 1+t-\tau \right) ^{-4}\left( 1+%
\frac{\left( \left \vert x-y\right \vert -\mathbf{c}\left( t-\tau \right)
\right) ^{2}}{1+t-\tau }\right) ^{-2}dyd\tau \\
&\lesssim &\int_{\frac{1}{2}\left( t+\frac{3}{2}\left( t-\frac{\left \vert
x\right \vert }{\mathbf{c}}\right) \right) }^{t}\left( 1+\left \vert x\right
\vert -\mathbf{c}\left( t-\tau \right) \right) ^{-3}\left( 1+t-\tau \right)
^{-4+\frac{5}{2}}\left( 1+\tau \right) ^{-\frac{1}{2}}d\tau \\
&\lesssim &\left( 1+t\right) ^{-7/2}\lesssim (1+t)^{-1/2}\left( 1+\left
\vert x\right \vert \right) ^{-3}\lesssim \left( 1+t\right) ^{-2}\left( 1+%
\frac{\left \vert x\right \vert ^{2}}{\left( 1+t\right) }\right) ^{-\frac{3}{%
2}}\hbox{,}
\end{eqnarray*}%
so that%
\begin{equation*}
I_{3}\lesssim \left( 1+t\right) ^{-2}\left( 1+\frac{\left \vert x\right
\vert ^{2}}{\left( 1+t\right) }\right) ^{-\frac{3}{2}}+\left( 1+t\right)
^{-5/2}\left( 1+\frac{\left( \mathbf{c}t-\left \vert x\right \vert \right)
^{2}}{1+t}\right) ^{-1}\hbox{.}
\end{equation*}%
To sum up,
\begin{equation*}
I\leq \left( 1+t\right) ^{-2}\left( 1+\frac{\left \vert x\right \vert ^{2}}{%
\left( 1+t\right) }\right) ^{-\frac{3}{2}}+\left( 1+t\right) ^{-5/2}\left( 1+%
\frac{\left( \mathbf{c}t-\left \vert x\right \vert \right) ^{2}}{1+t}\right)
^{-1}\hbox{.}
\end{equation*}
\end{proof}

\begin{proof}[\textbf{Proof of Lemma \protect \ref{moving-moving}}]
(Huygens wave convolved with Huygens wave). \newline
\newline
\textbf{Case 1:} $\left( x,t\right) \in D_{1}\cup D_{2}$. Direct computation
gives
\begin{eqnarray*}
J &\lesssim &\int_{0}^{\frac{t}{2}}\int_{\mathbb{R}^{3}}\left( 1+t\right)
^{-5/2}\left( 1+\tau \right) ^{-4}\left( 1+\frac{\left( \left \vert y\right
\vert -\mathbf{c}\tau \right) ^{2}}{1+\tau }\right) ^{-2}dyd\tau \\
&&+\int_{\frac{t}{2}}^{t}\int_{\mathbb{R}^{3}}\left( 1+t-\tau \right)
^{-5/2}e^{-\frac{\left( \left \vert x-y\right \vert -\mathbf{c}\left( t-\tau
\right) \right) ^{2}}{D_{0}\left( 1+t-\tau \right) }}\left( 1+t\right)
^{-4}dyd\tau \\
&\lesssim &\left( 1+t\right) ^{-5/2}\int_{0}^{\frac{t}{2}}\left( 1+\tau
\right) ^{-4}\left( 1+\tau \right) ^{\frac{5}{2}}d\tau +\left( 1+t\right)
^{-4}\int_{\frac{t}{2}}^{t}\left( 1+t-\tau \right) ^{-5/2}\left( 1+t-\tau
\right) ^{\frac{5}{2}}d\tau \\
&\lesssim &\left( 1+t\right) ^{-5/2}\hbox{.}
\end{eqnarray*}
\newline
\textbf{Case 2:} $\left( x,t\right) \in D_{3}$. We split the integral $J$
into four parts%
\begin{eqnarray*}
J &=&\int_{0}^{\frac{t}{2}}\left( \int_{\left \vert y\right \vert -\mathbf{c}%
\tau \leq \frac{\left \vert x\right \vert -\mathbf{c}t}{2}}+\int_{\left
\vert y\right \vert -\mathbf{c}\tau >\frac{\left \vert x\right \vert -%
\mathbf{c}t}{2}}\right) \left( \cdots \right) dyd\tau +\int_{\frac{t}{2}%
}^{t}\left( \int_{\left \vert y\right \vert -\mathbf{c}\tau \leq \frac{\left
\vert x\right \vert -\mathbf{c}t}{2}}+\int_{\left \vert y\right \vert -%
\mathbf{c}\tau >\frac{\left \vert x\right \vert -\mathbf{c}t}{2}}\right)
\left( \cdots \right) dyd\tau \\
&=&J_{11}+J_{12}+J_{21}+J_{22}\hbox{.}
\end{eqnarray*}

Note that if $\left \vert y\right \vert -\mathbf{c}\tau \leq \frac{%
\left
\vert x\right \vert -\mathbf{c}t}{2}$, then
\begin{equation*}
\left \vert x-y\right \vert -\mathbf{c}\left( t-\tau \right) \geq \left
\vert x\right \vert -\left \vert y\right \vert -\mathbf{c}\left( t-\tau
\right) \geq \frac{\left \vert x\right \vert -\mathbf{c}t}{2}\hbox{.}
\end{equation*}%
Hence,
\begin{eqnarray*}
J_{11} &\lesssim &\int_{0}^{\frac{t}{2}}\int_{\left \vert y\right \vert -%
\mathbf{c}\tau \leq \frac{\left \vert x\right \vert -\mathbf{c}t}{2}}\left(
1+t\right) ^{-5/2}e^{-\frac{\left( \left \vert x\right \vert -\mathbf{c}%
t\right) ^{2}}{D_{0}\left( 1+t\right) }}\left( 1+\tau \right) ^{-4}\left( 1+%
\frac{\left( \left \vert y\right \vert -\mathbf{c}\tau \right) ^{2}}{1+\tau }%
\right) ^{-2}dyd\tau \\
&\lesssim &\int_{0}^{\frac{t}{2}}\left( 1+t\right) ^{-5/2}e^{-\frac{\left(
\left \vert x\right \vert -\mathbf{c}t\right) ^{2}}{D_{0}\left( 1+t\right) }%
}\left( 1+\tau \right) ^{-4}\left( 1+\tau \right) ^{\frac{5}{2}}d\tau
\lesssim \left( 1+t\right) ^{-5/2}e^{-\frac{\left( \left \vert x\right \vert
-\mathbf{c}t\right) ^{2}}{D_{0}\left( 1+t\right) }}\hbox{.}
\end{eqnarray*}

For $J_{12}$, we have
\begin{eqnarray*}
J_{12} &\lesssim &\int_{0}^{\frac{t}{2}}\int_{\left \vert y\right \vert -%
\mathbf{c}\tau >\frac{\left \vert x\right \vert -\mathbf{c}t}{2}}\left(
1+t-\tau \right) ^{-5/2}e^{-\frac{\left( \left \vert x-y\right \vert -%
\mathbf{c}\left( t-\tau \right) \right) ^{2}}{D_{0}\left( 1+t-\tau \right) }%
}\left( 1+\tau \right) ^{-4}\left( 1+\frac{\left( \left \vert y\right \vert -%
\mathbf{c}\tau \right) ^{2}}{1+\tau }\right) ^{-2}dyd\tau \\
&\lesssim &\int_{0}^{\frac{t}{2}}\left( 1+t-\tau \right) ^{-5/2}\left(
1+t-\tau \right) ^{\frac{5}{2}}\left( 1+\tau \right) ^{-2}\left( 1+\left(
\left \vert x\right \vert -\mathbf{c}t\right) ^{2}\right) ^{-2}d\tau \\
&\lesssim &\left( 1+\left \vert x\right \vert -\mathbf{c}t\right) ^{-4}%
\hbox{,}
\end{eqnarray*}%
and
\begin{eqnarray*}
J_{12} &\lesssim &\int_{0}^{\frac{t}{2}}\int_{\left \vert y\right \vert -%
\mathbf{c}\tau >\frac{\left \vert x\right \vert -\mathbf{c}t}{2}}\left(
1+t\right) ^{-5/2}\left( 1+\tau \right) ^{-2}\left( 1+\left \vert y\right
\vert -\mathbf{c}\tau \right) ^{-4}dyd\tau \\
&\lesssim &\int_{0}^{\frac{t}{2}}\left( 1+t\right) ^{-5/2}\left( 1+\tau
\right) ^{-2}\int_{\mathbf{c}\tau +\frac{\left \vert x\right \vert -\mathbf{c%
}t}{2}}^{\infty }\left( 1+r-\mathbf{c}\tau \right) ^{-4}r^{2}drd\tau \\
&\lesssim &\int_{0}^{\frac{t}{2}}\left( 1+t\right) ^{-5/2}\left( 1+\tau
\right) ^{-2}\int_{\frac{\left \vert x\right \vert -\mathbf{c}t}{2}}^{\infty
}\left( 1+r\right) ^{-4}\left( r+\mathbf{c}\tau \right) ^{2}drd\tau \\
&\lesssim &\int_{0}^{\frac{t}{2}}\left( 1+t\right) ^{-5/2}\left( 1+\tau
\right) ^{-2}\left[ r^{-1}+\mathbf{c}\tau \cdot r^{-2}+\left( \mathbf{c}\tau
\right) ^{2}r^{-3}\right] _{r=\frac{\left \vert x\right \vert -\mathbf{c}t}{2%
}}d\tau \\
&\lesssim &\left( 1+t\right) ^{-5/2}\left[ \left( 1+\left \vert x\right
\vert -\mathbf{c}t\right) ^{-1}+\ln \left( 2+t\right) \left( 1+\left \vert
x\right \vert -\mathbf{c}t\right) ^{-2}+t\left( 1+\left \vert x\right \vert -%
\mathbf{c}t\right) ^{-3}\right] \\
&\lesssim &\left( 1+t\right) ^{-5/2}\left( 1+\left \vert x\right \vert -%
\mathbf{c}t\right) ^{-1}\left[ 1+\ln \left( 2+t\right) \left( 1+\sqrt{1+t}%
\right) ^{-1}+t\left( 1+\sqrt{1+t}\right) ^{-2}\right] \\
&\lesssim &\left( 1+t\right) ^{-5/2}\left( 1+\left \vert x\right \vert -%
\mathbf{c}t\right) ^{-1}\hbox{,}
\end{eqnarray*}%
which implies that
\begin{eqnarray*}
J_{12} &=&\left( J_{12}\right) ^{\frac{1}{3}}\left( J_{12}\right) ^{\frac{2}{%
3}}\lesssim \left[ \left( 1+\left \vert x\right \vert -\mathbf{c}t\right)
^{-4}\right] ^{\frac{1}{3}}\left[ \left( 1+t\right) ^{-5/2}\left( 1+\left
\vert x\right \vert -\mathbf{c}t\right) ^{-1}\right] ^{\frac{2}{3}} \\
&\lesssim &\left( 1+t\right) ^{-\frac{5}{3}}\left( 1+\left \vert x\right
\vert -\mathbf{c}t\right) ^{-2} \\
&\lesssim &\left( 1+t\right) ^{-\frac{8}{3}}\left( 1+\frac{\left( \left
\vert x\right \vert -\mathbf{c}t\right) ^{2}}{1+t}\right) ^{-1}\hbox{.}
\end{eqnarray*}

For $J_{21}$ and $J_{22}$, it follows that
\begin{eqnarray*}
J_{21} &\lesssim &\int_{\frac{t}{2}}^{t}\int_{\left \vert y\right \vert -%
\mathbf{c}\tau \leq \frac{\left \vert x\right \vert -\mathbf{c}t}{2}}\left(
1+t-\tau \right) ^{-5/2}e^{-\frac{\left( \left \vert x\right \vert -\mathbf{c%
}t\right) ^{2}}{2D_{0}\left( 1+t\right) }}e^{-\frac{\left( \left \vert
x-y\right \vert -\mathbf{c}\left( t-\tau \right) \right) ^{2}}{2D_{0}\left(
1+t-\tau \right) }}\left( 1+t\right) ^{-4}\left( 1+\frac{\left( \left \vert
y\right \vert -\mathbf{c}\tau \right) ^{2}}{1+\tau }\right) ^{-2}dyd\tau \\
&\lesssim &\left( 1+t\right) ^{-4}e^{-\frac{\left( \left \vert x\right \vert
-\mathbf{c}t\right) ^{2}}{2D_{0}\left( 1+t\right) }}\int_{\frac{t}{2}%
}^{t}\left( 1+t-\tau \right) ^{-5/2}\left( 1+t-\tau \right) ^{\frac{5}{2}%
}d\tau \\
&\lesssim &\left( 1+t\right) ^{-3}e^{-\frac{\left( \left \vert x\right \vert
-\mathbf{c}t\right) ^{2}}{2D_{0}\left( 1+t\right) }}\hbox{,}
\end{eqnarray*}%
\begin{eqnarray*}
J_{22} &\lesssim &\int_{\frac{t}{2}}^{t}\int_{\left \vert y\right \vert -%
\mathbf{c}\tau >\frac{\left \vert x\right \vert -\mathbf{c}t}{2}}\left(
1+t-\tau \right) ^{-5/2}e^{-\frac{\left( \left \vert x-y\right \vert -%
\mathbf{c}\left( t-\tau \right) \right) ^{2}}{D_{0}\left( 1+t-\tau \right) }%
}\left( 1+t\right) ^{-4}\left( 1+\frac{\left( \left \vert y\right \vert -%
\mathbf{c}\tau \right) ^{2}}{1+\tau }\right) ^{-2}dyd\tau \\
&\lesssim &\int_{\frac{t}{2}}^{t}\left( 1+t-\tau \right) ^{-5/2}\left(
1+t-\tau \right) ^{\frac{5}{2}}\left( 1+t\right) ^{-4}\left( 1+\frac{\left(
\left \vert x\right \vert -\mathbf{c}t\right) ^{2}}{1+t}\right) ^{-2}d\tau \\
&\lesssim &\left( 1+t\right) ^{-3}\left( 1+\frac{\left( \left \vert x\right
\vert -\mathbf{c}t\right) ^{2}}{1+t}\right) ^{-2}\hbox{.}
\end{eqnarray*}

Gathering all the estimates, we can conclude that
\begin{equation*}
J\lesssim \left( 1+t\right) ^{-5/2}\left( 1+\frac{\left( \left \vert x\right
\vert -\mathbf{c}t\right) ^{2}}{1+t}\right) ^{-1}\hbox{.}
\end{equation*}
\newline
\textbf{Case 3:} $\left( x,t\right) \in D_{4}$. We split the integral into
four parts%
\begin{eqnarray*}
J &=&\left( \int_{0}^{\frac{t}{4}-\frac{\left \vert x\right \vert }{4\mathbf{%
c}}}+\int_{\frac{t}{4}-\frac{\left \vert x\right \vert }{4\mathbf{c}}}^{%
\frac{t}{2}}+\int_{\frac{t}{2}}^{\frac{t}{2}+\frac{1}{4}\left( t+\frac{\left
\vert x\right \vert }{\mathbf{c}}\right) }+\int_{\frac{t}{2}+\frac{1}{4}%
\left( t+\frac{\left \vert x\right \vert }{\mathbf{c}}\right) }^{t}\right)
\int_{\mathbb{R}^{3}}\left( \cdots \right) dyd\tau \\
&=&:J_{1}+J_{2}+J_{3}+J_{4}\hbox{.}
\end{eqnarray*}

For $J_{1}$, we decompose $\mathbb{R}^{3}$ into two parts%
\begin{equation*}
J_{1}=\int_{0}^{\frac{t}{4}-\frac{\left \vert x\right \vert }{4\mathbf{c}}%
}\left( \int_{\left \vert y\right \vert \leq \frac{\mathbf{c}t-\left \vert
x\right \vert }{2}}+\int_{\left \vert y\right \vert >\frac{\mathbf{c}t-\left
\vert x\right \vert }{2}}\right) \int_{\mathbb{R}^{3}}\left( \cdots \right)
dyd\tau =:J_{11}+J_{12}\hbox{.}
\end{equation*}%
If $0\leq \tau \leq \frac{t}{4}-\frac{\left \vert x\right \vert }{4\mathbf{c}%
}$, $\left \vert y\right \vert \leq \frac{\mathbf{c}t-\left \vert
x\right
\vert }{2}$, then
\begin{equation*}
\mathbf{c}\left( t-\tau \right) -\left \vert x-y\right \vert \geq \mathbf{c}%
\left( t-\tau \right) -\left \vert x\right \vert -\left \vert y\right \vert
\geq \frac{\mathbf{c}t-\left \vert x\right \vert }{4}\geq \frac{\mathbf{c}t}{%
8}\hbox{.}
\end{equation*}%
Hence,
\begin{eqnarray*}
J_{11} &\lesssim &\int_{0}^{\frac{t}{4}-\frac{\left \vert x\right \vert }{4%
\mathbf{c}}}\int_{\left \vert y\right \vert \leq \frac{\mathbf{c}t-\left
\vert x\right \vert }{2}}\left( 1+t-s\right) ^{-5/2}e^{-\frac{\left( \left
\vert x\right \vert -\mathbf{c}t\right) ^{2}}{2D_{0}\left( 1+t\right) }%
}\left( 1+\tau \right) ^{-4}\left( 1+\frac{\left( \left \vert y\right \vert -%
\mathbf{c}\tau \right) ^{2}}{1+\tau }\right) ^{-2}dyd\tau \\
&\lesssim &\left( 1+t\right) ^{-5/2}e^{-\frac{\left( \left \vert x\right
\vert -\mathbf{c}t\right) ^{2}}{2D_{0}\left( 1+t\right) }}\int_{0}^{\frac{t}{%
4}-\frac{\left \vert x\right \vert }{4\mathbf{c}}}\left( 1+\tau \right)
^{-4}\left( 1+\tau \right) ^{\frac{5}{2}}d\tau \lesssim \left( 1+t\right)
^{-5/2}e^{-\frac{\left( \left \vert x\right \vert -\mathbf{c}t\right) ^{2}}{%
2D_{0}\left( 1+t\right) }}\hbox{,}
\end{eqnarray*}%
and%
\begin{eqnarray*}
J_{12} &\lesssim &\int_{0}^{\frac{t}{4}-\frac{\left \vert x\right \vert }{4%
\mathbf{c}}}\int_{\left \vert y\right \vert >\frac{\mathbf{c}t-\left \vert
x\right \vert }{2}}\left( 1+t-\tau \right) ^{-5/2}e^{-\frac{\left( \left
\vert x-y\right \vert -\mathbf{c}\left( t-\tau \right) \right) ^{2}}{%
D_{0}\left( 1+t-\tau \right) }}\left( 1+\tau \right) ^{-2}\left( 1+\left(
\mathbf{c}t-\left \vert x\right \vert \right) ^{2}\right) ^{-2}dyd\tau \\
&\lesssim &\left( 1+t\right) ^{-4}\int_{0}^{\frac{t}{4}-\frac{\left \vert
x\right \vert }{4\mathbf{c}}}\left( 1+t-\tau \right) ^{-5/2}\left( 1+t-\tau
\right) ^{\frac{5}{2}}\left( 1+\tau \right) ^{-2}d\tau \\
&\lesssim &\left( 1+t\right) ^{-4}\lesssim \left( 1+t\right) ^{-1}\left(
1+\left \vert x\right \vert \right) ^{-3}\lesssim \left( 1+t\right)
^{-5/2}\left( 1+\frac{\left \vert x\right \vert ^{2}}{1+t}\right) ^{-\frac{3%
}{2}}\hbox{,}
\end{eqnarray*}%
since $\sqrt{1+t}\leq \left \vert x\right \vert \leq \frac{\mathbf{c}t}{2}$,
so that
\begin{equation*}
J_{1}\lesssim \left( 1+t\right) ^{-5/2}e^{-\frac{\left( \left \vert x\right
\vert -\mathbf{c}t\right) ^{2}}{2D_{0}\left( 1+t\right) }}+\left( 1+t\right)
^{-5/2}\left( 1+\frac{\left \vert x\right \vert ^{2}}{1+t}\right) ^{-\frac{3%
}{2}}\hbox{.}
\end{equation*}

For $J_{2}$ and $J_{3}$,
\begin{eqnarray*}
J_{2} &\lesssim &\int_{\frac{t}{4}-\frac{\left \vert x\right \vert }{4%
\mathbf{c}}}^{\frac{t}{2}}\int_{\mathbb{R}^{3}}\left( 1+t\right) ^{-5/2}e^{-%
\frac{\left( \left \vert x-y\right \vert -\mathbf{c}\left( t-\tau \right)
\right) ^{2}}{D_{0}\left( 1+t-\tau \right) }}\left( 1+\tau \right)
^{-4}\left( 1+\frac{\left( \left \vert y\right \vert -\mathbf{c}\tau \right)
^{2}}{1+\tau }\right) ^{-2}dyd\tau \\
&\lesssim &\left( 1+t\right) ^{-5/2}\left( 1+\frac{t}{4}-\frac{\left \vert
x\right \vert }{4\mathbf{c}}\right) ^{-4}\int_{0}^{t}\int_{\mathbb{R}%
^{3}}e^{-\frac{\left( \left \vert x-y\right \vert -\mathbf{c}\left( t-\tau
\right) \right) ^{2}}{D_{0}\left( 1+t-\tau \right) }}\left( 1+\frac{\left(
\left \vert y\right \vert -\mathbf{c}\tau \right) ^{2}}{1+\tau }\right)
^{-2}dyd\tau \\
&\lesssim &\left( 1+t\right) ^{-5/2}\left( 1+t\right) ^{-4}\left( 1+t\right)
^{3}\lesssim \left( 1+t\right) ^{-7/2}\lesssim \left( 1+t\right) ^{-2}\left(
1+\frac{\left \vert x\right \vert ^{2}}{1+t}\right) ^{-\frac{3}{2}}\hbox{,}
\end{eqnarray*}%
\begin{eqnarray*}
J_{3} &\lesssim &\int_{\frac{t}{2}}^{\frac{t}{2}+\frac{1}{4}\left( t+\frac{%
\left \vert x\right \vert }{\mathbf{c}}\right) }\int_{\mathbb{R}^{3}}\left(
1+t-\tau \right) ^{-5/2}e^{-\frac{\left( \left \vert x-y\right \vert -%
\mathbf{c}\left( t-\tau \right) \right) ^{2}}{D_{0}\left( 1+t-\tau \right) }%
}\left( 1+t\right) ^{-4}\left( 1+\frac{\left( \left \vert y\right \vert -%
\mathbf{c}\tau \right) ^{2}}{1+\tau }\right) ^{-2}dyd\tau \\
&\lesssim &\left( 1+\frac{1}{4}\left( t-\frac{\left \vert x\right \vert }{%
\mathbf{c}}\right) \right) ^{-5/2}\left( 1+t\right) ^{-4}\int_{0}^{t}\int_{%
\mathbb{R}^{3}}e^{-\frac{\left( \left \vert x-y\right \vert -\mathbf{c}%
\left( t-\tau \right) \right) ^{2}}{D_{0}\left( 1+t-\tau \right) }}\left( 1+%
\frac{\left( \left \vert y\right \vert -\mathbf{c}\tau \right) ^{2}}{1+\tau }%
\right) ^{-2}dyd\tau \\
&\lesssim &\left( 1+t\right) ^{-13/2}\left( 1+t\right) ^{3}\lesssim \left(
1+t\right) ^{-2}\left( 1+\frac{\left \vert x\right \vert ^{2}}{1+t}\right)
^{-\frac{3}{2}}\hbox{,}
\end{eqnarray*}
due to Lemma 5.4 of \cite{[Liu-Wang]}.

For $J_{4}$, we decompose $\mathbb{R}^{3}$ into two parts%
\begin{equation*}
J_{4}=\int_{\frac{t}{2}+\frac{1}{4}\left( t+\frac{\left \vert x\right \vert
}{\mathbf{c}}\right) }^{t}\left( \int_{\left \vert y\right \vert \leq \frac{%
\left \vert x\right \vert +\mathbf{c}t}{2}}+\int_{\left \vert y\right \vert >%
\frac{\left \vert x\right \vert +\mathbf{c}t}{2}}\right) \left( \cdots
\right) dyd\tau =:J_{41}+J_{42}\hbox{.}
\end{equation*}%
If $\frac{t}{2}+\frac{1}{4}\left( t+\frac{\left \vert x\right \vert }{%
\mathbf{c}}\right) \leq \tau \leq t$, $\left \vert y\right \vert \leq \frac{%
\left \vert x\right \vert +\mathbf{c}t}{2}$, then%
\begin{equation*}
\mathbf{c}\tau -\left \vert y\right \vert \geq \frac{\mathbf{c}t}{2}+\frac{1%
}{4}\left( \mathbf{c}t+\left \vert x\right \vert \right) -\frac{\left \vert
x\right \vert +\mathbf{c}t}{2}\geq \frac{\mathbf{c}t-\left \vert x\right
\vert }{4}\geq \frac{\mathbf{c}t}{8}\hbox{.}
\end{equation*}%
If $\frac{t}{2}+\frac{1}{4}\left( t+\frac{\left \vert x\right \vert }{%
\mathbf{c}}\right) \leq \tau \leq t$, $\left \vert y\right \vert >\frac{%
\left \vert x\right \vert +\mathbf{c}t}{2}$, then%
\begin{eqnarray*}
\left \vert x-y\right \vert -\mathbf{c}\left( t-\tau \right) &\geq &\left
\vert y\right \vert -\left \vert x\right \vert -\mathbf{c}\left( t-\tau
\right) \geq \frac{\left \vert x\right \vert +\mathbf{c}t}{2}-\left \vert
x\right \vert -\mathbf{c}t+\frac{\mathbf{c}t}{2}+\frac{1}{4}\left( \mathbf{c}%
t+\left \vert x\right \vert \right) \\
&\geq &\frac{\mathbf{c}t-\left \vert x\right \vert }{4}\geq \frac{\mathbf{c}t%
}{8}\hbox{.}
\end{eqnarray*}%
Hence,
\begin{eqnarray*}
J_{41} &\lesssim &\int_{\frac{t}{2}+\frac{1}{4}\left( t+\frac{\left \vert
x\right \vert }{\mathbf{c}}\right) }^{t}\int_{\left \vert y\right \vert \leq
\frac{\left \vert x\right \vert +\mathbf{c}t}{2}}\left( 1+t-\tau \right)
^{-5/2}e^{-\frac{\left( \left \vert x-y\right \vert -\mathbf{c}\left( t-\tau
\right) \right) ^{2}}{D_{0}\left( 1+t-\tau \right) }}\left( 1+t\right)
^{-4}\left( 1+\frac{\left( \mathbf{c}t-\left \vert x\right \vert \right) ^{2}%
}{1+t}\right) ^{-2}dyd\tau \\
&\lesssim &\left( 1+t\right) ^{-4}\left( 1+\frac{\left( \mathbf{c}t-\left
\vert x\right \vert \right) ^{2}}{1+t}\right) ^{-2}(1+t) \\
&\lesssim &\left( 1+t\right) ^{-5}\lesssim \left( 1+t\right) ^{-2}\left(
1+\left \vert x\right \vert \right) ^{-3}\lesssim \left( 1+t\right)
^{-7/2}\left( 1+\frac{\left \vert x\right \vert ^{2}}{1+t}\right) ^{-\frac{3%
}{2}}\hbox{,}
\end{eqnarray*}%
\begin{eqnarray*}
J_{42} &\lesssim &\int_{\frac{t}{2}+\frac{1}{4}\left( t+\frac{\left \vert
x\right \vert }{\mathbf{c}}\right) }^{t}\int_{\left \vert y\right \vert >%
\frac{\left \vert x\right \vert +\mathbf{c}t}{2}}\left( 1+t-\tau \right)
^{-5/2}e^{-\frac{\left( \mathbf{c}t-\left \vert x\right \vert \right) ^{2}}{%
2D_{0}\left( 1+t\right) }}e^{-\frac{\left( \left \vert x-y\right \vert -%
\mathbf{c}\left( t-\tau \right) \right) ^{2}}{2D_{0}\left( 1+t-\tau \right) }%
}\left( 1+t\right) ^{-4}dyd\tau \\
&\lesssim &\left( 1+t\right) ^{-4}e^{-\frac{\left( \mathbf{c}t-\left \vert
x\right \vert \right) ^{2}}{2D_{0}\left( 1+t\right) }}\int_{\frac{t}{2}+%
\frac{1}{4}\left( t+\frac{\left \vert x\right \vert }{\mathbf{c}}\right)
}^{t}\left( 1+t-\tau \right) ^{-\frac{5}{2}+\frac{5}{2}}d\tau \\
&\lesssim &\left( 1+t\right) ^{-3}e^{-\frac{t}{C}}\lesssim \left( 1+t\right)
^{-7/2}\left( 1+\frac{\left \vert x\right \vert ^{2}}{1+t}\right) ^{-\frac{3%
}{2}}\hbox{.}
\end{eqnarray*}%
for some $C>0$, so that
\begin{equation*}
J_{4}\lesssim \left( 1+t\right) ^{-7/2}\left( 1+\frac{\left \vert x\right
\vert ^{2}}{1+t}\right) ^{-\frac{3}{2}}\hbox{.}
\end{equation*}

As a result,
\begin{equation*}
J\lesssim \left( 1+t\right) ^{-2}\left( 1+\frac{\left \vert x\right \vert
^{2}}{1+t}\right) ^{-\frac{3}{2}}+\left( 1+t\right) ^{-5/2}e^{-\frac{\left(
\left \vert x\right \vert -\mathbf{c}t\right) ^{2}}{2D_{0}\left( 1+t\right) }%
}
\end{equation*}
for $\left( x,t\right) \in D_{4}$. \bigskip \newline
\textbf{Case 4:} $\left( x,t\right) \in D_{5}$. We split the integral $J$
into five parts%
\begin{equation*}
J=\left( \int_{0}^{\frac{t}{4}-\frac{\left \vert x\right \vert }{4\mathbf{c}}%
}+\int_{\frac{t}{4}-\frac{\left \vert x\right \vert }{4\mathbf{c}}}^{\frac{t%
}{2}}+\int_{\frac{t}{2}}^{\frac{|x|}{2\mathbf{c}}+\frac{1}{4}(t+\frac{|x|}{%
\mathbf{c}})} +\int_{\frac{|x|}{2\mathbf{c}}+\frac{1}{4}(t+\frac{|x|}{%
\mathbf{c}})}^{\frac{t}{2}+\frac{1}{4}(t+\frac{|x|}{\mathbf{c}})} +\int^{t}_{%
\frac{t}{2}+\frac{1}{4}(t+\frac{|x|}{\mathbf{c}})}\right) \int_{\mathbb{R}%
^{3}}\left( \cdots \right) dyd\tau =:\sum_{i=1}^{5}J_{i}\hbox{.}
\end{equation*}

For $J_{1}$, we decompose $\mathbb{R}^{3}$ into two parts%
\begin{equation*}
J_{1}=\int_{0}^{\frac{t}{4}-\frac{\left \vert x\right \vert }{4\mathbf{c}}%
}\left( \int_{\left \vert y\right \vert \leq \frac{\mathbf{c}t-\left \vert
x\right \vert }{2}}+\int_{\left \vert y\right \vert >\frac{\mathbf{c}t-\left
\vert x\right \vert }{2}}\right) \left( \cdots \right) dyd\tau
=:J_{11}+J_{12}\hbox{.}
\end{equation*}%
If $0\leq \tau \leq \frac{t}{4}-\frac{\left \vert x\right \vert }{4\mathbf{c}%
}$, $\left \vert y\right \vert \leq \frac{\mathbf{c}t-\left \vert
x\right
\vert }{2}$, then
\begin{equation*}
\mathbf{c}\left( t-\tau \right) -\left \vert x-y\right \vert \geq \mathbf{c}%
\left( t-\tau \right) -\left( \left \vert x\right \vert +\left \vert y\right
\vert \right) \geq \frac{\mathbf{c}t-\left \vert x\right \vert }{4}\hbox{.}
\end{equation*}%
If $0\leq \tau \leq \frac{t}{4}-\frac{\left \vert x\right \vert }{4\mathbf{c}%
}$, $\left \vert y\right \vert >\frac{\mathbf{c}t-\left \vert x\right \vert
}{2}$, then%
\begin{equation*}
\left \vert y\right \vert -\mathbf{c}\tau \geq \frac{\mathbf{c}t-\left \vert
x\right \vert }{4}\hbox{.}
\end{equation*}%
Hence,%
\begin{eqnarray*}
J_{11} &\lesssim &\int_{0}^{\frac{t}{4}-\frac{\left \vert x\right \vert }{4%
\mathbf{c}}}\int_{\left \vert y\right \vert \leq \frac{\mathbf{c}t-\left
\vert x\right \vert }{2}}\left( 1+t\right) ^{-5/2}e^{-\frac{\left( \mathbf{c}%
t-\left \vert x\right \vert \right) ^{2}}{D_{0}\left( 1+t\right) }}\left(
1+\tau \right) ^{-4}\left( 1+\frac{\left( \left \vert y\right \vert -\mathbf{%
c}\tau \right) ^{2}}{1+\tau }\right) ^{-2}dyd\tau \\
&\lesssim &\left( 1+t\right) ^{-5/2}e^{-\frac{\left( \mathbf{c}t-\left \vert
x\right \vert \right) ^{2}}{D_{0}\left( 1+t\right) }}\int_{0}^{\frac{\mathbf{%
c}t-\left \vert x\right \vert }{4}}\left( 1+\tau \right) ^{-4+\frac{5}{2}%
}d\tau \lesssim \left( 1+t\right) ^{-5/2}e^{-\frac{\left( \mathbf{c}t-\left
\vert x\right \vert \right) ^{2}}{D_{0}\left( 1+t\right) }}\hbox{.}
\end{eqnarray*}%
For $J_{12}$, we have%
\begin{eqnarray*}
J_{12} &\lesssim &\int_{0}^{\frac{t}{4}-\frac{\left \vert x\right \vert }{4%
\mathbf{c}}}\int_{\left \vert y\right \vert >\frac{\mathbf{c}t-\left \vert
x\right \vert }{2}}\left( 1+t-\tau \right) ^{-5/2}e^{-\frac{\left( \left
\vert x-y\right \vert -\mathbf{c}\left( t-\tau \right) \right) ^{2}}{%
D_{0}\left( 1+t-\tau \right) }}\left( 1+\tau \right) ^{-2}\left( 1+\left(
\mathbf{c}t-\left \vert x\right \vert \right) ^{2}\right) ^{-2}dyd\tau \\
&\lesssim &\left( 1+\left( \mathbf{c}t-\left \vert x\right \vert \right)
^{2}\right) ^{-2}\int_{0}^{\frac{\mathbf{c}t-\left \vert x\right \vert }{4}%
}\left( 1+\tau \right) ^{-2}d\tau \\
&\lesssim &\left( 1+\mathbf{c}t-\left \vert x\right \vert \right) ^{-4}%
\hbox{,}
\end{eqnarray*}%
and%
\begin{eqnarray*}
J_{12} &\lesssim &\int_{0}^{\frac{t}{4}-\frac{\left \vert x\right \vert }{4%
\mathbf{c}}}\left( 1+t\right) ^{-5/2}\left( 1+\tau \right) ^{-2}\int_{\frac{%
\mathbf{c}t-\left \vert x\right \vert }{2}}^{\infty }\left( 1+r-\mathbf{c}%
\tau \right) ^{-4}r^{2}drd\tau \\
&\lesssim &\int_{0}^{\frac{\mathbf{c}t-\left \vert x\right \vert }{4}}\left(
1+t\right) ^{-5/2}\left( 1+\tau \right) ^{-2}\left( 1+\mathbf{c}t-\left
\vert x\right \vert \right) ^{-1}d\tau \\
&\lesssim &\left( 1+t\right) ^{-5/2}\left( 1+\mathbf{c}t-\left \vert x\right
\vert \right) ^{-1}\hbox{,}
\end{eqnarray*}%
which implies that
\begin{eqnarray*}
J_{12} &\lesssim &\left( J_{12}\right) ^{\frac{1}{3}}\left( J_{12}\right) ^{%
\frac{2}{3}}\lesssim \left[ \left( 1+\mathbf{c}t-\left \vert x\right \vert
\right) ^{-4}\right] ^{\frac{1}{3}}\left[ \left( 1+t\right) ^{-5/2}\left( 1+%
\mathbf{c}t-\left \vert x\right \vert \right) ^{-1}\right] ^{\frac{2}{3}} \\
&\lesssim &\left( 1+t\right) ^{-\frac{5}{3}}\left( 1+\mathbf{c}t-\left \vert
x\right \vert \right) ^{-2}\lesssim \left( 1+t\right) ^{-\frac{8}{3}}\left(
1+\frac{\left( \mathbf{c}t-\left \vert x\right \vert \right) ^{2}}{1+t}%
\right) ^{-1}\hbox{.}
\end{eqnarray*}%
Therefore, we get%
\begin{equation*}
J_{1}\lesssim \left( 1+t\right) ^{-5/2}e^{-\frac{\left( \mathbf{c}t-\left
\vert x\right \vert \right) ^{2}}{D_{0}\left( 1+t\right) }}+\left(
1+t\right) ^{-\frac{8}{3}}\left( 1+\frac{\left( \mathbf{c}t-\left \vert
x\right \vert \right) ^{2}}{1+t}\right) ^{-1}\hbox{.}
\end{equation*}

For $J_{2}$, we use the spherical coordinates to obtain

\begin{eqnarray*}
J_{2} &\lesssim &\int_{\frac{t}{4}-\frac{\left \vert x\right \vert }{4%
\mathbf{c}}}^{\frac{t}{2}}\int_{0}^{\infty }\int_{0}^{\pi }\left( 1+t-\tau
\right) ^{-5/2}e^{-\frac{\left( \sqrt{\left \vert x\right \vert
^{2}+r^{2}-2r\left \vert x\right \vert \cos \theta }-\mathbf{c}\left( t-\tau
\right) \right) ^{2}}{D_{0}\left( 1+t-\tau \right) }}\left( 1+\tau \right)
^{-4}\left( 1+\frac{\left( r-\mathbf{c}\tau \right) ^{2}}{1+\tau }\right)
^{-2}r^{2}\sin \theta d\theta drd\tau \\
&\lesssim &\int_{\frac{t}{4}-\frac{\left \vert x\right \vert }{4\mathbf{c}}%
}^{\frac{t}{2}}\int_{0}^{\infty }\int_{\left \vert \left \vert x\right \vert
-r\right \vert }^{\left \vert x\right \vert +r}\left( 1+t-\tau \right)
^{-5/2}e^{-\frac{\left( z-\mathbf{c}\left( t-\tau \right) \right) ^{2}}{%
D_{0}\left( 1+t-\tau \right) }}\left( 1+\tau \right) ^{-4}\left( 1+\frac{%
\left( r-\mathbf{c}\tau \right) ^{2}}{1+\tau }\right) ^{-2}rz\frac{1}{\left
\vert x\right \vert }dzdrd\tau \\
&\lesssim &\int_{\frac{t}{4}-\frac{\left \vert x\right \vert }{4\mathbf{c}}%
}^{\frac{t}{2}}\int_{0}^{\infty }\int_{0}^{\infty }\left( 1+t-\tau \right)
^{-5/2}e^{-\frac{\left( z-\mathbf{c}\left( t-\tau \right) \right) ^{2}}{%
D_{0}\left( 1+t-\tau \right) }}\left( 1+\tau \right) ^{-4}\left( 1+\frac{%
\left( r-\mathbf{c}\tau \right) ^{2}}{1+\tau }\right) ^{-2}rz\frac{1}{\left
\vert x\right \vert }dzdrd\tau \\
&\lesssim &\int_{\frac{t}{4}-\frac{\left \vert x\right \vert }{4\mathbf{c}}%
}^{\frac{t}{2}}\int_{0}^{\infty }\left( 1+t-\tau \right) ^{-\frac{5}{2}+%
\frac{3}{2}}\left( 1+\tau \right) ^{-4}\left( 1+\frac{\left( r-\mathbf{c}%
\tau \right) ^{2}}{1+\tau }\right) ^{-2}\frac{r}{\left \vert x\right \vert }%
drd\tau \\
&\lesssim &\left( 1+t\right) ^{-\frac{5}{2}+\frac{3}{2}}\left \vert x\right
\vert ^{-1}\int_{\frac{t}{4}-\frac{\left \vert x\right \vert }{4\mathbf{c}}%
}^{\frac{t}{2}}\left( 1+\tau \right) ^{-4+\frac{3}{2}}d\tau \\
&\lesssim &\left( 1+t\right) ^{-2}\left( 1+\mathbf{c}t-\left \vert x\right
\vert \right) ^{-\frac{3}{2}}\lesssim \left( 1+t\right) ^{-2}\left( 1+%
\mathbf{c}t-\left \vert x\right \vert \right) ^{\frac{1}{2}}\left( 1+\mathbf{%
c}t-\left \vert x\right \vert \right) ^{-2} \\
&\lesssim &\left( 1+t\right) ^{-5/2}\left( 1+\frac{\left( \mathbf{c}t-\left
\vert x\right \vert \right) ^{2}}{1+t}\right) ^{-1}\hbox{,}
\end{eqnarray*}%
by setting $z=\sqrt{\left \vert x\right \vert ^{2}+r^{2}-2r\left \vert
x\right \vert \cos \theta }$ and $\sin \theta d\theta =\frac{z}{r\left \vert
x\right \vert }dz$.

For $J_{3}$, we use the spherical coordinates again to obtain%
\begin{eqnarray*}
J_{3} &\lesssim &\int_{\frac{t}{2}}^{\frac{3|x|}{4\mathbf{c}}+\frac{t}{4}%
}\int_{0}^{\infty }\int_{0}^{\pi }\left( 1+t-\tau \right) ^{-5/2}e^{-\frac{%
\left( \sqrt{\left \vert x\right \vert ^{2}+r^{2}-2r\left \vert x\right
\vert \cos \theta }-\mathbf{c}\left( t-\tau \right) \right) ^{2}}{%
D_{0}\left( 1+t-\tau \right) }}\left( 1+t\right) ^{-4}\left( 1+\frac{\left(
r-\mathbf{c}\tau \right) ^{2}}{1+\tau }\right) ^{-2}r^{2}\sin \theta d\theta
drd\tau \\
&\lesssim &\int_{\frac{t}{2}}^{\frac{3|x|}{4\mathbf{c}}+\frac{t}{4}%
}\int_{0}^{\infty }\int_{\left \vert \left \vert x\right \vert -r\right
\vert }^{\left \vert x\right \vert +r}\left( 1+t-\tau \right) ^{-5/2}e^{-%
\frac{\left( z-\mathbf{c}\left( t-\tau \right) \right) ^{2}}{D_{0}\left(
1+t-\tau \right) }}\left( 1+t\right) ^{-4}\left( 1+\frac{\left( r-\mathbf{c}%
\tau \right) ^{2}}{1+\tau }\right) ^{-2}rz\frac{1}{\left \vert x\right \vert
}dzdrd\tau \\
&\lesssim &\left( 1+t\right) ^{-4}\left \vert x\right \vert ^{-1}\int_{\frac{%
t}{2}}^{\frac{3|x|}{4\mathbf{c}}+\frac{t}{4}} \int_{0}^{\infty }\left(
1+t-\tau \right) ^{-\frac{5}{2}+\frac{3}{2}}\left( 1+\frac{\left( r-\mathbf{c%
}\tau \right) ^{2}}{1+\tau }\right) ^{-2}rdrd\tau \\
&\lesssim &\left( 1+t\right) ^{-7/2}\int_{\frac{t}{2}}^{\frac{3|x|}{4\mathbf{%
c}}+\frac{t}{4}}\left( 1+t-\tau \right) ^{-1}d\tau \\
&\lesssim &\left( 1+t\right) ^{-7/2}\left[\ln \left(1+\frac{t}{2}\right)-\ln
\left(1+\frac{\mathbf{c}t-|x|}{4\mathbf{c}}\right)\right] \\
&\lesssim& \left( 1+t\right) ^{-5/2}\left( 1+\frac{\left( \mathbf{c}t-\left
\vert x\right \vert \right) ^{2}}{1+t}\right) ^{-1}\hbox{.}
\end{eqnarray*}
For $J_{4}$, similar to $J_{3}$, we have
\begin{eqnarray*}
J_{4} &\lesssim &\left( 1+t\right) ^{-7/2}\int_{\frac{3|x|}{4\mathbf{c}}+%
\frac{t}{4}}^{\frac{|x|}{4\mathbf{c}}+\frac{3t}{4}}\left( 1+t-\tau \right)
^{-1}d\tau \\
&\lesssim &\left( 1+t\right) ^{-7/2}\left[\left(1+\frac{3(\mathbf{c}t-|x|)}{4%
\mathbf{c}}\right)^{-1} \left(1+\frac{(\mathbf{c}t-|x|)}{2\mathbf{c}}\right)%
\right] \\
&\lesssim& \left( 1+t\right) ^{-5/2}\left( 1+\frac{\left( \mathbf{c}t-\left
\vert x\right \vert \right) ^{2}}{1+t}\right) ^{-1}\hbox{.}
\end{eqnarray*}
For $J_{5}$, we decompose $\mathbb{R}^{3}$ into two parts%
\begin{equation*}
J_{5}=\int_{\frac{t}{2}+\frac{1}{4}\left( t+\frac{\left \vert x\right \vert
}{\mathbf{c}}\right) }^{t}\left( \int_{\left \vert y\right \vert \leq \frac{%
\left \vert x\right \vert +\mathbf{c}t}{2}}+\int_{\left \vert y\right \vert >%
\frac{\left \vert x\right \vert +\mathbf{c}t}{2}}\right) \left( \cdots
\right) dyd\tau =:J_{51}+J_{52}\hbox{.}
\end{equation*}
If $\frac{t}{2}+\frac{1}{4}\left( t+\frac{\left \vert x\right \vert }{%
\mathbf{c}}\right) \leq \tau \leq t$, $\left \vert y\right \vert \leq \frac{%
\left \vert x\right \vert +\mathbf{c}t}{2}$, then%
\begin{equation*}
\mathbf{c}\tau -\left \vert y\right \vert \geq \frac{\mathbf{c}t}{2}+\frac{1%
}{4}\left( \mathbf{c}t+\left \vert x\right \vert \right) -\frac{\left \vert
x\right \vert +\mathbf{c}t}{2}\geq \frac{\mathbf{c}t-\left \vert x\right
\vert }{4}\hbox{.}
\end{equation*}%
If $\frac{t}{2}+\frac{1}{4}\left( t+\frac{\left \vert x\right \vert }{%
\mathbf{c}}\right) \leq \tau \leq t$, $\left \vert y\right \vert >\frac{%
\left \vert x\right \vert +\mathbf{c}t}{2}$, then%
\begin{eqnarray*}
\left \vert x-y\right \vert -\mathbf{c}\left( t-\tau \right) &\geq &\left
\vert y\right \vert -\left \vert x\right \vert -\mathbf{c}\left( t-\tau
\right) \geq \frac{\left \vert x\right \vert +\mathbf{c}t}{2}-\left \vert
x\right \vert -\mathbf{c}t+\frac{\mathbf{c}t}{2}+\frac{1}{4}\left( \mathbf{c}%
t+\left \vert x\right \vert \right) \geq \frac{\mathbf{c}t-\left \vert
x\right \vert }{4}\hbox{.}
\end{eqnarray*}%
Hence,
\begin{eqnarray*}
J_{51} &\lesssim &\int_{\frac{t}{2}+\frac{1}{4}\left( t+\frac{\left \vert
x\right \vert }{\mathbf{c}}\right) }^{t}\int_{\left \vert y\right \vert \leq
\frac{\left \vert x\right \vert +\mathbf{c}t}{2}}\left( 1+t-\tau \right)
^{-5/2}e^{-\frac{\left( \left \vert x-y\right \vert -\mathbf{c}\left( t-\tau
\right) \right) ^{2}}{D_{0}\left( 1+t-\tau \right) }}\left( 1+t\right)
^{-4}\left( 1+\frac{\left( \mathbf{c}t-\left \vert x\right \vert \right) ^{2}%
}{1+t}\right) ^{-2}dyd\tau \\
&\lesssim &\left( 1+t\right) ^{-4}\left( 1+\frac{\left( \mathbf{c}t-\left
\vert x\right \vert \right) ^{2}}{1+t}\right) ^{-2}\left( \mathbf{c}t-\left
\vert x\right \vert \right) \\
&\lesssim &\left( 1+t\right) ^{-3}\left( 1+\frac{\left( \mathbf{c}t-\left
\vert x\right \vert \right) ^{2}}{1+t}\right) ^{-2}\hbox{,}
\end{eqnarray*}%
\begin{eqnarray*}
J_{52} &\lesssim &\int_{\frac{t}{2}+\frac{1}{4}\left( t+\frac{\left \vert
x\right \vert }{\mathbf{c}}\right) }^{t}\int_{\left \vert y\right \vert >%
\frac{\left \vert x\right \vert +\mathbf{c}t}{2}}\left( 1+t-\tau \right)
^{-5/2}e^{-\frac{\left( \mathbf{c}t-\left \vert x\right \vert \right) ^{2}}{%
2D_{0}\left( 1+\mathbf{c}t-\left \vert x\right \vert \right) }}e^{-\frac{%
\left( \left \vert x-y\right \vert -\mathbf{c}\left( t-\tau \right) \right)
^{2}}{2D_{0}\left( 1+t-\tau \right) }}\left( 1+t\right) ^{-4}dyd\tau \\
&\lesssim &\left( 1+t\right) ^{-4}e^{-\frac{\left( \mathbf{c}t-\left \vert
x\right \vert \right)}{2D_{0}}}\int_{\frac{t}{2}+\frac{1}{4}\left( t+\frac{%
\left \vert x\right \vert }{\mathbf{c}}\right) }^{t}\left( 1+t-\tau \right)
^{-\frac{5}{2}+\frac{5}{2}}d\tau \\
&\lesssim &\left( 1+t\right) ^{-3}e^{-\frac{\left( \mathbf{c}t-\left \vert
x\right \vert \right)}{2D_{0}}}\hbox{.}
\end{eqnarray*}%
Gathering all the estimates, we have
\begin{equation*}
J\lesssim \left( 1+t\right) ^{-2}\left( 1+\frac{\left \vert x\right \vert
^{2}}{1+t}\right) ^{-\frac{3}{2}}+\left( 1+t\right) ^{-5/2}\left( 1+\frac{%
\left( \mathbf{c}t-\left \vert x\right \vert \right) ^{2}}{1+t}\right) ^{-1}%
\hbox{.}
\end{equation*}
\end{proof}

\begin{proof}[\textbf{Proof of Lemma \protect \ref{moving-nonmoving}}]
(Huygens wave convolved with diffusion wave). \newline
\newline
\textbf{Case 1:} $\left( x,t\right) \in D_{1}$. Direct computation gives
\begin{eqnarray*}
K &\lesssim &\left( 1+t\right) ^{-5/2}\int_{0}^{\frac{t}{2}}\int_{\mathbb{R}%
^{3}}\left( 1+\tau \right) ^{-3}\left( 1+\frac{\left \vert y\right \vert ^{2}%
}{1+\tau }\right) ^{-3}dyd\tau \\
&&+\left( 1+t\right) ^{-3}\int_{\frac{t}{2}}^{t}\int_{\mathbb{R}^{3}}\left(
1+t-\tau \right) ^{-5/2}\left( 1+\frac{\left( \left \vert x-y\right \vert -%
\mathbf{c}\left( t-\tau \right) \right) ^{2}}{D_{0}\left( 1+t-\tau \right) }%
\right) ^{-2}dyd\tau \\
&\lesssim &\left( 1+t\right) ^{-5/2}\int_{0}^{\frac{t}{2}}\left( 1+\tau
\right) ^{-3}\left( 1+\tau \right) ^{\frac{3}{2}}dyd\tau +\left( 1+t\right)
^{-3}\int_{\frac{t}{2}}^{t}\left( 1+t-\tau \right) ^{-5/2}\left( 1+t-\tau
\right) ^{\frac{5}{2}}d\tau \\
&\lesssim &\left( 1+t\right) ^{-5/2}+\left( 1+t\right) ^{-3}\left(
1+t\right)\lesssim \left( 1+t\right) ^{-2}\left( 1+\frac{\left \vert x\right
\vert ^{2}}{1+t}\right) ^{-\frac{3}{2}}\hbox{.}
\end{eqnarray*}
\newline
\textbf{Case 2:} $\left( x,t\right) \in D_{2}$. We split the integral $K$
into two parts%
\begin{equation*}
K=\left( \int_{0}^{\frac{3}{4}t}+\int_{\frac{3}{4}t}^{t}\right) \int_{%
\mathbb{R}^{3}}\left( \cdots \right) dyd\tau =:K_{1}+K_{2}\hbox{.}
\end{equation*}

For $K_{1}$, it is easy to see%
\begin{eqnarray*}
K_{1} &\lesssim &\int_{0}^{\frac{3}{4}t}\int_{\mathbb{R}^{3}}\left(
1+t\right) ^{-5/2}\left( 1+\tau \right) ^{-3}\left( 1+\frac{\left \vert
y\right \vert ^{2}}{1+\tau }\right) ^{-3}dyd\tau \\
&\lesssim &\int_{0}^{\frac{3}{4}t}\left( 1+t\right) ^{-5/2}\left( 1+\tau
\right) ^{-3}\left( 1+\tau \right) ^{\frac{3}{2}}d\tau \lesssim \left(
1+t\right) ^{-5/2}\lesssim \left( 1+t\right) ^{-5/2}\left( 1+\frac{\left(
\left \vert x\right \vert -\mathbf{c}t\right) ^{2}}{1+t}\right) ^{-1}\hbox{.}
\end{eqnarray*}

For $K_{2}$, we decompose $\mathbb{R}^{3}$ into two parts
\begin{equation*}
K_{2}=\int_{\frac{3}{4}t}^{t}\left( \int_{\left \vert y\right \vert \leq
\frac{\left \vert x\right \vert }{2}}+\int_{\left \vert y\right \vert >\frac{%
\left \vert x\right \vert }{2}}\right) \left( \cdots \right) dyd\tau
=:K_{21}+K_{22}.
\end{equation*}%
It readily follows that%
\begin{eqnarray*}
K_{22} &\lesssim &\left( 1+t\right) ^{-3}\left( 1+\frac{\left \vert x\right
\vert ^{2}}{1+t}\right) ^{-3}\int_{\frac{3}{4}t}^{t}\int_{\left \vert
y\right \vert >\frac{\left \vert x\right \vert }{2}}\left( 1+t-\tau \right)
^{-5/2}e^{-\frac{\left( \left \vert x-y\right \vert -\mathbf{c}\left( t-\tau
\right) \right) ^{2}}{D_{0}\left( 1+t-\tau \right) }}dyd\tau \\
&\lesssim &\left( 1+t\right) ^{-3}\left( 1+\frac{\left \vert x\right \vert
^{2}}{1+t}\right) ^{-3}\int_{\frac{3}{4}t}^{t}\left( 1+t-\tau \right)
^{-5/2}\left( 1+t-\tau \right) ^{\frac{5}{2}}d\tau \\
&\lesssim &\left( 1+t\right) ^{-2}\left( 1+\frac{\left \vert x\right \vert
^{2}}{1+t}\right) ^{-3}\hbox{.}
\end{eqnarray*}%
If $\frac{3}{4}t\leq \tau \leq t$, $\left \vert y\right \vert \leq \frac{%
\left \vert x\right \vert }{2}$, then
\begin{eqnarray*}
\left \vert x-y\right \vert -\mathbf{c}\left( t-\tau \right) &\geq &\left
\vert x\right \vert -\left \vert y\right \vert -\mathbf{c}\left( t-\tau
\right) \geq \frac{\left \vert x\right \vert }{2}-\frac{\mathbf{c}t}{4}\geq
\frac{1}{2}\left( \mathbf{c}t-\sqrt{1+t}\right) -\frac{\mathbf{c}t}{4} \\
&\geq &\frac{\mathbf{c}t}{8}+\left( \frac{\mathbf{c}t}{8}-\frac{\sqrt{1+t}}{2%
}\right) \geq \frac{\mathbf{c}t}{8}
\end{eqnarray*}%
for $t\geq 11$. Hence, there exists a universal constant $C>0$ such that%
\begin{eqnarray*}
K_{21} &\lesssim &\left( 1+t\right) ^{-3}e^{-\frac{t}{C}}\int_{\frac{3}{4}%
t}^{t}\int_{\mathbb{R}^{3}}\left( 1+t-\tau \right) ^{-5/2}\left( 1+\frac{%
\left \vert y\right \vert ^{2}}{1+\tau }\right) ^{-3}dyd\tau \\
&\lesssim &\left( 1+t\right) ^{-3}e^{-\frac{t}{C}}\int_{\frac{3}{4}%
t}^{t}\left( 1+t-\tau \right) ^{-5/2}\left( 1+\tau \right) ^{\frac{3}{2}%
}d\tau \\
&\lesssim &e^{-\frac{t}{C}}\lesssim \left( 1+t\right) ^{-5/2}\left( 1+\frac{%
\left( \left \vert x\right \vert -\mathbf{c}t\right) ^{2}}{1+t}\right) ^{-1}
\end{eqnarray*}%
for $t\geq 11$, and
\begin{eqnarray*}
K_{21} &\lesssim &\left( 1+t\right) ^{-3}\int_{\frac{3}{4}t}^{t}\int_{%
\mathbb{R}^{3}}\left( 1+t-\tau \right) ^{-5/2}\left( 1+\frac{\left \vert
y\right \vert ^{2}}{1+\tau }\right) ^{-3}dyd\tau \\
&\lesssim &C\lesssim \left( 1+t\right) ^{-5/2}\left( 1+\frac{\left( \left
\vert x\right \vert -\mathbf{c}t\right) ^{2}}{1+t}\right) ^{-1}
\end{eqnarray*}%
for $0\leq t\leq 11$. Therefore we can conclude that
\begin{equation*}
K\lesssim \left( 1+t\right) ^{-2}\left( 1+\frac{\left \vert x\right \vert
^{2}}{1+t}\right) ^{-3}+\left( 1+t\right) ^{-5/2}\left( 1+\frac{\left( \left
\vert x\right \vert -\mathbf{c}t\right) ^{2}}{1+t}\right) ^{-1}\hbox{.}
\end{equation*}
\newline
\textbf{Case 3:} $\left( x,t\right) \in D_{3}$. We split the integral $K$
into four parts%
\begin{eqnarray*}
K &=&\int_{0}^{\frac{t}{2}}\left( \int_{\left \vert y\right \vert \leq \frac{%
\left \vert x\right \vert -\mathbf{c}\left( t-\tau \right) }{2}}+\int_{\left
\vert y\right \vert >\frac{\left \vert x\right \vert -\mathbf{c}\left(
t-\tau \right) }{2}}\right) \left( \cdots \right) dyd\tau +\int_{\frac{t}{2}%
}^{t}\left( \int_{\left \vert y\right \vert \leq \frac{\left \vert x\right
\vert -\mathbf{c}\left( t-\tau \right) }{2}}+\int_{\left \vert y\right \vert
>\frac{\left \vert x\right \vert -\mathbf{c}\left( t-\tau \right) }{2}%
}\right) \left( \cdots \right) dyd\tau \\
&=&K_{11}+K_{12}+K_{21}+K_{22}\hbox{.}
\end{eqnarray*}

If $\left \vert y\right \vert \leq \frac{\left \vert x\right \vert -\mathbf{c%
}\left( t-\tau \right) }{2}$, then
\begin{equation*}
\left \vert x-y\right \vert -\mathbf{c}\left( t-\tau \right) \geq \left
\vert x\right \vert -\left \vert y\right \vert -\mathbf{c}\left( t-\tau
\right) \geq \frac{\left \vert x\right \vert -\mathbf{c}\left( t-\tau
\right) }{2}=\frac{\left \vert x\right \vert -\mathbf{c}t+\mathbf{c}\tau }{2}%
\hbox{.}
\end{equation*}%
Hence,

\begin{eqnarray*}
K_{11} &\lesssim &\int_{0}^{\frac{t}{2}}\int_{\left \vert y\right \vert \leq
\frac{\left \vert x\right \vert -\mathbf{c}\left( t-\tau \right) }{2}}\left(
1+t-\tau \right) ^{-5/2}e^{-\frac{\left( \left \vert x\right \vert -\mathbf{c%
}t\right) ^{2}+\left( \mathbf{c}\tau \right) ^{2}}{4D_{0}\left( 1+t-\tau
\right) }}\left( 1+\tau \right) ^{-3}\left( 1+\frac{\left \vert y\right
\vert ^{2}}{1+\tau }\right) ^{-3}dyd\tau \\
&\lesssim &\left( 1+t\right) ^{-5/2}e^{-\frac{\left( \left \vert x\right
\vert -\mathbf{c}t\right) ^{2}}{4D_{0}\left( 1+t\right) }}\int_{0}^{\frac{t}{%
2}}\left( 1+\tau \right) ^{-3}\left( 1+\tau \right) ^{\frac{3}{2}}dyd\tau
\lesssim \left( 1+t\right) ^{-5/2}e^{-\frac{\left( \left \vert x\right \vert
-\mathbf{c}t\right) ^{2}}{4D_{0}\left( 1+t\right) }}\hbox{,}
\end{eqnarray*}%
\begin{eqnarray*}
K_{12} &\lesssim &\left( 1+t\right) ^{-5/2}\int_{0}^{\frac{t}{2}}\int_{\left
\vert y\right \vert >\frac{\left \vert x\right \vert -\mathbf{c}\left(
t-\tau \right) }{2}}\left( 1+\tau +\left \vert y\right \vert ^{2}\right)
^{-3}dyd\tau \\
&\lesssim &\left( 1+t\right) ^{-5/2}\int_{0}^{\frac{t}{2}}\int_{\frac{\left
\vert x\right \vert -\mathbf{c}\left( t-\tau \right) }{2}}^{\infty }\left(
1+r\right) ^{-6}r^{2}drd\tau \lesssim \left( 1+t\right) ^{-5/2}\int_{0}^{%
\frac{t}{2}}\left( 1+\left \vert x\right \vert -\mathbf{c}t+\mathbf{c}\tau
\right) ^{-3}d\tau \\
&\lesssim &\left( 1+t\right) ^{-5/2}\left( 1+\left \vert x\right \vert -%
\mathbf{c}t\right) ^{-2}\lesssim \left( 1+t\right) ^{-7/2}\left( 1+\frac{%
\left( \left \vert x\right \vert -\mathbf{c}t\right) ^{2}}{1+t}\right) ^{-1}%
\hbox{,}
\end{eqnarray*}%
\begin{eqnarray*}
K_{21} &\lesssim &\int_{\frac{t}{2}}^{t}\int_{\left \vert y\right \vert \leq
\frac{\left \vert x\right \vert -\mathbf{c}\left( t-\tau \right) }{2}}\left(
1+t-\tau \right) ^{-5/2}e^{-\frac{\left( \left \vert x\right \vert -\mathbf{c%
}t\right) ^{2}+\left( \mathbf{c}\tau \right) ^{2}}{4D_{0}\left( 1+t-\tau
\right) }}\left( 1+\tau \right) ^{-3}\left( 1+\frac{\left \vert y\right
\vert ^{2}}{1+\tau }\right) ^{-3}dyd\tau \\
&\lesssim &e^{-\frac{\left( \left \vert x\right \vert -\mathbf{c}t\right)
^{2}}{4D_{0}\left( 1+t\right) }}e^{-\frac{t}{C}}\int_{\frac{t}{2}}^{t}\left(
1+t-\tau \right) ^{-5/2}\left( 1+\tau \right) ^{-\frac{3}{2}}d\tau \lesssim
e^{-\frac{t}{C}}e^{-\frac{\left( \left \vert x\right \vert -\mathbf{c}%
t\right) ^{2}}{4D_{0}\left( 1+t\right) }}
\end{eqnarray*}%
for some $C>0$, and%
\begin{eqnarray*}
K_{22} &\lesssim &\left( 1+t\right) ^{-3}\left( 1+\frac{\left( \left \vert
x\right \vert -\mathbf{c}t\right) ^{2}+\mathbf{c}^{2}t^{2}}{1+t}\right)
^{-3}\int_{\frac{t}{2}}^{t}\int_{\mathbb{R}^{3}}\left( 1+t-\tau \right)
^{-5/2}e^{-\frac{\left( \left \vert x-y\right \vert -\mathbf{c}\left( t-\tau
\right) \right) ^{2}}{D_{0}\left( 1+t-\tau \right) }}dyd\tau \\
&\lesssim &\left( 1+t\right) ^{-3}\left( 1+\frac{\left( \left \vert x\right
\vert -\mathbf{c}t\right) ^{2}}{1+t}+t\right) ^{-3}\int_{\frac{t}{2}%
}^{t}\left( 1+t-\tau \right) ^{-5/2}\left( 1+t-\tau \right) ^{\frac{5}{2}%
}d\tau \\
&\lesssim &\left( 1+t\right) ^{-4}\left( 1+\frac{\left( \left \vert x\right
\vert -\mathbf{c}t\right) ^{2}}{1+t}\right) ^{-1}\hbox{.}
\end{eqnarray*}%
It implies that
\begin{equation*}
K\lesssim \left( 1+t\right) ^{-5/2}\left( 1+\frac{\left( \left \vert x\right
\vert -\mathbf{c}t\right) ^{2}}{1+t}\right) ^{-1}\hbox{.}
\end{equation*}
\newline
\textbf{Case 4:} $\left( x,t\right) \in D_{4}$. First note that $\sqrt{1+t}%
\leq \left \vert x\right \vert \leq \mathbf{c}t/2$ in this region. Now we
split the integral $K$ into three parts
\begin{equation*}
K=\left( \int_{0}^{\frac{t}{2}-\frac{\left \vert x\right \vert }{2\mathbf{c}}%
}+\int_{\frac{t}{2}-\frac{\left \vert x\right \vert }{2\mathbf{c}}}^{t-\frac{%
\left \vert x\right \vert }{2\mathbf{c}}}+\int_{t-\frac{\left \vert x\right
\vert }{2\mathbf{c}}}^{t}\right) \int_{\mathbb{R}^{3}}\left( \cdots \right)
dyd\tau =:K_{1}+K_{2}+K_{3}\hbox{.}
\end{equation*}

For $K_{1}$, we decompose $\mathbb{R}^{3}$ into two parts
\begin{equation*}
K_{1}=\int_{0}^{\frac{t}{2}-\frac{\left \vert x\right \vert }{2\mathbf{c}}%
}\left( \int_{\left \vert y\right \vert \leq \frac{\mathbf{c}\left( t-\tau
\right) -\left \vert x\right \vert }{2}}+\int_{\left \vert y\right \vert >%
\frac{\mathbf{c}\left( t-\tau \right) -\left \vert x\right \vert }{2}%
}\right) \left( \cdots \right) dyd\tau =:K_{11}+K_{12}\hbox{.}
\end{equation*}%
If $0\leq \tau \leq \frac{t}{2}-\frac{\left \vert x\right \vert }{2\mathbf{c}%
}$, $\left \vert y\right \vert \leq \frac{\mathbf{c}\left( t-\tau \right)
-\left \vert x\right \vert }{2}$, then
\begin{equation*}
\mathbf{c}\left( t-\tau \right) -\left \vert x\right \vert -\left \vert
y\right \vert \geq \frac{\mathbf{c}\left( t-\tau \right) -\left \vert
x\right \vert }{2}\geq \frac{\mathbf{c}t-\left \vert x\right \vert }{4}%
\hbox{.}
\end{equation*}%
If $0\leq \tau \leq \frac{t}{2}-\frac{\left \vert x\right \vert }{2\mathbf{c}%
}$, $\left \vert y\right \vert >\frac{\mathbf{c}\left( t-\tau \right)
-\left
\vert x\right \vert }{2}$, then%
\begin{equation*}
\left \vert y\right \vert >\frac{\mathbf{c}t-\left \vert x\right \vert }{4}%
\hbox{.}
\end{equation*}%
Hence, we have
\begin{eqnarray*}
K_{11} &\lesssim &\left( 1+\frac{t}{2}+\frac{\left \vert x\right \vert }{2%
\mathbf{c}}\right) ^{-5/2}e^{-\frac{\left( \left \vert x\right \vert -%
\mathbf{c}t\right) ^{2}}{16D_{0}\left( 1+t\right) }}\int_{0}^{\frac{t}{2}-%
\frac{\left \vert x\right \vert }{2\mathbf{c}}}\int_{\left \vert y\right
\vert \leq \frac{\mathbf{c}\left( t-\tau \right) -\left \vert x\right \vert
}{2}}\left( 1+\tau \right) ^{-3}\left( 1+\frac{\left( \left \vert y\right
\vert -\mathbf{c}\tau \right) ^{2}}{1+\tau }\right) ^{-3}dyd\tau \\
&\lesssim &\left( 1+t\right) ^{-5/2}e^{-\frac{\left( \left \vert x\right
\vert -\mathbf{c}t\right) ^{2}}{16D_{0}\left( 1+t\right) }}\int_{0}^{\frac{t%
}{2}-\frac{\left \vert x\right \vert }{2\mathbf{c}}}\left( 1+\tau \right)
^{-3+\frac{3}{2}}d\tau \lesssim \left( 1+t\right) ^{-5/2}e^{-\frac{\left(
\left \vert x\right \vert -\mathbf{c}t\right) ^{2}}{4D_{0}\left( 1+t\right) }%
}\hbox{,}
\end{eqnarray*}%
\begin{eqnarray*}
K_{12} &\lesssim &\left( 1+t\right) ^{-5/2}\int_{0}^{\frac{t}{2}-\frac{\left
\vert x\right \vert }{2\mathbf{c}}}\int_{\left \vert y\right \vert \geq
\frac{\mathbf{c}\left( t-\tau \right) -\left \vert x\right \vert }{2}}\left(
1+\tau \right) ^{-3}\left( 1+\frac{\left \vert y\right \vert ^{2}}{1+\tau }%
\right) ^{-3}dyd\tau \\
&\lesssim &\left( 1+t\right) ^{-5/2}\int_{0}^{\frac{t}{2}-\frac{\left \vert
x\right \vert }{2\mathbf{c}}}\int_{\frac{\mathbf{c}\left( t-\tau \right)
-\left \vert x\right \vert }{2}}^{\infty }r^{-6}r^{2}drd\tau \lesssim \left(
1+t\right) ^{-5/2}\left( 1+\mathbf{c}t-\left \vert x\right \vert \right)
^{-2} \\
&\lesssim &\left( 1+t\right) ^{-7/2}\left( 1+\frac{\left( \left \vert
x\right \vert -\mathbf{c}t\right) ^{2}}{1+t}\right) ^{-1}\hbox{,}
\end{eqnarray*}%
so that%
\begin{equation*}
K_{1}\lesssim \left( 1+t\right) ^{-5/2}\left( 1+\frac{\left( \left \vert
x\right \vert -\mathbf{c}t\right) ^{2}}{1+t}\right) ^{-1}\hbox{.}
\end{equation*}

For $K_{2}$, we use the spherical coordinates to obtain
\begin{eqnarray*}
K_{2} &=&\int_{\frac{t}{2}-\frac{\left \vert x\right \vert }{2\mathbf{c}}%
}^{t-\frac{\left \vert x\right \vert }{2\mathbf{c}}}\int_{0}^{\infty
}\int_{0}^{\pi }\left( 1+t-\tau \right) ^{-5/2}e^{-\frac{\left( \sqrt{\left
\vert x\right \vert ^{2}+r^{2}-2r\left \vert x\right \vert \cos \theta }-%
\mathbf{c}\left( t-\tau \right) \right) ^{2}}{D_{0}\left( 1+t-\tau \right) }%
}\left( 1+\tau \right) ^{-3}\left( 1+\frac{r^{2}}{1+\tau }\right)
^{-3}r^{2}\sin \theta d\theta drd\tau \\
&=&\int_{\frac{t}{2}-\frac{\left \vert x\right \vert }{2\mathbf{c}}}^{t-%
\frac{\left \vert x\right \vert }{2\mathbf{c}}}\int_{0}^{\infty }\int_{\left
\vert \left \vert x\right \vert -r\right \vert }^{\left \vert x\right \vert
+r}\left( 1+t-\tau \right) ^{-5/2}e^{-\frac{\left( z-\mathbf{c}\left( t-\tau
\right) \right) ^{2}}{D_{0}\left( 1+t-\tau \right) }}\left( 1+\tau \right)
^{-3}\left( 1+\frac{r^{2}}{1+\tau }\right) ^{-3}r^{2}\frac{zdz}{r\left \vert
x\right \vert }drd\tau \\
&\lesssim &\int_{\frac{t}{2}-\frac{\left \vert x\right \vert }{2\mathbf{c}}%
}^{t-\frac{\left \vert x\right \vert }{2\mathbf{c}}}\int_{0}^{\infty
}\int_{\left \vert \left \vert x\right \vert -r\right \vert }^{\left \vert
x\right \vert +r}\left( 1+\left \vert x\right \vert \right) ^{-5/2}e^{-\frac{%
\left( c\tau -\left( \mathbf{c}t-z\right) \right) ^{2}}{D_{0}\left(
1+t\right) }}\left( 1+t\right) ^{-3}\left( 1+\frac{r^{2}}{1+t}\right) ^{-3}r%
\frac{zdz}{\left \vert x\right \vert }drd\tau \\
&\lesssim &\frac{1}{\left \vert x\right \vert }\int_{0}^{\infty }\int_{\left
\vert \left \vert x\right \vert -r\right \vert }^{\left \vert x\right \vert
+r}\left( 1+\left \vert x\right \vert \right) ^{-5/2}\sqrt{1+t}\left(
1+t\right) ^{-3}\left( 1+\frac{r^{2}}{1+t}\right) ^{-3}rzdzdr \\
&\lesssim &\left( 1+\left \vert x\right \vert \right) ^{-7/2}\left(
1+t\right) ^{-\frac{5}{2}}\int_{0}^{\infty }\int_{\left \vert \left \vert
x\right \vert -r\right \vert }^{\left \vert x\right \vert +r}\left( 1+\frac{%
r^{2}}{1+t}\right) ^{-3}rzdzdr \\
&\lesssim &\left( 1+\left \vert x\right \vert \right) ^{-7/2}\left(
1+t\right) ^{-\frac{5}{2}}\left[ \int_{0}^{\left \vert x\right \vert
}\int_{\left \vert x\right \vert -r}^{\left \vert x\right \vert +r}zr\left(
1+\frac{r^{2}}{1+t}\right) ^{-3}dzdr+\int_{\left \vert x\right \vert
}^{\infty }\int_{r-\left \vert x\right \vert }^{r+\left \vert x\right \vert
}zr\left( 1+\frac{r^{2}}{1+t}\right) ^{-3}dzdr\right] \\
&\lesssim &\left( 1+\left \vert x\right \vert \right) ^{-7/2}\left(
1+t\right) ^{-\frac{5}{2}}\left[ \int_{0}^{\left \vert x\right \vert
}r^{2}\left \vert x\right \vert \left( 1+\frac{r^{2}}{1+t}\right)
^{-3}dr+\int_{\left \vert x\right \vert }^{\infty }r^{2}\left \vert x\right
\vert \left( 1+\frac{r^{2}}{1+t}\right) ^{-3}dr\right] \\
&\lesssim &\left( 1+\left \vert x\right \vert \right) ^{-7/2}\left(
1+t\right) ^{-\frac{5}{2}}\left[ \left \vert x\right \vert \left( 1+t\right)
^{\frac{3}{2}}+\left \vert x\right \vert \left( 1+t\right) ^{\frac{3}{2}%
}\left( 1+\frac{\left \vert x\right \vert }{\sqrt{1+t}}\right) ^{-3}\right]
\\
&\lesssim &\left( 1+\left \vert x\right \vert \right) ^{-5/2}\left(
1+t\right) ^{-1}\lesssim \left( 1+t\right) ^{-2}\left( 1+\frac{\left \vert
x\right \vert ^{2}}{ 1+t }\right) ^{-\frac{3}{2}}\hbox{.}
\end{eqnarray*}%
by setting $z=\sqrt{\left \vert x\right \vert ^{2}+r^{2}-2r\left \vert
x\right \vert \cos \theta }$ and $\sin \theta d\theta =\frac{z}{r\left \vert
x\right \vert }dz$.

For $K_{3}$, we split $\mathbb{R}^{3}$ into two parts
\begin{equation*}
K_{3}=\int_{t-\frac{\left \vert x\right \vert }{2\mathbf{c}}}^{t}\left(
\int_{\left \vert y\right \vert \leq \frac{\left \vert x\right \vert -%
\mathbf{c}\left( t-\tau \right) }{2}}+\int_{\left \vert y\right \vert >\frac{%
\left \vert x\right \vert -\mathbf{c}\left( t-\tau \right) }{2}}\right)
\left( \cdots \right) dyd\tau =:K_{31}+K_{32}\hbox{.}
\end{equation*}%
If $t-\frac{\left \vert x\right \vert }{2\mathbf{c}}<\tau \leq t$, $%
\left
\vert y\right \vert \leq \frac{\left \vert x\right \vert -\mathbf{c}%
\left( t-\tau \right) }{2}$, then
\begin{equation*}
\left \vert x-y\right \vert -\mathbf{c}\left( t-\tau \right) \geq \left
\vert x\right \vert -\left \vert y\right \vert -\mathbf{c}\left( t-\tau
\right) \geq \frac{\left \vert x\right \vert -\mathbf{c}\left( t-\tau
\right) }{2}\geq \frac{\left \vert x\right \vert }{4}\hbox{.}
\end{equation*}%
If $t-\frac{\left \vert x\right \vert }{2\mathbf{c}}<\tau \leq t$, $%
\left
\vert y\right \vert >\frac{\left \vert x\right \vert -\mathbf{c}%
\left( t-\tau \right) }{2}$, then%
\begin{equation*}
\left \vert y\right \vert >\frac{\left \vert x\right \vert -\mathbf{c}\left(
t-\tau \right) }{2}\geq \frac{\left \vert x\right \vert }{4}\hbox{.}
\end{equation*}%
Hence,
\begin{eqnarray*}
K_{31} &\lesssim &\left( 1+|x|^{2}\right) ^{-\frac{5}{2}}\int_{t-\frac{\left
\vert x\right \vert }{2\mathbf{c}}}^{t}\int_{\mathbb{R}^{3}}\left( 1+\tau
\right) ^{-3}\left( 1+\frac{\left \vert y\right \vert ^{2}}{1+\tau }\right)
^{-3}dyd\tau \\
&\lesssim &\left( 1+|x|^{2}\right) ^{-\frac{5}{2}}\int_{t-\frac{\left \vert
x\right \vert }{2\mathbf{c}}}^{t}\left( 1+\tau \right) ^{-\frac{3}{2}}d\tau
\\
&\lesssim & \left( 1+t\right) ^{-3}\left( 1+\frac{\left \vert x\right \vert
^{2}}{1+t }\right) ^{-\frac{3}{2}}\hbox{,}
\end{eqnarray*}%
\begin{eqnarray*}
K_{32} &\lesssim &\left( 1+t-\frac{\left \vert x\right \vert }{2\mathbf{c}}%
\right) ^{-3}\left( 1+\frac{\left \vert x\right \vert ^{2}}{ 1+t }\right)
^{-3}\int_{t-\frac{\left \vert x\right \vert }{2\mathbf{c}}}^{t}\int_{\left
\vert y\right \vert >\frac{\left \vert x\right \vert -\mathbf{c}\left(
t-\tau \right) }{2}}\left( 1+t-\tau \right) ^{-5/2}e^{-\frac{\left( \left
\vert x-y\right \vert -\mathbf{c}\left( t-\tau \right) \right) ^{2}}{%
D_{0}\left( 1+t-\tau \right) }}dyd\tau \\
&\lesssim &\left( 1+t-\frac{\left \vert x\right \vert }{2\mathbf{c}}\right)
^{-3}\left( 1+\frac{\left \vert x\right \vert ^{2}} { 1+t }\right)
^{-3}\int_{t-\frac{\left \vert x\right \vert }{2\mathbf{c}}}^{t}\left(
1+t-\tau \right) ^{-5/2}\left( 1+t-\tau \right) ^{\frac{5}{2}}d\tau \\
&\lesssim & \left( 1+t\right) ^{-2}\left( 1+\frac{\left \vert x\right \vert
^{2}}{ 1+t }\right) ^{-3}\hbox{,}
\end{eqnarray*}%
so that
\begin{equation*}
K_{3}\lesssim \left( 1+t\right) ^{-2}\left( 1+\frac{\left \vert x\right
\vert ^{2}}{ 1+t }\right) ^{-\frac{3}{2}}\hbox{.}
\end{equation*}%
To sum up,
\begin{equation*}
K\lesssim \left( 1+t\right) ^{-2}\left( 1+\frac{\left \vert x\right \vert
^{2}}{ 1+t }\right) ^{-\frac{3}{2}}+\left( 1+t\right) ^{-5/2}\left( 1+\frac{%
\left( \left \vert x\right \vert -\mathbf{c}t\right) ^{2}}{1+t}\right) ^{-1}%
\hbox{.}
\end{equation*}
\newline
\textbf{Case 5:} $\left( x,t\right) \in D_{5}$. We split the integral $K$
into three parts%
\begin{equation*}
K=\left( \int_{0}^{\frac{t}{2}-\frac{\left \vert x\right \vert }{2\mathbf{c}}%
}+\int_{\frac{t}{2}-\frac{\left \vert x\right \vert }{2\mathbf{c}}}^{\frac{3%
}{2}\left( t-\frac{\left \vert x\right \vert }{\mathbf{c}}\right) }+\int_{%
\frac{3}{2}\left( t-\frac{\left \vert x\right \vert }{\mathbf{c}}\right)
}^{t}\right) \int_{\mathbb{R}^{3}}\left( \cdots \right) dyd\tau
=:K_{1}+K_{2}+K_{3}\hbox{.}
\end{equation*}

For $K_{1}$, the estimate is the same as the term $K_{1}$ of Case 4 and so
\begin{equation*}
K_{1}\lesssim \left( 1+t\right) ^{-5/2}\left( 1+\frac{\left( \left \vert
x\right \vert -\mathbf{c}t\right) ^{2}}{1+t}\right) ^{-1}\hbox{.}
\end{equation*}

For $K_{2}$, we use the spherical coordinates to obtain
\begin{eqnarray*}
K_{2} &\lesssim &\int_{\frac{t}{2}-\frac{\left \vert x\right \vert }{2%
\mathbf{c}}}^{\frac{3}{2}\left( t-\frac{\left \vert x\right \vert }{\mathbf{c%
}}\right) }\int_{\mathbb{R}^{3}}\left( 1+t\right) ^{-5/2}e^{-\frac{\left(
\left \vert x-y\right \vert -\mathbf{c}\left( t-\tau \right) \right) ^{2}}{%
D_{0}\left( 1+t-\tau \right) }}\left( 1+\tau \right) ^{-3}\left( 1+\frac{%
\left \vert y\right \vert ^{2}}{1+\tau }\right) ^{-3}dyd\tau \\
&\lesssim &\int_{\frac{t}{2}-\frac{\left \vert x\right \vert }{2\mathbf{c}}%
}^{\frac{3}{2}\left( t-\frac{\left \vert x\right \vert }{\mathbf{c}}\right)
}\int_{0}^{\infty }\int_{0}^{\pi }\left( 1+t\right) ^{-5/2}e^{-\frac{\left(
\sqrt{\left \vert x\right \vert ^{2}+r^{2}-2r\left \vert x\right \vert \cos
\theta }-\mathbf{c}\left( t-\tau \right) \right) ^{2}}{D_{0}\left(
1+t\right) }}\left( 1+\tau \right) ^{-3}\left( 1+\frac{r^{2}}{1+\tau }%
\right) ^{-3}r^{2}\sin \theta d\theta drd\tau \\
&\lesssim &\int_{\frac{t}{2}-\frac{\left \vert x\right \vert }{2\mathbf{c}}%
}^{\frac{3}{2}\left( t-\frac{\left \vert x\right \vert }{\mathbf{c}}\right)
}\int_{0}^{\infty }\int_{\left \vert \left \vert x\right \vert -r\right
\vert }^{\left \vert x\right \vert +r}\left( 1+t\right) ^{-5/2}e^{-\frac{%
\left( z-\mathbf{c}\left( t-\tau \right) \right) ^{2}}{D_{0}\left(
1+t\right) }}z\left( 1+\mathbf{c}t-\left \vert x\right \vert \right)
^{-3}\left( 1+\frac{r^{2}}{1+\tau }\right) ^{-3}\frac{r}{\left \vert x\right
\vert }dzdrd\tau \\
&\lesssim &\left( 1+t\right) ^{-7/2}\left( 1+\mathbf{c}t-\left \vert x\right
\vert \right) ^{-3}\int_{\frac{t}{2}-\frac{\left \vert x\right \vert }{2%
\mathbf{c}}}^{\frac{3}{2}\left( t-\frac{\left \vert x\right \vert }{\mathbf{c%
}}\right) }\int_{0}^{\infty }\int_{\left \vert \left \vert x\right \vert
-r\right \vert }^{\left \vert x\right \vert +r}e^{-\frac{\left( c\tau
-\left( \mathbf{c}t-z\right) \right) ^{2}}{D_{0}\left( 1+t\right) }}z\left(
1+\frac{r^{2}}{1+\tau }\right) ^{-3}rdzdrd\tau \\
&\lesssim &\left( 1+t\right) ^{-7/2}\left( 1+\mathbf{c}t-\left \vert x\right
\vert \right) ^{-3}\left( 1+t\right) ^{\frac{1}{2}} \\
&&\cdot \left[ \int_{0}^{\left \vert x\right \vert }\int_{\left \vert
x\right \vert -r}^{\left \vert x\right \vert +r}z\left( 1+\frac{r^{2}}{1+%
\mathbf{c}t-\left \vert x\right \vert }\right) ^{-3}rdzdr+\int_{\left \vert
x\right \vert }^{\infty }\int_{r-\left \vert x\right \vert }^{r+\left \vert
x\right \vert }z\left( 1+\frac{r^{2}}{1+\mathbf{c}t-\left \vert x\right
\vert }\right) ^{-3}rdzdr\right] \\
&\lesssim &\left( 1+t\right) ^{-3}\left( 1+\mathbf{c}t-\left \vert x\right
\vert \right) ^{-3}\left[ \int_{0}^{\left \vert x\right \vert }r^{2}\left
\vert x\right \vert \left( 1+\frac{r^{2}}{1+\mathbf{c}t-\left \vert x\right
\vert }\right) ^{-3}dr+\int_{\left \vert x\right \vert }^{\infty }r^{2}\left
\vert x\right \vert \left( 1+\frac{r^{2}}{1+\mathbf{c}t-\left \vert x\right
\vert }\right) ^{-3}dr\right] \\
&\lesssim &\left( 1+t\right) ^{-3}\left( 1+\mathbf{c}t-\left \vert x\right
\vert \right) ^{-3}\left[ \left \vert x\right \vert \left( 1+\mathbf{c}%
t-\left \vert x\right \vert \right) ^{\frac{3}{2}}+\left \vert x\right \vert
\left( 1+\mathbf{c}t-\left \vert x\right \vert \right) ^{3}\left( 1+\left
\vert x\right \vert \right) ^{-3}\right] \\
&\lesssim &\left( 1+t\right) ^{-2}\left( 1+\mathbf{c}t-\left \vert x\right
\vert \right) ^{-\frac{3}{2}}\lesssim \left( 1+t\right) ^{-2 }\left( 1+%
\mathbf{c}t-\left \vert x\right \vert \right) ^{\frac{1}{2}}\left( 1+\mathbf{%
c}t-\left \vert x\right \vert \right) ^{-2} \\
&\lesssim & \left( 1+t\right) ^{-5/2}\left( 1+\frac{\left( \mathbf{c}t-\left
\vert x\right \vert \right) ^{2}}{1+t}\right) ^{-1}\hbox{.}
\end{eqnarray*}

For $K_{3}$, we decompose $\mathbb{R}^{3}$ into two parts%
\begin{equation*}
K_{3}=\int_{\frac{3}{2}\left( t-\frac{\left \vert x\right \vert }{\mathbf{c}}%
\right) }^{t}\left( \int_{\left \vert y\right \vert \leq \frac{\left \vert
x\right \vert -\mathbf{c}\left( t-\tau \right) }{2}}+\int_{\left \vert
y\right \vert >\frac{\left \vert x\right \vert -\mathbf{c}\left( t-\tau
\right) }{2}}\right) \left( \cdots \right) dyd\tau =K_{31}+K_{32}\hbox{.}
\end{equation*}%
If $\frac{3}{2}\left( t-\frac{\left \vert x\right \vert }{\mathbf{c}}\right)
\leq \tau \leq t$, $\left \vert y\right \vert \leq \frac{\left \vert
x\right
\vert -\mathbf{c}\left( t-\tau \right) }{2}$, then
\begin{equation*}
\left \vert x-y\right \vert -\mathbf{c}\left( t-\tau \right) \geq \left
\vert x\right \vert -\left \vert y\right \vert -\mathbf{c}\left( t-\tau
\right) \geq \frac{\left \vert x\right \vert -\mathbf{c}\left( t-\tau
\right) }{2}\geq \frac{\mathbf{c}t-\left \vert x\right \vert }{4}\hbox{.}
\end{equation*}%
If $\frac{3}{2}\left( t-\frac{\left \vert x\right \vert }{\mathbf{c}}\right)
\leq \tau \leq t$, $\left \vert y\right \vert >\frac{\left \vert
x\right
\vert -\mathbf{c}\left( t-\tau \right) }{2}$, then
\begin{equation*}
\left \vert y\right \vert >\frac{\left \vert x\right \vert -\mathbf{c}\left(
t-\tau \right) }{2}\geq \frac{\mathbf{c}t-\left \vert x\right \vert }{4}%
\hbox{.}
\end{equation*}%
Hence, we use the spherical coordinates to obtain
\begin{eqnarray*}
K_{31} &\lesssim &\int_{\frac{3}{2}\left( t-\frac{\left \vert x\right \vert
}{\mathbf{c}}\right) }^{t}\int_{0}^{\frac{\left \vert x\right \vert -\mathbf{%
c}\left( t-\tau \right) }{2}}\int_{0}^{\pi }\left( 1+t-\tau \right)
^{-5/2}e^{-\frac{\left( \mathbf{c}t-\left \vert x\right \vert \right) ^{2}}{%
32D_{0}\left( 1+t-\tau \right) }}e^{-\frac{\left( \sqrt{\left \vert x\right
\vert ^{2}+r^{2}-2r\left \vert x\right \vert \cos \theta }-\mathbf{c}\left(
t-\tau \right) \right) ^{2}}{2D_{0}\left( 1+t-\tau \right) }} \\
&&\cdot \left( 1+\tau \right) ^{-3}\left( 1+\frac{r^{2}}{1+\tau }\right)
^{-3}r^{2}\sin \theta d\theta drd\tau \\
&\lesssim &e^{-\frac{\left( \mathbf{c}t-\left \vert x\right \vert \right)
^{2}}{32D_{0}\left( 1+t\right) }}\int_{\frac{3}{2}\left( t-\frac{\left \vert
x\right \vert }{\mathbf{c}}\right) }^{t}\int_{0}^{\frac{\left \vert x\right
\vert -\mathbf{c}\left( t-\tau \right) }{2}}\int_{\left \vert x\right \vert
-r}^{\left \vert x\right \vert +r}\left( 1+t-\tau \right) ^{-5/2}e^{-\frac{%
\left( z-\mathbf{c}\left( t-\tau \right) \right) ^{2}}{2D_{0}\left( 1+t-\tau
\right) }}z\left( 1+\tau \right) ^{-3} \\
&&\cdot \left( 1+\frac{r^{2}}{1+\tau }\right) ^{-3}\frac{r}{\left \vert
x\right \vert }dzdrd\tau \\
&\lesssim &\frac{1}{\left \vert x\right \vert }e^{-\frac{\left( \mathbf{c}%
t-\left \vert x\right \vert \right) ^{2}}{32D_{0}\left( 1+t\right) }}\int_{%
\frac{3}{2}\left( t-\frac{\left \vert x\right \vert }{\mathbf{c}}\right)
}^{t}\left( 1+t-\tau \right) ^{-5/2}\left( 1+\tau \right) ^{-3}\left(
1+t-\tau \right) ^{\frac{3}{2}}\left( 1+\tau \right) d\tau \\
&\lesssim &\frac{1}{\left \vert x\right \vert }e^{-\frac{\left( \mathbf{c}%
t-\left \vert x\right \vert \right) ^{2}}{32D_{0}\left( 1+t\right) }}\left(
\int_{\frac{3}{2}\left( t-\frac{\left \vert x\right \vert }{\mathbf{c}}%
\right) }^{\frac{1}{2}\left( t+\frac{3}{2}\left( t-\frac{\left \vert x\right
\vert }{\mathbf{c}}\right) \right) }+\int_{\frac{1}{2}\left( t+\frac{3}{2}%
\left( t-\frac{\left \vert x\right \vert }{\mathbf{c}}\right) \right)
}^{t}\right) \left( 1+t-\tau \right) ^{-1}\left( 1+\tau \right) ^{-2}d\tau \\
&\lesssim &\frac{e^{-\frac{\left( \mathbf{c}t-\left \vert x\right \vert
\right) ^{2}}{32D_{0}\left( 1+t\right) }}}{\left \vert x\right \vert }\left[
\left( 1+\frac{3\left \vert x\right \vert }{4\mathbf{c}}-\frac{t}{4}\right)
^{-1}\left( 1+\frac{3}{2}\left( t-\frac{\left \vert x\right \vert }{\mathbf{c%
}}\right) \right) ^{-1}+\left( 1+t\right) ^{-2}\ln \left(1+\frac{3|x|}{4%
\mathbf{c}}-\frac{t}{4}\right) \right] \\
&\lesssim &\frac{e^{-\frac{\left( \mathbf{c}t-\left \vert x\right \vert
\right) ^{2}}{32D_{0}\left( 1+t\right) }}}{1+t}\left(
1+t\right)^{-3/2}\lesssim \left( 1+t\right) ^{-5/2}\left( 1+\frac{\left(
\mathbf{c}t-x\right) ^{2}}{1+t}\right) ^{-1}\hbox{,}
\end{eqnarray*}%
by setting $z=\sqrt{\left \vert x\right \vert ^{2}+r^{2}-2r\left \vert
x\right \vert \cos \theta }$ and $\sin \theta d\theta =\frac{z}{r\left \vert
x\right \vert }dz$. As for $K_{32}$, direct computation gives%
\begin{eqnarray*}
K_{32} &\lesssim &\int_{\frac{3}{2}\left( t-\frac{\left \vert x\right \vert
}{\mathbf{c}}\right) }^{t}\int_{\left \vert y\right \vert >\frac{\left \vert
x\right \vert -\mathbf{c}\left( t-\tau \right) }{2}}\left( 1+t-\tau \right)
^{-5/2}e^{-\frac{\left( \left \vert x-y\right \vert -\mathbf{c}\left( t-\tau
\right) \right) ^{2}}{D_{0}\left( 1+t-\tau \right) }}\left( 1+\left \vert
y\right \vert ^{2}\right) ^{-3}dyd\tau \\
&\lesssim &\left( \int_{\frac{3}{2}\left( t-\frac{\left \vert x\right \vert
}{\mathbf{c}}\right) }^{\frac{1}{2}\left( t+\frac{3}{2}\left( t-\frac{\left
\vert x\right \vert }{\mathbf{c}}\right) \right) }+\int_{\frac{1}{2}\left( t+%
\frac{3}{2}\left( t-\frac{\left \vert x\right \vert }{\mathbf{c}}\right)
\right) }^{t}\right) \int_{\left \vert y\right \vert >\frac{\left \vert
x\right \vert -\mathbf{c}\left( t-\tau \right) }{2}}(\cdot \cdot \cdot) dy
d\tau \\
&= &K_{321}+K_{322}\,,
\end{eqnarray*}%
we then have
\begin{eqnarray*}
K_{321} &\lesssim &\int_{\frac{3}{2}\left( t-\frac{\left \vert x\right \vert
}{\mathbf{c}}\right) }^{\frac{1}{2}\left( t+\frac{3}{2}\left( t-\frac{\left
\vert x\right \vert }{\mathbf{c}}\right) \right) }\int_{\left \vert y\right
\vert >\frac{\left \vert x\right \vert -\mathbf{c}\left( t-\tau \right) }{2}%
}\left( 1+t-\tau \right) ^{-5/2}e^{-\frac{\left( \left \vert x-y\right \vert
-\mathbf{c}\left( t-\tau \right) \right) ^{2}}{D_{0}\left( 1+t-\tau \right) }%
}\left( 1+\left \vert y\right \vert ^{2}\right) ^{-3}dyd\tau \\
&\lesssim &\int_{\frac{3}{2}\left( t-\frac{\left \vert x\right \vert }{%
\mathbf{c}}\right) }^{\frac{1}{2}\left( t+\frac{3}{2}\left( t-\frac{\left
\vert x\right \vert }{\mathbf{c}}\right) \right) } \left( 1+t-\tau \right)
^{-5/2}\left( 1+\left \vert x\right \vert -\mathbf{c}\left( t-\tau \right)
\right) ^{-3}d\tau \\
&\lesssim &\left( 1+t\right) ^{-5/2}\left( 1+\mathbf{c}t-\left \vert x\right
\vert \right) ^{-2} \\
&\lesssim &\left( 1+t\right) ^{-7/2}\left( 1+\frac{\left( \mathbf{c}t-\left
\vert x\right \vert \right) ^{2}}{1+t}\right) ^{-1}\hbox{.}
\end{eqnarray*}%
and
\begin{eqnarray*}
K_{322} &\lesssim &\int_{\frac{1}{2}\left( t+\frac{3}{2}\left( t-\frac{\left
\vert x\right \vert }{\mathbf{c}}\right) \right) }^{t}\int_{\left \vert
y\right \vert >\frac{\left \vert x\right \vert -\mathbf{c}\left( t-\tau
\right) }{2}}\left( 1+t-\tau \right) ^{-5/2}e^{-\frac{\left( \left \vert
x-y\right \vert -\mathbf{c}\left( t-\tau \right) \right) ^{2}}{D_{0}\left(
1+t-\tau \right) }}\left( 1+\left \vert y\right \vert ^{2}\right)
^{-3}dyd\tau \\
&\lesssim &\int_{\frac{1}{2}\left( t+\frac{3}{2}\left( t-\frac{\left \vert
x\right \vert }{\mathbf{c}}\right) \right) }^{t}\left( 1+\left \vert x\right
\vert -\mathbf{c}\left( t-\tau \right) \right) ^{-6}d\tau \\
&\lesssim &\left( 1+\left \vert x\right \vert +\mathbf{c}t\right) ^{-5} \\
&\lesssim &\left( 1+t\right) ^{-\frac{7}{2}}\left( 1+\frac{\left \vert
x\right \vert ^{2}}{1+t}\right) ^{-\frac{3}{2}}\hbox{.}
\end{eqnarray*}%
It implies that
\begin{equation*}
K_{3}\lesssim \left( 1+t\right) ^{-5/2}\left( 1+\frac{\left( \mathbf{c}%
t-\left \vert x\right \vert \right) ^{2}}{1+t}\right) ^{-1}+\left(
1+t\right) ^{-2}\left( 1+\frac{\left \vert x\right \vert ^{2}}{1+t}\right)
^{-\frac{3}{2}}\hbox{.}
\end{equation*}%
As a consequence,
\begin{equation*}
K\lesssim \left( 1+t\right) ^{-5/2}\left( 1+\frac{\left( \mathbf{c}t-\left
\vert x\right \vert \right) ^{2}}{1+t}\right) ^{-1}+\left( 1+t\right)
^{-2}\left( 1+\frac{\left \vert x\right \vert ^{2}}{1+t}\right) ^{-\frac{3}{2%
}}\hbox{.}
\end{equation*}
\end{proof}

\begin{proof}[\textbf{Proof of Lemma \protect \ref{nonmoving-moving}}]
(Diffusion wave convolution with Huygens wave). \newline
\newline
\textbf{Case 1:} $\left( x,t\right) \in D_{1}$. We split the integral into
two parts to obtain%
\begin{eqnarray*}
L &=&\left( \int_{0}^{\frac{t}{2}}+\int_{\frac{t}{2}}^{t}\right) \int_{%
\mathbb{R}^{3}}\left( \cdots \right) dyd\tau \\
&\lesssim &\left( 1+t\right) ^{-2}\int_{0}^{\frac{t}{2}}\int_{\mathbb{R}%
^{3}}\left( 1+\tau \right) ^{-4}\left( 1+\frac{\left( \left \vert y\right
\vert -\mathbf{c}\tau \right) ^{2}}{1+\tau }\right) ^{-2}dyd\tau \\
&&+\left( 1+t\right) ^{-4}\int_{\frac{t}{2}}^{t}\int_{\mathbb{R}^{3}}\left(
1+t-\tau \right) ^{-2}\left( 1+\frac{\left \vert x-y\right \vert ^{2}}{%
D_{0}\left( 1+t-\tau \right) }\right) ^{-2}dyd\tau \\
&\lesssim &\left( 1+t\right) ^{-2}\int_{0}^{\frac{t}{2}}\left( 1+\tau
\right) ^{-4}\left( 1+\tau \right) ^{\frac{5}{2}}d\tau +\left( 1+t\right)
^{-4}\int_{t/2}^{t}\left( 1+t-\tau \right) ^{-\frac{1}{2}}d\tau \\
&\lesssim &\left( 1+t\right) ^{-2}\lesssim \left( 1+t\right) ^{-2}\left( 1+%
\frac{\left \vert x\right \vert ^{2}}{1+t}\right) ^{-\frac{3}{2}}\hbox{.}
\end{eqnarray*}
\newline
\textbf{Case 2:} $\left( x,t\right) \in D_{2}$. We split the integral into
two parts:
\begin{equation*}
L=\left( \int_{0}^{\frac{t}{2}}+\int_{\frac{t}{2}}^{t}\right) \int_{\mathbb{R%
}^{3}}\left( \cdots \right) dyd\tau =:L_{1}+L_{2}\hbox{.}
\end{equation*}

First one can see%
\begin{eqnarray*}
L_{2} &\lesssim &\left( 1+t\right) ^{-4}\int_{\frac{t}{2}}^{t}\int_{\mathbb{R%
}^{3}}\left( 1+t-\tau \right) ^{-2}\left( 1+\frac{\left \vert x-y\right
\vert ^{2}}{D_{0}\left( 1+t-\tau \right) }\right) ^{-2}dyd\tau \\
&\lesssim &\left( 1+t\right) ^{-7/2}\left( 1+\frac{\left( \left \vert
x\right \vert -\mathbf{c}t\right) ^{2}}{1+t}\right) ^{-1}\hbox{.}
\end{eqnarray*}

As for $L_{1}$, we further decompose $\mathbb{R}^{3}$ into two parts:
\begin{equation*}
L_{1}=\int_{0}^{\frac{t}{2}}\left( \int_{\left \vert y\right \vert \leq
\frac{2}{3}\left \vert x\right \vert }+\int_{\left \vert y\right \vert >%
\frac{2}{3}\left \vert x\right \vert }\right) \left( \ldots \right) dyd\tau
=:L_{11}+L_{12}.
\end{equation*}%
If $\left \vert y\right \vert \leq \frac{2}{3}\left \vert x\right \vert $,
then we have
\begin{equation*}
\left \vert x-y\right \vert \geq \left \vert x\right \vert -\left \vert
y\right \vert \geq \frac{\left \vert x\right \vert }{3}\hbox{,}
\end{equation*}%
and thus
\begin{eqnarray*}
&&L_{11}\lesssim \left( 1+t\right) ^{-2}\left( 1+\frac{\left \vert x\right
\vert ^{2}}{1+t}\right) ^{-2}\int_{0}^{\frac{t}{2}}\int_{\left \vert y\right
\vert \leq \frac{2}{3}\left \vert x\right \vert }\left( 1+\tau \right)
^{-4}\left( 1+\frac{\left( \left \vert y\right \vert -\mathbf{c}\tau \right)
^{2}}{1+\tau }\right) ^{-2}dyd\tau \\
&\lesssim &\left( 1+t\right) ^{-2}\left( 1+\frac{\left \vert x\right \vert
^{2}}{1+t}\right) ^{-2}\int_{0}^{\frac{t}{2}}\left( 1+\tau \right)
^{-4}\left( 1+\tau \right) ^{\frac{5}{2}}d\tau \lesssim \left( 1+t\right)
^{-2}\left( 1+\frac{\left \vert x\right \vert ^{2}}{1+t}\right) ^{-2}\hbox{.}
\end{eqnarray*}%
If $\left \vert y\right \vert \geq \frac{2}{3}\left \vert x\right \vert $
and $0\leq \tau \leq \frac{t}{2}$, then
\begin{equation*}
\left \vert y\right \vert -\mathbf{c}\tau \geq \frac{2}{3}\left \vert
x\right \vert -\mathbf{c}\tau \geq \frac{2}{3}\left( \mathbf{c}t-\sqrt{1+t}%
\right) -\frac{\mathbf{c}t}{2}\geq \frac{\mathbf{c}t}{12}+\frac{\mathbf{c}t}{%
12}-\frac{2}{3}\sqrt{1+t}\geq \frac{\mathbf{c}t}{12}
\end{equation*}%
$\allowbreak $for $t\geq 40$. Hence,
\begin{eqnarray*}
&&\int_{0}^{\frac{t}{2}}\int_{\left \vert y\right \vert \geq \frac{2}{3}%
\left \vert x\right \vert }\left( 1+t-\tau \right) ^{-2}\left( 1+\frac{\left
\vert x-y\right \vert ^{2}}{D_{0}\left( 1+t-\tau \right) }\right)
^{-2}\left( 1+\tau \right) ^{-4}\left( 1+\frac{\left( \left \vert y\right
\vert -\mathbf{c}\tau \right) ^{2}}{1+\tau }\right) ^{-2}dyd\tau \\
&\lesssim &\left( 1+t\right) ^{-4}\int_{0}^{\frac{t}{2}}\int_{\mathbb{R}%
^{3}}\left( 1+t-\tau \right) ^{-2}\left( 1+\frac{\left \vert x-y\right \vert
^{2}}{D_{0}\left( 1+t-\tau \right) }\right) ^{-2}\left( 1+\tau \right)
^{-2}dyd\tau \\
&\lesssim &\left( 1+t\right) ^{-9/2}\left( 1+\frac{\left( \left \vert
x\right \vert -\mathbf{c}t\right) ^{2}}{1+t}\right) ^{-1}
\end{eqnarray*}%
whenever $t\geq 40$, and
\begin{eqnarray*}
&&\int_{0}^{\frac{t}{2}}\int_{\left \vert y\right \vert \geq \frac{2}{3}%
\left \vert x\right \vert }\left( 1+t-\tau \right) ^{-2}\left( 1+\frac{\left
\vert x-y\right \vert ^{2}}{D_{0}\left( 1+t-\tau \right) }\right)
^{-2}\left( 1+\tau \right) ^{-4}\left( 1+\frac{\left( \left \vert y\right
\vert -\mathbf{c}\tau \right) ^{2}}{1+\tau }\right) ^{-2}dyd\tau \\
&\lesssim &C\lesssim \left( 1+t\right) ^{-5/2}\left( 1+\frac{\left( \left
\vert x\right \vert -\mathbf{c}t\right) ^{2}}{1+t}\right) ^{-1}
\end{eqnarray*}%
whenever $0\leq t\leq 40$. Therefore, we get%
\begin{equation*}
L_{12}\lesssim \left( 1+t\right) ^{-5/2}\left( 1+\frac{\left( \left \vert
x\right \vert -\mathbf{c}t\right) ^{2}}{1+t}\right) ^{-1}\hbox{.}
\end{equation*}%
Combining all the estimates, we have%
\begin{equation*}
L\lesssim \left( 1+t\right) ^{-2}\left( 1+\frac{\left \vert x\right \vert
^{2}}{1+t}\right) ^{-2}+\left( 1+t\right) ^{-5/2}\left( 1+\frac{\left( \left
\vert x\right \vert -\mathbf{c}t\right) ^{2}}{1+t}\right) ^{-1}\hbox{.}
\end{equation*}
\newline
\textbf{Case 3:} $\left( x,t\right) \in D_{3}$. We split the integral into
two parts%
\begin{equation*}
L=\left( \int_{0}^{\frac{t}{2}}+\int_{\frac{t}{2}}^{t}\right) \int_{\mathbb{R%
}^{3}}\left( \cdots \right) dyd\tau =:L_{1}+L_{2}\hbox{.}
\end{equation*}

For $L_{1}$, we further decompose the space domain into two parts:
\begin{equation*}
L_{1}=\int_{0}^{\frac{t}{2}}\left( \int_{\left \vert y\right \vert \leq
\frac{2}{3}\left \vert x\right \vert }+\int_{\left \vert y\right \vert >%
\frac{2}{3}\left \vert x\right \vert }\right) \left( \cdots \right) dyd\tau
=:L_{11}+L_{12}\hbox{.}
\end{equation*}%
If $\left \vert y\right \vert \leq \frac{2}{3}\left \vert x\right \vert $,
then
\begin{equation*}
\left \vert x-y\right \vert \geq \frac{\left \vert x\right \vert }{3}\hbox{,}
\end{equation*}%
and thus%
\begin{eqnarray*}
L_{11} &\lesssim &\left( 1+t\right) ^{-2}\left( 1+\frac{\left \vert x\right
\vert ^{2}}{1+t}\right) ^{-\frac{3}{2}}\int_{0}^{\frac{t}{2}}\int_{\mathbb{R}%
^{3}}\left( 1+\tau \right) ^{-4}\left( 1+\frac{\left( \left \vert y\right
\vert -\mathbf{c}\tau \right) ^{2}}{1+\tau }\right) ^{-2}dyd\tau \\
&\lesssim &\left( 1+t\right) ^{-2}\left( 1+\frac{\left \vert x\right \vert
^{2}}{1+t}\right) ^{-\frac{3}{2}}\hbox{.}
\end{eqnarray*}%
If $\left \vert y\right \vert \geq \frac{2}{3}\left \vert x\right \vert $
and $0\leq \tau \leq \frac{t}{2}$, we have%
\begin{equation*}
\left \vert y\right \vert -\mathbf{c}\tau \geq \frac{2}{3}\left \vert
x\right \vert -\frac{\mathbf{c}t}{2}=\frac{1}{6}\left \vert x\right \vert +%
\frac{1}{2}\left( \left \vert x\right \vert -\mathbf{c}t\right) \hbox{,}
\end{equation*}%
so that%
\begin{eqnarray*}
L_{12} &\lesssim &\int_{0}^{\frac{t}{2}}\int_{\left \vert y\right \vert \geq
\frac{2}{3}\left \vert x\right \vert }\left( 1+t-\tau \right) ^{-2 }e^{-%
\frac{\left \vert x-y\right \vert ^{2}}{D_{0}\left( 1+t-\tau \right) }%
}\left( 1+\tau \right) ^{-2}\left( 1+\tau +\left( \left \vert x\right \vert -%
\mathbf{c}t\right) ^{2}\right) ^{-2}dyd\tau \\
&\lesssim &\left( 1+\left( \left \vert x\right \vert -\mathbf{c}t\right)
^{2}\right) ^{-2}\int_{0}^{\frac{t}{2}}\int_{\mathbb{R}^{3}}\left( 1+t-\tau
\right) ^{-2}e^{-\frac{\left \vert x-y\right \vert ^{2}}{D_{0}\left(
1+t-\tau \right) }}\left( 1+\tau \right) ^{-2}dyd\tau \\
&\lesssim &\left( 1+\left( \left \vert x\right \vert -\mathbf{c}t\right)
^{2}\right) ^{-2}(1+t)^{-1/2}\lesssim \left( \frac{1+t}{2}+\frac{\left(
\left \vert x\right \vert -\mathbf{c}t\right) ^{2}}{2}\right)
^{-2}(1+t)^{-1/2} \\
&\lesssim& \left( 1+t\right) ^{-5/2}\left( 1+\frac{\left( \left \vert
x\right \vert -\mathbf{c}t\right) ^{2}}{1+t}\right) ^{-2}
\end{eqnarray*}%
since $\left \vert x\right \vert -\mathbf{c}t\geq \sqrt{1+t}$ for $\left(
x,t\right) \in D_{3}$. Therefore, we obtain%
\begin{equation*}
L_{1}\lesssim \left( 1+t\right) ^{-2}\left( 1+\frac{\left \vert x\right
\vert ^{2}}{1+t}\right) ^{-\frac{3}{2}}+\left( 1+t\right) ^{-5/2}\left( 1+%
\frac{\left( \left \vert x\right \vert -\mathbf{c}t\right) ^{2}}{1+t}\right)
^{-2}\hbox{.}
\end{equation*}

Next for $L_{2}$, we decompose $\mathbb{R}^{3}$ into two parts%
\begin{equation*}
L_{2}=\int_{\frac{t}{2}}^{t}\left( \int_{\left \vert y\right \vert -\mathbf{c%
}\tau \leq \frac{\left \vert x\right \vert -ct}{2}}+\int_{\left \vert
y\right \vert -\mathbf{c}\tau >\frac{\left \vert x\right \vert -ct}{2}%
}\right) \left( \cdots \right) dyd\tau =:L_{21}+L_{22}\hbox{.}
\end{equation*}%
If $\left \vert y\right \vert -\mathbf{c}\tau \leq \frac{\left \vert
x\right
\vert -\mathbf{c}t}{2}$, we have
\begin{equation*}
\left \vert x-y\right \vert \geq \left \vert x\right \vert -\left( \frac{%
\left \vert x\right \vert -\mathbf{c}t}{2}\right) -\mathbf{c}\tau \geq \frac{%
\left \vert x\right \vert -\mathbf{c}t}{2}\hbox{,}
\end{equation*}%
so
\begin{eqnarray*}
L_{21} &\lesssim &\left( 1+\frac{\left( \left \vert x\right \vert -\mathbf{c}%
t\right) ^{2}}{1+t}\right) ^{-2}\int_{\frac{t}{2}}^{t}\int_{\mathbb{R}%
^{3}}\left( 1+t-\tau \right) ^{-2}e^{-\frac{\left \vert x-y\right \vert ^{2}%
}{2D_{0}\left( 1+t-\tau \right) }}\left( 1+\tau \right) ^{-4}dyd\tau \\
&\lesssim &\left( 1+t\right) ^{-7/2}\left( 1+\frac{\left( \left \vert
x\right \vert -\mathbf{c}t\right) ^{2}}{1+t}\right) ^{-2}\hbox{,}
\end{eqnarray*}
and
\begin{eqnarray*}
L_{22} &\lesssim &\left( 1+\frac{\left( \left \vert x\right \vert -\mathbf{c}%
t\right) ^{2}}{1+t}\right) ^{-2}\int_{\frac{t}{2}}^{t}\int_{\mathbb{R}%
^{3}}\left( 1+t-\tau \right) ^{-2}e^{-\frac{\left \vert x-y\right \vert ^{2}%
}{D_{0}\left( 1+t-\tau \right) }}\left( 1+\tau \right) ^{-4}dyd\tau \\
&\lesssim &\left( 1+t\right) ^{-7/2}\left( 1+\frac{\left( \left \vert
x\right \vert -\mathbf{c}t\right) ^{2}}{1+t}\right) ^{-2}\hbox{.}
\end{eqnarray*}

Therefore,
\begin{equation*}
L\lesssim \left( 1+t\right) ^{-2}\left( 1+\frac{\left \vert x\right \vert
^{2}}{1+t}\right) ^{-\frac{3}{2}}+\left( 1+t\right) ^{-5/2}\left( 1+\frac{%
\left( \left \vert x\right \vert -\mathbf{c}t\right) ^{2}}{1+t}\right) ^{-2}%
\hbox{.}
\end{equation*}
\newline
\textbf{Case 4:} $\left( x,t\right) \in D_{4}\cup D_{5}$. Observe that%
\begin{equation*}
\frac{\sqrt{1+t}}{2\mathbf{c}}\leq \frac{\left \vert x\right \vert }{2%
\mathbf{c}}\leq \frac{t}{2}\hbox{.}
\end{equation*}

We split the integral into three parts:%
\begin{equation*}
L=\left( \int_{0}^{\frac{\left \vert x\right \vert }{2\mathbf{c}}}+\int_{%
\frac{\left \vert x\right \vert }{2\mathbf{c}}}^{\frac{t}{2}}+\int_{\frac{t}{%
2}}^{t}\right) \int_{\mathbb{R}^{3}}\left( \ldots \right) dyd\tau
=:L_{1}+L_{2}+L_{3}\hbox{.}
\end{equation*}

For $L_{1}$, we decompose $\mathbb{R}^{3}$ into two parts $\{ \left \vert
y\right \vert \leq \frac{3}{4}\left \vert x\right \vert \}$ and $\{
\left
\vert y\right \vert >\frac{3}{4}\left \vert x\right \vert \}$. If $%
\left
\vert y\right \vert \geq \frac{3}{4}\left \vert x\right \vert $ and $%
0\leq \tau \leq \frac{\left \vert x\right \vert }{2\mathbf{c}}$, then
\begin{equation*}
\left \vert y\right \vert -\mathbf{c}\tau \geq \frac{1}{4}\left \vert
x\right \vert \hbox{.}
\end{equation*}
Thus,
\begin{eqnarray*}
L_{1} &\lesssim &\left( 1+t\right) ^{-2}\left( 1+\frac{\left \vert x\right
\vert ^{2}}{1+t}\right) ^{-\frac{3}{2}}\int_{0}^{\frac{t}{2}}\int_{\left
\vert y\right \vert \leq \frac{3\left \vert x\right \vert }{4}}\left( 1+\tau
\right) ^{-4}\left( 1+\frac{\left( \left \vert y\right \vert -\mathbf{c}\tau
\right) ^{2}}{1+\tau }\right) ^{-2}dyd\tau \\
&&+\left( 1+\left \vert x\right \vert ^{2}\right) ^{-2}\int_{0}^{\frac{t}{2}%
}\int_{\left \vert y\right \vert >\frac{3\left \vert x\right \vert }{4}%
}\left( 1+t-\tau \right) ^{-2}e^{-\frac{\left \vert x-y\right \vert ^{2}}{%
D_{0}\left( 1+t-\tau \right) }}\left( 1+\tau \right) ^{-2}dyd\tau \\
&\lesssim &\left( 1+t\right) ^{-2}\left( 1+\frac{\left \vert x\right \vert
^{2}}{1+t}\right) ^{-\frac{3}{2}}+\left( \frac{\left \vert x\right \vert ^{2}%
}{2}+\frac{1+t}{2}\right) ^{-2} (1+t)^{-1/2} \\
&\lesssim &\left( 1+t\right) ^{-2}\left( 1+\frac{\left \vert x\right \vert }{%
1+t}\right) ^{-\frac{3}{2}}\hbox{.}
\end{eqnarray*}

As for $L_{2}$ and $L_{3}$, it immediately follows that
\begin{eqnarray*}
L_{2} &\lesssim &\int_{\frac{\left \vert x\right \vert }{2\mathbf{c}}}^{%
\frac{t}{2}}\int_{\mathbb{R}^{3}}\left( 1+t-\tau \right) ^{-2}e^{-\frac{%
\left \vert x-y\right \vert ^{2}}{D_{0}\left( 1+t-\tau \right) }}\left(
1+\tau \right) ^{-4}\left( 1+\frac{\left( \left \vert y\right \vert -\mathbf{%
c}\tau \right) ^{2}}{1+\tau }\right) ^{-2}dyd\tau \\
&\lesssim &\left( 1+t\right) ^{-2}\left( 1+\frac{\left \vert x\right \vert }{%
2\mathbf{c}}\right) ^{-2}\int_{\frac{\left \vert x\right \vert }{2\mathbf{c}}%
}^{\frac{t}{2}}\left( 1+\tau \right) ^{-2}\int_{\mathbb{R}^{3}}e^{-\frac{%
\left \vert x-y\right \vert ^{2}}{D_{0}\left( 1+t-\tau \right) }}dyd\tau \\
&\lesssim &\left( 1+t\right) ^{-2}\left( 1+\frac{\left \vert x\right \vert }{%
2\mathbf{c}}\right) ^{-3}\left( 1+t\right) ^{\frac{3}{2}} \\
&\lesssim &\left( 1+t\right) ^{-5/2}\left( 1+\frac{\left \vert x\right \vert
^{2}}{1+t}\right) ^{-2}\hbox{,}
\end{eqnarray*}

and%
\begin{eqnarray*}
L_{3} &\lesssim &\left( 1+t\right) ^{-4}\int_{\frac{t}{2}}^{t}\int_{\mathbb{R%
}^{3}}\left( 1+t-\tau \right) ^{-2}e^{-\frac{\left \vert x-y\right \vert ^{2}%
}{D_{0}\left( 1+t-\tau \right) }}dyd\tau \\
&\lesssim &\left( 1+t\right) ^{-7/2}\lesssim \left( 1+\left \vert x\right
\vert ^{2}\right) ^{-\frac{3}{2}}(1+t)^{-1/2}\lesssim \left( 1+t\right)
^{-2}\left( 1+\frac{\left \vert x\right \vert }{1+t}\right) ^{-\frac{3}{2}}%
\hbox{,}
\end{eqnarray*}%
since $\sqrt{1+t}\leq \left \vert x\right \vert \leq \mathbf{c}t$.
Therefore, we have%
\begin{equation*}
L\lesssim \left( 1+t\right) ^{-2}\left( 1+\frac{\left \vert x\right \vert }{%
1+t}\right) ^{-\frac{3}{2}}\hbox{.}
\end{equation*}
\end{proof}

\textbf{Acknowledgments:}

This work is partially supported by the National Key R\&D Program of China
under grant 2022YFA1007300. Y.-C. Lin is supported by the National Science
and Technology Council under the grant NSTC 112-2115-M-006-006-MY2. H.-T.
Wang is supported by NSFC under Grant No. 12371220 and 12031013, the
Strategic Priority Research Program of Chinese Academy of Sciences under
Grant No. XDA25010403. K.-C. Wu is supported by the National Science and
Technology Council under the grant NSTC 112-2628-M-006-006 -MY4 and National
Center for Theoretical Sciences.

\end{document}